\newtheorem{thm}{Theorem}[section]
\newtheorem{lemma}[thm]{Lemma}
\newtheorem{corollary}[thm]{Corollary}
\newtheorem{proposition}[thm]{Proposition}
\newtheorem*{thm*}{Theorem}
\theoremstyle{definition}
\newtheorem{definition}[thm]{Definition}
\newtheorem{example}[thm]{Example}
\newtheorem{remark}[thm]{Remark}
\newtheorem{parg}[thm]{}
\newtheorem{notation}[thm]{Setting}
\newcommand{\ph}{\varphi}
\newcommand{\w}{\widetilde}
\newcommand{\ma}{\mathcal}
\newcommand{\la}{\longrightarrow}
\newcommand{\pr}{\mathbb{P}}
\newcommand{\Q}{\mathbb{Q}}
\newcommand{\C}{\mathbb{C}}
\newcommand{\R}{\mathbb{R}}
\newcommand{\Z}{\mathbb{Z}}
\newcommand{\N}{\mathcal{N}_1}
\newcommand{\Nu}{\mathcal{N}^1}
\newcommand{\dom}{\operatorname{dom}}
\newcommand{\Sing}{\operatorname{Sing}}
\newcommand{\Pic}{\operatorname{Pic}}
\newcommand{\NE}{\operatorname{NE}}
\newcommand{\Exc}{\operatorname{Exc}}
\newcommand{\Supp}{\operatorname{Supp}}
\newcommand{\Lo}{\operatorname{Locus}}
\newcommand{\codim}{\operatorname{codim}}
\newcommand{\Eff}{\operatorname{Eff}}
\newcommand{\Nef}{\operatorname{Nef}}
\newcommand{\Mov}{\operatorname{Mov}}
\newcommand{\MCD}{\operatorname{MCD}}
\newcommand{\mov}{\operatorname{mov}}
\newcommand{\Bl}{\operatorname{Bl}}
\newcommand{\Id}{\operatorname{Id}}
\patchcmd{\section}{\normalfont}{\normalfont\large}{}{}
\patchcmd{\subsection}{\bfseries}{\scshape\centering}{}{}
\patchcmd{\subsection}{-.5em}{.5em}{}{}
\title[Fano $4$-folds, flips, and blow-ups of points]{Fano $4$-folds, flips, and blow-ups of points}
\author{C.~Casagrande}
\address{Universit\`a di Torino,
Dipartimento di Matematica,
via Carlo Alberto 10,
10123 Torino - Italy}
\email{cinzia.casagrande@unito.it}
\subjclass[2010]{14J45,14J35,14E30}
\begin{document}
\begin{abstract}
We study smooth, complex Fano 4-folds $X$ with large Picard number $\rho_X$, with techniques from birational geometry. Our main result is that if $X$ is isomorphic in codimension one to the blow-up of a smooth projective 4-fold $Y$ at a point, then $\rho_X\leq 12$. We give examples of such $X$ with Picard number up to $9$. The main theme (and tool) is the study of fixed prime divisors in Fano 4-folds, especially in the case $\rho_X>6$, in which we give some general results of independent interest. 
\end{abstract}
\maketitle
\renewcommand{\theequation}{\thethm}

{\footnotesize \tableofcontents}

\section{Introduction}
\noindent  
 In this paper we study (smooth, complex) Fano $4$-folds $X$ with large Picard number $\rho_X$. Let us recall that $\rho_X$ is equal to the second Betti number of $X$, and since there are finitely many families of Fano $4$-folds, $\rho_X$ is bounded. We also have an explicit bound on $\rho_X$ (see
\cite[Rem.~3.1]{cdue}), which however is, conjecturally,
 far from being sharp; the maximal value of $\rho_X$ for a Fano $4$-fold $X$ is not yet known. 

As for examples,
taking products of del Pezzo surfaces one gets Fano $4$-folds with Picard number up to $18$; to the author's knowledge, the other known examples of Fano $4$-folds have $\rho\leq 9$.
 In fact, for $\rho=7,8,9$, the author is aware of only one family (for each $\rho$) of Fano $4$-folds which is not a product of surfaces, obtained as follows. Consider the blow-up $\Bl_{p_1,\dotsc,p_r}\pr^4$ of $\pr^4$ in $r$ general points. For $r=2,\dotsc,8$ this variety is not Fano, but can be modified with a finite sequence of flips\footnote{These are $K$-positive flips, in the terminology of this paper.} in order to get a smooth Fano $4$-fold $X$ with $\rho_X=1+r\leq 9$; we refer the reader to Example \ref{example1} for more details.

The main object of this paper is the study of Fano $4$-folds obtained as in the previous example: by the blow-up of a (smooth) point, followed by a sequence of flips.

Let us first recall that Fano manifolds that can be obtained by blowing-up  a point in another manifold have been classified, in arbitrary dimension $\geq 3$, by Bonavero, Campana, and Wi\'sniewski \cite{bonwisncamp}; in particular such $X$ always has $\rho_X\leq 3$. 

In the case where we allow also flips, our main result is a bound on the Picard number. 
\begin{thm}\label{onepoint}
Let $X$ be a smooth Fano $4$-fold. Suppose that there exist a normal and $\Q$-factorial projective variety $Y$, and a smooth point $p\in Y$, such that $X$ and $\Bl_pY$ are isomorphic in codimension one. Then $\rho_X\leq 12$.
\end{thm}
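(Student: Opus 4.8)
The plan is to reduce the statement to a property of a single fixed prime divisor of $X$ and then feed it into the general theory of such divisors. First I would record that an isomorphism in codimension one induces an isomorphism $N^1(\Bl_pY)\cong N^1(X)$ of the spaces of divisor classes, so that $\rho_X=\rho_{\Bl_pY}=\rho_Y+1$; thus the assertion is equivalent to $\rho_Y\leq 11$. Since nothing is to be proved when $\rho_X\leq 6$, I would assume $\rho_X\geq 7$, placing us precisely in the range $\rho_X>6$ where the paper's general results on fixed prime divisors apply. Writing $\sigma\colon\Bl_pY\to Y$ for the blow-up, its exceptional divisor is $E\cong\pr^3$ with $N_{E}=\ma O_{\pr^3}(-1)$, and I would set $D:=\ph_*E\subset X$, where $\ph\colon\Bl_pY\dashrightarrow X$ is the given birational map.

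The next step is to identify $D$ and pin down its type. Since $E\cong\pr^3$ carries $N_E=\ma O_{\pr^3}(-1)$ and is $\sigma$-exceptional, it is rigid: $h^0(\Bl_pY,\ma O(mE))=1$ for all $m>0$. As $\ph$ is an isomorphism in codimension one and both varieties are normal, restriction and extension of sections across a locus of codimension $\geq 2$ gives $h^0(X,\ma O(mD))=h^0(\Bl_pY,\ma O(mE))=1$ for all $m>0$, so $D$ is a fixed prime divisor of the Fano $4$-fold $X$. Moreover $\Bl_pY$ is itself a small $\Q$-factorial modification of $X$ on which the transform of $D$, namely $E$, admits the elementary divisorial contraction $\sigma$ sending $\pr^3$ to the smooth point $p$; via adjunction $K_E=(K_{\Bl_pY}+E)|_E=\ma O_{\pr^3}(-4)$ together with $E|_E=\ma O_{\pr^3}(-1)$, one reads off the numerical behaviour of $(-K_X)|_D$ and of $D|_D$. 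This is the signature that places $D$ in the specific ``blow-up of a point'' type among the fixed prime divisors classified in the paper.

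With $D$ recognized as a fixed prime divisor of this special type, the conclusion follows by invoking the paper's general structural results on fixed prime divisors of Fano $4$-folds with $\rho_X>6$: these control the image of $N_1(D)\to N_1(X)$ and the extremal rays of $X$ meeting $D$, and bound $\rho_X$ in terms of this local data. The main obstacle — and the real content — is precisely this final numerical estimate. One must track how the flips relating $\Bl_pY$ and $X$ interact with $E$ (so that $D$ need not be isomorphic to $\pr^3$, and its numerical behaviour must be controlled through the modification), and then show that the Mori-theoretic constraints forced by the Fano condition on $X$, together with the contractibility of $D$ to a point, cannot accommodate more than $\rho_X=12$. Handling this uniformly across all admissible configurations of flips, rather than only for the explicit examples $\Bl_{p_1,\dotsc,p_r}\pr^4$, is what makes the bound delicate.
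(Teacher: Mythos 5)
Your reduction is sound as far as it goes: $D:=\ph_*E$ is indeed a fixed prime divisor (the rigidity of $E$ transfers across an isomorphism in codimension one since both varieties are normal), and since $\Bl_pY$ is a SQM of $X$ on which the transform of $D$ is contracted by the blow-up of a smooth point, $D$ is a fixed prime divisor of type $(3,0)^{sm}$ in the sense of Definition \ref{30}. This matches exactly the paper's own reformulation of the theorem. The problem is everything after that.

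The final paragraph of your proposal is not a proof but a description of what a proof would have to accomplish. There is no off-the-shelf ``general structural result'' in the theory of fixed prime divisors that converts the existence of a type-$(3,0)^{sm}$ fixed divisor into the bound $\rho_X\leq 12$; establishing that implication \emph{is} the theorem, and it occupies essentially all of sections \ref{secmanypoints}--\ref{hard}. Concretely, the missing ingredients are: (a) the Hirzebruch--Riemann--Roch computation $h^0(X,-K_X)=h^0(Y,-K_Y)-15r$ combined with Hwang's bound $h^0(Y,-K_Y)\leq 126$, which gives Theorem \ref{manypoints} ($\rho_X\leq 9$ when the base has Picard number one); (b) Theorem \ref{everyfixeddiv}, showing $\rho_X\leq 11$ when \emph{every} fixed prime divisor is of type $(3,0)^{sm}$, which forces (when $\rho_X\geq 12$ and $\delta_X\leq 1$) the existence of a fixed divisor of another type adjacent to $[D]$ in $\Eff(X)$; (c) the case analysis of Propositions \ref{(3,1)} and \ref{(3,0)Q}, which shows that the Fano target $Y$ of the MMP for $D$ must then carry a fixed divisor of type $(3,2)$; and (d) Proposition \ref{zero}, which exploits the resulting type-$(3,2)$ fixed divisor $E_X\subset X$ disjoint from $D$ (so that $\N(E_X,X)\subseteq D^{\perp}\subsetneq\N(X)$) to conclude $\rho_X=12$. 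The Lefschetz-defect results you allude to only dispose of the case $\delta_X\geq 2$ (Theorem \ref{summary}); in the remaining case $\delta_X\leq 1$ they control nothing, and the ``numerical estimate'' you defer is precisely the content that cannot be deferred.
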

As an intermediate result towards Theorem \ref{onepoint},
we also give a bound on the Picard number of Fano $4$-folds obtained, from a $4$-fold with $\rho=1$, by blow-ups of points and flips. 
\begin{thm}\label{manypoints}
 Let $X$ be a smooth Fano $4$-fold. Suppose that there exist a normal and $\Q$-factorial projective variety $Y$ with $\rho_Y=1$, and smooth distinct points $p_1,\dotsc,p_r\in Y$, such that $X$ and $\Bl_{p_1,\dotsc,p_r}Y$ are isomorphic in codimension one. Then $\rho_X\leq 9$, and if $\rho_X\geq 8$, then $Y\cong\pr^4$.
\end{thm}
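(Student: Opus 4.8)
The plan is to prove Theorem \ref{manypoints} by exploiting the rigid combinatorial structure forced by the hypotheses, together with the theory of fixed prime divisors in Fano $4$-folds that the introduction promises for $\rho_X>6$.
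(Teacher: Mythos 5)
Your proposal contains no proof: it is a single sentence announcing an intention, with no argument, computation, or lemma. There is nothing to verify. Moreover, the direction it gestures at --- ``the theory of fixed prime divisors for $\rho_X>6$'' --- is not what drives this particular theorem in the paper. The classification of fixed prime divisors into the four types (Th.~\ref{typefixed}) requires $\rho_X\geq 6$ and is the engine behind Theorem \ref{onepoint}, not Theorem \ref{manypoints}; the latter must be proved for all $\rho_X$ since it is used as an input to the inductive arguments of sections \ref{every} and \ref{hard}.

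The actual content you are missing is a numerical argument via Riemann--Roch. The paper first shows (Prop.~\ref{drop}) that if $X$ is Fano and isomorphic in codimension one to a composition $\w{X}\to Y$ of $r$ blow-ups of smooth points, then $h^0(X,-K_X)=h^0(Y,-K_Y)-15r$; this uses the Hirzebruch--Riemann--Roch formula together with the explicit structure of SQM's of Fano $4$-folds (Th.~\ref{SQM0}), which shows that $\chi(-K)$ is unchanged under a $K$-negative flip. Combining this with Kawamata's nonvanishing $h^0(X,-K_X)>0$ and Hwang's bound $h^0(Y,-K_Y)\leq 126$ (with equality exactly for $\pr^4$) gives $r\leq 8$, hence $\rho_X=r+1\leq 9$. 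The finer statement that $\rho_X\geq 8$ forces $Y\cong\pr^4$ requires the refined bound $h^0(Y,-K_Y)\leq 105$ when $Y\not\cong\pr^4$, which in turn requires ruling out a covering family of rational curves of anticanonical degree $3$ on $Y$; this is done by an analysis of low-degree curves through the blown-up points (Lemmas \ref{locus} and \ref{3}). None of these steps appears in, or is suggested by, your proposal.
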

\noindent The bound in Theorem \ref{manypoints} is sharp (see Example \ref{example1}).  We do not know whether the bound in Theorem \ref{onepoint} is sharp; we show some properties of $X$ when $\rho_X=12$ is maximal, see Theorem \ref{finally}.

\medskip

One of the main themes (and tools) of this work is the study of \emph{fixed prime divisors\footnote{``Fixed'' means, as usual, that $D$ is a fixed component of the linear system $|mD|$ for every $m\geq 1$.} in Fano $4$-folds}. 

Given a fixed prime divisor $D$ in a Fano variety $X$, there is always a sequence of flips $X\dasharrow X'$ such that the transform  of $D$ in $X'$ is the exceptional divisor of an elementary divisorial contraction $f\colon X'\to Y$. 
 In general, the same fixed divisor can be exceptional for different  divisorial contractions (see Examples \ref{special} and \ref{simple}).

Suppose now that $X$ is a Fano $4$-fold with $\rho_X\geq 7$.
After \cite{eff} (see Th.~\ref{typefixed})  there are only four possibilities for  the elementary divisorial contraction $f$ as above, namely: 
\begin{enumerate}[--]
\item type $(3,0)^{sm}$: $f$ is
the blow-up of a smooth point;
\item type $(3,1)^{sm}$: $f$ is the blow-up of a smooth curve;
\item type $(3,2)$: $f$ is
a divisorial contraction sending the exceptional divisor to a surface;
\item type $(3,0)^Q$: $f$ is
  a divisorial contraction with $\Exc(f)$ an irreducible quadric, with normal bundle $\ma{O}_Q(-1)$, contracted to a (mildly) singular point.
\end{enumerate}
 Moreover we show that, in a neighbourhood of the exceptional divisor, 
 $f$ is uniquely determined by $D$ (Lemma \ref{elementarydivisorial}), so that
it makes sense to define the type of a fixed prime divisor $D$ as the type of $f$.

Thus we have four possible types of fixed prime divisors in a Fano $4$-fold $X$ with $\rho_X\geq 7$, and Th.~\ref{onepoint} can be reformulated as follows: if a 
  Fano $4$-fold $X$ contains a fixed prime divisor of type $(3,0)^{sm}$, then $\rho_X\leq 12$. 

The author plans to pursue this study in order to get a (good) bound on the Picard  number of Fano $4$-folds containing a fixed prime divisor of one the remaining three types; this program, if completed, would yield a good bound on the Picard number of Fano $4$-folds, and was one of the motivations for this work.

Before getting to the proof of Th.~\ref{onepoint}, we need to show many intermediate results. In particular, in section \ref{hard}, we give some general results on the geometry of Fano $4$-folds $X$ with $\rho_X\geq 7$, which are of independent interest. 

We use mainly techniques from birational geometry and the Minimal Model Program. In particular, we rely on the fact
that Fano varieties are Mori dream spaces \cite[Cor.~1.3.2]{BCHM}, and use all the standard properties and techniques concerning Mori dream spaces \cite{hukeel}. 

We also use some previously known bounds on the Picard number $\rho_X$ of a Fano $4$-fold $X$. In particular,
we define an invariant of $X$, the \emph{Lefschetz defect}, as 
$$\delta_X:=\max\bigl\{\dim\text{coker} \bigl(i_*\colon\N(D)\to\N(X)
\bigr)\,|\,i\colon D\hookrightarrow X\text{ a prime divisor}\bigr\}.$$
If $D$ is ample, then $i_*\colon\N(D)\to\N(X)$ is an isomorphism, so that $\delta_X$ measures the failure of Lefschetz hyperplane theorem for non-ample divisors.
We refer the reader to \cite{codim,gloria} for the properties of the Lefschetz defect of Fano varieties of arbitrary dimension. When $X$ is a Fano $4$-fold which is not a product of surfaces,
we have $\delta_X\leq 3$, and if moreover
$\delta_X\geq 2$, then $\rho_X\leq 12$ (see Th.~\ref{summary}). Therefore in many situations we can reduce to the case where $\delta_X\leq 1$.

\medskip

Let us describe in more detail the content of the paper, and our strategy for the proof of Th.~\ref{onepoint}.

In section \ref{notandprel} we introduce the notation and the terminology, we recall some preliminary results, and we show some related properties needed in the sequel.

In section \ref{secmanypoints}  we prove Th.~\ref{manypoints}. 
The proof is based on the computation of the difference  $h^0(Y,-K_Y)-h^0(X,-K_X)$, using Hirzebruch-Riemann-Roch, and on a result by Hwang which gives a bound on $h^0(Y,-K_Y)$ for Fano $4$-folds with $\rho_Y=1$; see par.~\ref{introsecmanypoints} for an overview.

Then we introduce and study fixed prime divisors of type $(3,0)^{sm}$ in a Fano $4$-fold $X$ (section \ref{every1}). While the other types of fixed prime divisors are defined only when $\rho_X\geq 6$, we need to work with fixed prime divisors of type $(3,0)^{sm}$ in any Picard number, in order to be able to use an inductive strategy. In particular  
we show that, given  a fixed divisor $D\subset X$ of type $(3,0)^{sm}$,  there exists a sequence of flips $X\dasharrow X'$ and a blow-up of a point $f\colon X'\to Y$ such that $\Exc(f)$ is the transform of $D$, and $Y$ is smooth and Fano (Prop.~\ref{fanotarget}). 

In section \ref{every2} we show that if every fixed prime divisor of $X$ is of type $(3,0)^{sm}$, then $\rho_X\leq 11$ (Th.~\ref{everyfixeddiv}). This is an intermediate result needed for the proof of Th.~\ref{onepoint}; the proof is based on Th.~\ref{manypoints} and on a result from \cite{eff}, see par.~\ref{introevery2} for an overview.

The heart of the paper is section \ref{hard}, where we  make a detailed study of Fano $4$-folds with $\rho\geq 6$ (or $\rho\geq 7$), depending on their fixed prime divisors. 
 After having recalled from \cite{eff} the four types of fixed prime divisors (section \ref{tf}), we show some properties of $X$ when it contains a fixed prime divisor of type $(3,1)^{sm}$ (section \ref{a}) or $(3,0)^Q$ (section \ref{b}). Then we use the properties of fixed prime divisors to give a simple description of the dual cone of the cone of movable divisors (Lemmas \ref{movdual} and \ref{movdual2}). 
Finally we show an important property of fixed prime divisors $E$ of type $(3,2)$: if $E$ is disjoint from another prime divisor, then $\rho_X\leq 12$ (Prop.~\ref{zero}).

Building on all the previous results, we conclude section \ref{hard} with the proof of Th.\ref{onepoint}; let us give an idea of the strategy.

Let $X$ be a Fano $4$-fold containing a fixed prime divisor $D$ of type $(3,0)^{sm}$, and assume that $\rho_X\geq 12$. We know from Prop.~\ref{fanotarget} that there exists a smooth Fano $4$-fold $Y$ and a point $p\in Y$ such that $X$ and $\Bl_pY$ are isomorphic in codimension one. 

We use the results in section \ref{every} to show that $Y$ must have some fixed prime divisor $B$ not of type $(3,0)^{sm}$, thus $B$ is of type $(3,2)$, $(3,1)^{sm}$, or $(3,0)^Q$. Using the results in sections \ref{a} and \ref{b}, we show that in any case $Y$ must contain a fixed prime divisor $E_Y$ of type $(3,2)$. Finally this yields a fixed prime divisor $E_X\subset X$, of type $(3,2)$, and disjoint from $D$, which implies that $\rho_X=12$ by Prop.~\ref{zero}.

Finally we present some examples in section \ref{examples}.

\smallskip

\noindent{\bf Acknowledgements.} I would like to thank Carolina Araujo for 
her help with 
Example \ref{example1}, and St\'ephane Druel for many useful conversations and comments. 
\section{Notations and preliminaries}\label{notandprel}
\noindent In this section we set the terminology and the notation, about the Minimal Model Program, Mori dream spaces, and families of curves. 
We also recall several preliminary results that we need on Fano $4$-folds, mostly from \cite{eff}, and show some related properties that will be used in the sequel.
\subsection{Preliminaries from the MMP and on Mori dream spaces}\label{notMMP}
\noindent We work over the field of complex numbers.
We refer the reader to \cite{kollarmori} for the terminology related to the Minimal Model Program, to \cite{hukeel} for the definition and properties of Mori dream spaces, and to \cite[\S 2]{eff} for an overview of most of the properties of Mori dream spaces that will be used in this paper.

 Let $X$ be a normal, $\Q$-factorial,  projective Mori dream space.

If $f\colon X\dasharrow Y$ is a rational map, the domain $\dom(f)$ of $f$ is the largest
open subset of $X$ where $f$ is regular.

A {\bf contraction} 
of $X$ is a morphism 
with connected fibers $f\colon X\to Y$
onto a normal projective variety. The exceptional locus $\Exc(f)$ is the set of points of $X$
where $f$ is not an isomorphism. 
If $D$ is a divisor in $X$, we say that $f$ is
{\bf $D$-positive} (respectively {\bf $D$-negative}) 
if $D\cdot C>0$ (respectively
$D\cdot C<0$) for every curve $C$ in $X$ such that
$f(C)=\{pt\}$. When $D=K_X$, we just say {\bf $K$-positive} (or 
{\bf $K$-negative}).

The contraction $f$ is {\bf elementary} if $\rho_X-\rho_Y=1$; as usual, $f$ can be divisorial, small, or of fiber type. When $f$ is elementary, we say that  $f$ is {\bf of type $\mathbf{(a,b)}$} if $\dim\Exc(f)=a$ and
$\dim f(\Exc(f))=b$. We will also use the following terminology.
\begin{definition}\label{typecontraction}
Let $f\colon X\to Y$ be an elementary divisorial contraction. 

We say that $f$ is
 {\bf of type $\mathbf{(n-1,m)^{sm}}$} if $f$ is the blow-up of a smooth, connected, $m$-dimensional subvariety of $Y$, contained in the smooth locus of $Y$. 

We say that $f$ is  {\bf of type $\mathbf{(n-1,0)^Q}$} if $\Exc(f)\cong Q$, $Q\subset\pr^n$ a quadric, with normal bundle $\ma{N}_{\Exc(f)/X}\cong\ma{O}_Q(-1)$, and $f(\Exc(f))$ is a point.
\end{definition}
\noindent Notice that if $f$ is of type $(n-1,m)^{sm}$, then $X$ is smooth if and only if $Y$ is smooth.

We denote by $\mathcal{N}^1(X)$ (respectively $\mathcal{N}_1(X)$) 
 the $\R$-vector space of Cartier divisors (respectively one-cycles)
with
real coefficients, modulo numerical equivalence;
$[D]$ is the numerical equivalence class in $\mathcal{N}^1(X)$ 
 of a
divisor $D$ in $X$, and similarly $[C]\in\N(X)$ for a curve $C\subset X$;
$\equiv$ stands for numerical equivalence.

For any closed subset $Z\subseteq X$, we set
$$\N(Z,X):=i_*(\N(Z))\subseteq\N(X),$$
where $i\colon Z\hookrightarrow X$ is the inclusion. 

For any subset $H\subseteq \N(X)$, we set $H^{\perp}:=
\{\gamma\in\Nu(X)\,|\,h\cdot\gamma=0\text{ for every }h\in H\}$, and
similarly for $H\subseteq \Nu(X)$.
For any divisor $D$ in $X$,
$D^{\perp}:=[D]^{\perp}\subseteq \N(X)$, and similarly for a curve $C\subset X$.

An effective divisor $D$ in $X$ is {\bf movable} if its stable base locus has
codimension at least $2$. A prime divisor is {\bf fixed} if it not movable.

We will consider the following convex polyhedral cones:
$$\Nef(X)\subseteq\Mov(X)\subseteq\Eff(X)\subset\Nu(X)\quad\text{and}\quad
\mov(X)\subseteq\NE(X)\subset\N(X),
$$
where:\\
$\Nef(X)$ is the cone of nef classes,\\
$\Mov(X)$ is the convex cone generated by classes of
movable divisors,\\
$\Eff(X)$ is the convex cone generated by classes of
effective divisors,\\
$\NE(X)$ is the convex cone generated by classes of
effective curves,\\
$\mov(X)$ is the cone of ``moving curves'', namely the convex cone generated by classes of curves moving in a family of curves covering $X$.

An {\bf extremal ray} of $X$ is a one-dimensional
face of $\NE(X)$.
We use greek letters $\sigma,\tau,\eta,$ etc.\ to denote convex
polyhedral cones and their faces in $\N(X)$ or $\Nu(X)$, with the exception of extremal rays of $\NE(X)$, which are denoted with the more customary notation $R$.
We denote by $\langle x_1,\dotsc,x_r \rangle$ the convex cone generated by $x_1,\dotsc,x_r$ in $\N(X)$ or $\Nu(X)$.
If $\sigma$ is a convex polyhedral cone in  $\N(X)$ (respectively in $\Nu(X)$), we denote by $\sigma^{\vee}\subset\Nu(X)$ (respectively $\sigma^{\vee}\subset\N(X)$) its dual cone. 

We have the relations:
$$\NE(X)=\Nef(X)^{\vee}\qquad\text{and}\qquad \mov(X)=\Eff(X)^{\vee}$$
(see \cite[\S 11.4.C and references therein]{lazII}, \cite{carolina}, and \cite[\S 2.16]{eff}).

Let $f\colon X\to Y$ be a contraction. We consider the push-forward of one-cycles $f_*\colon\N(X)\to\N(Y)$, and 
set $\NE(f):=\NE(X)\cap\ker f_*$. We also say that $f$ is the
contraction of $\NE(f)$.
 
Consider an elementary contraction $f\colon X\to Y$ and the extremal
ray $R:=\NE(f)$. We say that $R$ is birational, divisorial,
 small, of type $(a,b)$, or of type $(n-1,m)^{sm}$, 
if $f$ is. We set $\Lo(R):=\Exc(f)$, namely $\Lo(R)$ is the
union of the curves whose class belongs to $R$. 
If $D$ is a
divisor in $X$, we say that
$D\cdot R>0$ if $D\cdot C>0$ for a curve $C$ with $[C]\in R$,
equivalently if $f$ is $D$-positive; similarly for $D\cdot R=0$ and
$D\cdot R<0$.

We refer the reader to \cite{kollarmori} for the definition of the {\bf flip} $g\colon X\dasharrow\w{X}$
of a small elementary elementary contraction $f\colon X\to Y$. We also say that $g$ is the flip of the
small extremal ray $\NE(f)$.  If $D$ is a divisor on $X$ such that $f$ is
$D$-negative (respectively, $D$-positive), we say that $g$ is a 
 {$D$-negative
flip} (respectively, 
$D$-positive flip). Finally, 
 when $D=K_X$, we just say $K$-positive or $K$-negative.

A {\bf MMP for a divisor $D$} in $X$ is a sequence
$$X=X_1\stackrel{\ph_1}\dasharrow X_2\dasharrow\cdots\dasharrow X_{k-1}\stackrel{\ph_{k-1}}{\dasharrow} X_k$$
where every $\ph_i$ is either a $D_i$-negative flip (and $D_{i+1}$ is the transform of $D_i$), or a $D_i$-negative elementary divisorial contraction (and $D_{i+1}=(\ph_i)_*D_i$), and moreover: if $[D]\in\Eff(X)$, then $D_k$ is nef; otherwise $X_k$ has a $D_k$-negative elementary contraction of fiber type.
Let us consider the composition $\ph\colon X\dasharrow X_k$, let $C\subset X_k$ be an irreducible curve such that $C\cap\dom\ph^{-1}\neq\emptyset$, and let $C_X\subset X$ be its transform. Then $D\cdot C_X\geq D_k\cdot C$ (this follows from negativity of contractions, see \cite[Lemma 3.38]{kollarmori}).

A {\bf small $\Q$-factorial modification (SQM)} of $X$ is a normal and
$\Q$-factorial projective variety $\w{X}$, together with a
birational map $\ph\colon X\dasharrow \w{X}$ which is an isomorphism
in codimension $1$. 
Since $X$ is a Mori dream space, $\ph$ can always be factored as a finite sequence of flips.

A {\bf rational contraction}\footnote{In \cite{hukeel} this is also called a ``contracting rational map''.} of $X$  is a rational map $f\colon X\dasharrow Y$ that can be factored as in the following diagram:
\stepcounter{thm}
\begin{equation}\label{f}
\xymatrix{{X}\ar@{-->}[r]_{\ph}\ar@{-->}@/^1pc/[rr]^f&{\w{X}}\ar[r]_{\tilde{f}}&{Y,}
}
\end{equation}
where $\ph$ is a SQM and $\tilde{f}$ is a contraction.
We say that $f$ is {\bf elementary} if $\rho_X-\rho_Y=1$, equivalently if $\tilde{f}$ is elementary. We say that $f$ is {\bf of fiber type} if $\dim Y<\dim X$. 
When $f$ is birational, 
we denote by $\Exc(f)$ the closure in $X$ of the exceptional locus of $f\colon\dom(f)\to Y$. As in the regular case, when $f$ is elementary and birational, $f$ can be {\bf divisorial} (if $\Exc(f)$ is a prime divisor) or {\bf small} (if $\codim\Exc(f)\geq 2$). 

Let $f\colon X\dasharrow Y$ be a rational contraction such that $Y$ is $\Q$-factorial. Then $Y$ is again a Mori dream space, and for every rational contraction $g\colon Y\dasharrow Z$, the composition $g\circ f\colon X\dasharrow Z$ is again a rational contraction, see for instance \cite[Rem.~2.8]{eff}.\label{oggi} These properties will be used throughout the paper.
\begin{remark}[\cite{eff}, Rem.~3.7]\label{Kneg}
Let $X$ be a smooth Fano $4$-fold and $f\colon X\dasharrow Y$ a rational contraction. Then there exists a factorization of $f$ as \eqref{f}, with moreover $\tilde{f}$ $K$-negative.
\end{remark}

 We denote by  $\text{\bf MCD}\mathbf{(X)}$ the collection of the cones $f^*(\Nef(Y))$ for all rational contractions  $f\colon X\dasharrow Y$. This is a fan in $\Nu(X)$, supported on the movable cone $\Mov(X)$, called the 
{\bf Mori chamber decomposition} of $\Mov(X)$. The
 cones in $\MCD(X)$ are in bijection with rational contractions of $X$.

The following result will be very useful.
\begin{lemma}[\cite{eff}, Lemma 2.21 and Prop.~2.22]\label{movf}
Let $X$ be a projective, normal, and $\Q$-factorial Mori dream space, $f\colon X\dasharrow Y$ a rational contraction, and $\sigma$ the minimal face of $\Eff(X)$ containing $f^*(\Nef(Y))$.
Consider a factorization of $f$ as \eqref{f},
and let $F\subset \w{X}$ be a general fiber of $\tilde{f}$.
Then 
$$\rho_F\geq \dim\N(F,\w{X})=\rho_X-\dim\sigma=\dim\bigl(\ker\tilde{f}_*\cap \mov(X)\bigr).$$ 
\end{lemma}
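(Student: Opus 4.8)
The plan is to work on the SQM $\w X$ throughout and to prove the chain of (in)equalities in three independent steps: the first inequality is formal, the identification with $\ker\tilde f_*\cap\mov(X)$ is geometric (the real content), and the equality $\rho_X-\dim\sigma=\dim(\ker\tilde f_*\cap\mov(X))$ is convex-geometric duality. Since $\ph$ is an isomorphism in codimension one, it induces identifications $\Nu(X)\cong\Nu(\w X)$ and $\N(X)\cong\N(\w X)$ carrying $\Eff(X)$ onto $\Eff(\w X)$ and $\mov(X)$ onto $\mov(\w X)$, with $\rho_X=\rho_{\w X}$; via these we regard $\sigma$ and $\ker\tilde f_*$ as living on $\w X$ and argue there. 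The inequality $\rho_F\geq\dim\N(F,\w X)$ is then immediate, since $\N(F,\w X)=i_*(\N(F))$ is the image of $i_*\colon\N(F)\to\N(\w X)$, hence of dimension at most $\dim\N(F)=\rho_F$.

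For the middle identification I fix an ample divisor $A$ on $Y$ and show $\N(F,\w X)=\ker\tilde f_*\cap\mov(X)$ (it suffices to match their linear spans). For the inclusion ``$\subseteq$'', note that $\tilde f^*A$ lies in the relative interior of $\tilde f^*\Nef(Y)=f^*(\Nef(Y))$, hence (the minimal face of $\Eff$ containing a subcone being the face whose relative interior meets that subcone) in the relative interior of $\sigma$. Thus for $D$ in the relative interior of $\sigma$ and $\epsilon>0$ small, both $\tilde f^*A\pm\epsilon D$ lie in $\sigma\subseteq\Eff(\w X)$; restricting to a general fiber $F$, which is contained in the support of none of these divisors, and using $\tilde f^*A|_F\equiv 0$, we get that $\pm\epsilon\,D|_F$ are both pseudoeffective on $F$, so $D|_F\equiv 0$ by pointedness of $\Eff(F)$. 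Hence $D\cdot C=0$ for every curve $C\subset F$, and by linearity the span of $\sigma$ is orthogonal to $\N(F,\w X)$, i.e. $\N(F,\w X)\subseteq\sigma^{\perp}$. For the inclusion ``$\supseteq$'', take $\gamma\in\ker\tilde f_*\cap\mov(X)$ and write $\gamma=\sum_j\lambda_j[\Gamma_j]$ with $\lambda_j>0$ and $\Gamma_j$ families of curves covering $\w X$. Pushing forward gives $0=\tilde f_*\gamma=\sum_j\lambda_j\,\tilde f_*[\Gamma_j]$ with each $\tilde f_*[\Gamma_j]\in\overline{\NE}(Y)$; since $\overline{\NE}(Y)$ is pointed, each $\tilde f_*[\Gamma_j]=0$, so the general member of $\Gamma_j$ is contracted by $\tilde f$ and, passing through a general point, is contained in a general fiber $F$. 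Therefore $[\Gamma_j]\in\N(F,\w X)$ and $\gamma\in\N(F,\w X)$.

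It remains to prove $\dim(\ker\tilde f_*\cap\mov(X))=\rho_X-\dim\sigma$, which is pure convex geometry using $\mov(X)=\Eff(X)^{\vee}$. By the projection formula $\ker\tilde f_*=(\tilde f^*\Nu(Y))^{\perp}$, and since $\tilde f^*\Nu(Y)$ is contained in the span of $\sigma$ we have $\sigma^{\perp}\subseteq\ker\tilde f_*$. Conversely, if $\gamma\in\ker\tilde f_*\cap\mov(X)$ then $\gamma\geq 0$ on $\Eff(X)\supseteq\sigma$ and $\tilde f^*A\cdot\gamma=A\cdot\tilde f_*\gamma=0$ with $\tilde f^*A$ in the relative interior of $\sigma$, so $\gamma$ vanishes on the whole span of $\sigma$; hence $\ker\tilde f_*\cap\mov(X)=\sigma^{\perp}\cap\mov(X)$. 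As $X$ is a Mori dream space, $\Eff(X)$ is a full-dimensional pointed rational polyhedral cone, so by the standard duality of faces between $\Eff(X)$ and $\mov(X)=\Eff(X)^{\vee}$ one has $\dim(\sigma^{\perp}\cap\mov(X))=\rho_X-\dim\sigma$. Combining the three steps yields $\rho_F\geq\dim\N(F,\w X)=\rho_X-\dim\sigma=\dim(\ker\tilde f_*\cap\mov(X))$.

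The formal step and the duality step are routine; the main obstacle is the reverse inclusion ``$\supseteq$'' in the middle identification, namely seeing that \emph{all} moving curve classes in $\ker\tilde f_*$ are realized on a \emph{single} general fiber. The decisive point is that pushing a positive combination of covering families into the pointed cone $\overline{\NE}(Y)$ and obtaining $0$ forces each family to be $\tilde f$-contracted, so a general member lies in a general fiber. The only subtlety in the forward inclusion is the genericity of $F$ needed to guarantee that the restrictions of the effective classes $\tilde f^*A\pm\epsilon D$ remain pseudoeffective.
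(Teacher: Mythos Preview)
The paper does not give its own proof of this lemma; it is quoted from \cite{eff}, Lemma~2.21 and Prop.~2.22. Your argument is correct and is essentially the natural one: sandwich $\N(F,\w X)$ between $\ker\tilde f_*\cap\mov(X)$ (from below, via covering families contracted by $\tilde f$) and $\sigma^{\perp}$ (from above, via restriction of effective classes to a general fiber), then use face duality for the full-dimensional polyhedral cone $\Eff(X)$ and its dual $\mov(X)$ to see that these two bounds have the same dimension $\rho_X-\dim\sigma$.

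One small point is worth making explicit. In the reverse inclusion you decompose a single $\gamma\in\ker\tilde f_*\cap\mov(X)$ into covering families $\Gamma_j$ and argue that their general members lie in a general fiber $F$; but you need this for \emph{all} such $\gamma$ with a \emph{single} choice of $F$. This is fine because $\ker\tilde f_*\cap\mov(X)$ is polyhedral (intersection of a linear subspace with the polyhedral cone $\mov(X)=\Eff(X)^{\vee}$), hence spanned by finitely many classes, each decomposing into finitely many covering families; a general $F$ then works for all of them at once. Likewise, in the forward inclusion the restriction of a class in $\Eff(\w X)$ to $F$ is pseudoeffective because $\Eff(\w X)$ is generated by finitely many effective divisors, none containing a general $F$.
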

\subsection{Families of curves}\label{families}
\noindent We refer the reader to \cite{kollar} for generalities on families of curves.

Let $X$ be a smooth projective variety. 
By a family of  curves in $X$ we mean an irreducible closed subset $V\subset\text{Chow}(X)$ such that for general $v\in V$, the corresponding cycle $C_v$ of $X$ is an integral curve; then  for every $v\in V$ the cycle $C_v$ is a connected curve in $X$. The numerical equivalence class $[C_v]\in\N(X)$ does not depend on
$v\in V$, and we denote it by $[V]$.
The anticanonical degree of $V$ is $-K_X\cdot[V]$. We say that $V$ is a family of rational curves if for general $v\in V$ the curve $C_v$ is rational.
We say that $V$ is maximal if it is an irreducible component of $\text{Chow}(X)$.

We denote by $\Lo(V)$ the union of the curves $C_v$ for $v\in V$, and
we say that $V$ is covering if $\Lo(V)=X$. 
 If $Z\subset X$ is a closed subset, we set $V_Z:=\{v\in V\,|\,C_v\cap Z\neq\emptyset\}$; this is a
closed subset of $V$. 
We will need the following.
\begin{lemma}[\cite{kollar}, Prop.~II.4.19, see also \cite{occhettaGM}, Lemma 4.1] \label{kol}
Let $V$ be a family of rational curves, and $Z\subseteq X$ a closed subset such that  $C_v$ is an integral curve for every $v\in V_Z$.
Then $\N(\Lo(V_Z),X)=\N(Z,X)+\R[V]$.
\end{lemma}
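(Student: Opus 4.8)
The plan is to prove Lemma \ref{kol} by a Chow-variety deformation argument, showing that any curve $C_v$ meeting $Z$ can be numerically accounted for by cycles supported on $Z$ together with multiples of the fixed class $[V]$. First I would observe that the inclusion $\N(Z,X)+\R[V]\subseteq\N(\Lo(V_Z),X)$ is essentially automatic: every $C_v$ with $v\in V_Z$ lies in $\Lo(V_Z)$ and has class $[V]$, and $Z\subseteq\Lo(V_Z)$ (since each such $C_v$ meets $Z$), so both $\N(Z,X)$ and $\R[V]$ push forward into $\N(\Lo(V_Z),X)$. The content is the reverse inclusion $\N(\Lo(V_Z),X)\subseteq\N(Z,X)+\R[V]$.

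For the reverse inclusion, the key geometric input is that $\Lo(V_Z)$ is swept out by the curves $C_v$, $v\in V_Z$, and that $V_Z$ is itself a closed (hence projective) subset of the Chow variety. I would argue that it suffices to show that for a one-cycle $\gamma$ supported on $\Lo(V_Z)$, its class lies in $\N(Z,X)+\R[V]$; by linearity and because $\N(\Lo(V_Z),X)$ is spanned by classes of irreducible curves contained in $\Lo(V_Z)$, I may assume $\gamma$ is an irreducible curve $B\subset\Lo(V_Z)$. The standard rational-connectedness/gluing technique of \cite[Prop.~II.4.19]{kollar} produces, for such $B$, a connected chain of curves from the family that links $B$ to $Z$: since the $C_v$ are rational and cover $\Lo(V_Z)$, any point of $\Lo(V_Z)$ can be connected to $Z$ through a sequence of these rational curves, and the resulting cycle-theoretic relation expresses $[B]$ modulo multiples of $[V]$ in terms of a class carried by $Z$. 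The hypothesis that $C_v$ is an integral curve for \emph{every} $v\in V_Z$ (not merely the general one) is precisely what guarantees that these deformation cycles remain honest irreducible curves of class $[V]$, with no drop in dimension or breaking into extraneous components, so that the accounting closes up cleanly.

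I would then invoke the cited result of Koll\'ar directly to conclude, since Lemma \ref{kol} is stated as being exactly \cite[Prop.~II.4.19]{kollar} (and \cite[Lemma 4.1]{occhettaGM}): the proof amounts to translating that proposition, phrased there in terms of the numerical class generated by a covering family and a fixed subset, into the notation $\N(\Lo(V_Z),X)=\N(Z,X)+\R[V]$ used here. Concretely, Koll\'ar's proposition shows that the $\R$-span of classes of curves in $\Lo(V_Z)$ is generated by $[V]$ together with the span of classes of curves meeting $Z$ but \emph{constrained} to $Z$; the identification $\N(\Lo(V_Z),X)=\N(Z,X)+\R[V]$ is the homological reformulation.

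The main obstacle, and the only genuinely substantive point, is the integrality hypothesis: one must ensure that every member $C_v$ for $v\in V_Z$ is integral rather than merely reduced or reducible, so that the connecting chains have controlled numerical class $[V]$ and the push-forward $i_*\colon\N(\Lo(V_Z))\to\N(X)$ behaves as expected. If some $C_v$ degenerated into several components of distinct classes, the clean identity would fail, and one would only obtain an inclusion into a larger span. Since this hypothesis is assumed in the statement, the argument reduces to the verbatim application of \cite[Prop.~II.4.19]{kollar}, and no further work is required beyond the notational translation.
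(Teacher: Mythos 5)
The paper gives no proof of this lemma at all: it is quoted verbatim from Koll\'ar's Prop.~II.4.19 (see also Occhetta et al.), which is exactly what you ultimately do, so your proposal takes the same approach as the paper and is acceptable. One small caveat: the mechanism behind II.4.19 is not really a chaining/gluing argument but rather the observation that any irreducible curve $B\subset\Lo(V_Z)$ lies on the surface swept out by the family over a curve $T\subseteq V_Z$, whose group of one-cycles is spanned by the fiber class $[V]$ together with a multisection that can be taken inside $Z$ (this is where the integrality of every $C_v$ with $v\in V_Z$ is used); since you defer to the citation in the end, this imprecision does not affect correctness.
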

 \subsection{Flips and SQM's of a smooth $4$-fold}\label{flips}
\noindent Let $X$ be a smooth projective $4$-fold.
An {\bf exceptional plane} in $X$ is a closed subset $L\subset X$
such that $L\cong\pr^2$ and
$\mathcal{N}_{L/X}\cong\mathcal{O}_{\pr^2}(-1)^{\oplus 2}$. 
We denote by
$C_L\subset L$ a line; notice that $-K_X\cdot C_L=1$.
An
{\bf exceptional line} in $X$ is a curve $\ell\cong\pr^1$ with
$\mathcal{N}_{\ell/X}\cong\mathcal{O}_{\pr^1}(-1)^{\oplus 3}$; notice
that $K_X\cdot \ell=1$.
\begin{thm}[\cite{kawsmall}]\label{lorenzo}
Let $X$ be a smooth projective $4$-fold and $R$ a small, $K$-negative extremal ray of $\overline{\NE}(X)$. Then $\Lo(R)$ is a disjoint union of exceptional planes.
\end{thm}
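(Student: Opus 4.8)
The plan is to let $f\colon X\to Y$ be the contraction of the ray $R$, so that $\Lo(R)=\Exc(f)$, and to analyze the positive-dimensional fibers of $f$. Since $R$ is $K$-negative, its length $\ell(R):=\min\{-K_X\cdot C\mid [C]\in R,\ C\text{ rational}\}$ is a positive integer, hence $\ell(R)\geq 1$; since $R$ is small, $\Exc(f)$ has codimension at least $2$, so $\dim\Exc(f)\leq 2$. The main input is the Ionescu--Wi\'sniewski inequality: for any irreducible component $W$ of a fiber of $f$ over a point of $f(\Exc(f))$ one has $\dim W+\dim\Exc(f)\geq \dim X+\ell(R)-1=3+\ell(R)$.

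First I would use this inequality to pin down all the numerical invariants. If $f(\Exc(f))$ had positive dimension, a general fiber-component $W$ would satisfy $\dim W\leq \dim\Exc(f)-1\leq 1$, and the inequality would give $1+2\geq 3+\ell(R)$, forcing $\ell(R)\leq 0$, a contradiction; the case $\dim\Exc(f)=1$ is excluded in the same way. Hence $f(\Exc(f))$ is a finite set, the fiber over each of its points is a $2$-dimensional $W$ contracted to that point, and the inequality $2+2\geq 3+\ell(R)$ then forces $\ell(R)=1$. Since distinct such fibers lie over distinct points, they are automatically disjoint, so it remains only to show that each $W$ is an exceptional plane.

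The heart of the matter, and the step I expect to be the main obstacle, is to prove that such a $W$ is smooth and isomorphic to $\pr^2$. Here I would study the covering family $V$ of rational curves of anticanonical degree $\ell(R)=1$ contained in $W$: through every point of $W$ there is such a curve $C$, and since $-K_X\cdot C=1$ is minimal, bend-and-break prevents these curves from degenerating, so $V$ is an unsplit covering family on $W$. A curve $C\in V$ has $\deg\ma{N}_{C/X}=-K_X\cdot C-2=-1$; moreover no deformation of $C$ can leave $W$, because $\Exc(f)=\Lo(R)$ and $f$ is small, which forbids a non-negative normal summand pointing out of $W$. Combining this rigidity with the deformation count identifies $\ma{N}_{C/X}\cong\ma{O}(1)\oplus\ma{O}(-1)^{\oplus 2}$, the positive summand corresponding to movement inside $W$. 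Then $W$ is smooth along the general curve of $V$ and carries a covering family of rational curves $C$ with $C^2=1$ and $-K_W\cdot C=3$ (by adjunction, using $\det\ma{N}_{W/X}\cdot C=-2$); since all curves of $W$ are numerically proportional in $X$, the surface classification yields $W\cong\pr^2$ with the $C$ being lines. Establishing the global smoothness of $W$ and excluding pathological fibers is exactly the delicate point that relies on the minimality of $\ell(R)$ and the local structure of the small contraction.

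Finally, to compute the normal bundle I would restrict $\ma{N}_{W/X}$ to a line $C\subset W\cong\pr^2$. From the sequence $0\to \ma{N}_{C/W}\to \ma{N}_{C/X}\to \ma{N}_{W/X}|_C\to 0$ with $\ma{N}_{C/W}\cong\ma{O}(1)$ and $\ma{N}_{C/X}\cong\ma{O}(1)\oplus\ma{O}(-1)^{\oplus 2}$, and using again that $C$ cannot deform out of $W$ (which rules out a trivial or positive quotient summand), I obtain $\ma{N}_{W/X}|_C\cong\ma{O}(-1)^{\oplus 2}$ for every line $C$. A rank-two bundle on $\pr^2$ that restricts to $\ma{O}(-1)^{\oplus 2}$ on every line is uniform of splitting type $(-1,-1)$, hence isomorphic to $\ma{O}_{\pr^2}(-1)^{\oplus 2}$. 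Thus each $W$ is an exceptional plane, and $\Lo(R)$ is the disjoint union of these planes.
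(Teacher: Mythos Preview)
The paper does not give a proof of this statement: it is quoted as a result of Kawamata \cite{kawsmall} and used as a black box throughout. So there is no ``paper's own proof'' to compare against; your proposal should be measured against Kawamata's original argument.

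Your outline captures the correct architecture. The Ionescu--Wi\'sniewski inequality does force $\ell(R)=1$, $\dim\Exc(f)=2$, and $f(\Exc(f))$ finite, exactly as you say. The normal-bundle step at the end is also right: once one knows $W\cong\pr^2$ and that lines have $\ma{N}_{C/X}\cong\ma{O}(1)\oplus\ma{O}(-1)^{\oplus 2}$, uniformity of $\ma{N}_{W/X}$ of splitting type $(-1,-1)$ gives $\ma{O}_{\pr^2}(-1)^{\oplus 2}$.

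Where your sketch is thin is precisely where you flag it: showing that each $2$-dimensional fiber $W$ is a smooth $\pr^2$. You argue via an unsplit family of degree-$1$ rational curves and adjunction to get $C^2=1$, $-K_W\cdot C=3$, and then invoke ``surface classification''. The gap is that a priori $W$ is only a projective surface, possibly reducible, non-reduced, or non-normal, and adjunction and intersection computations on $W$ are not available until you know $W$ is at least Cohen--Macaulay and irreducible. Kawamata's actual proof does not proceed this way: he first constructs the flip and analyzes the exceptional locus via vanishing theorems (Kawamata--Viehweg) applied to the conormal sheaf, extracting the structure of $W$ from cohomological constraints rather than from deformation theory on the possibly singular $W$. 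Your deformation-theoretic route can be made to work, but it requires an independent argument (e.g.\ via the classification of $K$-negative two-dimensional fibers as in Andreatta--Wi\'sniewski \cite{AWaview}) to control the scheme structure and irreducibility of $W$ before you can run adjunction; as written, that step is a genuine hole.
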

\begin{thm}[\cite{kawsmall} and \cite{eff}, Rem.~3.6]\label{SQM0}
Let $X$ be a smooth Fano $4$-fold and $\ph\colon
X\dasharrow \w{X}$ a SQM. 
 Then 
$\w{X}$ is smooth,
$X\smallsetminus\dom(\ph)$ is a disjoint union of
exceptional planes $L_1,\dotsc,L_s\subset X$, and  $\w{X}\smallsetminus\dom(\ph^{-1})$
is a disjoint union of exceptional lines $\ell_1,\dotsc,\ell_s\subset\w{X}$; moreover
$\ph$ factors as 
$$
\xymatrix{
&\widehat{X}\ar[dl]_f\ar[dr]^g&\\
X\ar@{-->}[rr]^{\ph}&&{\w{X}}
}$$
where $f$ is the blow-up of  $L_1\cup\cdots\cup L_s$,
 and $g$ is the blow-up of  $\ell_1\cup\cdots\cup \ell_s$.
\end{thm}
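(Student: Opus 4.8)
The plan is to reduce the global statement to the local structure of a single $K$-negative flip of a smooth $4$-fold, which is Kawamata's theorem \cite{kawsmall}, and then to control the composition using the Fano hypothesis on $X$.

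First I would factor $\ph$ into flips and, crucially, arrange that every flip is $K$-negative. Since $X$ is Fano it is a Mori dream space, so $\ph$ factors as a finite sequence of flips $X=X_0\dasharrow X_1\dasharrow\cdots\dasharrow X_k=\w{X}$. To make the factorization $K$-negative, fix an ample divisor $H$ on $\w{X}$ and set $A:=\ph^{-1}_*H\in\Mov(X)$, so that $A$ becomes nef precisely on the SQM $\w{X}$; then run the $A$-MMP with scaling of the ample class $-K_X$, that is, follow the segment joining $-K_X$ and $A$ inside $\Mov(X)$. At the $i$-th step one flips the small extremal ray $R_i$ of $X_i$ cut out by the wall the segment crosses. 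Since the segment lies in $\Nef(X_i)$ on the subinterval corresponding to $X_i$ and extends to the endpoint $-K_X$, linearity of the intersection pairing (as a function of the segment parameter) forces $-K_X\cdot R_i>0$; as $X_i$ is isomorphic to $X$ in codimension one, the transform of $-K_X$ is $-K_{X_i}$ and the pairing is preserved, so $K_{X_i}\cdot R_i<0$. Thus every flip is $K$-negative.

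Next I would carry out the local analysis step by step, starting from the smooth $4$-fold $X_0=X$. At each stage $R_i$ is a small $K$-negative extremal ray of the smooth $4$-fold $X_i$, so by Theorem \ref{lorenzo} the locus $\Lo(R_i)$ is a disjoint union of exceptional planes $L\cong\pr^2$ with $\ma N_{L/X_i}\cong\ma O_{\pr^2}(-1)^{\oplus 2}$. Blowing them up produces an exceptional divisor $\pr(\ma O_{\pr^2}(-1)^{\oplus 2})\cong\pr^2\times\pr^1$, whose other projection (to $\pr^1$) contracts the $\pr^2$-fibers; this realizes the flip of $R_i$ as a blow-up of the planes followed by a blow-down to disjoint exceptional lines $\ell\cong\pr^1$ with $\ma N_{\ell/X_{i+1}}\cong\ma O_{\pr^1}(-1)^{\oplus 3}$, and in particular exhibits $X_{i+1}$ as smooth. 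By induction all the $X_i$, and hence $\w{X}$, are smooth, which gives the first assertion.

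Finally I would glue the individual resolutions into a single diagram. For one flip the factorization is exactly the two contractions of $\pr^2\times\pr^1$; the remaining task is to transport the successive flipping centers to disjoint smooth centers in the two fixed ends $X$ and $\w{X}$, so that $f\colon\wi{X}\to X$ is the simultaneous blow-up of all the planes $L_1,\dotsc,L_s$ (which together form $X\smallsetminus\dom(\ph)$) and $g\colon\wi{X}\to\w{X}$ the simultaneous blow-up of all the lines $\ell_1,\dotsc,\ell_s$ (which together form $\w{X}\smallsetminus\dom(\ph^{-1})$). I expect this gluing to be the main obstacle: a plane flipped at a stage $i\geq 1$ lives in $X_i$ and could a priori meet the loci modified by the earlier flips, so one must verify that its transform to $X$ is still a \emph{disjoint} exceptional plane, and dually for the lines in $\w{X}$. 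The crux is therefore the disjointness and smoothness of the centers in the two ends; this is precisely the content of \cite[Rem.~3.6]{eff}, and granting it a simple count shows that the planes and the lines are equal in number $s$ and that $f$ and $g$ assemble into the asserted common resolution $\wi{X}$.
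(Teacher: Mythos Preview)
The paper does not supply a proof of this theorem; it is quoted as a known result, with the local description of a single $K$-negative flip due to Kawamata \cite{kawsmall} and the passage to an arbitrary SQM carried out in \cite[Rem.~3.6]{eff}. Your sketch is a faithful reconstruction of that argument: factor the SQM as a sequence of $K$-negative flips by running a MMP with scaling of the ample class $-K_X$, apply Kawamata's theorem at each step to get smoothness of every $X_i$ and the plane/line description of each flip, and then verify that the successive flipping loci are mutually disjoint so that they assemble into disjoint exceptional planes in $X$ and disjoint exceptional lines in $\w{X}$.

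Your identification of the disjointness of the centers as the crux is exactly right, and deferring to \cite[Rem.~3.6]{eff} is appropriate here. For completeness, the mechanism behind that disjointness is anticanonical bookkeeping: each exceptional line $\ell$ created by an earlier flip has $-K_{X_i}\cdot\ell=-1$, whereas any line $C_L$ inside an exceptional plane $L$ to be flipped next has $-K_{X_i}\cdot C_L=1$; if $\ell$ met $L$, passing a line in $L$ through the intersection point and transporting it back to $X$ would produce a curve of nonpositive anticanonical degree on the Fano $X$, a contradiction. This is precisely the computation underlying Lemma~\ref{SQM}(1)--(3) in the present paper.
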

\begin{lemma}[\cite{eff}, Rem.~3.6]\label{SQM}
Let $X$ be a smooth Fano $4$-fold and $\ph\colon
X\dasharrow \w{X}$ a SQM.
\begin{enumerate}[(1)]
\item 
 If ${C}\subset\w{X}$ is an irreducible curve such that
$C\cap\dom(\ph^{-1})\neq\emptyset$, 
 $n$ is the number of points of ${C}$ 
belonging to an exceptional line, and $C_X\subset X$ is the transform of $C$,
we have
$$-K_{\w{X}}\cdot C\geq -K_X\cdot C_X+n\geq 1+n\geq 1.$$
\item If ${C}\subset\w{X}$ is an irreducible curve such that 
  $-K_{\w{X}}\cdot C\leq 0$, then $C$
  is an exceptional line and  $-K_{\w{X}}\cdot C=-1$.
\item Let $T\subseteq\w{X}$ be the union of integral curves of anticanonical degree $1$. Then $T\subset\dom(\ph^{-1})$, and $T$ cannot intersect exceptional lines.
\end{enumerate}
\end{lemma}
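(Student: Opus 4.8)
The plan is to run everything off the explicit factorization of the SQM provided by Theorem \ref{SQM0}. Write $f\colon\wi{X}\to X$ and $g\colon\wi{X}\to\w{X}$ for the two blow-ups, so that $\ph=g\circ f^{-1}$, and let $E=E_1\cup\cdots\cup E_s\subset\wi{X}$ be their common exceptional divisor (each $E_i\cong\pr^2\times\pr^1$), contracted by $f$ to the plane $L_i$ and by $g$ to the line $\ell_i$. Since $f$ is the blow-up of the codimension-two centers $L_i$ and $g$ is the blow-up of the codimension-three centers $\ell_j$, the discrepancy formulas read $K_{\wi{X}}=f^*K_X+E$ and $K_{\wi{X}}=g^*K_{\w{X}}+2E$; subtracting gives the key identity $g^*(-K_{\w{X}})=f^*(-K_X)+E$.

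For (1), let $\wi{C}\subset\wi{X}$ be the strict transform of $C$ under $g$, so that $g_*\wi{C}=C$ and $f_*\wi{C}=C_X$. Because $C\cap\dom(\ph^{-1})\neq\emptyset$, the curve $C$ is not contained in $\ell_1\cup\cdots\cup\ell_s$, hence $\wi{C}\not\subseteq E$; in particular $C_X$ is again a curve. Intersecting the identity above with $\wi{C}$ and using the projection formula twice, I get $-K_{\w{X}}\cdot C=-K_X\cdot C_X+E\cdot\wi{C}$. The first term is a positive integer, since $-K_X$ is ample and $C_X$ is a curve, so $-K_X\cdot C_X\geq 1$. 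For the second term I will argue that $E\cdot\wi{C}\geq n$: each of the $n$ points of $C$ lying on some $\ell_j$ has nonempty preimage on $\wi{C}$ contained in $E$ (as $g(E)=\bigcup_j\ell_j$ and $g|_{\wi{C}}$ is surjective onto $C$), these preimages are distinct for distinct points, and each contributes at least $1$ to the intersection number because $\wi{C}\not\subseteq E$. Combining the three estimates yields the chain $-K_{\w{X}}\cdot C\geq -K_X\cdot C_X+n\geq 1+n\geq 1$.

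Part (2) is then immediate: if $-K_{\w{X}}\cdot C\leq 0$, then by (1) we cannot have $C\cap\dom(\ph^{-1})\neq\emptyset$, so $C\subseteq\w{X}\smallsetminus\dom(\ph^{-1})=\ell_1\cup\cdots\cup\ell_s$; as the $\ell_j$ are disjoint and irreducible, $C$ equals some $\ell_j$, and by the definition of an exceptional line $-K_{\w{X}}\cdot C=-1$. For part (3), let $C$ be an integral curve with $-K_{\w{X}}\cdot C=1$. It cannot be an exceptional line (those have anticanonical degree $-1$), so by (2) we must have $C\cap\dom(\ph^{-1})\neq\emptyset$; applying (1) gives $1=-K_{\w{X}}\cdot C\geq 1+n$, forcing $n=0$. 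Thus $C$ meets no exceptional line, i.e.\ $C\subseteq\dom(\ph^{-1})$, which is exactly the assertion for every component of $T$.

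The only step needing genuine care is the inequality $E\cdot\wi{C}\geq n$ in (1): one must check that the local intersection multiplicity of $\wi{C}$ with $E$ at each point over an $\ell_j$ is at least one, which uses precisely that $\wi{C}$ is a strict transform not contained in $E$. Everything else — the discrepancy identity and the two applications of the projection formula — is formal once Theorem \ref{SQM0} supplies the blow-up description and identifies the single exceptional divisor $E$ with its two discrepancies $1$ and $2$.
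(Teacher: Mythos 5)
Your proof is correct and follows essentially the route the paper intends: the statement is only cited from \cite{eff}, Rem.~3.6, and the argument there is exactly this computation, based on the factorization $X\stackrel{f}{\leftarrow}\widehat{X}\stackrel{g}{\to}\w{X}$ of Th.~\ref{SQM0} together with the discrepancy identity $g^*(-K_{\w{X}})=f^*(-K_X)+E$ and the estimate $E\cdot\wi{C}\geq n$ for the strict transform $\wi{C}\not\subseteq E$. All the delicate points (that $E$ is the common exceptional divisor with discrepancies $1$ and $2$, that $f$ does not contract $\wi{C}$, and that part (3) uses the contrapositive established inside the proof of (2)) are handled correctly.
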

\begin{remark}\label{SQM2}
Let $X$ be a smooth Fano $4$-fold and $\ph\colon
X\dasharrow \w{X}$ a SQM. If $L\subset X$ is an exceptional plane, then either $L\cap\dom(\ph)=\emptyset$, or $L\smallsetminus\dom(\ph)$ is a finite set (possibly empty).
\end{remark}
\begin{proof}
By Th.~\ref{SQM0}, the indeterminacy locus of $\ph$ is a disjoint union of  exceptional planes $L_1,\dotsc,L_s$.
Let $\w{B}$ be an ample divisor in $\w{X}$, and $B$ its transform in $X$, so that $\ph\colon X\dasharrow\w{X}$ is a MMP for the divisor $B$.
We have $B\cdot C_{L_i}<0$ for every $i=1,\dotsc,s$.

If $L\subset X$ is an exceptional plane different from $L_1,\dotsc,L_s$, then we can choose a line $C_L\subset L$ such that $C_L\cap\dom(\ph)\neq\emptyset$. If $\w{C}_L\subset\w{X}$ is the transform of $C_L$, we have $B\cdot C_L\geq\w{B}\cdot\w{C}_L >0$, thus the class $[C_L]$ cannot be proportional to $[C_{L_i}]$ for every $i=1,\dotsc,s$. This implies that $\dim(L\cap L_i)\leq 0$ for every $i=1,\dotsc,s$.
\end{proof}
\begin{remark}\label{movable}
Let $X$ be a smooth Fano $4$-fold, $C\subset X$ an irreducible curve, and $B$ a movable divisor in $X$ such that  $B\cdot C<0$. 
Then $C$ is contained in an exceptional plane.
\end{remark}
\begin{proof}
Since $B$ is movable,
by running a MMP for $B$ we get a SQM $\ph\colon X\dasharrow \w{X}$ such that the transform $\w{B}$ of $B$ in $\w{X}$ is nef. By Th.~\ref{SQM0}, the indeterminacy locus of $\ph$ is a disjoint union of finitely many exceptional planes. If $C$ intersects $\dom\ph$, let $\w{C}\subset \w{X}$ be its transform. Then we have $\w{B}\cdot \w{C}\leq B\cdot C<0$, a contradiction. Thus $C$ is contained in an exceptional plane.
\end{proof}
\begin{lemma}\label{dimensions}
Let $X$ be a smooth Fano $4$-fold and $\ph\colon X\dasharrow\w{X}$ a SQM. Let $Z\subseteq X$ and $\w{Z}\subseteq\w{X}$ be closed subsets such that $Z$ contains the indeterminacy locus of $\ph$, $\w{Z}$ contains the indeterminacy locus of $\ph^{-1}$, and $\ph(Z\cap\dom\ph)=\w{Z}\cap\dom(\ph^{-1})$.
Then $\dim\N(Z,X)=\dim\N(\w{Z},\w{X})$.
\end{lemma}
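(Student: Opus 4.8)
The plan is to lift the entire configuration to the common resolution $\wi X$ furnished by Theorem \ref{SQM0}, and to compare $\N(Z,X)$ and $\N(\w Z,\w X)$ by routing both through a single intermediate space $\N(W,\wi X)$. First I would invoke Theorem \ref{SQM0} to factor $\ph$ as $X\xleftarrow{f}\wi X\xrightarrow{g}\w X$, where $f$ is the blow-up of the disjoint exceptional planes $L_1,\dotsc,L_s$ forming $X\smallsetminus\dom(\ph)$, and $g$ is the blow-up of the disjoint exceptional lines $\ell_1,\dotsc,\ell_s$ forming $\w X\smallsetminus\dom(\ph^{-1})$. Since $f$ is an isomorphism over $\dom(\ph)$ and $g$ over $\dom(\ph^{-1})$, and these open sets are identified by $\ph=g\circ f^{-1}$, a short set-theoretic argument shows $\Exc(f)=\Exc(g)=:E$ and gives an identification $\wi X\smallsetminus E\cong\dom(\ph)\cong\dom(\ph^{-1})$ compatible with $f$, $g$, and $\ph$.

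Next I would set $W:=f^{-1}(Z)$ and verify that $W=g^{-1}(\w Z)$. Because $Z\supseteq L_1\cup\dotsb\cup L_s$ and $\w Z\supseteq\ell_1\cup\dotsb\cup\ell_s$, both $f^{-1}(Z)$ and $g^{-1}(\w Z)$ contain $E$; away from $E$, under the identification $\wi X\smallsetminus E\cong\dom(\ph)\cong\dom(\ph^{-1})$, the hypothesis $\ph(Z\cap\dom\ph)=\w Z\cap\dom(\ph^{-1})$ says precisely that $f^{-1}(Z)\smallsetminus E=g^{-1}(\w Z)\smallsetminus E$. Hence $W$ is one well-defined closed subset of $\wi X$, and $E\subseteq W$.

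The core computation is then the pair of equalities $\dim\N(W,\wi X)=\dim\N(Z,X)+s$ and $\dim\N(W,\wi X)=\dim\N(\w Z,\w X)+s$; subtracting them proves the lemma. For the first I would examine $f_*\colon\N(W,\wi X)\to\N(Z,X)$ (well defined since $f(W)=Z$). It is surjective, as every curve in $Z$ is the image of its strict transform, or, if it lies in some $L_i$, of a lift inside $E$, all of which sit in $W=f^{-1}(Z)$. Its kernel is $\N(W,\wi X)\cap\ker(f_*\colon\N(\wi X)\to\N(X))$; since this latter kernel is generated by the $s$ classes of the $\pr^1$-fibers of $E\to L_1\cup\dotsb\cup L_s$, and these curves lie in $E\subseteq W$, the kernel is exactly that $s$-dimensional space, and rank–nullity yields the claimed equality. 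The argument for $g$ is identical, now using the $s$ classes of lines in the $\pr^2$-fibers of $E\to\ell_1\cup\dotsb\cup\ell_s$ to generate $\ker g_*$.

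The main obstacle I anticipate is bookkeeping rather than conceptual: checking cleanly that $\ker f_*$ and $\ker g_*$ are each spanned by the relevant fiber classes and that these classes genuinely lie in $\N(W,\wi X)$, i.e. that all the exceptional contributions to $\N(\wi X)$ come from curves contained in $E\subseteq W$. This is the standard behaviour of $\N_1$ under the blow-up of disjoint smooth centers, so once it is recorded the two displayed dimension identities, and hence $\dim\N(Z,X)=\dim\N(\w Z,\w X)$, follow immediately.
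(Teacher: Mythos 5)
Your proposal is correct and follows essentially the same route as the paper: factor $\ph$ through the common blow-up $\widehat{X}$, identify $f^{-1}(Z)=g^{-1}(\w{Z})$, observe that this set contains both exceptional loci so that $\ker f_*$ and $\ker g_*$ (each of dimension $s$) lie in its $\N_1$, and conclude by rank--nullity applied to $f_*$ and $g_*$. The paper's version is just a more compressed statement of the same argument.
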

\begin{proof}
Consider the factorization $X\stackrel{f}{\leftarrow}\widehat{X}\stackrel{g}{\to}\w{X}$ of $\ph$ as in Th.~\ref{SQM0}.
 By our assumptions we have $f^{-1}(Z)=g^{-1}(\w{Z})$; let us denote this closed subset by $\widehat{Z}$. Then $\widehat{Z}$ contains all exceptional divisors of $f$ and $g$, so that $\N(\widehat{Z},\widehat{X})$ contains both $\ker f_*$ and $\ker g_*$. Moreover $f(\widehat{Z})=Z$ and $g(\widehat{Z})=\w{Z}$, hence
$$f_*(\N(\widehat{Z},\widehat{X}))=\N(Z,X)\quad\text{and}\quad
g_*(\N(\widehat{Z},\widehat{X}))=\N(\w{Z},\w{X}).$$
Since $\dim\ker f_*=\dim\ker g_*$, we get the statement.
\end{proof}
\subsection{Preliminary results on Fano $4$-folds}
\noindent Let $X$ be a smooth Fano variety. Recall from the Introduction that the {\bf Lefschetz defect} of $X$ is
$$\delta_X:=\max\{\codim\N(D,X)\,|\,D\subset X\text{ a prime divisor}\};$$
we will use the following bounds on $\delta_X$ and $\rho_X$.
\begin{thm}[\cite{eff}, Th.~3.11 and \cite{cdue}, Th.~1.2]\label{summary}
Let $X$ be a smooth Fano $4$-fold which is not 
 a product of surfaces. Then $\delta_X\leq 3$, and moreover: if $\delta_X=3$, then $\rho_X\leq 6$; if $\delta_X=2$, then $\rho_X\leq 12$.
\end{thm}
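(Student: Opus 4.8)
The plan is to derive the statement from the general theory of the Lefschetz defect of Fano varieties (\cite{codim, gloria}), treating the two boundary values $\delta_X=3$ and $\delta_X=2$ by a more delicate analysis specific to dimension $4$. The two nontrivial inputs I would rely on are an existing ``large defect forces a product'' theorem in arbitrary dimension and the tool provided by Lemma \ref{movf}.

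To bound $\delta_X$, I would fix a prime divisor $D\subset X$ achieving the defect, so that $\codim\N(D,X)=\delta_X$, and recall that the $\delta_X$ numerical directions of $\N(X)$ missed by $D$ are produced by a fiber-type (rational) contraction whose fibers move transversally to $D$. The key input from the general theory is that an excessively large defect forces a splitting: as soon as $\delta_X\geq 4$, one has $X\cong S\times T$ with $S$ a del Pezzo surface of Picard number $\rho_S=\delta_X+1$ and $T$ Fano of dimension $\dim X-2$. For $\dim X=4$ this makes $T$ a surface, so $X$ is a product of two surfaces; since $X$ is assumed not to be such a product, we conclude $\delta_X\leq 3$.

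For the case $\delta_X=3$, the largest value allowed for a non-product and hence the most rigid, I would feed the divisor $D$ into the boundary case of the structure theorem to obtain a fiber-type rational contraction $f\colon X\dasharrow Y$ whose general fiber $F$ carries the missing directions. Lemma \ref{movf} then bounds $\rho_F$ from below by $\dim\N(F,X)=\rho_X-\dim\sigma$, so that $\rho_X$ is controlled by $\rho_F$ and by $\rho_Y$; bounding $\rho_F$ (the fibers being low-dimensional Fano, essentially del Pezzo surfaces) and forcing $Y$ to have small Picard number yields $\rho_X\leq 6$.

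The case $\delta_X=2$ is both the deepest and the main obstacle, and is the content of \cite{cdue}, Th.~1.2; it does not follow formally from the general structure theorem. Here the induced fibration is far less rigid, one is led to del Pezzo or conic-bundle type structures over a base of dimension $2$, and bounding $\rho_X$ requires a systematic study of all the extremal contractions of the Fano $4$-fold together with the geometry of the curves spanning the two missing directions. The hard point is to rule out, through such a case analysis, the accumulation of too many extremal rays and to reach the sharp value $\rho_X\leq 12$; this is where the genuinely four-dimensional Minimal Model Program is needed, rather than the arbitrary-dimension structure results.
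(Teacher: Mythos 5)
This statement is not proved in the paper at all: it is imported verbatim from \cite{eff}, Th.~3.11 and \cite{cdue}, Th.~1.2, so there is no internal proof to compare your attempt against. Your proposal correctly identifies the external inputs --- the arbitrary-dimension theorem of \cite{codim} that $\delta_X\geq 4$ forces $X\cong S\times T$ with $S$ a del Pezzo surface of Picard number $\delta_X+1$ (which in dimension $4$ makes $X$ a product of surfaces, giving $\delta_X\leq 3$), and the two quoted bounds for $\delta_X=3$ and $\delta_X=2$. To that extent it matches the paper, which also treats these as known results.

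However, where you do try to supply an argument rather than a citation, there is a real gap. In the $\delta_X=3$ case, applying Lemma \ref{movf} to a fiber-type rational contraction with general fiber $F$ gives $\rho_F\geq\rho_X-\dim\sigma$; if $F$ is a del Pezzo surface this only yields $\rho_X\leq 9+\dim\sigma$, and nothing in your sketch forces $\dim\sigma$ (or $\rho_Y$) to be small enough to reach $\rho_X\leq 6$. The actual proof of \cite{eff}, Th.~3.11 requires a finer analysis of the contraction attached to a divisor $D$ with $\codim\N(D,X)=3$ (in particular of how the complementary directions of $\N(D,X)$ are spanned by loci disjoint from $D$), not just the fiber Picard number bound. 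For $\delta_X=2$ you explicitly defer to \cite{cdue}, which is acceptable as a citation but means the proposal does not constitute a proof of that part. In short: as an annotated citation your plan is accurate, but as a proof it is incomplete, and the one step you attempt to argue does not close.
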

\begin{thm}[\cite{eff}, Th.~1.1] \label{elemft}
Let $X$ be a smooth Fano $4$-fold. If $X$ has an elementary rational contraction of fiber type, then $\rho_X\leq 11$.
\end{thm}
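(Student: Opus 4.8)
The plan is to reduce $f$ to an honest Mori fiber space and then bound the Picard number of its base. Using Remark~\ref{Kneg} I would factor $f$ as $X\stackrel{\ph}{\dasharrow}\w{X}\stackrel{\tilde{f}}{\to}Y$, with $\ph$ a SQM and $\tilde{f}$ a $K$-negative elementary contraction of fiber type. By Theorem~\ref{SQM0} the variety $\w{X}$ is smooth, and since a SQM preserves numerical equivalence classes we have $\rho_{\w{X}}=\rho_X$; as $\tilde{f}$ is elementary, $\rho_X=\rho_Y+1$, while $Y$ is normal, projective and $\Q$-factorial. Hence it suffices to prove $\rho_Y\leq 10$. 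Let $F$ be a general fiber of $\tilde{f}$ and put $d:=\dim F=4-\dim Y$. Being general, $F$ is disjoint from the finitely many exceptional lines of $\ph^{-1}$, and by adjunction $-K_{\w{X}}|_F=-K_F$; since $\tilde{f}$ is $K$-negative this class is ample, so $F$ is a smooth Fano variety of dimension $d$. Finally every curve of $F$ is contracted by $\tilde{f}$, so $\N(F,\w{X})\subseteq\ker\tilde{f}_*$, which is a line because $\tilde{f}$ is elementary; as $F$ carries curves, $\dim\N(F,\w{X})=1$.

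Next I would dispose of the low-dimensional bases. If $\dim Y=0$ then $\rho_X=1$, and if $\dim Y=1$ then $Y$ is a curve with $\rho_Y=1$, so that $\rho_X=2$; in both cases we are done. There remain two genuine fibration types: the case $d=1$, where $F\cong\pr^1$ and $\tilde{f}$ is a conic bundle over a threefold $Y$, and the case $d=2$, where $F$ is a del Pezzo surface and $\tilde{f}$ is a del Pezzo fibration over a surface $Y$. Note that when $\rho_F>1$ the equality $\dim\N(F,\w{X})=1$ forces the whole of $\N(F)$ to collapse onto a line in $\N(\w{X})$, which means that the monodromy of the family of fibers acts on the curve classes of $F$ with rank-one coinvariants; this is a strong restriction on $F$, but I do not expect to need its precise form.

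The heart of the matter, and the step I expect to be the main obstacle, is to bound $\rho_Y\leq 10$ (indeed $\leq 9$ when $\dim Y=2$). The difficulty is that a priori $Y$ is merely the base of a fibration and could be a rational surface or threefold of arbitrarily large Picard number: the bound has to be extracted from the positivity of $-K_X$. Concretely I would transport this positivity to $Y$ by means of a canonical bundle formula for the fibration $\tilde{f}$: writing $-K_{\w{X}}$ as the movable and big transform of the ample $-K_X$, and combining the relative anticanonical class with the discriminant of the conic or del Pezzo fibration, one obtains an effective divisor $\Delta$ on $Y$ making $(Y,\Delta)$ of Fano type, and then I would upgrade this to the statement that $Y$ is a mildly singular Fano variety of dimension at most $3$. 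The bound $\rho_Y\leq 10$ would then follow from the classification of such varieties, namely $\rho\leq 9$ for del Pezzo surfaces and $\rho\leq 10$ for Fano threefolds (Mori--Mukai). The two delicate points are controlling the singularities of $Y$ well enough that a Picard-number bound is actually available, and checking that the argument is insensitive to the passage from the Fano $X$ to its non-Fano small modification $\w{X}$.

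Finally, the products of del Pezzo surfaces, which realize the sharp value $\rho_X=11$ — for instance $S\times\pr^1\times\pr^1$ with $\rho_S=9$, fibered over $S\times\pr^1$ by contracting one ruling of the second factor — should be treated directly rather than through the base $Y$. Such a product satisfies $\Mov=\Nef$, so it contains no exceptional plane and hence admits no nontrivial SQM by Theorem~\ref{SQM0}; therefore every elementary rational contraction of fiber type on it is an honest contraction, necessarily induced by a fiber-type contraction of a single del Pezzo factor, and one reads off $\rho_X\leq 11$ at once.
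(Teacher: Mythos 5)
The paper does not actually prove this statement: it is imported verbatim from \cite{eff}, Th.~1.1, so there is no internal proof to compare with. Judged on its own, your reduction is fine as far as it goes --- factoring through a SQM via Rem.~\ref{Kneg}, noting that $\w{X}$ is smooth, that the general fiber $F$ of the $K$-negative elementary contraction $\tilde{f}\colon\w{X}\to Y$ is a smooth Fano of dimension at most $3$, and that everything comes down to $\rho_Y\leq 10$. But the step you yourself flag as the heart of the matter is where the argument breaks. A canonical bundle formula at best produces a boundary $\Delta$ with $(Y,\Delta)$ klt log Fano, i.e.\ $Y$ of Fano type; it does not make $-K_Y$ itself ample, and it certainly does not make $Y$ smooth. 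The classifications you then invoke ($\rho\leq 9$ for del Pezzo surfaces, $\rho\leq 10$ for Fano $3$-folds) are classifications of \emph{smooth} varieties. For $\Q$-factorial klt log del Pezzo surfaces the Picard number is unbounded (already among toric ones: LDP-polygons acquire arbitrarily many vertices as the Gorenstein index grows), and the same failure occurs in dimension $3$ once singularities are uncontrolled. So ``$Y$ of Fano type $\Rightarrow\rho_Y\leq 10$'' is false, and nothing in your sketch controls the singularities of $Y$ or upgrades ``log Fano for some boundary'' to ``Fano''.

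The actual proof in \cite{eff} gets around this by proving much more about the base: an elementary fiber-type contraction is automatically \emph{quasi-elementary} ($\ker\tilde{f}_*=\N(F,\w{X})$ --- precisely the rank-one monodromy observation you make and then set aside as not needed), and for quasi-elementary fiber-type contractions arising from a Fano manifold the target is shown to be a \emph{smooth Fano} variety, with the argument extended in \cite{eff} to rational contractions through the SQM. Only then do the bounds $\rho\leq 10$ for smooth Fano $3$-folds and $\rho\leq 9$ for del Pezzo surfaces apply, giving $\rho_X=\rho_Y+1\leq 11$. Establishing smoothness and Fano-ness of $Y$ is the substantive content of the theorem, and it is exactly what your proposal leaves unproved. (Your final paragraph on products of del Pezzo surfaces is harmless but unnecessary: for $S\times\pr^1\times\pr^1$ the base $S\times\pr^1$ is a smooth Fano $3$-fold with $\rho=10$, so it is covered by the general argument once that argument exists.)
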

\begin{parg}\label{basic0}
Let $X$ be a smooth projective $4$-fold, and $f\colon X\to Y$ an elementary, $K$-negative divisorial contraction of type $(3,2)$; set $D:=\Exc(f)$. It is well-known that $f$ has at most finitely many $2$-dimensional fibers, and that every $1$-dimensional fiber $F$ of $f$ satisfies $F\cong\pr^1$, $D\cdot F=-1$, and $-K_X\cdot F=1$. In particular, $D$ is covered by a family of rational curves of anticanonical degree $1$. Moreover, if $f$ has no $2$-dimensional fibers, then it is of type $(3,2)^{sm}$ \cite[Th.~4.1(1)]{AWaview}.
\end{parg}
\begin{remark}\label{basic}
Let $X$ be a smooth projective $4$-fold, and $f\colon X\to Y$ an elementary, $K$-negative divisorial contraction; set $D:=\Exc(f)$.
\begin{enumerate}[(1)]
\item
If $f$ is of type $(3,0)$, then $\dim\N(D,X)=1$.
\item
If $f$ is of type $(3,1)$, then $\dim\N(D,X)=2$. 
\item
If $D$ contains  an irreducible surface $T$ such that  $\dim\N(T,X)=1$ and $f$ is finite on $T$, then $f$ is
of type $(3,2)$ and  $\dim\N(D,X)=2$.
\item
If $f$ is of type $(3,2)$, and $D$ contains an exceptional plane $L$, then $f$ is finite on $L$ and $\dim\N(D,X)=2$.
\end{enumerate}
\end{remark}
\begin{proof}
We have $\dim\ker f_*=1$ and $f_*(\N(D,X))=\N(f(D),Y)$; this easily yields (1) and (2). For (3), $\dim f(D)\geq\dim f(T)=2$, thus $f$ is of type $(3,2)$, $f(D)=f(T)$, and
 $f_*(\N(T,X))=f_*(\N(D,X))$. Finally, if $f$ is of type $(3,2)$, an exceptional plane cannot be contained in a fiber of $f$ by \cite[Th.~4.7]{AWaview}, thus (4) follows from (3).
\end{proof}
\begin{lemma}\label{torino}
Let $X$ be a smooth Fano $4$-fold, and $D\subset X$ a fixed prime divisor such that every $D$-negative extremal ray of $\NE(X)$ is of type $(3,2)$. Then the following holds:
\begin{enumerate}[(1)]
\item $-K_X+D$ is nef, and $(-K_X+D)^{\perp}\cap\NE(X)$ is a face containing all $D$-negative extremal rays of $\NE(X)$ (and no others);
\item either $\NE(X)$ has a unique $D$-negative extremal ray, or $\rho_X\leq 6$ and $\dim\N(D,X)\leq 3$.
\end{enumerate}
\end{lemma}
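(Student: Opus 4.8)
The plan is to prove (1) by testing $-K_X+D$ against the finitely many extremal rays of $\NE(X)$, and (2) by analysing the contraction of the face of $\NE(X)$ cut out by $-K_X+D$.

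For (1), since $X$ is Fano the cone $\NE(X)$ is rational polyhedral, so it suffices to check $(-K_X+D)\cdot R\ge 0$ on every extremal ray $R$. If $D\cdot R\ge 0$ then $(-K_X+D)\cdot R\ge -K_X\cdot R>0$ because $-K_X$ is ample. If $D\cdot R<0$, then by hypothesis $R$ is of type $(3,2)$, and by par.~\ref{basic0} a minimal curve $F$ generating $R$ satisfies $-K_X\cdot F=1$ and $D\cdot F=-1$, whence $(-K_X+D)\cdot R=0$. Thus $-K_X+D$ is nef and $\tau:=(-K_X+D)^{\perp}\cap\NE(X)$ is a face of $\NE(X)$. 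For an extremal ray $R$ one has $R\subseteq\tau$ iff $(-K_X+D)\cdot R=0$ iff $D\cdot R=K_X\cdot R<0$ (using $-K_X\cdot R>0$); so $\tau$ is generated by exactly the $D$-negative rays, which gives (1).

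For (2), I assume $\NE(X)$ has at least two $D$-negative extremal rays, so $\dim\tau\ge 2$, and I must deduce $\dim\N(D,X)\le 3$ and $\rho_X\le 6$. Let $g\colon X\to Z$ be the contraction of the face $\tau$ and set $W:=g(D)$. Every irreducible curve $C$ with $[C]\in\tau$ has $-K_X\cdot C>0$ and $(-K_X+D)\cdot C=0$, hence $D\cdot C<0$ and $C\subseteq D$; since moreover $\Exc(g)=\Lo(\tau)\supseteq\Lo(R)=D$ for any $D$-negative ray $R$ (of type $(3,2)$, with exceptional locus $D$), we get $\Exc(g)=D$. Let $F$ be a general fibre of $g|_D\colon D\to W$, so that $\dim F=3-\dim W$ and, in characteristic zero, $F$ is irreducible and is rationally chain connected by the $\tau$-curves.

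The key point is to bound $\dim\tau$. Each $D$-negative ray $R_i$ is of type $(3,2)$ and, by par.~\ref{basic0}, is covered by a family $V_i$ of rational curves of anticanonical degree $1$ with $[V_i]\in R_i$ and $\Lo(V_i)=D$, which is moreover a fibration: through a general point of $D$ passes a unique $V_i$-curve. Since $[V_i]\in\ker g_*$, every $V_i$-curve is contracted by $g$ and hence lies in a single fibre of $g|_D$; as the $V_i$ cover $D$, each restricts to a covering family of $F$. Therefore $\N(F,X)$ contains every $[V_i]$, so $\N(F,X)=\langle\tau\rangle$ and $\dim\N(F,X)=\dim\tau$. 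Now I chain-connect $F$ using Lemma \ref{kol}: starting from a general point of $F$ and successively sweeping by suitable $V_i$, a sweep raises the dimension of the swept locus by exactly one precisely when it introduces a class $[V_i]$ not yet present (as the $V_i$ are fibrations), and filling the irreducible $F$ requires exactly $\dim F$ such dimension-increasing sweeps; Lemma \ref{kol} then gives $\dim\tau=\dim\N(F,X)\le\dim F=3-\dim W$. With $\dim\tau\ge 2$ this forces $\dim W\le 1$, so $\dim\N(W,Z)\le 1$. Finally $g_*$ maps $\N(D,X)$ onto $\N(W,Z)$ with kernel $\langle\tau\rangle$, so
\[
\dim\N(D,X)=\dim\tau+\dim\N(W,Z)\le(3-\dim W)+\dim W=3.
\]
A direct inspection shows that if $X$ were a product of surfaces then $D$ would be of the form $e\times S$ with $e$ a $(-1)$-curve, and would have a unique $D$-negative ray; hence in the present case $X$ is not a product of surfaces, and Th.~\ref{summary} yields $\delta_X\le 3$. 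Then $\rho_X=\dim\N(D,X)+\codim\N(D,X)\le 3+\delta_X\le 6$, completing (2).

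I expect the main obstacle to be the estimate $\dim\N(F,X)\le\dim F$ in the last paragraph: one must verify that the fibration families $V_i$ genuinely chain-connect the general fibre $F$ in exactly $\dim F$ dimension-increasing steps, using that each $V_i$ is an honest fibration (par.~\ref{basic0}) and that $F$ is irreducible and rationally chain connected by the $\tau$-curves. The reductions $\dim W\le 1$ and ``$X$ is not a product'' are then formal consequences; the bound $\rho_X\le 6$ follows immediately from Th.~\ref{summary}.
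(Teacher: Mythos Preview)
Your proof of (1) is correct and essentially identical to the paper's.

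For (2), your approach differs from the paper's and contains a gap. The paper does \emph{not} contract the full face $\tau=(-K_X+D)^\perp\cap\NE(X)$; instead it picks a \emph{two-dimensional} face $\tau'\subseteq\tau$, generated by two distinct $D$-negative rays $R_i,R_j$ of type $(3,2)$, and contracts that. Then $\rho_X-\rho_Z=2$, and $\dim g(D)\le 1$ is immediate: the $3$-fold $D$ is first sent to a surface by contracting $R_i$, and that surface is covered by the images of the $R_j$-fibres, so its further image has dimension $\le 1$. From $g_*(\N(D,X))=\N(g(D),Z)$ and $\dim\ker g_*=2$ one gets $\dim\N(D,X)\le 3$ at once. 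No chain-sweeping is needed.

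Your route instead requires the bound $\dim\tau\le\dim F$, which you attempt via successive sweeps by the families $V_i$. The gap is in the sentence ``filling the irreducible $F$ requires exactly $\dim F$ such dimension-increasing sweeps''. You have not shown that the $V_i$-chains starting from a general point actually fill $F$: a locus $Z_k$ could become $V_i$-invariant for every $i$ while still being a proper subvariety of $F$, and nothing you have said rules this out. (Also, the ``precisely when'' is false: a sweep by $V_i$ can raise $\dim Z$ by one without raising $\dim\N(Z,X)$, if $[V_i]$ already lies in $\N(Z,X)$; this does not hurt the inequality you want, but it shows the biconditional is wrong.) Results of this flavour do exist for unsplit families, but they require a genuine chain-connectedness input that you have only asserted, not proved. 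The simplest repair is to abandon the full-face contraction and adopt the paper's $2$-dimensional-face trick, which bypasses the issue entirely.
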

\begin{proof}
This proof is very similar to that of \cite[Rem.~5.3]{eff}.
Let $R_1,\dotsc,R_n$ be the $D$-negative extremal rays of $\NE(X)$ (notice that $n\geq 1$, because $D$ is not nef). By assumption, each $R_i$ is of type $(3,2)$; if $F_i\subset D$ is a general fiber of the contraction of $R_i$, we have $(-K_X+D)\cdot F_i=0$ (see \ref{basic0}).

On the other hand, if $R$ is an extremal ray of $\NE(X)$ different from $R_1,\dotsc,R_n$, then $D\cdot R\geq 0$, $-K_X\cdot R>0$, and hence  $(-K_X+D)\cdot R>0$. This shows (1).

Suppose now that $n>1$. In particular this implies that $X$ is not a product of surfaces, and hence $\delta_X\leq 3$ by Th.~\ref{summary}. 
Since $n>1$, the face $(-K_X+D)^{\perp}\cap\NE(X)$ has dimension at least $2$; let $\tau$ be one of its $2$-dimensional faces. The contraction $g\colon X\to Z$ with $\NE(g)=\tau$ is such that $\rho_X-\rho_Z=2$ and $\dim g(D)\leq 1$.
We have $g_*(\N(D,X))=\N(g(D),Z)$,
 therefore $\dim g_*(\N(D,X))\leq 1$ and hence $\dim\N(D,X)\leq 3$.
Since $\delta_X\leq 3$, we get $\rho_X\leq 6$. This shows (2).
\end{proof}
\begin{remark}\label{pisa}
Let $X$ be a smooth Fano $4$-fold with $\rho_X\geq 6$. 
\begin{enumerate}[(1)]
\item
Every divisorial extremal ray of $\NE(X)$ is of type $(3,2)$.
\item
Let $D\subset X$ be the locus of an extremal ray of type $(3,2)$.
 Then $D$ does not contain exceptional planes, and every $D$-negative extremal ray of $\NE(X)$ is of type $(3,2)$;
\item either $\NE(X)$ has a unique $D$-negative extremal ray, or $\rho_X=6$, $\delta_X=3$, and $\dim\N(D,X)=3$.
\end{enumerate}
\end{remark}
\begin{proof}
The statement is clear if $X$ is a product of surfaces. Otherwise, $\delta_X\leq 3$ by Th.~\ref{summary}, hence $\dim\N(E,X)\geq\rho_X-3\geq 3$ for every prime divisor $E\subset X$. Thus (1) follows from  Rem.~\ref{basic}(1)-(2), (2) follows from Rem.~\ref{basic}(4) and (1), and (3) follows from (2) and Lemma \ref{torino}(2).
 \end{proof}
\begin{lemma}\label{divisorcovered}
Let $X$ be a smooth Fano $4$-fold with $\rho_X\geq 6$, and $D\subset X$ a prime divisor covered by a family of rational curves of anticanonical degree $1$. Then one of the following holds:
\begin{enumerate}[$(i)$]
\item $D$ is nef;
\item $D$ is the locus of an extremal ray of type $(3,2)$.
\end{enumerate}
\end{lemma}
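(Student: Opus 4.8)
The plan is as follows. If $D$ is nef we are in case $(i)$, so assume $D$ is not nef. Then $\NE(X)$ has a $D$-negative extremal ray $R$, and since $D$ is effective and irreducible, every irreducible curve $C$ with $D\cdot C<0$ is contained in $D$; in particular $\Lo(R)\subseteq D$ whenever $R$ is birational. First I would rule out that $R$ is of fiber type: otherwise $\Lo(R)=X$, so the curves of $R$ cover $X$ while being contained in $D$, absurd since $D\neq X$. If $R$ is divisorial, then it is of type $(3,2)$ by Rem.~\ref{pisa}(1) (here $\rho_X\geq 6$ is used), its locus $\Lo(R)=\Exc(R)$ is a prime divisor contained in $D$, hence equal to $D$, and we are in case $(ii)$. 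Thus everything reduces to excluding that a $D$-negative extremal ray $R$ is small.

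So suppose $R$ is small. By Th.~\ref{lorenzo}, $\Lo(R)=L_1\sqcup\dotsb\sqcup L_s$ is a disjoint union of exceptional planes, all contained in $D$, and each line $C_{L_i}$ generates $R$, so $\N(\Lo(R),X)=\R[C_{L_1}]$ is one-dimensional. Let $V$ be the covering family of $D$, with $-K_X\cdot[V]=1$. Since the curves $C_v$ cover the threefold $D$ while $\Lo(R)$ is two-dimensional, the general $C_v$ is not contained in $\Lo(R)$; as both $[V]$ and $[C_{L_1}]$ have anticanonical degree one, $[V]\in\R[C_{L_1}]$ would force $[V]=[C_{L_1}]\in R$ and hence $C_v\subset\Lo(R)$, so $[V]\notin\R[C_{L_1}]$. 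I would then split according to whether the curves of $V$ meeting $\Lo(R)$ already cover $D$.

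If $\Lo(V_{\Lo(R)})=D$, then Lemma \ref{kol} applied with $Z=\Lo(R)$ yields $\N(D,X)=\N(\Lo(R),X)+\R[V]=\R[C_{L_1}]+\R[V]$, which is two-dimensional because $[V]\notin\R[C_{L_1}]$. Hence $\codim\N(D,X)=\rho_X-2\geq 4$, so $\delta_X\geq 4$. On the other hand the presence of a small extremal ray forces $X$ not to be a product of surfaces, so $\delta_X\leq 3$ by Th.~\ref{summary}, a contradiction. If instead $\Lo(V_{\Lo(R)})\subsetneq D$, then the general member $C_v$ is disjoint from $\Lo(R)$ (a general point of $D$ lies on a curve of $V$ avoiding $\Lo(R)$). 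Flipping $R$ gives a SQM $\ph\colon X\dasharrow\w{X}$ with $\w{X}$ smooth and the $L_i$ becoming exceptional lines $\ell_i\subset\w{X}$, with $\ell_i\subset\w{D}$. The general $C_v$ avoids the flip, so its transform satisfies $-K_{\w{X}}\cdot\w{C}_v=-K_X\cdot C_v=1$; since the numerical class is constant on the irreducible transformed family $\w{V}$ and every member is a connected curve of anticanonical degree one, hence an integral such curve, the whole family lies in the locus $T$ of integral curves of anticanonical degree one. Therefore $\w{D}=\Lo(\w{V})\subseteq T$, so $\ell_i\subseteq T$, contradicting Lemma \ref{SQM}(3).

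The main obstacle is the small case, and within it the second horn: one must verify that, after flipping, \emph{every} member of the transformed family has anticanonical degree one, so that Lemma \ref{SQM}(3) applies. This hinges on the general member avoiding the flipped planes together with the constancy of the numerical class in a family, which is exactly what upgrades ``$-K_{\w{X}}\cdot\w{C}_v=1$ for general $v$'' to ``$\w{D}\subseteq T$''. The first horn, by contrast, is a short Lefschetz-defect computation via Lemma \ref{kol} and Th.~\ref{summary}.
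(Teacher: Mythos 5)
Your treatment of the fiber-type and divisorial cases, and of the first branch of your small case, matches the paper's argument (Lemma \ref{kol} plus the Lefschetz-defect bound of Th.~\ref{summary}). The problem is the second branch, which you yourself single out as the crux. The step ``every member is a connected curve of anticanonical degree one, hence an integral such curve'' is exactly where the argument breaks: that implication is valid on the Fano $X$, where every irreducible curve has positive anticanonical degree, but not on the SQM $\w{X}$, which contains exceptional lines $\ell$ with $-K_{\w{X}}\cdot\ell=-1$ (Lemma \ref{SQM}(2)). A degenerate member of the transformed family could be, say, $\Gamma+\ell_i$ with $-K_{\w{X}}\cdot\Gamma=2$: connected, of total anticanonical degree one, but neither integral nor contained in $T$. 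Since the point of $\w{D}\cap\ell_i$ that you need is precisely a limit of general members, i.e.\ it lies on such a possibly degenerate cycle, the conclusion $\w{D}\subseteq T$ does not follow and the contradiction with Lemma \ref{SQM}(3) evaporates.

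The paper never faces this branch because it shows it is vacuous. With $L\subseteq\Lo(R)$ an exceptional plane, every point of $L$ lies on some $C_v$ (because $\Lo(V)=D\supseteq L$), while no $C_v$ is contained in $L$ (a degree-one integral curve inside $L\cong\pr^2$ would be a line $C_L$, forcing $[V]\in R$ and hence $D\subseteq\Lo(R)$, absurd). Hence $\Lo(V_L)$ contains $L$ together with the parts of these curves leaving $L$, and a short dimension count gives $\dim\Lo(V_L)\geq 3$, so $\Lo(V_L)=D$: your first branch always applies. To repair your proof you should either establish this dimension count (which collapses the dichotomy to your Case A) or produce a genuinely different contradiction in the second branch; as written, that branch is a gap.
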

\begin{proof}
Let us assume that $(i)$ does not hold; then there exists an extremal ray $R$ of $\NE(X)$ such that $D\cdot R<0$.
If $R$ is divisorial, then it is of type $(3,2)$ because $\rho_X\geq 6$ (see Rem.~\ref{pisa}(1)), and we have $(ii)$.
We show that $R$ cannot be small. By contradiction,
suppose otherwise, and let $L\subseteq\Lo(R)\subset D$ be an exceptional plane (see Lemma \ref{lorenzo}). 

There is a family $V$ of rational curves in $X$ with $\Lo(V)=D$ and $-K_X\cdot[V]=1$. Since $X$ is Fano, for every $v\in V$ the curve $C_v$ must be integral, thus by  Lemma \ref{kol} we have $\N(\Lo(V_L),X)=\R[V]+\N(L,X)=\R[V]+\R R$.

On the other hand $[V]\not\in R$ (for otherwise $D\subseteq\Lo(R)$), thus for every $v\in V$, the curve $C_v$ cannot be contained in $L$. This means that  $\dim\Lo(V_L)\geq 3$, thus $\Lo(V_L)=D$ and $\dim\N(D,X)=2$; since $\rho_X\geq 6$, we get $\delta_X\geq 4$. By Th.~\ref{summary} this implies that $X$ is a product of  surfaces, hence $X$ has no small extremal rays, and we get a contradiction.
\end{proof}
We will also need the following property, which
 should be well-known to experts; we include a proof for lack of reference.
\begin{lemma}\label{quadricsing}
Let $X$ be a smooth projective $4$-fold and $f\colon X\to Y$ an elementary divisorial contraction of type $(3,0)^Q$ (see Def.~\ref{typecontraction}).
 Set  $E:=\Exc(f)$ and $p:=f(E)\in Y$.
Then $p$ is an isolated terminal and factorial singularity in $Y$, $f$ is the blow-up of the ideal of $p$ in $Y$, and two cases can occur:
\begin{enumerate}[$(i)$]
\item $Q$ is a smooth quadric and $p$ is an ordinary double point;
\item $Q$ is a cone over a smooth $2$-dimensional quadric, and $p$ is locally analytically a singularity of type $\{x_0^2+x_1^2+x_2^2+x_3^2+x_4^3=0\}\subset\C^5$.
\end{enumerate}
\end{lemma}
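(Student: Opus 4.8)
The plan is to reconstruct the germ $(Y,p)$ from the data attached to $E:=\Exc(f)\cong Q$, and then to read off the two analytic normal forms from the smoothness of $X$. Since $\ma{N}_{E/X}\cong\ma{O}_Q(-1)$ and $\ma{O}_Q(1)$ is ample, the line bundle $\ma{O}_X(-E)$ is $f$-ample; as $f$ is an isomorphism away from $E$ and contracts $E$ to the single point $p$, the locus $Y\smallsetminus\{p\}$ is smooth and $f$ is the blow-up of the ideal sheaf $\ma{I}:=f_*\ma{O}_X(-E)$, i.e.\ $X=\operatorname{Proj}_Y\bigl(\bigoplus_{m\geq0}f_*\ma{O}_X(-mE)\bigr)$. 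Note also that $Q$ is a prime divisor, hence irreducible and reduced, so the defining quadratic form $q$ of $Q\subset\pr^4$ has rank $r\geq 3$.

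To identify $\ma{I}$ I would compute the associated graded ring. Pushing forward the sequences
$$0\to\ma{O}_X(-(m+1)E)\to\ma{O}_X(-mE)\to\ma{O}_Q(m)\to 0,$$
using $\ma{O}_X(-E)|_E\cong\ma{N}_{E/X}^{-1}\cong\ma{O}_Q(1)$ and the vanishing $H^1(Q,\ma{O}_Q(j))=0$ for $j\geq0$ (which follows for every $Q\subset\pr^4$ from its ideal sequence, giving $R^1f_*\ma{O}_X(-(m+1)E)=0$), yields
$$\operatorname{gr}_{\ma{I}}\ma{O}_Y\;\cong\;\bigoplus_{m\geq0}H^0(Q,\ma{O}_Q(m))\;\cong\;\C[x_0,\dots,x_4]/(q),$$
the homogeneous coordinate ring of $Q$. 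In particular the Rees algebra is generated in degree $1$, $\ma{I}=\mathfrak{m}_p$, and $(Y,p)$ is an \emph{isolated hypersurface singularity} of embedding dimension $5$ and multiplicity $2$, whose tangent cone is the cone over $Q$; thus $\widehat{\ma{O}}_{Y,p}\cong\C[[x_0,\dots,x_4]]/(G)$ with $G=q+(\text{higher order})$ and $\operatorname{rank}q=r$.

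Next I would pin down $r$ and normalise $G$ using that $X=\Bl_pY$ is smooth. After a formal change of coordinates splitting off the nondegenerate part of $q$, I may assume $G=x_0^2+\cdots+x_{r-1}^2+\Psi$ with $\Psi$ of order $\geq3$ in the remaining variables; blowing up the origin, the exceptional divisor is $\{q=0\}=Q$ and the strict transform is $X$. A direct computation in the affine charts then shows: if $r=5$, $G$ is equivalent to $x_0^2+\cdots+x_4^2$ and $Q$ is smooth (case $(i)$, the ordinary double point); if $r=4$, smoothness of $X$ at the point of $E$ over the vertex of $Q$ forces $\Psi=x_4^3$ (any higher multiplicity leaves a singular point of $X$), giving $G=x_0^2+x_1^2+x_2^2+x_3^2+x_4^3$ and $Q$ a cone over a smooth quadric surface (case $(ii)$); and if $r\leq3$ the vertex locus of $Q$ is positive dimensional, and along it the coefficient of the strict transform in the exceptional direction is a binary form of positive degree, which must vanish somewhere, so $X$ would be singular — a contradiction. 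The same chart computation verifies conversely that $\Bl_p\{G=0\}$ is smooth with exceptional divisor $Q$ of normal bundle $\ma{O}_Q(-1)$, so it coincides with $f$. This local analysis is the technical heart and the step I expect to be most delicate.

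Finally, terminality and factoriality follow. Writing $K_X=f^*K_Y+aE$ and restricting to a line $C\subset Q$, adjunction gives $K_X|_E=\omega_Q\otimes\ma{N}_{E/X}^{-1}\cong\ma{O}_Q(-3)\otimes\ma{O}_Q(1)=\ma{O}_Q(-2)$, while $E\cdot C=-1$, whence $a=2$; since $X$ is smooth and the unique exceptional divisor over $p$ has positive discrepancy, $(Y,p)$ is terminal (equivalently, both normal forms are classically terminal). Factoriality is then immediate from Grothendieck's parafactoriality theorem: $(Y,p)$ is an isolated hypersurface singularity of dimension $4$, so $\ma{O}_{Y,p}$ is a unique factorization domain. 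This completes the plan.
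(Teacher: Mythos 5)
Your proposal is correct and follows essentially the same route as the paper's proof: identify $f$ as the blow-up of $\mathfrak{m}_p$ with $\operatorname{gr}(\ma{O}_{Y,p})\cong\C[x_0,\dotsc,x_4]/(q)$, deduce that $\hat{\ma{O}}_{Y,p}$ is a hypersurface $q+(\text{higher order terms})$, and settle the case analysis on the rank of $q$ (excluding rank $\leq 3$ and forcing the $x_4^3$ term in rank $4$) by computing the blow-up in charts and imposing smoothness of $X$, with terminality and factoriality by standard arguments. The only substantive difference is that where the paper quotes Andreatta--Wi{\'s}niewski (Prop.~5.4 and 5.5) for the blow-up/graded-ring input, you rederive it from the vanishing of $R^1f_*\ma{O}_X(-mE)$ --- which, to be fully precise, requires the theorem on formal functions applied to the infinitesimal neighbourhoods of $E$, not only $H^1(Q,\ma{O}_Q(j))=0$ --- a self-contained but equivalent substitute.
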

\begin{proof}
The proof is analogous to the proof of the $3$-dimensional case in \cite[proof of (3.4.3), pp.\ 164-165]{moriannals}. Notice that $Q$ is irreducible, because $f$ is elementary by assumption. Moreover it is easy to check that $f$ is $K$-negative and that $E\cdot C=-1$ for a line $C\subset E$, hence $p$ is a terminal and factorial singularity by standard arguments.

It follows from \cite[Prop.~5.4 and 5.5]{AW} that $f$ is the blow-up of the ideal $\mathfrak{m}_{p}$ of $p$ in $Y$ and that the graded ring  $\text{gr}(\ma{O}_{Y,p}):=\oplus_{j\geq 0}(\mathfrak{m}_{p}^j/\mathfrak{m}_{p}^{j+1})$ associated to the local ring $\ma{O}_{Y,p}$ is isomorphic as a graded $\C$-algebra to $\C[x_0,x_1,x_2,x_3,x_4]/(F)$, where $F$ is the equation of $Q\subset\pr^4$.

Let us consider the complete local ring $\hat{\ma{O}}_{Y,p}$. We have 
$\text{gr}(\hat{\ma{O}}_{Y,p})\cong\text{gr}(\ma{O}_{Y,p})$ as graded $\C$-algebras, which yields $\hat{\ma{O}}_{Y,p}\cong\C[[x_0,x_1,x_2,x_3,x_4]]/(F+G)$, where $G\in\hat{\mathfrak{m}}_p^3$.

If $Q$ is smooth, with suitable changes of coordinates we get 
$\hat{\ma{O}}_{Y,p}\cong\C[[x_0,x_1,x_2,x_3,x_4]]/(x_0^2+x_1^2+x_2^2+x_3^2+x_4^2)$, hence $p$ is an ordinary double point.  If $Q$ is a cone over a smooth $2$-dimensional quadric, we get  $\hat{\ma{O}}_{Y,p}\cong\C[[x_0,x_1,x_2,x_3,x_4]]/(x_0^2+x_1^2+x_2^2+x_3^2+x_4^n)$, where $n\geq 3$. Finally if $Q$ is a join of a line and a smooth conic,  we get $\hat{\ma{O}}_{Y,p}\cong\C[[x_0,x_1,x_2,x_3,x_4]]/(x_0^2+x_1^2+x_2^2+H(x_3,x_4))$,
where $H\in\C[[x_3,x_4]]$ and $H\in\hat{\mathfrak{m}}_p^3$.
Now by imposing that the blow-up of $Y$ at $p$ is smooth, we see by a local computation that when $Q$ is singular the only case that can occur is $(ii)$.
\end{proof}
\subsection{Faces of the effective cone}\label{extreff}
\begin{parg}[Fixed prime divisors and faces of $\Eff(X)$]\label{otite}
Let $X$ be a projective, normal, and $\Q$-factorial Mori dream space. 
There is a bijection between:
\begin{enumerate}[$\bullet$]
\item the set of fixed prime divisors in $X$, and
\item the set of one-dimensional faces of $\Eff(X)$ not contained in $\Mov(X)$,
\end{enumerate}
obtained by $D\mapsto \langle[D]\rangle$ (see  \cite[\S 2.18]{eff}).
\end{parg}
\begin{lemma}\label{bijection}
Let $X$ be a projective, normal, and $\Q$-factorial Mori dream space, $D\subset X$ a fixed prime divisor, and $f\colon X\dasharrow Y$ an elementary divisorial rational contraction with $\Exc(f)=D$. 
\begin{enumerate}[(1)]
\item There is  a bijection between $2$-dimensional faces $\sigma$ of $\Eff(X)$ containing $[D]$, and one-dimensional faces of $\Eff(Y)$, given by $$\sigma\mapsto f_*\sigma;$$
\item  $\sigma\cap\Mov(X)=\{0\}$ if and only if 
$f_*\sigma\not\subset\Mov(Y)$ ($\sigma$ as in (1));
\item  there is a bijection between fixed prime divisors $B_Y\subset Y$ and fixed prime divisors $B_X\subset X$
such that $\langle [D], [B_X]\rangle$ is a $2$-dimensional face of $\Eff(X)$ and $\langle [D], [B_X]\rangle\cap\Mov(X)=\{0\}$; here $B_X$ is the 
 transform of $B_Y$ in $X$.
\end{enumerate}
\end{lemma}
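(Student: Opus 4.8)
The statement bundles three assertions about an elementary divisorial rational contraction $f\colon X\dasharrow Y$ with $\Exc(f)=D$. The unifying principle is that $f$ induces a linear isomorphism on the level of Néron–Severi spaces away from the ray spanned by $D$, and that this isomorphism interacts well with the cone structures. Concretely, after factoring $f$ as in \eqref{f} through a SQM $\ph\colon X\dasharrow\w X$ and an elementary divisorial contraction $\tilde f\colon\w X\to Y$, the pushforward $\tilde f_*\colon\Nu(\w X)\to\Nu(Y)$ has one-dimensional kernel spanned by the class of the transform of $D$. The plan is to use the already-established dictionary between fixed prime divisors and one-dimensional faces of $\Eff$ not contained in $\Mov$ (par.~\ref{otite}), together with standard Mori dream space functoriality, to transport faces back and forth.

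\textbf{Part (1).} First I would reduce to the case where $f$ is a genuine (regular) elementary divisorial contraction by passing to the SQM factorization; since a SQM is an isomorphism in codimension one it identifies $\Eff(X)$ with $\Eff(\w X)$ and carries $[D]$ to the class of its transform, so no faces are lost or created. For the regular contraction $\tilde f$, the map $\tilde f^*\colon\Nu(Y)\to\Nu(\w X)$ is injective with image the hyperplane $(\ker\tilde f_*)^{\perp}$, and $\tilde f^*\bigl(\Eff(Y)\bigr)\subseteq\Eff(\w X)$ with $\tilde f^*\bigl(\Eff(Y)\bigr)=\Eff(\w X)\cap\tilde f^*(\Nu(Y))$; conversely $\tilde f_*$ carries effective classes to effective classes. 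The key geometric fact is that every effective class on $\w X$ is, modulo the ray $\langle[\w D]\rangle$, the pullback of an effective class on $Y$, because the exceptional divisor $\w D$ is the unique fixed divisor contracted by $\tilde f$. A two-dimensional face $\sigma$ of $\Eff(\w X)$ containing $[\w D]$ is then cut out by a supporting hyperplane; pushing forward by $\tilde f_*$ collapses the $[\w D]$-direction and sends $\sigma$ to a one-dimensional face $\tilde f_*\sigma$ of $\Eff(Y)$, and pulling back recovers $\sigma$ as the cone spanned by $[\w D]$ and $\tilde f^*(\tilde f_*\sigma)$. I would verify that these two operations are mutually inverse, which gives the bijection.

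\textbf{Parts (2) and (3).} For (2), the relation $\mov(X)=\Eff(X)^{\vee}$ (and likewise on $Y$) lets me dualize: the condition $\sigma\cap\Mov(X)=\{0\}$ says that no nonzero class in $\sigma$ is movable, equivalently $\sigma$ is a face lying strictly on the ``fixed'' side, and I would match this against the condition that the corresponding face $f_*\sigma$ of $\Eff(Y)$ is not contained in $\Mov(Y)$ by tracking how a general fiber of $\tilde f$ meets the divisors in question (invoking Lemma~\ref{movf} to compare $\mov$ directions across $\tilde f$). Part (3) is then essentially a corollary: by par.~\ref{otite}, fixed prime divisors on $Y$ correspond to one-dimensional faces of $\Eff(Y)$ not in $\Mov(Y)$, and fixed prime divisors $B_X$ with $\langle[D],[B_X]\rangle$ a two-dimensional face meeting $\Mov(X)$ only at the origin correspond to two-dimensional faces $\sigma$ of $\Eff(X)$ through $[D]$ with $\sigma\cap\Mov(X)=\{0\}$; composing the bijection of (1) with the equivalence of (2) yields the desired correspondence, with $B_X$ the transform of $B_Y$.

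\textbf{Main obstacle.} The delicate point is (2): establishing the precise equivalence between $\sigma\cap\Mov(X)=\{0\}$ and $f_*\sigma\not\subset\Mov(Y)$. One must rule out the mixed situation in which $\sigma$ meets $\Mov(X)$ only in a proper subcone while $f_*\sigma$ still lands inside $\Mov(Y)$, and for this I expect to need the structure of the Mori chamber decomposition together with the fact that $f$ itself records the movability of $D$. Getting the quantifiers right here—showing that movability is inherited exactly along the $\tilde f$-direction and not spuriously introduced or destroyed by the contraction of $\w D$—will be the heart of the argument.
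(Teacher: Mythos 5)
Your treatment of (1) is essentially the paper's: pass to the SQM factorization, observe that the induced push-forward $f_*\colon\Nu(X)\to\Nu(Y)$ is surjective with kernel $\R[D]$, that $f_*(\Eff(X))=\Eff(Y)$, and that $\ker f_*\cap\Eff(X)=\langle[D]\rangle$ is a one-dimensional face; the correspondence of faces is then elementary convex geometry. Likewise (3) is, as you say, a formal consequence of (1), (2) and the dictionary of par.~\ref{otite}.

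The genuine gap is in (2), which you flag as the main obstacle but leave open, and for which the tools you name would not close it. First, the direction that is actually delicate is not the ``mixed'' situation you describe: if $\sigma\cap\Mov(X)\neq\{0\}$, then $f_*\sigma\subset\Mov(Y)$ follows at once from $f_*(\Mov(X))=\Mov(Y)$, since $f_*\sigma$ is a single ray containing a nonzero movable class. What must be proved is that $\sigma\cap\Mov(X)=\{0\}$ forces $f_*\sigma\not\subset\Mov(Y)$, and neither dualizing to $\mov(X)=\Eff(X)^{\vee}$, nor Lemma~\ref{movf}, nor ``tracking a general fiber of $\tilde f$'' addresses this: movability of a class in $f_*\sigma$ is a statement about base loci of linear systems on $Y$, which the fibers of the divisorial contraction $\tilde f$ (curves inside $\w{D}$) do not detect, and Lemma~\ref{movf} concerns Picard numbers of fibers of fiber-type contractions. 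The missing idea is the one the paper uses and it is short: if $\sigma\cap\Mov(X)=\{0\}$, then by par.~\ref{otite} both extremal rays of $\sigma$ are spanned by classes of fixed prime divisors, so $\sigma=\langle[D],[B]\rangle$ with $B$ a fixed prime divisor, and every effective divisor of $X$ with class in $\sigma$ equals $aD+bB$ with $a,b\geq 0$ (its movable part has class in $\sigma\cap\Mov(X)=\{0\}$, and $D$, $B$ are the only fixed prime divisors with class in $\sigma$). Since $\sigma$ is the intersection of $\Eff(X)$ with the preimage under $f_*$ of the linear span of $f_*\sigma$, the transform in $X$ of any effective divisor on $Y$ with class in $f_*\sigma$ has class in $\sigma$, hence is of the form $aD+bB$ and pushes forward to a multiple of $f_*B$; therefore $f_*B$ is not movable and $f_*\sigma\not\subset\Mov(Y)$. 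Without this step the equivalence in (2), and hence the bijection in (3), is not established.
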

\begin{proof}
By considering a SQM $\ph\colon X\dasharrow \w{X}$ such that $f\circ\ph^{-1}\colon\w{X}\to Y$ is regular, we get a push-forward map $$f_*\colon\Nu(X)\to\Nu(Y).$$
This is a surjective linear map with $\ker f_*=\R[D]$. Moreover $f_*(\Eff(X))=\Eff(Y)$, and $\ker f_*\cap\Eff(X)=\langle[D]\rangle$ is a one-dimensional face of $\Eff(X)$. This yields (1).

Let $\sigma$ be a $2$-dimensional face of $\Eff(X)$ containing $[D]$.
Since $f_*(\Mov(X))=\Mov(Y)$, 
if  $\sigma\cap\Mov(X)\neq\{0\}$, then $f_*\sigma\subset\Mov(Y)$. 

Suppose
 instead that
 $\sigma\cap\Mov(X)=\{0\}$. Then there exists a fixed prime divisor $B\subset X$ such that $\sigma=\langle[D],[B]\rangle$,
and every 
  effective divisor of $X$ with class in $\sigma$ has the form $aD+bB$ with $a,b\in\Z_{\geq 0}$. Thus every effective divisor with class in $f_*\sigma$ is a multiple of $f_*B$. This shows that $f_*B$ is not movable, and we have (2).

Finally, (3) follows from (1) and (2).
\end{proof}
The following result allows to bound $\rho_X$ in terms of how $\Eff(X)$ intersects $\Mov(X)$.
\begin{lemma}\label{useful}
Let $X$ be a smooth Fano $4$-fold and let $d$ be the minimal dimension of a face $\sigma$ of $\Eff(X)$ such that $\sigma\cap\Mov(X)\neq\{0\}$. Then $\rho_X\leq 10+d$.
\end{lemma}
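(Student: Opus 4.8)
The plan is to attach to the face $\sigma$ a rational contraction of fiber type, and then to bound $\rho_X$ through the Picard number of a general fiber by means of Lemma \ref{movf}. First I would dispose of a trivial case: the whole cone $\Eff(X)$ is a face meeting $\Mov(X)$ nontrivially (since $\{0\}\neq\Nef(X)\subseteq\Mov(X)$), so $d\leq\rho_X$ is automatic, and if $d=\rho_X$ the asserted inequality is vacuous. Hence I may assume that $\sigma$ is a \emph{proper} face of $\Eff(X)$, of dimension $d<\rho_X$, and I fix a nonzero class $z\in\sigma\cap\Mov(X)$.

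Next I would single out the rational contraction associated with $\sigma$. Since $\MCD(X)$ is a fan supported on $\Mov(X)$ and $z\in\Mov(X)\smallsetminus\{0\}$, the class $z$ lies in the relative interior of a unique cone $f^*(\Nef(Y))$ of $\MCD(X)$, corresponding to a rational contraction $f\colon X\dasharrow Y$. I claim that $\sigma$ is precisely the minimal face of $\Eff(X)$ containing $f^*(\Nef(Y))$, which is exactly the hypothesis of Lemma \ref{movf}. This is where the minimality of $d$ enters: if $\sigma_z$ denotes the minimal face of $\Eff(X)$ containing $z$, then $\sigma_z\cap\Mov(X)\neq\{0\}$ (as $z\in\Mov(X)$) and $\sigma_z\subseteq\sigma$, so by minimality of $d=\dim\sigma$ one gets $\sigma_z=\sigma$; and since $z$ lies in the relative interior of the subcone $f^*(\Nef(Y))\subseteq\Eff(X)$, any face of $\Eff(X)$ containing $z$ must contain all of $f^*(\Nef(Y))$, so the minimal face containing $f^*(\Nef(Y))$ coincides with $\sigma_z=\sigma$.

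Then I would check that $f$ is of fiber type and exploit the fiber. I fix a factorization $X\dasharrow\w{X}\xrightarrow{\tilde f}Y$ as in \eqref{f}, with $\w{X}$ smooth by Th.~\ref{SQM0}. As $\sigma$ is a proper face of $\Eff(X)$, it contains no big class; so if $\tilde f$ were birational, the pullback to $\w{X}$ of an ample class on $Y$ would yield a big class inside $f^*(\Nef(Y))\subseteq\sigma$, a contradiction. Thus $\dim Y<4$ and $f$ is of fiber type. By Rem.~\ref{Kneg} I may assume $\tilde f$ is $K$-negative, whence a general fiber $F$ of $\tilde f$ is a smooth Fano variety of dimension $4-\dim Y\in\{1,2,3\}$. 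By the classification of Fano varieties in dimensions $\leq 3$ (namely $\pr^1$, del Pezzo surfaces with $\rho\leq 9$, and the Mori--Mukai classification of Fano $3$-folds with $\rho\leq 10$) we have $\rho_F\leq 10$. Applying Lemma \ref{movf} to $f$, whose associated minimal face is $\sigma$ of dimension $d$, gives $\rho_F\geq\rho_X-d$, and therefore $\rho_X\leq d+\rho_F\leq 10+d$.

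I expect the main obstacle to be the second paragraph: correctly matching the given face $\sigma$ of $\Eff(X)$ with the cell of $\MCD(X)$ through a movable class $z\in\sigma$, so that $\sigma$ is genuinely the minimal face of $\Eff(X)$ containing $f^*(\Nef(Y))$ and Lemma \ref{movf} applies with $\dim\sigma=d$. Once $f$ is recognized as being of fiber type (which uses that a proper face carries no big class), the remainder is the standard input that a smooth Fano variety of dimension at most $3$ has Picard number at most $10$.
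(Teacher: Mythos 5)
Your proof is correct and follows essentially the same route as the paper's: pick a Mori chamber inside $\sigma\cap\Mov(X)$ whose minimal containing face of $\Eff(X)$ is $\sigma$ (using the minimality of $d$ exactly as you do), apply Lemma \ref{movf}, and bound the Picard number of a general fiber by $10$ via the classification of Fano varieties of dimension at most $3$. The only cosmetic differences are that the paper does not bother to verify that $f$ is of fiber type --- it merely notes that $Y$ is not a point, so $\dim F\leq 3$ and $\rho_F\leq 10$ holds in every case --- and that it selects a chamber of maximal dimension in $\sigma\cap\Mov(X)$ rather than the one through an arbitrary nonzero movable class of $\sigma$.
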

\begin{proof}
Set $\tau:=\sigma\cap\Mov(X)$; then $\tau$ is a face of $\Mov(X)$, with $\dim\tau>0$,
 and $\sigma$ is the smallest face of $\Eff(X)$ containing $\tau$.
Let us choose $\tau_0\in\MCD(X)$, the Mori chamber decomposition of $\Mov(X)$, such that $\tau_0\subseteq\tau$ and $\dim\tau_0=\dim\tau$.
Then $\tau_0$ determines a rational contraction $f\colon X\dasharrow Y$ such that $f^*(\Nef(Y))=\tau_0$; in particular $\rho_Y=\dim\tau_0>0$, so that $Y$ is not a point. 
We can factor $f$ as
$$\xymatrix{{X}\ar@{-->}[r]\ar@{-->}@/^1pc/[rr]^f&{\w{X}}\ar[r]_{\tilde{f}}&{Y,}
}$$
where $X\dasharrow \w{X}$ is a SQM and $\tilde{f}$ is a $K$-negative contraction (see Rem.~\ref{Kneg}). 
Let $F\subset \w{X}$ be a general fiber of $\tilde{f}$. Since $\w{X}$ is smooth (by Th.~\ref{SQM0}) and $\tilde{f}$ is $K$-negative, $F$ is a smooth Fano variety with $\dim F\leq 3$, and $\rho_F\leq 10$ \cite[\S 12.6]{fanoEMS}.
On the other hand, by Lemma \ref{movf} we have 
$$
\rho_F\geq\dim\N(F,\w{X})=\rho_X-\dim\sigma=\rho_X-d,$$
namely $\rho_X\leq\rho_F+d\leq 10+d$.
\end{proof}
\noindent The bound in Lemma \ref{useful} can sometimes be refined in specific cases, like the following.
\begin{lemma}\label{macau}
Let $X$ be a smooth Fano $4$-fold with $\delta_X\leq 1$. If there exists a $2$-dimensional face $\sigma$ of $\Eff(X)$ such that $\sigma\cap\Mov(X)\neq\{0\}$, then $\rho_X\leq 11$.
\end{lemma}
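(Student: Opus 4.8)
The plan is to run the construction from the proof of Lemma \ref{useful} and sharpen the final estimate on $\rho_F$ using the hypothesis $\delta_X\leq 1$, arguing by contradiction. So I would begin by assuming $\rho_X\geq 12$. Let $d$ denote the minimal dimension of a face of $\Eff(X)$ meeting $\Mov(X)$ non-trivially; the existence of the $2$-dimensional face $\sigma$ gives $d\leq 2$, while Lemma \ref{useful} gives $\rho_X\leq 10+d$ and hence $d\geq 2$. Thus $d=2$ and $\rho_X=12$, and the given face $\sigma$ is of minimal dimension, so I may use it in the construction.

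Next I would reproduce the construction in the proof of Lemma \ref{useful}: setting $\tau:=\sigma\cap\Mov(X)$ and choosing a chamber $\tau_0\in\MCD(X)$ with $\tau_0\subseteq\tau$ and $\dim\tau_0=\dim\tau$, I obtain a rational contraction $f\colon X\dasharrow Y$ with $f^*(\Nef(Y))=\tau_0$, factored as a SQM $\ph\colon X\dasharrow\w{X}$ followed by a $K$-negative contraction $\tilde{f}\colon\w{X}\to Y$ (Rem.~\ref{Kneg}). Let $F\subset\w{X}$ be a general fiber of $\tilde{f}$. Then $\w{X}$ is smooth (Th.~\ref{SQM0}) and $F$ is a smooth Fano variety with $\dim F\leq 3$, so $\rho_F\leq 10$; and, exactly as in the proof of Lemma \ref{useful}, Lemma \ref{movf} gives $\rho_F\geq\dim\N(F,\w{X})=\rho_X-\dim\sigma=10$. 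Therefore $\rho_F=10=\dim\N(F,\w{X})$. Since a smooth del Pezzo surface has $\rho\leq 9$ and $\pr^1$ has $\rho=1$, the equality $\rho_F=10$ forces $\dim F=3$; in particular $\tilde{f}$ is of fiber type over a curve and $F$ is a prime divisor in $\w{X}$.

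To derive a contradiction I would transport $F$ back to $X$. Being a general fiber, $F$ is disjoint from the finitely many exceptional lines of $\w{X}$ (Th.~\ref{SQM0}), so $F\subset\dom(\ph^{-1})$ and its transform $F_X\subset X$ is a prime divisor contained in $\dom(\ph)$ --- hence disjoint from the exceptional planes --- with $\ph|_{F_X}\colon F_X\to F$ an isomorphism. Since every curve in $F_X$ lies in $\dom(\ph)$, the isomorphism $\N(X)\to\N(\w{X})$ induced by the SQM carries $\N(F_X,X)$ onto $\N(F,\w{X})$, so $\dim\N(F_X,X)=\dim\N(F,\w{X})=10$. Then $\codim\N(F_X,X)=\rho_X-10=2$, giving $\delta_X\geq 2$ and contradicting the hypothesis $\delta_X\leq 1$. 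This contradiction yields $\rho_X\leq 11$.

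I expect the last step to be the main obstacle: one must argue carefully that passing through the SQM does not change $\dim\N(F,\w{X})$, and this relies precisely on the generality of $F$, which keeps it away from the exceptional lines and realizes its transform $F_X$ as an honest prime divisor of $X$ disjoint from the exceptional planes. Transporting $F$ to $X$ in this way seems cleaner than attempting to show that the Lefschetz defect is preserved under SQM's of smooth Fano $4$-folds (which would also finish the argument, since $F$ is already a prime divisor of $\w{X}$ with $\codim\N(F,\w{X})=2$).
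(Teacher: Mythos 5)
Your reduction to the case $\rho_X=12$, $d=2$, $\rho_F=\dim\N(F,\w{X})=10$, $\dim F=3$ matches the paper, but the key step of your argument --- ``being a general fiber, $F$ is disjoint from the finitely many exceptional lines of $\w{X}$'' --- is false in general, and this is exactly the difficulty the lemma has to overcome. An exceptional line $\ell\subset\w{X}$ has $-K_{\w{X}}\cdot\ell=-1$, while $\tilde{f}$ is $K$-negative, so $\ell$ cannot be contracted by $\tilde{f}$; since $Y$ is a curve, $\ell$ dominates $Y$ and therefore meets \emph{every} fiber of $\tilde{f}$. Thus if the SQM $\ph$ is nontrivial, $F$ is never contained in $\dom(\ph^{-1})$, and the transfer $\dim\N(F_X,X)=\dim\N(F,\w{X})$ breaks down. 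Worse, the failure goes in the wrong direction for you: tracing $F$ through the factorization $X\leftarrow\widehat{X}\to\w{X}$ of Th.~\ref{SQM0}, the strict transform of $F$ in $\widehat{X}$ contains a whole fiber $\pr^2$ of the blow-up of $\ell$, which maps isomorphically onto an exceptional plane $L\subset X$; hence $F_X\supset L$ and $\N(F_X,X)$ acquires the class $[C_L]$, so that one expects $\dim\N(F_X,X)=11$ and $\codim\N(F_X,X)=1$ --- no contradiction with $\delta_X\leq 1$. Lemma~\ref{dimensions} does not apply either, precisely because $F$ meets but does not contain the indeterminacy locus of $\ph^{-1}$.

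The paper closes this gap by first showing that $\ph$ must be an isomorphism: from $\rho_F=10$ and the classification one gets $F\cong S\times\pr^1$ with $S$ a del Pezzo surface of degree $1$, so $F$, and hence $\w{X}$, is covered by integral curves of anticanonical degree $1$; Lemma~\ref{SQM}(3) then forces $\w{X}=X$ (such curves cannot meet exceptional lines and lie in $\dom(\ph^{-1})$). Only after that does the conclusion $\codim\N(F,X)=2$, contradicting $\delta_X\leq 1$, become available. Your proof needs this (or an equivalent) additional argument; as written it does not go through.
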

\begin{proof}
We keep the same notation as in  Lemma \ref{useful} and its proof. We have $d\leq 2$, thus $\rho_X\leq 12$ by  Lemma \ref{useful}.  Suppose by contradiction that $\rho_X=12$. Then $d=2$ and $\dim\N(F,\w{X})=\rho_F=10$, 
therefore $F\cong S\times\pr^1$, where $S$ is a del Pezzo surface with $\rho_S=9$  \cite[\S 12.6]{fanoEMS}. 
Hence $S$ and $F$ are covered by a family of (non rational) curves of anticanonical degree $1$.
This means that there is a covering
family of  curves in $\w{X}$ whose general member is an irreducible curve,
of anticanonical degree $1$, and contracted by $\tilde{f}$. As $\tilde{f}$ is $K$-negative, 
we deduce that
every curve of the family has anticanonical degree $1$. Therefore $\w{X}$ is covered by integral curves of anticanonical degree $1$, and Lemma \ref{SQM}(3) implies that $X\dasharrow \w{X}$ is an isomorphism.

Thus $F\subset X$ is a prime divisor with $\codim\N(F,X)=2$, but this contradicts the assumption $\delta_X\leq 1$; this concludes the proof.
\end{proof}
\begin{corollary}\label{noquasiel}
Let $X$ be a smooth Fano $4$-fold with $\rho_X\geq 12$ and $\delta_X\leq 1$, and let $f\colon X\dasharrow Y$ be an elementary divisorial rational contraction.
Then every one-dimensional face of $\Eff(Y)$ is generated by a fixed prime divisor.
\end{corollary}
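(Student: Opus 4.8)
The plan is to reduce the statement to two of the preceding results, Lemma \ref{bijection} and Lemma \ref{macau}. First I would observe that, since $f$ is an elementary divisorial rational contraction, its target $Y$ is normal, $\Q$-factorial and projective, hence again a Mori dream space, with $\rho_Y=\rho_X-1\geq 11$; moreover $D:=\Exc(f)$ is a fixed prime divisor of $X$, and as in the proof of Lemma \ref{bijection} the ray $\langle[D]\rangle=\ker f_*\cap\Eff(X)$ is a one-dimensional face of $\Eff(X)$ not contained in $\Mov(X)$. By the bijection recalled in \ref{otite}, applied to $Y$, a one-dimensional face of $\Eff(Y)$ is generated by a fixed prime divisor precisely when it is \emph{not} contained in $\Mov(Y)$. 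Thus the statement is equivalent to the assertion that no one-dimensional face of $\Eff(Y)$ is contained in $\Mov(Y)$, and I would argue this by contradiction.

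So suppose $\tau$ is a one-dimensional face of $\Eff(Y)$ with $\tau\subseteq\Mov(Y)$. By Lemma \ref{bijection}(1) there is a unique $2$-dimensional face $\sigma$ of $\Eff(X)$ containing $[D]$ with $f_*\sigma=\tau$. Now Lemma \ref{bijection}(2) states that $\sigma\cap\Mov(X)=\{0\}$ if and only if $f_*\sigma\not\subseteq\Mov(Y)$; since $f_*\sigma=\tau\subseteq\Mov(Y)$, this forces $\sigma\cap\Mov(X)\neq\{0\}$. Hence $X$ possesses a $2$-dimensional face of $\Eff(X)$ meeting $\Mov(X)$ nontrivially.

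This is exactly the hypothesis of Lemma \ref{macau}, which, together with the assumption $\delta_X\leq 1$, yields $\rho_X\leq 11$, contradicting $\rho_X\geq 12$. Therefore no one-dimensional face of $\Eff(Y)$ can lie in $\Mov(Y)$, and the corollary follows. The argument is short, and its only real content is the middle step: the point to handle carefully is checking that $Y$ is $\Q$-factorial (so that the face/fixed-divisor dictionary of \ref{otite} and the bijection of Lemma \ref{bijection} apply to it as well as to $X$), and that the map $f_*$ turns a one-dimensional face of $\Eff(Y)$ inside $\Mov(Y)$ into a two-dimensional face of $\Eff(X)$ meeting $\Mov(X)$. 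Once this translation is in place, Lemma \ref{macau} supplies the numerical contradiction with no further work.
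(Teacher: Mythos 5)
Your proof is correct and follows essentially the same route as the paper: reduce via the dictionary of \ref{otite} to showing no one-dimensional face of $\Eff(Y)$ lies in $\Mov(Y)$, translate through Lemma \ref{bijection}(1)--(2) into the existence of a $2$-dimensional face of $\Eff(X)$ meeting $\Mov(X)$ nontrivially, and invoke Lemma \ref{macau} to contradict $\rho_X\geq 12$. No issues.
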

\begin{proof}
We have to show that no one-dimensional face of $\Eff(Y)$ is contained in $\Mov(Y)$. By Lemma \ref{bijection}(1)-(2), this is equivalent to showing that there is no $2$-dimensional face $\sigma$ of $\Eff(X)$ such that $[\Exc(f)]\in\sigma$ and $\sigma\cap\Mov(X)\neq \{0\}$, and this follows from Lemma \ref{macau}.
\end{proof}
\section{Blowing-up many points}\label{secmanypoints}
\begin{parg}\label{introsecmanypoints} In this section we show the following result, which implies Th.~\ref{manypoints}.
\begin{thm}\label{manypointsgeneral}
 Let $X$ be a smooth Fano $4$-fold. Suppose that there exists 
a normal and $\Q$-factorial projective variety $Y$ with $\rho_Y=1$, and 
 a composition $f\colon \w{X}\to Y$ of blow-ups of smooth points, such
that 
 $X$ and $\w{X}$ are isomorphic in codimension one. 

Then 
$\rho_X\leq 9$, and if $\rho_X\geq 8$, then $Y\cong\pr^4$.
\end{thm}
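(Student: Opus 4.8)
The plan is to compare $h^0(X,-K_X)$ with $h^0(Y,-K_Y)$, both of which are Euler characteristics by Kodaira vanishing, and to link the two through a controlled chain of Hirzebruch--Riemann--Roch computations. Write $r$ for the number of point blow-ups composing $f$, so that $\rho_{\w X}=\rho_Y+r=1+r$, and hence $\rho_X=1+r$ since $X$ and $\w X$ are isomorphic in codimension one. First I would check that $Y$ is in fact a \emph{smooth} Fano $4$-fold. Smoothness follows by downward induction on the tower $\w X=Z_r\to\cdots\to Z_0=Y$: each $Z_i\to Z_{i-1}$ is the blow-up of a point in the smooth locus of $Z_{i-1}$, so $Z_i$ is smooth if and only if $Z_{i-1}$ is, and $Z_r=\w X$ is smooth by Th.~\ref{SQM0}. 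That $Y$ is Fano then follows from $\rho_Y=1$: for a curve $C\subset Y$ avoiding the images of the blow-up centres, its transform in $\w X$ has positive anticanonical degree by Lemma \ref{SQM}(1), whence $-K_Y\cdot C>0$ and $-K_Y$ is ample.

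The heart of the argument is the exact identity $h^0(X,-K_X)=h^0(Y,-K_Y)-15\,r$, which I would obtain from the chain
\begin{equation*}
h^0(X,-K_X)=\chi(X,-K_X)=\chi(\w X,-K_{\w X})=\chi(Y,-K_Y)-15\,r=h^0(Y,-K_Y)-15\,r,
\end{equation*}
where the outer equalities are Kodaira vanishing on the smooth Fano varieties $X$ and $Y$. The third equality is an HRR computation for a single blow-up of a smooth point of a $4$-fold: if $\pi\colon\Bl_pZ\to Z$ has exceptional divisor $E\cong\pr^3$, then $-K_{\Bl_pZ}=\pi^*(-K_Z)-3E$, and since $\pi_*\ma{O}(-3E)=\ma{I}_p^3$ with vanishing higher direct images, $\chi(\Bl_pZ,-K_{\Bl_pZ})=\chi(Z,-K_Z)-\dim_{\C}(\ma{O}_{Z,p}/\mathfrak{m}_p^3)=\chi(Z,-K_Z)-15$; iterating $r$ times along the tower produces the factor $15\,r$. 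The second equality---invariance of $\chi(-K)$ under the SQM---is the step requiring the most care, and is where I expect the main work to lie. I would factor $\ph\colon X\dasharrow\w X$ as $X\xleftarrow{\sigma}\wi X\xrightarrow{\tau}\w X$ as in Th.~\ref{SQM0}, with $\sigma$ the blow-up of exceptional planes $L_i\cong\pr^2$ (normal bundle $\ma{O}(-1)^{\oplus2}$, $-K_X\cdot C_{L_i}=1$) and $\tau$ the blow-up of exceptional lines $\ell_j\cong\pr^1$ (normal bundle $\ma{O}(-1)^{\oplus3}$, $-K_{\w X}\cdot\ell_j=-1$). Computing $\chi(\wi X,-K_{\wi X})$ from each side, the correction term is $\chi(\pr^2,\ma{O}(1))=3$ per plane, and per line (from $\ma{O}_{\ell_j}/\ma{I}_{\ell_j}^2\cong\ma{O}_{\ell_j}\oplus N^*_{\ell_j}$, with $N^*_{\ell_j}=\ma{O}_{\pr^1}(1)^{\oplus3}$) it is $\chi(\ell_j,\ma{O}_{\pr^1}(-1))+\chi(\ell_j,\ma{O}_{\pr^1}(-1)\otimes N^*_{\ell_j})=0+3=3$; since there are equally many planes and lines (Th.~\ref{SQM0}), both sides yield $\chi(\wi X,-K_{\wi X})=\chi(X,-K_X)-3s=\chi(\w X,-K_{\w X})-3s$, and the invariance follows.

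With the identity in hand the theorem is immediate. As $-K_X$ is ample we have $h^0(X,-K_X)\geq1$, so $15\,r\leq h^0(Y,-K_Y)-1<h^0(Y,-K_Y)$; by Hwang's bound $h^0(Y,-K_Y)\leq 126$ for a Fano $4$-fold with $\rho_Y=1$ (equality only for $\pr^4$), we get $15\,r\leq125$, hence $r\leq8$ and $\rho_X=1+r\leq9$. Finally, if $\rho_X\geq8$, i.e.\ $r\geq7$, then $h^0(Y,-K_Y)=15\,r+h^0(X,-K_X)\geq105+1>105$; since among Fano $4$-folds with $\rho_Y=1$ the value $h^0(-K_Y)$ exceeds $105$ only for $\pr^4$ (the quadric attaining exactly $105$, with all lower-index cases strictly smaller by Hwang's result), this forces $Y\cong\pr^4$.
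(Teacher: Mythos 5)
Your overall strategy is the same as the paper's (the identity $h^0(X,-K_X)=h^0(Y,-K_Y)-15r$ via Hirzebruch--Riemann--Roch and the structure of SQM's, combined with Hwang's bound), and your computation of that identity, including the $\pm 3$ bookkeeping on each side of the factorization $X\leftarrow\wi{X}\rightarrow\w{X}$, is correct. But the final step has a genuine gap. You assert that among Fano $4$-folds with $\rho_Y=1$ the value $h^0(Y,-K_Y)$ exceeds $105$ only for $\pr^4$, ``with all lower-index cases strictly smaller by Hwang's result.'' That is not what Hwang's theorem gives. For index $1$ the available bound is $h^0(Y,-K_Y)\leq 121$, and it improves to $97$ only when the minimal anticanonical degree of a covering family of rational curves of $Y$ is \emph{not} $3$ (this is exactly how Th.~\ref{rho1}(2) is stated). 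So a priori there could exist $Y\not\cong\pr^4$ of index $1$, carrying a covering family of rational curves of anticanonical degree $3$, with $h^0(Y,-K_Y)$ as large as $121$; since $15\cdot 7+1=106\leq 121$ and even $15\cdot 8+1=121$, your numerics do not exclude $\rho_X=8$ or $9$ with such a $Y$. The paper closes precisely this case geometrically (Lemmas \ref{locus} and \ref{3}): a connected curve $C\subset Y$ with $-K_Y\cdot C=3$ cannot meet $f(\Exc(f))$, because $f^*(-K_Y)=-K_{\w{X}}+3E$ forces the transform of any component through a blown-up point to have nonpositive anticanonical degree, hence to be an exceptional line of degree $-1$, and then $-K_Y\cdot C\equiv 2\pmod 3$ on that component; a short analysis of the remaining component rules out total degree $3$. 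Since a covering family of degree-$3$ rational curves would have a member through a blown-up point, this family cannot exist once $r\geq 1$, and only then does Hwang's bound give $h^0(Y,-K_Y)\leq 105$ and hence $r\leq 6$, $\rho_X\leq 7$, for $Y\not\cong\pr^4$. Without this geometric input your conclusion ``$\rho_X\geq 8\Rightarrow Y\cong\pr^4$'' does not follow.

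A secondary point: $h^0(X,-K_X)\geq 1$ is not a consequence of ampleness of $-K_X$; it is Kawamata's effective nonvanishing theorem for Fano $4$-folds (cited as \cite[Th.~5.2]{kawasian} in the paper). You do need the strict positivity in the second part of the argument (to pass from $15r\leq h^0(Y,-K_Y)$ to $15r<h^0(Y,-K_Y)$), so this should be invoked rather than taken for granted.
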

Let us outline our strategy.
We first show that 
$h^0(X,-K_X)=h^0(Y,-K_Y)-15r$, where $r$ is the number of blow-ups in $\w{X}\to Y$, hence $r=\rho_X-1$ (Prop.~\ref{drop}). This is based on the Hirzebruch-Riemann-Roch formula and on the explicit description of SQM's of $X$ in Th.~\ref{SQM0}. Since it is known that  $h^0(Y,-K_Y)\leq 126$ (Th.~\ref{rho1}), this yields $r\leq 8$ and hence $\rho_X\leq 9$. To give a finer bound on $\rho_X$ in the case $Y\not\cong\pr^4$, we use a refined bound on $h^0(Y,-K_Y)$ and an analysis of curves of low anticanonical degree in $Y$ (Lemmas \ref{locus} and \ref{3}).
\end{parg}
\subsection{Behaviour of $\chi(X,-K_X)$ under blow-up in a $4$-fold}\label{chi}
\noindent The goal of this section if to prove the following.
\begin{proposition}\label{drop}
Let $X$ be a smooth Fano $4$-fold, $X\dasharrow\w{X}$ a SQM, and
$\w{X}\to Y$ a composition of $r$ blow-ups of a smooth point, 
with $Y$ Fano. Then we have:
$$h^0(X,-K_X)=h^0(Y,-K_Y)-15r.$$
\end{proposition}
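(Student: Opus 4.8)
The key fact is that $-K_X$ and $-K_{\w X}$ agree up to numerical equivalence along the SQM (since $X$ is smooth Fano, the SQM is crepant: $\ph^*(-K_{\w X})=-K_X$ in an obvious sense, both being movable classes with the same restriction away from the exceptional planes/lines), and so the Euler characteristics $\chi(X,-K_X)$ and $\chi(\w X,-K_{\w X})$ should coincide, after which the whole problem reduces to tracking how $\chi(-K)$ changes under a single blow-up of a smooth point in a smooth projective $4$-fold, and then iterating $r$ times. So I would split the proof into two independent pieces: (i) $h^0(X,-K_X)=h^0(\w X,-K_{\w X})$, and (ii) $\chi(\w X,-K_{\w X})=\chi(Y,-K_Y)-15r$ together with the vanishing of higher cohomology, so that $h^0=\chi$ throughout.

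\textbf{Step (i): the SQM does not change $h^0(-K)$.}
First I would use Theorem \ref{SQM0} to factor the SQM $X\dasharrow\w X$ through the common smooth blow-up $\wi X\to X$ (blowing up the exceptional planes $L_i$) and $\wi X\to \w X$ (blowing up the exceptional lines $\ell_i$). Because the centers are the very controlled $L_i\cong\pr^2$ with $\ma N\cong\ma O(-1)^{\oplus 2}$ and $\ell_i\cong\pr^1$ with $\ma N\cong\ma O(-1)^{\oplus 3}$, the discrepancies of $f$ and $g$ are equal (both equal to $1$ for each exceptional divisor of the two blow-ups of codimension-two and codimension-three centers in dimension $4$), so $-K_X$ and $-K_{\w X}$ pull back to \emph{the same} divisor class on $\wi X$ up to the common exceptional divisor; concretely $f^*(-K_X)=g^*(-K_{\w X})$ as this class is $-K_{\wi X}$ minus the sum of the exceptional divisors with the matching coefficients. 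Pushing forward, $h^0(X,-K_X)=h^0(\wi X, f^*(-K_X))=h^0(\wi X, g^*(-K_{\w X}))=h^0(\w X,-K_{\w X})$, the outer equalities being the projection formula for the birational morphisms $f$ and $g$. This reduces everything to $\w X$.

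\textbf{Step (ii): the numerical drop under one blow-up of a point.}
Here I would argue purely numerically with Hirzebruch--Riemann--Roch. Write $\pi\colon \Bl_p Z\to Z$ for the blow-up of a smooth point in a smooth projective $4$-fold, $E\cong\pr^3$ the exceptional divisor. Then $K_{\Bl_p Z}=\pi^*K_Z+3E$, so $-K_{\Bl_p Z}=\pi^*(-K_Z)-3E$. I would compute $\chi(\Bl_p Z,-K_{\Bl_p Z})$ by HRR: the Todd class and the Chern character of $-K$ on $\Bl_p Z$ are pulled back from $Z$ up to the contributions supported on $E$, and all cross-terms involving $\pi^*(\text{anything})\cdot E$ vanish after pushing to a point because $\pi_*E^j$ is supported on $p$; the only surviving correction is the self-intersection contribution of the $-3E$ term together with the change in the Todd class localized along $E$. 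Carrying out this local computation (equivalently, computing $\chi(E,\cdot)$ for the restriction and using $E\cong\pr^3$ with $E|_E=\ma O_{\pr^3}(-1)$) yields the constant correction $-15$ independent of $Z$ and $p$. Iterating over the $r$ successive blow-ups gives $\chi(\w X,-K_{\w X})=\chi(Y,-K_Y)-15r$.

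\textbf{Assembling, and the main obstacle.}
Finally I would upgrade $\chi$ to $h^0$: since $Y$ is Fano, $\w X$ is Fano, and $X$ is Fano (hence $-K$ is nef and big at each stage, and the intermediate varieties have at worst the mild singularities controlled above), Kawamata--Viehweg vanishing gives $h^i(-K)=0$ for $i>0$ on each of $X$, $\w X$, and $Y$, so $h^0=\chi$ everywhere, and combining Steps (i) and (ii) yields $h^0(X,-K_X)=h^0(\w X,-K_{\w X})=\chi(\w X,-K_{\w X})=\chi(Y,-K_Y)-15r=h^0(Y,-K_Y)-15r$. I expect the genuinely delicate point to be the bookkeeping in Step (ii): one must be sure that \emph{only} the $E$-supported terms contribute to the difference $\chi(\Bl_pZ,-K)-\chi(Z,-K)$ and that this contribution is the universal constant $-15$ regardless of the geometry of $Z$; the cleanest way to see this is to note that the difference is itself computable as an Euler characteristic of a complex supported on $E\cong\pr^3$, making it intrinsic to the local model $(\pr^3\subset\Bl_0\C^4)$, so that a single explicit computation on $\pr^3$ with $\ma O_{\pr^3}(-1)=\ma N_{E}^\vee$ fixes the value once and for all.
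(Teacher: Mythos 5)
Your Step (ii) is essentially the paper's Lemma \ref{blowup} (one applies Hirzebruch--Riemann--Roch \eqref{RR} to $-K=\pi^*(-K_Z)-3E$ and finds $(-K)^4$ drops by $81$ and $(-K)^2\cdot c_2$ by $18$, giving $\tfrac{1}{12}(162+18)=15$), and that part is fine. But there are two genuine errors elsewhere. First, in Step (i) the discrepancies of $f$ and $g$ do \emph{not} match: blowing up a surface $L_i\cong\pr^2$ in a smooth $4$-fold has discrepancy $1$, while blowing up a curve $\ell_i$ has discrepancy $2$, so $f^*(-K_X)=g^*(-K_{\w{X}})-\sum_iE_i$ and the two pullbacks are not equal --- indeed Corollary \ref{flip} shows $(-K_X)^4=(-K_{\w{X}})^4+s$, which would be impossible if they were. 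The conclusion $h^0(X,-K_X)=h^0(\w{X},-K_{\w{X}})$ is nevertheless true, for the much simpler reason that a SQM is an isomorphism in codimension one and sections of a line bundle on a normal variety extend over codimension-two subsets; so this error is reparable.

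Second, and more seriously, your assembly requires $h^0(\w{X},-K_{\w{X}})=\chi(\w{X},-K_{\w{X}})$, which you justify by asserting that $\w{X}$ is Fano (and that $-K$ is nef and big at each intermediate stage) and invoking Kawamata--Viehweg. This is false: if the SQM is nontrivial, $\w{X}$ contains exceptional lines $\ell$ with $-K_{\w{X}}\cdot\ell=-1$, so $-K_{\w{X}}$ is not nef, and it need not even be big; in Example \ref{example1} one has $(-K_{\w{X}_8})^4=-23$. The same problem affects the intermediate blow-ups between $\w{X}$ and $Y$. This is exactly the gap that the paper's Corollary \ref{flip} closes: one transfers the \emph{Euler characteristic} $\chi(\cdot,-K)$ (not $h^0$) across the SQM by an explicit Riemann--Roch computation on the common resolution $\wi{X}$ of Theorem \ref{SQM0} --- the blow-up of an exceptional plane and the blow-up of an exceptional line each drop $\chi(-K)$ by exactly $3$ (Lemmas \ref{blowupcurve} and \ref{blowupexcplane}), so the two contributions cancel --- and then converts $\chi$ to $h^0$ only at the two Fano endpoints $X$ and $Y$, where Kodaira vanishing applies. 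As written, your chain of equalities breaks at $h^0(\w{X},-K_{\w{X}})=\chi(\w{X},-K_{\w{X}})$.
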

\noindent In order to prove the above relation, we need to analyse the behaviour of the Euler characteristic of the anticanonical sheaf of a smooth $4$-fold under the blow-up of a smooth point and under a $K$-negative flip.

We recall that if $X$ is a smooth projective $4$-fold and $D$ is a divisor on $X$, the Hirzebruch-Riemann-Roch formula \cite[Cor.~15.2.1]{fultonint} gives 
\stepcounter{thm}
\begin{equation}\label{RR0}
 \chi(X,D)=\frac{1}{24}\Big(D^4-2K_X\cdot D^3+D^2\left(K_X^2+c_2(X)\right)-D\cdot K_X\cdot c_2(X)\Big)+\chi(X,\ma{O}_X).
\end{equation}
In particular for the anticanonical divisor we have: 
\stepcounter{thm}\begin{equation}\label{RR}
\chi(X,-K_{X})=\frac{1}{12}\bigl(2(-K_{X})^4+
(-K_{X})^2\cdot c_2(X)\bigr)+\chi(X,\ma{O}_{X}).
\end{equation}
\begin{remark}[\cite{fultonint}, Ex.~15.4.3]\label{c2}
Let $Y$ be a smooth projective $4$-fold and  $f\colon X\to Y$ the blow-up along a smooth irreducible subvariety $A\subset Y$ of codimension $d$. Let $E$ be the exceptional divisor, so that $E=\pr_A(\ma{N}^*_{A/Y})\subset X$ where $\ma{N}_{A/Y}$ is the normal bundle\footnote{We use Grothendieck's notation for the projectivization of a vector bundle.} of $A$ in $Y$. Moreover let $j\colon E\hookrightarrow X$ be the inclusion, and set $\zeta:=c_1(\ma{O}_{\ma{N}^*_{A/Y}}(1))\in H^2(E,\Z)$. Then we have $-K_X=f^*(-K_Y)-(d-1)E$ and
$$c_2(X)=f^*c_2(Y)-j_*\left((d-1)f_{|E}^*(c_1(A))+\frac{d(d-3)}{2}\zeta+(d-2)
f_{|E}^*c_1(\ma{N}_{A/Y})\right).$$
\end{remark}
\begin{lemma}\label{blowup}
Let $Y$ be a smooth projective $4$-fold
 and $f\colon X\to Y$ the blow-up at a point. Let $E\subset X$ be the exceptional divisor (so that $E\cong\pr^3$), $H\subset E$ a plane, and  $[H]\in H^4(X,\Z)$ the fundamental class of $H$.
Then we have:
\begin{align*}
 c_2(X)&=f^*c_2(Y)-2[H],& (-K_X)^4&=(-K_Y)^4-81,\\
 (-K_X)^2\cdot c_2(X)&=(-K_Y)^2\cdot c_2(Y)-18,
 &
\chi(X,-K_X)&=\chi(Y,-K_Y)-15.
\end{align*}
\end{lemma}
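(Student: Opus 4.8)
Lemma \ref{blowup} asks to compute the change in four invariants under the blow-up of a single point in a smooth projective 4-fold. The plan is to specialize Remark \ref{c2} to the case $d=4$, $A=\{pt\}$, and then feed the resulting formula for $c_2(X)$ into the Riemann–Roch expression \eqref{RR}, tracking each intersection number.

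First I would record the exceptional divisor: since $d=4$ and $A$ is a point, $E=\pr_A(\ma{N}^*_{A/Y})\cong\pr^3$, so $c_1(A)=0$ and $c_1(\ma{N}_{A/Y})=0$. Remark \ref{c2} then collapses to $c_2(X)=f^*c_2(Y)-j_*\bigl(\tfrac{d(d-3)}{2}\zeta\bigr)=f^*c_2(Y)-2\,j_*\zeta$, using $\tfrac{4\cdot 1}{2}=2$. Here $\zeta=c_1(\ma{O}_{\pr^3}(1))$ on $E\cong\pr^3$, and $j_*\zeta$ is the class in $H^4(X,\Z)$ of a plane $H\subset E$; thus $c_2(X)=f^*c_2(Y)-2[H]$, which is the first claimed identity. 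I would also note for later use the standard relation $-K_X=f^*(-K_Y)-3E$ (the $(d-1)=3$ case of Remark \ref{c2}).

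Next I would compute $(-K_X)^4$. Writing $-K_X=f^*(-K_Y)-3E$ and expanding, the cross terms involving both $f^*(-K_Y)$ and $E$ vanish because $f_*E=0$ and $f^*(-K_Y)$ restricted to $E$ is trivial (pullback of a class from a point), so only $(f^*(-K_Y))^4=(-K_Y)^4$ and the pure term $(-3E)^4=81\,E^4$ survive. Since $E\cong\pr^3$ with $\ma{N}_{E/X}\cong\ma{O}_{\pr^3}(-1)$, one has $E^4=(E|_E)^3=(-H)^3\cdot(\text{sign})$; the standard computation gives $E^4=-1$, yielding $(-K_X)^4=(-K_Y)^4+81\cdot(-1)=(-K_Y)^4-81$. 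For the mixed invariant $(-K_X)^2\cdot c_2(X)$, I would substitute both $-K_X=f^*(-K_Y)-3E$ and $c_2(X)=f^*c_2(Y)-2[H]$ and expand; the term $(f^*(-K_Y))^2\cdot f^*c_2(Y)=(-K_Y)^2\cdot c_2(Y)$ is the main piece, while all terms pairing $f^*$-classes against $E$ or $[H]$ across different degrees vanish by projection formula, leaving a single correction coming from $(-3E)^2\cdot(-2[H])=-18\,E^2\cdot[H]$. Computing $E^2\cdot[H]$ on $\pr^3$ (with $H$ a plane and $E^2=-\zeta$ under $j_*$) gives $E^2\cdot[H]=1$, so $(-K_X)^2\cdot c_2(X)=(-K_Y)^2\cdot c_2(Y)-18$.

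Finally, the Euler characteristic follows by plugging the two numerical identities into \eqref{RR}, together with $\chi(X,\ma{O}_X)=\chi(Y,\ma{O}_Y)$ (birational invariance of $\chi(\ma{O})$ for smooth varieties, since blow-up of a smooth center preserves it):
\[
\chi(X,-K_X)=\frac{1}{12}\bigl(2(-K_Y)^4-162+(-K_Y)^2\cdot c_2(Y)-18\bigr)+\chi(Y,\ma{O}_Y)
=\chi(Y,-K_Y)-\frac{180}{12}=\chi(Y,-K_Y)-15.
\]
The only genuinely delicate points are the sign and value conventions in the self-intersections $E^4=-1$ and $E^2\cdot[H]=1$; everything else is bookkeeping via the projection formula. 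I expect the main obstacle to be keeping the signs straight in these exceptional-divisor intersection numbers, since an error there propagates directly into the final constant $-15$.
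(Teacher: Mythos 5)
Your proposal is correct and follows essentially the same route as the paper: specialize Remark~\ref{c2} to a point (giving $c_2(X)=f^*c_2(Y)-2[H]$), compute the intersection numbers via the projection formula and the self-intersections $E^4=-1$, $E^2\cdot[H]=1$ on $E\cong\pr^3$, and conclude with \eqref{RR}. The paper's proof is just a terser version of the same computation, so no further comment is needed.
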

\begin{proof}
The formula for  $c_2(X)$ follows from Rem.~\ref{c2}. The intersections are given by a simple computation, using also \cite[Rem.~3.2.4]{fultonint} in $E$,
and the last equality follows from  \eqref{RR}.
\end{proof}
The following Lemmas \ref{blowupcurve} and \ref{blowupexcplane} are proven in the same way as Lemma \ref{blowup}.
\begin{lemma}\label{blowupcurve}
Let $Y$ be a smooth projective $4$-fold
 and  $f\colon X\to Y$ the blow-up  along a smooth irreducible curve $C$ of genus $g(C)$. Let $E\subset X$ be the exceptional divisor,  $F\subset E$ a fiber of the $\pr^2$-bundle $E\to C$, and $[F]\in H^4(X,\Z)$ the fundamental class of $F$. Set $d(C):=-K_Y\cdot C+2-2g(C)$.
Then we have:
\begin{align*}
c_2(X)&=f^*c_2(Y)-d(C)[F], &(-K_X)^2\cdot c_2(X)&=(-K_Y)^2\cdot c_2(Y)-4d(C),\\
(-K_X)^4&=(-K_Y)^4-16d(C), &\chi(X,-K_X)&=\chi(Y,-K_Y)-3d(C).
\end{align*}
\end{lemma}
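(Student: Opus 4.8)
The plan is to follow exactly the template of Lemma \ref{blowup}: specialize the general blow-up formula of Remark \ref{c2} to the codimension $d=3$ of a curve in a $4$-fold, and then reduce every intersection number to a computation on the $\pr^2$-bundle $f_{|E}\colon E=\pr_C(\ma{N}^*_{C/Y})\to C$ and on a single fibre $F\cong\pr^2$. The last identity for $\chi(X,-K_X)$ will then drop out of \eqref{RR}.

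First I would record from Remark \ref{c2} that in the case $d=3$ one has $-K_X=f^*(-K_Y)-2E$ and
\[
c_2(X)=f^*c_2(Y)-j_*\bigl(2\,f_{|E}^*c_1(C)+f_{|E}^*c_1(\ma{N}_{C/Y})\bigr),
\]
the $\zeta$-term of Remark \ref{c2} disappearing because $\tfrac{d(d-3)}{2}=0$ for $d=3$. Since $c_1(C)$ and $c_1(\ma{N}_{C/Y})$ lie in $H^2(C,\Z)$ they are multiples of the point class, whose $f_{|E}^*$-pullback pushes forward under $j_*$ to the fibre class $[F]\in H^4(X,\Z)$; thus the bracket contributes $\bigl(2\deg c_1(C)+\deg\ma{N}_{C/Y}\bigr)[F]$. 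Using $\deg c_1(C)=2-2g(C)$ together with adjunction $2g(C)-2=K_Y\cdot C+\deg\ma{N}_{C/Y}$, this coefficient simplifies to $(-K_Y\cdot C)+2-2g(C)=d(C)$, giving the first identity $c_2(X)=f^*c_2(Y)-d(C)[F]$.

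For the three intersection numbers I would expand the powers of $-K_X=f^*(-K_Y)-2E$ and discard all mixed terms of the form $f^*(\gamma)\cdot E^k$ with $\gamma$ a class of degree $\geq 4$ on $Y$: restricted to $E$ such a $\gamma$ factors through $H^{\geq 4}(C)=0$, hence these terms vanish (in particular $f^*c_2(Y)\cdot[F]=0$). The surviving contributions are computed by restriction to $E$, where $\ma{O}_X(E)|_E=\ma{O}_E(-1)$ gives $E|_E=-\zeta$, and on a fibre $F\cong\pr^2$ one has $E|_F=-h$ and $-K_X|_F=2h$, with $h$ the hyperplane class. This yields at once $(-K_X)^2\cdot[F]=4$ and $f^*(-K_Y)\cdot E^3=-K_Y\cdot C$, while $\int_X E^4=-\int_E\zeta^3$ is evaluated by the Grothendieck relation on $E=\pr_C(\ma{N}^*_{C/Y})$, the higher Chern classes of $\ma{N}^*_{C/Y}$ vanishing on the curve $C$; this gives $\int_E\zeta^3=\deg c_1(\ma{N}^*_{C/Y})=-\deg\ma{N}_{C/Y}$, so that $\int_X E^4=\deg\ma{N}_{C/Y}$. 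Collecting these, and once more invoking adjunction to repackage $-K_Y\cdot C$, $g(C)$ and $\deg\ma{N}_{C/Y}$ into the single invariant $d(C)$, one obtains $(-K_X)^2\cdot c_2(X)=(-K_Y)^2\cdot c_2(Y)-4d(C)$ and $(-K_X)^4=(-K_Y)^4-16d(C)$.

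Finally, the formula for $\chi(X,-K_X)$ follows by substituting the two preceding identities into \eqref{RR} together with $\chi(X,\ma{O}_X)=\chi(Y,\ma{O}_Y)$ (the blow-up of a smooth centre leaves $\chi(\ma{O})$ unchanged), producing the correction $-\tfrac{1}{12}(2\cdot 16+4)\,d(C)=-3d(C)$. The only genuinely delicate point is the bookkeeping of signs in the pure self-intersection $\int_X E^4$ through the Grothendieck/Segre relation, and the consistent use of adjunction; everything else is a routine expansion via the projection formula, exactly as in Lemma \ref{blowup}.
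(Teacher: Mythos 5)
Your proposal is correct and follows exactly the route the paper intends: the paper's proof of this lemma is simply ``proven in the same way as Lemma \ref{blowup}'', i.e.\ specialize Remark \ref{c2} (here to $d=3$, where the $\zeta$-term drops out), compute the intersection numbers on the exceptional $\pr^2$-bundle, and conclude via \eqref{RR}. All your sign conventions and intermediate values ($\int_E\zeta^3=\deg\ma{N}^*_{C/Y}$, $E^4=\deg\ma{N}_{C/Y}$, $(-K_X)^2\cdot[F]=4$, $f^*(-K_Y)\cdot E^3=-K_Y\cdot C$) check out and assemble correctly into the four stated identities.
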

\begin{lemma}\label{blowupexcplane}
Let $Y$ be a smooth projective $4$-fold
 and  $f\colon X\to Y$ the blow-up  along an exceptional plane.
Let $E\subset X$ be the exceptional divisor, so that
 $E\cong \pr^1\times \pr^2$. Set $S:=\{pt\}\times\pr^2\subset E$ and $T:=\pr^1\times l\subset E$, where $l\subset\pr^2$ is a line, and let $[S],[T]\in H^4(X,\Z)$ be the fundamental classes of $S$ and $T$ respectively. Then we have:
\begin{align*}
c_2(X)&=f^*c_2(Y)+[S]-2[T],&
(-K_X)^4&=(-K_Y)^4-17,\\
 (-K_X)^2\cdot c_2(X)&=(-K_Y)^2\cdot c_2(Y)-2,&
\chi(X,-K_X)&=\chi(Y,-K_Y)-3.
\end{align*}
\end{lemma}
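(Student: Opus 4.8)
The plan is to mirror the computation in Lemma \ref{blowup} exactly, the only new ingredient being the intersection theory on the exceptional divisor $E\cong\pr^1\times\pr^2$. First I would invoke Remark \ref{c2} in the case of a blow-up along a smooth surface $A$, so the codimension is $d=2$. With $d=2$ the canonical bundle formula reads $-K_X=f^*(-K_Y)-E$, and the quadratic term in $d$ vanishes, so the $c_2$ formula collapses to
$$c_2(X)=f^*c_2(Y)-j_*\bigl(f_{|E}^*c_1(A)\bigr),$$
where $A$ is the exceptional plane, $A\cong\pr^2$. Here $E=\pr_A(\ma{N}^*_{A/Y})$, and since the normal bundle of an exceptional plane is $\ma{O}_{\pr^2}(-1)^{\oplus 2}$, the projectivization is the trivial $\pr^1$-bundle, giving $E\cong\pr^1\times\pr^2$ as claimed. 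The class $f_{|E}^*c_1(A)=f_{|E}^*c_1(\pr^2)=3f_{|E}^*[l]$ pushes forward under $j_*$ to a multiple of the fibre class $[T]=\pr^1\times l$; tracking the coefficient and comparing with the stated answer $c_2(X)=f^*c_2(Y)+[S]-2[T]$ is the one genuinely new bookkeeping step, and I expect this to be the main (though still routine) obstacle: one must carefully identify how $\zeta=c_1(\ma{O}(1))$ and the pullback classes on $E$ correspond to the geometric classes $[S]$ and $[T]$, possibly using the relation defining the tautological class on $E$ to rewrite $j_*$ of the relevant cycles.

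Once $c_2(X)$ is expressed in terms of $[S]$ and $[T]$, the remaining quantities are pure intersection numbers on $X$. Using $-K_X=f^*(-K_Y)-E$ together with the standard blow-up relations $f^*(\alpha)\cdot E = 0$ for $\alpha$ a pullback of a class from $Y$ of complementary dimension, and the self-intersection numbers of $E$ computed via the Chern classes of $\ma{N}_{A/Y}$ and the Segre classes of the $\pr^1\times\pr^2$ geometry, I would compute $(-K_X)^4$, $(-K_X)^2\cdot c_2(X)$, and finally feed these into the Hirzebruch–Riemann–Roch expression \eqref{RR}. Since $f$ is birational, $\chi(X,\ma{O}_X)=\chi(Y,\ma{O}_Y)$, so the $\chi(\ma{O})$ terms cancel and
$$\chi(X,-K_X)-\chi(Y,-K_Y)=\frac{1}{12}\bigl(2[(-K_X)^4-(-K_Y)^4]+[(-K_X)^2\cdot c_2(X)-(-K_Y)^2\cdot c_2(Y)]\bigr),$$
into which the values $-17$ and $-2$ are substituted to yield $-3$.

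The verification of the individual intersection numbers $(-K_X)^4=(-K_Y)^4-17$ and $(-K_X)^2\cdot c_2(X)=(-K_Y)^2\cdot c_2(Y)-2$ is where the concrete geometry of $\pr^1\times\pr^2$ enters: expanding $(f^*(-K_Y)-E)^4$ by the binomial theorem, all mixed terms $f^*(-K_Y)^{4-i}\cdot E^i$ with $0<i<4$ that survive reduce to intersection numbers on $E$, which are evaluated using $S$ and $T$ as the generators of $H^4(E,\Z)$ together with the restriction $E_{|E}=-\zeta$ and the relation between $\zeta$ and $c_1(\ma{N}_{A/Y})$. Because the paper explicitly says these lemmas ``are proven in the same way as Lemma \ref{blowup},'' I would present the argument as a transcription of that proof with the data $(-K_X)=f^*(-K_Y)-E$, $E\cong\pr^1\times\pr^2$, and the two self-intersection inputs, and merely record the final four numerical outputs; the only real work, as noted, is pinning down the coefficients of $[S]$ and $[T]$ in $c_2(X)$, after which everything is a mechanical computation closed off by \eqref{RR}.
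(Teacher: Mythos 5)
Your overall strategy coincides with the paper's, which offers no more detail than ``proven in the same way as Lemma \ref{blowup}'': specialize Remark \ref{c2} to $d=2$, compute the intersection numbers on $E\cong\pr^1\times\pr^2$, and feed the results into \eqref{RR}. However, the one formula you commit to --- in the step you yourself single out as the only genuinely new piece of bookkeeping --- is wrong. At $d=2$ the coefficient $\frac{d(d-3)}{2}$ equals $-1$, not $0$ (it vanishes at $d=3$, which is the curve case of Lemma \ref{blowupcurve}, not at $d=2$); the term that actually drops out is $(d-2)f_{|E}^*c_1(\ma{N}_{A/Y})$. The correct specialization is
$$c_2(X)=f^*c_2(Y)-j_*\bigl(f_{|E}^*c_1(A)-\zeta\bigr),$$
and the extra $j_*\zeta$ is essential: since $\ma{N}^*_{A/Y}\cong\ma{O}_{\pr^2}(1)^{\oplus 2}$, on $E\cong\pr^1\times\pr^2$ the class $\zeta$ is the sum of the pullbacks of the point class from $\pr^1$ and of the hyperplane class from $\pr^2$, so $j_*\zeta=[S]+[T]$, which combined with $j_*\bigl(f_{|E}^*c_1(A)\bigr)=3[T]$ gives exactly $[S]-2[T]$. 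Your collapsed formula instead yields $c_2(X)=f^*c_2(Y)-3[T]$.

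This is not a harmless slip, because the discrepancy $[S]+[T]$ is not numerically trivial against $(-K_X)^2$. Using $(-K_Y)_{|A}=\ma{O}_{\pr^2}(1)$ (adjunction with $\det\ma{N}_{A/Y}=\ma{O}_{\pr^2}(-2)$) and $E_{|E}=-\zeta$, one computes $(-K_X)^2\cdot[S]=(-K_X)^2\cdot[T]=4$, so your version of $c_2(X)$ would give $(-K_X)^2\cdot c_2(X)=(-K_Y)^2\cdot c_2(Y)-10$ and then $\chi(X,-K_X)-\chi(Y,-K_Y)=\tfrac{1}{12}\bigl(2(-17)-10\bigr)=-\tfrac{44}{12}$, which is not even an integer. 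The remainder of your plan --- the binomial expansion of $(f^*(-K_Y)-E)^4$ via $E_{|E}=-\zeta$, the cancellation of the $\chi(\ma{O})$ terms, and the final substitution into \eqref{RR} --- is sound and does reproduce $(-K_X)^4=(-K_Y)^4-17$ and $\chi(X,-K_X)=\chi(Y,-K_Y)-3$ once the $c_2$ step is corrected as above.
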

\begin{corollary}\label{flip}
Let $X$ be a smooth Fano $4$-fold, $\ph\colon X\dasharrow \w{X}$ a SQM, and 
 $s$ the number of connected components of $X\smallsetminus\dom\ph$. Then $\chi(X,-K_X)=\chi(\w{X},-K_{\w{X}})$ and $(-K_X)^4=(-K_{\w{X}})^4+s$.
\end{corollary}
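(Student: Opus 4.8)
The plan is to exploit the factorization of the SQM $\ph$ through a common smooth blow-up provided by Theorem \ref{SQM0}, and then to compare both $\chi(\cdot,-K)$ and $(-K)^4$ for $X$ and for $\w{X}$ against the intermediate variety, using the explicit blow-up formulas of Lemmas \ref{blowupexcplane} and \ref{blowupcurve}. Concretely, I would first invoke Theorem \ref{SQM0}: since $X$ is a smooth Fano $4$-fold, $\w{X}$ is smooth, the locus $X\smallsetminus\dom(\ph)$ is a disjoint union of exactly $s$ exceptional planes $L_1,\dotsc,L_s$ (so that $s$ is indeed the number of connected components), while $\w{X}\smallsetminus\dom(\ph^{-1})$ is a disjoint union of the same number $s$ of exceptional lines $\ell_1,\dotsc,\ell_s$, and $\ph$ factors as $X\xleftarrow{f}\widehat{X}\xrightarrow{g}\w{X}$, where $f$ is the blow-up of $L_1\cup\cdots\cup L_s$ and $g$ is the blow-up of $\ell_1\cup\cdots\cup\ell_s$. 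In particular $\widehat{X}$ is smooth.

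Next, since the $L_i$ are pairwise disjoint, I would apply Lemma \ref{blowupexcplane} once for each $L_i$, blowing them up one at a time; disjointness guarantees that each $L_i$ remains an exceptional plane after the others are blown up, so the contributions simply add up. This gives
$$\chi(\widehat{X},-K_{\widehat{X}})=\chi(X,-K_X)-3s,\qquad (-K_{\widehat{X}})^4=(-K_X)^4-17s.$$
For the map $g$, I would apply Lemma \ref{blowupcurve} once for each of the disjoint smooth rational curves $\ell_i$. The key input here is the value of the invariant $d(\ell_i)=-K_{\w{X}}\cdot\ell_i+2-2g(\ell_i)$: since $\ell_i\cong\pr^1$ we have $g(\ell_i)=0$, and since $\ell_i$ is an exceptional line we have $K_{\w{X}}\cdot\ell_i=1$, i.e. $-K_{\w{X}}\cdot\ell_i=-1$, whence $d(\ell_i)=1$. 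Disjointness again ensures this value is unchanged as the curves are blown up successively, because the intersection of $\ell_i$ with the exceptional divisor over any $\ell_j$ ($j\neq i$) vanishes. Lemma \ref{blowupcurve} with $d(\ell_i)=1$ then yields
$$\chi(\widehat{X},-K_{\widehat{X}})=\chi(\w{X},-K_{\w{X}})-3s,\qquad (-K_{\widehat{X}})^4=(-K_{\w{X}})^4-16s.$$

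Finally, I would compare the two pairs of relations. From the $\chi$-equations, both $\chi(X,-K_X)$ and $\chi(\w{X},-K_{\w{X}})$ equal $\chi(\widehat{X},-K_{\widehat{X}})+3s$, giving $\chi(X,-K_X)=\chi(\w{X},-K_{\w{X}})$. Subtracting the two $(-K)^4$-equations gives $(-K_X)^4-(-K_{\w{X}})^4=17s-16s=s$, i.e. $(-K_X)^4=(-K_{\w{X}})^4+s$, as claimed. I do not anticipate a serious obstacle: the only delicate point is recognizing that Lemma \ref{blowupcurve} is the correct tool for the exceptional lines and checking that $d(\ell_i)=1$ — this is precisely the source of the asymmetry between the coefficients $17$ and $16$, and hence of the surviving term $+s$ — together with the routine bookkeeping that disjointness of the centers makes the per-center contributions additive.
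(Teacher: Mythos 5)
Your proposal is correct and follows exactly the paper's argument: factor $\ph$ as $X\xleftarrow{f}\widehat{X}\xrightarrow{g}\w{X}$ via Theorem \ref{SQM0}, apply Lemma \ref{blowupexcplane} to the $s$ disjoint exceptional planes and Lemma \ref{blowupcurve} (with $d(\ell_i)=1$) to the $s$ disjoint exceptional lines, and compare. The only difference is that you spell out the additivity over disjoint centers and the computation of $d(\ell_i)$, which the paper leaves implicit.
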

\begin{proof}
Consider the factorization $X\stackrel{f}{\leftarrow}\widehat{X}\stackrel{g}{\to}\w{X}$ of $\ph$ given by Th.~\ref{SQM0}.
By Lemmas \ref{blowupcurve} and \ref{blowupexcplane} we have
$\chi(\widehat{X},-K_{\widehat{X}})=\chi(X,-K_X)-3s=\chi(\w{X},-K_{\w{X}})-3s$
and $(-K_{\widehat{X}})^4=(-K_X)^4-17s=(-K_{\w{X}})^4-16s$,
which yields the statement.
\end{proof}
\begin{proof}[Proof of Prop.~\ref{drop}]Notice that $\w{X}$ is smooth by Th.~\ref{SQM0}, and hence $Y$ is smooth.
By Cor.~\ref{flip} and Lemma \ref{blowup} we get
$$h^0(X,-K_X)=\chi(X,-K_X)=\chi(\w{X},-K_{\w{X}})=\chi(Y,-K_Y)-15r=h^0(Y,-K_Y)-15r.$$
\end{proof}
\subsection{Proof of Theorems \ref{manypoints} and \ref{manypointsgeneral}}
\noindent We start by analysing some properties of curves of low anticanonical degree in $Y$.
\begin{lemma}\label{locus}
Let $X$ be a smooth Fano $4$-fold, $\ph\colon X\dasharrow\w{X}$ a SQM, and $f\colon\w{X}\to Y$ a composition of $r$ blow-ups of a smooth point.
Let $C\subset Y$ be an integral curve.
\begin{enumerate}[(1)]
\item If $-K_Y\cdot C=1$, then $C$ is contained in the open subset where the map $f\circ\ph\colon X\dasharrow Y$ is an isomorphism. 
\item If $-K_Y\cdot C=2$ and $C\cap f(\Exc(f))\neq\emptyset$, then $C=f(\ell)$, $\ell\subset\w{X}$ an exceptional line.
\item
If $-K_Y\cdot C=3$, then $C\cap f(\Exc(f))=\emptyset$.
\end{enumerate}
\end{lemma}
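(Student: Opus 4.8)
The plan is to pull $C$ back to $\w{X}$ along $f$ and then read off everything from two facts: each blow-up in $f$ drops the anticanonical degree of a curve by a multiple of $3$, and irreducible curves in $\w{X}$ have anticanonical degree $\geq -1$ (Lemma \ref{SQM}(2)). Write $f$ as $\w{X}=Z_r\to\cdots\to Z_0=Y$, with $g_i\colon Z_i\to Z_{i-1}$ the blow-up of a smooth point $p_i$ and exceptional divisor $E_i\cong\pr^3$; since $\w{X}$ is smooth (Th.~\ref{SQM0}) and each $g_i$ blows up a smooth point, all $Z_i$ are smooth $4$-folds. Let $C=C_0,C_1,\dots,C_r=:\w{C}$ be the successive strict transforms and $m_i:=E_i\cdot C_i\in\Z_{\geq 0}$ the multiplicity of $C_{i-1}$ at $p_i$. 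From $-K_{Z_i}=g_i^*(-K_{Z_{i-1}})-3E_i$, the projection formula, and $(g_i)_*C_i=C_{i-1}$, I obtain the key relation
$$-K_{\w{X}}\cdot\w{C}=-K_Y\cdot C-3\sum_{i=1}^r m_i .$$

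Next I would record the crucial equivalence: $C$ meets $f(\Exc(f))$ if and only if some $m_i>0$. Indeed, if all $m_i=0$ then $\w{C}$ is disjoint from every $E_i$, hence from $\Exc(f)$, so its image $C$ avoids $f(\Exc(f))$; conversely a point of $C\cap f(\Exc(f))$ forces $p_i\in C_{i-1}$ for some $i$, i.e.\ $m_i>0$. Consequently $-K_{\w{X}}\cdot\w{C}\equiv -K_Y\cdot C\pmod 3$ always, with equality $-K_{\w{X}}\cdot\w{C}=-K_Y\cdot C$ exactly when $C\cap f(\Exc(f))=\emptyset$, and $-K_{\w{X}}\cdot\w{C}\leq-K_Y\cdot C-3$ otherwise. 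Also $f(\w{C})=C$, since $\w{C}$ is the strict transform.

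The three statements now drop out by comparison with Lemma \ref{SQM}(2). For (3): if $-K_Y\cdot C=3$ and $C$ met $f(\Exc(f))$, then $-K_{\w{X}}\cdot\w{C}\leq 0$, so $\w{C}$ would be an exceptional line of degree $-1$, contradicting $-1\not\equiv 3\pmod 3$; hence $C\cap f(\Exc(f))=\emptyset$. For (2): if $-K_Y\cdot C=2$ and $C$ meets $f(\Exc(f))$, then $-K_{\w{X}}\cdot\w{C}\leq-1$, so by Lemma \ref{SQM}(2) $\w{C}$ is an exceptional line $\ell$, and $C=f(\w{C})=f(\ell)$. For (1): if $-K_Y\cdot C=1$, then $C$ cannot meet $f(\Exc(f))$ (this would give degree $\leq-2<-1$), so $C$ avoids $f(\Exc(f))$, $f$ is an isomorphism near $\w{C}$, and $-K_{\w{X}}\cdot\w{C}=1$.

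To finish (1) I must further check that $C$ lies in the isomorphism locus of the full composite $f\circ\ph$, i.e.\ that $C$ also avoids the images of the exceptional lines of $\ph$. Since $-K_{\w{X}}\cdot\w{C}=1$, the curve $\w{C}$ lies in the set $T$ of Lemma \ref{SQM}(3); hence $\w{C}\subset\dom(\ph^{-1})$ and $\w{C}$ is disjoint from all exceptional lines. As $\w{C}$ is also disjoint from $\Exc(f)$, the inverse $\ph^{-1}\circ f^{-1}$ is regular and an isomorphism along $C$, as required. The only delicate point is the bookkeeping in the first step, namely showing that ``$C$ meets $f(\Exc(f))$'' corresponds exactly to a positive, $3$-divisible drop in anticanonical degree; the exactness of the congruence mod $3$ is precisely what excludes the borderline value $0$ in case (3), while everything else is a direct application of Lemma \ref{SQM}.
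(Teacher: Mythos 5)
Your proof is correct and follows essentially the same route as the paper's: both rest on the identity $-K_Y\cdot C=-K_{\w{X}}\cdot\w{C}+3(\text{nonnegative integer})$, with the correction term positive exactly when $C$ meets $f(\Exc(f))$, and then combine the mod-$3$ congruence with the bound $-K_{\w{X}}\cdot\w{C}\geq -1$ from Lemma \ref{SQM}(2) and, for part (1), with Lemma \ref{SQM}(3). The paper obtains the key relation in one stroke from $f^*(-K_Y)=-K_{\w{X}}+3E$ with $E$ effective supported on $\Exc(f)$, whereas you track the drop blow-up by blow-up via the multiplicities $m_i$; this is only a difference in bookkeeping.
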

\begin{proof}Notice that $\w{X}$ is smooth by Th.~\ref{SQM0}, and hence $Y$ is smooth.
Using the formula for the canonical divisor of a blow-up, we see that
$$f^*(-K_Y)=-K_{\w{X}}+3E,$$
where $E$ is an effective divisor with $\Supp(E)=\Exc(f)$.
 Notice that $C\not\subset f(\Exc(f))$, because $\dim f(\Exc(f))=0$;
let $\w{C}\subset\w{X}$ be the transform of $C$. We have
$$-K_Y\cdot C=-K_{\w{X}}\cdot\w{C}+3E\cdot\w{C},$$
and either 
 $C\cap f(\Exc(f))=\emptyset$, or $E\cdot\w{C}>0$.

Suppose that 
$1\leq -K_Y\cdot C\leq 3$ and  $C\cap f(\Exc(f))\neq\emptyset$.
Then 
$-K_{\w{X}}\cdot\w{C}\leq 0$, and Lemma \ref{SQM}(2) implies that $-K_{\w{X}}\cdot\w{C}=-1$ and that $\w{C}$ is an exceptional line. Moreover
$$-K_Y\cdot C=-1+3E\cdot\w{C}\equiv 2\mod 3,$$
thus $-K_Y\cdot C=2$. This gives (2) and (3).
If $-K_Y\cdot C=1$, we have shown that 
$C\cap f(\Exc(f))=\emptyset$ and $-K_{\w{X}}\cdot\w{C}=1$. Thus Lemma \ref{SQM}(3) yields that $\w{C}\subset\dom(\ph^{-1})$, and this gives (1).
\end{proof}
\begin{lemma}\label{3}
Let $X$ be a smooth Fano $4$-fold, $\ph\colon X\dasharrow\w{X}$ a SQM, and $f\colon\w{X}\to Y$ a composition of $r$ blow-ups of a smooth point, with $Y$ Fano.
Let $C\subset Y$ be a connected curve with $-K_Y\cdot C=3$. Then $C\cap f(\Exc(f))=\emptyset$.
\end{lemma}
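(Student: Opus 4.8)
The plan is to reduce the \emph{connected} curve $C$ to its irreducible components and apply Lemma \ref{locus} to each of them. As in the proof of Lemma \ref{locus}, $\w{X}$ is smooth by Th.~\ref{SQM0}, so $Y$ is a smooth Fano $4$-fold, $-K_Y$ is ample and Cartier, and we recall the relation $f^*(-K_Y)=-K_{\w{X}}+3E$ with $\Supp(E)=\Exc(f)$; set $\Sigma:=f(\Exc(f))$, the finite set of points blown up by $f$. Writing $C=C_1\cup\cdots\cup C_k$ as its irreducible components, each $C_i$ is integral with $-K_Y\cdot C_i\geq 1$, and $\sum_i(-K_Y\cdot C_i)=3$. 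Hence the multiset of component degrees is one of $\{3\}$, $\{2,1\}$, or $\{1,1,1\}$. By Lemma \ref{locus}(1) a degree-$1$ component is disjoint from $\Sigma$, and by Lemma \ref{locus}(3) a degree-$3$ component is disjoint from $\Sigma$. Therefore, if $C$ were to meet $\Sigma$, the only surviving possibility is the configuration $\{2,1\}$, with the degree-$2$ component meeting $\Sigma$.

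So I would argue by contradiction: suppose $C\cap\Sigma\neq\emptyset$, so that $C=C_1\cup C_2$ with $-K_Y\cdot C_1=1$, $-K_Y\cdot C_2=2$, and $C_2\cap\Sigma\neq\emptyset$. By Lemma \ref{locus}(2), $C_2=f(\ell)$ for some exceptional line $\ell\subset\w{X}$. On the other hand, $C_1$ is disjoint from $\Sigma$ by Lemma \ref{locus}(1), so $f$ is an isomorphism over a neighbourhood of $C_1$; its transform $\w{C_1}:=f^{-1}(C_1)$ is then an integral curve, and since $E\cdot\w{C_1}=0$ the projection formula gives $-K_{\w{X}}\cdot\w{C_1}=-K_Y\cdot C_1=1$. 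Consequently $\w{C_1}$ is contained in the locus $T\subset\w{X}$ of Lemma \ref{SQM}(3), the union of integral curves of anticanonical degree $1$.

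Finally I would exploit connectedness to produce the contradiction. Since $C=C_1\cup C_2$ is connected, there is a point $q\in C_1\cap C_2$; as $q\in C_1$ we have $q\notin\Sigma$, so $f^{-1}(q)$ is a single point $\w{q}$ and $f$ is an isomorphism near $\w{q}$. Because $f$ restricts to an isomorphism over a neighbourhood of $C_1$, the point $\w{q}=f^{-1}(q)$ lies on $\w{C_1}$; and because $f(\ell)=C_2\ni q$, the \emph{unique} preimage $\w{q}$ of $q$ must also lie on $\ell$. Thus $\w{q}\in\w{C_1}\cap\ell$, i.e.\ the degree-$1$ curve $\w{C_1}\subseteq T$ meets the exceptional line $\ell$, contradicting Lemma \ref{SQM}(3); hence $C\cap\Sigma=\emptyset$. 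The only delicate point — and the one where the hypothesis that $C$ is merely \emph{connected} (rather than integral, as in Lemma \ref{locus}) genuinely matters — is checking that the node $q$ lifts to a single point lying simultaneously on $\w{C_1}$ and on $\ell$; the structural input that anticanonical-degree-$1$ curves cannot meet exceptional lines (Lemma \ref{SQM}(3)) then closes the argument.
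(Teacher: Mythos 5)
Your proof is correct and follows essentially the same route as the paper: reduce to the irreducible components, use Lemma \ref{locus} to force the configuration of a degree-$2$ component equal to $f(\ell)$ meeting a degree-$1$ component, and derive a contradiction at the intersection point from the fact that integral curves of anticanonical degree $1$ cannot meet exceptional lines (Lemma \ref{SQM}(3)). The paper phrases the final contradiction via Lemma \ref{locus}(1) (the degree-$1$ component lies where $f\circ\ph$ is an isomorphism), but this is the same mechanism you invoke directly.
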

\begin{proof}
If $C$ is integral, this is Lemma \ref{locus}(3). 
If $C$ is not integral, 
suppose by contradiction that $C\cap f(\Exc(f))\neq \emptyset$, and let 
 $C_1$ be an irreducible component of $C$ intersecting $f(\Exc(f))$. Since $Y$ is Fano, we have $-K_Y\cdot C_1\in\{1,2\}$, and Lemma \ref{locus}(1)-(2) yields that $-K_Y\cdot C_1=2$ and the transform $\w{C}_1$ of $C_1$ in $\w{X}$ is an exceptional line.

Hence we must have
$C=C_1\cup C_2$ with $C_2$ integral and $-K_Y\cdot C_2=1$. By Lemma \ref{locus}(1) the curve $C_2$ is contained in the open subset where the map $f\circ\ph\colon X\dasharrow Y$ is an isomorphism, but this is impossible, 
because $C_1\cap C_2\neq\emptyset$ and $C_1$ is the image of an exceptional line in $\w{X}$.
\end{proof}
\begin{lemma}\label{sequence}
 Let $X$ be a smooth Fano $4$-fold, $\ph\colon X\dasharrow\w{X}$ a SQM, and  $f\colon\w{X}\to Y$ a composition of $r$ blow-ups of a smooth point.
Then $f$ is the blow-up of $r$ distinct points in $Y$.
\end{lemma}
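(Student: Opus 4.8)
The plan is to argue by contradiction, translating the statement into a condition on the centers of the successive blow-ups. Write $f$ as a tower of point blow-ups $\w{X}=Y_r\to Y_{r-1}\to\cdots\to Y_1\to Y_0=Y$, where each $g_k\colon Y_k\to Y_{k-1}$ is the blow-up of a smooth point $p_k\in Y_{k-1}$; recall that $\w{X}$, and hence every $Y_k$ and $Y$ itself, is smooth by Th.~\ref{SQM0}. The first step is to reformulate the claim: $f$ is the blow-up of $r$ distinct points of $Y$ if and only if no center $p_i$ lies on the exceptional locus of the preceding map $Y_{i-1}\to Y$. Indeed, if every center avoids the previous exceptional loci, then $Y_{i-1}\to Y$ is an isomorphism near $p_i$, the images $q_1,\dots,q_r\in Y$ are forced to be pairwise distinct, and the tower coincides with $\Bl_{q_1,\dots,q_r}Y$; conversely a center lying on a previous exceptional divisor destroys this structure. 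So I assume that some center lies on a previous exceptional divisor, and I take the minimal such index $i$.

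By minimality, $Y_{i-1}\to Y$ is the blow-up of $i-1$ distinct points, so its exceptional divisors are pairwise disjoint copies of $\pr^3$ with normal bundle $\ma{O}_{\pr^3}(-1)$, and $p_i$ lies on one of them, say $G\cong\pr^3$. I would then choose a line $\lambda\subset G$ through $p_i$ and compute its anticanonical degree: using $-K=\pi^*(-K)-3E$ for each point blow-up, together with $E\cdot\lambda=-1$ on the pristine exceptional $\pr^3$, one finds $-K_{Y_{i-1}}\cdot\lambda=3$.

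Next I track the anticanonical degree of the strict transform of $\lambda$ through the rest of the tower. Blowing up $p_i$ lowers it by $3\cdot\operatorname{mult}_{p_i}(\lambda)=3$, so $-K_{Y_i}\cdot\w\lambda=0$; each subsequent point blow-up either leaves the degree unchanged or decreases it, and always by a nonnegative multiple of $3$. Hence the (irreducible) strict transform $\Lambda\subset\w{X}$ satisfies $-K_{\w{X}}\cdot\Lambda\leq 0$ and $-K_{\w{X}}\cdot\Lambda\equiv 0\pmod 3$, so in particular $-K_{\w{X}}\cdot\Lambda\neq-1$. Since $\w{X}$ is a SQM of the smooth Fano $X$, this contradicts Lemma \ref{SQM}(2), which forces every irreducible curve of nonpositive anticanonical degree to be an exceptional line of degree exactly $-1$.

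The only delicate point — and the reason for choosing the minimal offending index — is guaranteeing a \emph{pristine} exceptional $\pr^3$ containing $p_i$, on which the computation $-K_{Y_{i-1}}\cdot\lambda=3$ is valid; without minimality the divisor through $p_i$ could itself have been modified at an earlier stage, and the clean degree count would fail. Once this is in place, the congruence $-K_{\w{X}}\cdot\Lambda\equiv 0\pmod 3$ is exactly what excludes the value $-1$ permitted by Lemma \ref{SQM}(2), turning the degree estimate into a genuine contradiction.
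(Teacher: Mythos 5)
Your proof is correct and takes essentially the same route as the paper's: the paper argues by induction on $r$, peeling off one blow-up and invoking Lemma \ref{locus}(3), whose proof is exactly your mod-$3$ anticanonical-degree computation combined with Lemma \ref{SQM}(2). Your minimal-counterexample setup simply replays that computation inline for a line of degree $3$ in a pristine exceptional $\pr^3$ through the first offending center.
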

\begin{proof}
We proceed by induction on $r\geq 0$. If $r=0$ or $r=1$, there is nothing to prove. For the general case, consider a factorization of $f$ as in the following diagram:
$$\xymatrix{{\w{X}}\ar[r]_g\ar@/^1pc/[rr]^f&Z\ar[r]_h&Y
}$$
where $h$ is a single blow-up and $g$ is a composition of $r-1$ blow-ups. 

By the induction assumption, $g$ is the blow-up of $r-1$ distinct points $p_1,\dotsc,p_{r-1}\in Z$. 
We have $\Exc(h)\cong\pr^3$, and if $C\subset \Exc(h)$ is a line, then $-K_Z\cdot C=3$. By Lemma \ref{locus}(3), we have $\{p_1,\dotsc,p_{r-1}\}\cap C=\emptyset$, therefore $p_i\not\in\Exc(h)$ for every $i=1,\dotsc,r-1$. This yields the statement.
\end{proof}
The next result follows from the classification of Fano $3$-folds in the case of index $>1$, and has been proved by Hwang \cite{hwang4folds} in the case of index $1$.
\begin{thm}\label{rho1}
Let $Y$ be a smooth Fano $4$-fold with $\rho_Y=1$. We have the following: 
\begin{enumerate}[(1)]
\item $h^0(Y,-K_Y)\leq 126$, with equality if and only if $Y\cong\pr^4$;
\item if $Y\not\cong\pr^4$, then either $h^0(Y,-K_Y)\leq 105$, or $Y$ has index $1$ and the minimal anticanonical degree of a
 covering family of rational curves in $Y$ is $3$.
\end{enumerate}
\end{thm}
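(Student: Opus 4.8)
The plan is to reduce everything to the Fano index and to the classification of lower-dimensional Fano manifolds, isolating the index-$1$ case for Hwang's theorem. Since $-2K_Y$ is ample, applying Kodaira vanishing to $-K_Y=K_Y+(-2K_Y)$ gives $h^i(Y,-K_Y)=0$ for $i>0$, so $h^0(Y,-K_Y)=\chi(Y,-K_Y)$ and it suffices to bound this Euler characteristic. Writing $-K_Y=i_YH$ with $H$ the ample generator of $\Pic(Y)\cong\Z$, the index satisfies $1\le i_Y\le 5$, and by Kobayashi--Ochiai $i_Y=5$ holds exactly when $Y\cong\pr^4$ and $i_Y=4$ exactly when $Y$ is the smooth quadric $Q^4\subset\pr^5$.

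First I would dispose of the two highest-index cases by direct computation. For $\pr^4$ we get $h^0(-K_Y)=h^0(\pr^4,\ma{O}(5))=\binom{9}{4}=126$, and the covering family of lines has anticanonical degree $5$. For the quadric, the ideal sheaf sequence $0\to\ma{O}_{\pr^5}(2)\to\ma{O}_{\pr^5}(4)\to\ma{O}_{Q}(4)\to 0$ gives $h^0(Q^4,\ma{O}_{Q}(4))=\binom{9}{5}-\binom{7}{5}=105$, and here the lines form a covering family of anticanonical degree $4$. Thus both fall under the first alternative of (2), except $\pr^4$ which realizes the value $126$ in (1).

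For $i_Y\in\{2,3\}$ I would descend by hyperplane sections: by adjunction a smooth $V\in|H|$ is a Fano threefold with $-K_V=(i_Y-1)H|_V$, of index $i_Y-1$ and $\rho_V=1$. The Iskovskikh--Mori--Mukai classification of Fano threefolds then bounds the degree $H^4$ (equivalently the genus) in each subcase, and $h^0(-K_Y)=\chi(i_YH)$ is computed from this data by Riemann--Roch and checked to be at most $105$. In these cases $Y$ is covered by lines, a family of anticanonical degree $i_Y\in\{2,3\}$, so again the first alternative of (2) holds; note that this is where the del Pezzo fourfolds (index $3$) produce a minimal covering degree equal to $3$, but without being of index $1$, consistently with the dichotomy.

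The remaining and hardest case is $i_Y=1$, where $-K_Y=H$ is primitive and no classification is available. This is precisely the content of \cite{hwang4folds}: for a Fano $4$-fold of index $1$ with $\rho_Y=1$, one has $h^0(Y,-K_Y)\le 105$ unless the minimal anticanonical degree of a covering family of rational curves equals $3$, which gives the second alternative of (2). Assembling the cases yields (1): every $Y\not\cong\pr^4$ contributes at most $105$, or in the exceptional index-$1$ family a value that Hwang's estimate keeps strictly below $126$, so that $h^0(Y,-K_Y)\le 126$ with equality only for $\pr^4$. The main obstacle is the index-$1$ case, which rests entirely on Hwang's deep estimate; the index-$2,3$ cases are in principle routine but require marshalling the threefold classification and carrying out the Riemann--Roch computations subcase by subcase.
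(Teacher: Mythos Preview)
Your approach is essentially the paper's: split by Fano index, handle $\pr^4$ and the quadric directly, use Riemann--Roch together with a smooth hyperplane section and the Fano 3-fold classification for index $2$ and $3$, and cite Hwang for index $1$. Two points to sharpen: the existence of a smooth $V\in|H|$ in the index-$3$ and index-$2$ cases is not automatic and is due to Fujita and Wilson respectively (see \cite[Th.~2.3.2 and 2.3.15]{fanoEMS}); and Hwang's actual bounds in the index-$1$ case are $h^0(Y,-K_Y)\le 121$ in general and $\le 97$ when the minimal covering degree is not $3$ \cite[Th.~3]{hwang4folds}, slightly stronger than what you quote but yielding the same conclusion.
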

\begin{proof}
If $Y\cong\pr^4$, then $h^0(Y,-K_Y)=126$. If $Y$ is isomorphic to a quadric in $\pr^5$, then $h^0(Y,-K_Y)=105$. If $Y$ is neither $\pr^4$ nor a quadric, then by Kobayashi-Ochiai's theorem, the index of $Y$ is at most $3$.

If $Y$ has index $3$, let $H\in\Pic(Y)$ be an ample generator of $\Pic(Y)$. By Kodaira vanishing we have $\chi(Y,-H)=0$, which using Hirzebruch-Riemann-Roch \eqref{RR0} yields 
$$H^2\cdot c_2(Y)=2H^4+12$$
and again by Hirzebruch-Riemann-Roch \eqref{RR} we get
$$h^0(Y,-K_Y)=15H^4+10.$$
It has been proven by Fujita that the linear system $|H|$ contains a smooth element $Z$ \cite[Th.~2.3.2 and references therein]{fanoEMS}.
 Then $Z$ is a Fano $3$-fold with index $2$, $\rho_Z=1$, and $(-K_Z)^3=8H^4$. It is known by classification that $(-K_Z)^3\leq 40$ \cite[\S 12.2]{fanoEMS}, thus $h^0(Y,-K_Y)\leq 85$.

If $Y$ has index $2$, we proceed in the same way as in the case of index $3$: if $H$ is an ample generator of 
$\Pic(Y)$, we have
$$H^2\cdot c_2(Y)=H^4+24\quad\text{ and }\quad h^0(Y,-K_Y)=3H^4+9.$$
It has been proven by Wilson that the linear system $|H|$ contains a smooth element $Z$ \cite[Th.~2.3.15 and references therein]{fanoEMS}.
Then $Z$ is a Fano $3$-fold with index $1$, $\rho_Z=1$, and $H^4=(-K_Z)^3$. By classification $(-K_Z)^3\leq 22$, thus $h^0(Y,-K_Y)\leq 75$.

Finally, if $Y$ has index $1$, by \cite[Th.~3]{hwang4folds} we have $h^0(Y,-K_Y)\leq 121$, and
$h^0(Y,-K_Y)\leq 97$ unless the minimal anticanonical degree of a
covering family of rational curves of $Y$ is $3$. This yields the statement.
\end{proof}
\begin{proof}[Proof of Th.~\ref{manypointsgeneral}]Notice that $\w{X}$ is smooth by Th.~\ref{SQM0}, and hence $Y$ is smooth.
If $Y\cong\pr^4$, then $h^0(Y,-K_Y)=126$ and $0\leq h^0(X,-K_X)=126-15r$ by Prop.~\ref{drop}, which yields $r\leq 8$ and $\rho_X\leq 9$.

Suppose that $Y\not\cong\pr^4$ and that $\rho_X>1$. Then $f$ is not an isomorphism, thus $f(\Exc(f))\neq\emptyset$. 
Hence Lemma \ref{3} implies that $Y$ cannot have a covering family of  curves of anticanonical degree $3$, and Th.~\ref{rho1} yields $h^0(Y,-K_Y)\leq 105$.

It has been shown by Kawamata \cite[Th.~5.2]{kawasian} that $h^0(X,-K_X)>0$ (in fact $h^0(X,-K_X)\geq 2$, see \cite[Th.~1.2]{floris}).
Therefore Prop.~\ref{drop} yields 
$$0< h^0(X,-K_X)=h^0(Y,-K_Y)-15r\leq 105-15r=15(7-r),$$
thus $r\leq 6$ and $\rho_X\leq 7$. This concludes the proof.
\end{proof}
\section{Bounding $\rho_X$ when every fixed divisor is of type $(3,0)^{sm}$}\label{every}
\begin{parg}[Fixed prime divisors and MMP's]\label{pargfixed}
Let $X$ be a projective, normal, and $\Q$-factorial Mori dream space. 
If $D\subset X$ is a fixed prime divisor, running a MMP for $D$ yields a 
diagram
$$X\stackrel{\ph}{\dasharrow}\w{X}\stackrel{f}{\la} Y$$
where $\ph$ is a SQM that factors as a sequence of $D$-negative flips,  
 and $f$ is an elementary divisorial contraction
 with exceptional locus the transform of $D$. We will always denote a MMP for a fixed prime divisor in this way: as a SQM followed by an elementary divisorial contraction.
\end{parg}
\begin{definition}[fixed divisors of type $(3,0)^{sm}$]\label{30}
Let $X$ be a smooth Fano $4$-fold and $D\subset X$ a fixed prime divisor. 
We say that $D$ is {\bf of type $\mathbf{(3,0)^{sm}}$} is there exists a 
SQM $X\dasharrow X'$ such that $X'$ is the blow-up of a projective $4$-fold $Y'$ at a smooth point,
 with exceptional divisor
 the transform of $D$.
\end{definition}
This notion is crucial for the rest of the paper; notice that in our main result, Theorem \ref{onepoint}, the assumption on $X$ can be reformulated by saying that $X$ has a fixed prime divisor of type $(3,0)^{sm}$. 

The goal of section \ref{every} is to prove the following intermediate result.
\begin{thm}\label{everyfixeddiv}
Let $X$ be a smooth Fano $4$-fold such that every fixed prime divisor is of type $(3,0)^{sm}$.
Then $\rho_X\leq 11$, and if $\rho_X\geq 10$, then $X$ has an elementary rational contraction of fiber type.
\end{thm}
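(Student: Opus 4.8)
The plan is to prove the following dichotomy, from which both assertions follow at once: either $X$ has an elementary rational contraction of fiber type, in which case $\rho_X\leq 11$ by Theorem~\ref{elemft}; or $X$ has no such contraction, in which case I will show $\rho_X\leq 9$. Granting this, the bound $\rho_X\leq 11$ is immediate in both cases, and for the second assertion note that if $\rho_X\geq 10$ the no-fiber-type alternative is excluded, so $X$ must have an elementary rational contraction of fiber type. Thus the entire content is concentrated in the implication
\[
\text{no elementary rational contraction of fiber type}\ \Longrightarrow\ \rho_X\leq 9,
\]
which is where Theorem~\ref{manypoints}, in the form of Theorem~\ref{manypointsgeneral}, enters.

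So assume $X$ has no elementary rational contraction of fiber type and $\rho_X\geq 2$. Then $\Eff(X)\neq\Mov(X)$ (otherwise a facet of $\Mov(X)$ would lie on the boundary of $\Eff(X)$ and produce a fiber-type elementary rational contraction), so $X$ has a fixed prime divisor $D$, which by hypothesis is of type $(3,0)^{sm}$. By Proposition~\ref{fanotarget} I can run a MMP for $D$ and blow it down: there is a SQM $X\dashrightarrow X'$ and a blow-up of a smooth point $X'\to Y$ whose exceptional divisor is the transform of $D$, with $Y$ smooth and Fano and $\rho_Y=\rho_X-1$. The strategy is to iterate this, blowing down one smooth point at a time, until the base has Picard number $1$, and then to apply Theorem~\ref{manypointsgeneral}.

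For the iteration to proceed I must verify that the hypotheses are inherited by $Y$. First, an elementary fiber-type rational contraction of $Y$ would compose with $X\dashrightarrow Y$ to a fiber-type rational contraction of $X$; a general fiber avoids the (finite) centre and the flipping loci, so its transform sweeps out a covering family of curves on $X$, which pins down a facet shared by $\Mov(X)$ and $\Eff(X)$ and hence a fiber-type elementary rational contraction of $X$, contrary to assumption. Second, every fixed prime divisor of $Y$ must again be of type $(3,0)^{sm}$: by Lemma~\ref{bijection}(3) a fixed prime divisor $B_Y\subset Y$ is the transform of a fixed prime divisor $B_X\subset X$ with $\langle[D],[B_X]\rangle$ a $2$-dimensional face of $\Eff(X)$ meeting $\Mov(X)$ only at the origin, and $B_X$ is of type $(3,0)^{sm}$ by hypothesis. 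The face condition expresses that the point-contractions of $D$ and of $B_X$ are compatible, i.e.\ can be performed in either order; geometrically the two exceptional loci are disjoint, so contracting $D$ does not disturb the local model along $B_X$, and the blow-down of $B_Y$ to a smooth point survives on $Y$. With the induction in place, the tower of smooth-point blow-downs continues as long as the base has a fixed prime divisor, and it terminates either at a base with Picard number $1$, or at a smooth Fano $4$-fold $Y_k$ with $\rho_{Y_k}\geq 2$ and $\Eff(Y_k)=\Mov(Y_k)$. In the latter case every facet of $\Mov(Y_k)$ lies on the boundary of $\Eff(Y_k)$ and yields an elementary fiber-type rational contraction of $Y_k$, which lifts back along the tower to one on $X$, a contradiction. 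Hence the tower ends with $\rho_{Y_k}=1$, so $X$ is isomorphic in codimension one to a composition of blow-ups of smooth points over $Y_k$, and Theorem~\ref{manypointsgeneral} gives $\rho_X\leq 9$.

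I expect the main obstacle to be the inductive step that type $(3,0)^{sm}$ is preserved under blowing down another $(3,0)^{sm}$ fixed divisor: one must upgrade the numerical ``compatible $2$-face'' condition of Lemma~\ref{bijection}(3) into the geometric statement that the two exceptional loci are disjoint and the two point-blow-downs commute, so that the local analytic model realizing $B_Y$ as the blow-up of a smooth point is untouched by the contraction of $D$. A secondary technical point is the lifting of a fiber-type contraction through a blow-up of a point (by blowing up both the point and its image), which should be routine but must be checked to be compatible with the SQMs produced by Theorem~\ref{SQM0} that appear along the tower.
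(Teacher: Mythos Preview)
Your overall strategy is close to the paper's --- blow down fixed divisors of type $(3,0)^{sm}$ one at a time, stay in the Fano category, and invoke Theorem~\ref{manypointsgeneral} once the base has $\rho=1$ --- and the inheritance of the $(3,0)^{sm}$ hypothesis is exactly Lemma~\ref{transform}. But there is a genuine gap in the step where you ``lift back'' an elementary fiber-type rational contraction from $Y_k$ to $X$.

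Your justification (``a general fiber avoids the centre and the flipping loci, so its transform sweeps out a covering family of curves on $X$, which pins down a facet shared by $\Mov(X)$ and $\Eff(X)$'') does not work. A covering family of curves only gives a class in $\mov(X)$; it does not by itself produce a common \emph{facet} of $\Mov(X)$ and $\Eff(X)$. The correct mechanism is the relative $2$-ray game (Lemma~\ref{lifting0} in the paper): given $X'\stackrel{f}{\to}Y\stackrel{g}{\to}Z$ with $f$ a blow-up of a point and $g$ elementary of fiber type, one flips $X'$ over $Z$ until $-D$ becomes relatively nef, and then the \emph{other} extremal ray of $\NE(\widetilde X/Z)$ is the desired fiber-type ray. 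But this argument needs $\codim_Z h(D)\geq 2$, i.e.\ $\dim Z\geq 2$; when $Z\cong\pr^1$ the image of the exceptional divisor is a point of codimension~$1$, the divisor $g^{-1}(h(D))$ is a genuine divisor, and the game breaks down. So your dichotomy ``either $\rho_{Y_k}=1$ or the fiber-type contraction lifts'' is not established.

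This is not a cosmetic issue: it is precisely the case the paper isolates as $(iib)$ in Lemma~\ref{rho1o2}, namely $\rho_Y=2$ with two elementary rational contractions $Y\dasharrow\pr^1$ and $\Eff(Y)=\Mov(Y)$. The paper does \emph{not} claim that such a contraction lifts to $X$. Instead it proves a separate bound (Lemma~\ref{caserho2}): using the absence of covering families of degree~$3$ on the Fano $3$-fold fibers (forced by Lemma~\ref{3}) and an explicit analysis of the two pencils, one gets $h^0(Y,-K_Y)\leq 40$, and then Prop.~\ref{drop} gives $\rho_X\leq 4$. Your argument must either supply a valid lifting in the $\dim Z=1$ case (which the $2$-ray game does not provide) or treat this case separately as the paper does.
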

We first study properties of fixed prime divisors of type $(3,0)^{sm}$, in section \ref{every1}. Then we study Fano $4$-folds where every fixed prime divisor is of  type $(3,0)^{sm}$, in section \ref{every2}. We conclude section \ref{every2} with the proof of Th.~\ref{everyfixeddiv}.
\subsection{Fixed divisors of type $(3,0)^{sm}$}\label{every1}
\noindent In this section we study properties of fixed divisors of type $(3,0)^{sm}$. We show that given a fixed divisor $D$ of type $(3,0)^{sm}$ in $X$, there exists a SQM $X\dasharrow \w{X}$ such that $\w{X}$ is the blow-up of another smooth \emph{Fano} $4$-fold $Y$ at a point (Prop.~\ref{fanotarget}). We use this to show that whenever we have $X\stackrel{\psi}{\dasharrow}X'\stackrel{g}{\to}Y'$, where $\psi$ is a SQM and $g$ is an elementary divisorial contraction with exceptional locus the transform of $D$, then $g$ is of type $(3,0)^{sm}$ (Prop.~\ref{tofix}).  We conclude the section by relating 
 fixed prime divisors of $X$ to fixed prime divisors of $Y$ (Lemma \ref{transform}).
\begin{lemma}\label{firstproperties}
Let $X$ be a smooth Fano $4$-fold and $D\subset X$ a fixed prime divisor of type $(3,0)^{sm}$. Let  $\psi\colon X\dasharrow X'$ be a SQM and $g\colon X'\to Y'$ the blow-up of a smooth point $p\in Y'$, with $\Exc(g)$ the transform of $D$.
\begin{enumerate}[(1)]
\item
For every exceptional plane $L\subset X\smallsetminus \dom\psi$
we have either  $D\cap L=\emptyset$, or $L\subset D$ and $D\cdot C_L<0$, where $C_L$ is a line in $L\cong\pr^2$.
Moreover $X\smallsetminus \dom\psi$ contains all exceptional planes contained in $D$.
\item For every irreducible curve $C\subset Y'$ such that $p\in C$ we have $-K_{Y'}\cdot C\geq 2$ and $-K_{Y'}\cdot C\neq 3$. If $-K_{Y'}\cdot C=2$, then $C=g(\ell)$, $\ell\subset X'$ an exceptional line with $\Exc(g)\cdot\ell=1$.
\item The point $p$ can belong to (at most) finitely many integral curves of anticanonical degree $2$, and cannot belong to an exceptional plane.
\end{enumerate}
\end{lemma}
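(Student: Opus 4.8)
The plan is to derive all three statements from the blow-up formula $-K_{X'}=g^*(-K_{Y'})-3E'$, where $E':=\Exc(g)\cong\pr^3$, so that $E'|_{E'}=\mathcal{O}_{\pr^3}(-1)$ and $-K_{X'}|_{E'}=\mathcal{O}_{\pr^3}(3)$, together with the fine description of the SQM $\psi$ provided by Th.~\ref{SQM0} and Lemma~\ref{SQM}. First note that $X'$ is smooth by Th.~\ref{SQM0}, hence $Y'$ is smooth; write $X\smallsetminus\dom\psi=L_1\sqcup\cdots\sqcup L_s$ (exceptional planes) and $X'\smallsetminus\dom(\psi^{-1})=\ell_1\sqcup\cdots\sqcup\ell_s$ (exceptional lines), as in Th.~\ref{SQM0}. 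The single most useful observation is that any curve in $E'$ has positive $-K_{X'}$-degree, while every exceptional line has $-K_{X'}$-degree $-1$.

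I would prove (2) and (3) first, as they are the cleanest. For (2): given an integral $C\subset Y'$ with $p\in C$, let $\widetilde C\subset X'$ be its strict transform and $m:=E'\cdot\widetilde C=\operatorname{mult}_pC\geq1$; the projection formula gives $-K_{Y'}\cdot C=-K_{X'}\cdot\widetilde C+3m$. By Lemma~\ref{SQM}(2) either $\widetilde C$ is an exceptional line with $-K_{X'}\cdot\widetilde C=-1$, or $-K_{X'}\cdot\widetilde C\geq1$. In the first case $-K_{Y'}\cdot C=3m-1\equiv2\pmod 3$, so it is $\geq2$, $\neq3$, and equals $2$ exactly when $m=1$, giving $C=g(\ell)$ with $\ell:=\widetilde C$ and $E'\cdot\ell=1$; in the second case $-K_{Y'}\cdot C\geq1+3=4$. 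This is (2). For (3): every exceptional line of $X'$ must lie in $X'\smallsetminus\dom(\psi^{-1})$, since a curve meeting $\dom(\psi^{-1})$ has $-K_{X'}$-degree $\geq1$ by Lemma~\ref{SQM}(1) whereas an exceptional line has degree $-1$; hence the exceptional lines are exactly $\ell_1,\dots,\ell_s$, finitely many. By (2), every integral curve of anticanonical degree $2$ through $p$ is $g(\ell_i)$ for some $i$ with $E'\cdot\ell_i=1$, so there are finitely many. Finally, if $p$ lay on an exceptional plane $L\subset Y'$, a line $C_L\subset L\cong\pr^2$ through $p$ would satisfy $-K_{Y'}\cdot C_L=1$, contradicting (2); so $p$ lies on no exceptional plane.

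The harder part is (1). Each $L_i$ is flipped to $\ell_i$; since $-K_{X'}|_{E'}=\mathcal{O}(3)$ is positive while $-K_{X'}\cdot\ell_i=-1$, no $\ell_i$ is contained in $E'$, so $E'\cdot\ell_i\geq0$. Working on the common resolution $X\xleftarrow{a}\widehat X\xrightarrow{b}X'$ of $\psi$ from Th.~\ref{SQM0}, where the strict transform $\widehat D$ of $D$ coincides with that of $E'$, a local intersection computation along $L_i\leftrightarrow\ell_i$ gives $D\cdot C_{L_i}=-E'\cdot\ell_i\leq0$; moreover $\widehat D$ meets the common exceptional divisor over $L_i$ iff either $D$ meets $L_i$ or $E'$ meets $\ell_i$, so $D\cap L_i\neq\emptyset\iff E'\cap\ell_i\neq\emptyset\iff E'\cdot\ell_i>0\iff D\cdot C_{L_i}<0$. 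If $L_i\not\subset D$ then $D|_{L_i}$ is effective and $D\cdot C_{L_i}\geq0$; hence $D\cdot C_{L_i}<0$ forces $L_i\subset D$. This yields the dichotomy in the first sentence of (1).

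The "moreover" in (1) is the step I expect to be the main obstacle: I must rule out an exceptional plane $L'\subset D$ with $L'\not\subset X\smallsetminus\dom\psi$. By Rem.~\ref{SQM2} such an $L'$ meets $\dom\psi$ in a dense open set, and since $L'\subset D$ its transform $\widetilde{L'}$ lies in $E'\cong\pr^3$. If $\widetilde{L'}$ is again an exceptional plane of $X'$, then it is a hyperplane $\pr^2\subset\pr^3$, and from $0\to\mathcal{N}_{\pr^2/E'}\to\mathcal{N}_{\pr^2/X'}\to\mathcal{N}_{E'/X'}|_{\pr^2}\to0$ with $\mathcal{N}_{\pr^2/E'}=\mathcal{O}(1)$ and $\mathcal{N}_{E'/X'}|_{\pr^2}=\mathcal{O}(-1)$ one gets $c_1(\mathcal{N}_{\pr^2/X'})=0$, contradicting the value $\mathcal{O}(-2)$ forced by $\mathcal{O}_{\pr^2}(-1)^{\oplus2}$. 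The delicate point is justifying that $\widetilde{L'}$ really is an exceptional plane: this is immediate when $L'$ is disjoint from the flipping locus $\bigcup_iL_i$, since then $\psi$ is a local isomorphism near $L'$ and preserves the normal bundle; the general case requires tracking how $L'$ can meet the $L_i$ (each such $L_i$ lies in $D$ by the first part of (1)), which is the technical crux. Once this is settled, no such $L'$ exists, so every exceptional plane contained in $D$ is among $L_1,\dots,L_s$.
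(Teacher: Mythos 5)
Parts (2) and (3) and the dichotomy in the first sentence of (1) are correct and follow essentially the paper's route (the paper gets (2) by citing Lemma \ref{locus}; your direct computation with $-K_{Y'}\cdot C=-K_{X'}\cdot\widetilde C+3m$ and Lemma \ref{SQM}(2) is the same argument). The problem is the ``moreover'' clause of (1), where you yourself flag an unresolved ``technical crux'': you try to show that the transform $\widetilde{L'}$ of an exceptional plane $L'\subset D$ meeting $\dom\psi$ is again an exceptional plane sitting as a hyperplane in $E'\cong\pr^3$, and you only justify this when $L'$ is disjoint from the flipping locus. As written, the general case is a genuine gap, and the normal-bundle computation you set up is not the right tool to close it: there is no reason the full surface $L'$ should transform to a plane (or even to a surface with controlled normal bundle) when it meets the flipped planes $L_i$.

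The paper closes this with a one-line degree argument that sidesteps the whole issue: by Rem.~\ref{SQM2}, $L'\cap(X\smallsetminus\dom\psi)$ is a finite set, so a \emph{general line} $C_{L'}\subset L'\cong\pr^2$ lies entirely in $\dom\psi$; its transform $C'_{L'}$ is then a complete curve contained in $\Exc(g)\cong\pr^3$ with $-K_{X'}\cdot C'_{L'}=-K_X\cdot C_{L'}=1$, which is impossible since $-K_{X'}|_{\Exc(g)}\cong\ma{O}_{\pr^3}(3)$ gives every curve in $\Exc(g)$ anticanonical degree at least $3$. You already use exactly this positivity of $-K_{X'}$ on $E'$ elsewhere in your argument, so the fix is to apply it to a single general line of $L'$ rather than to the whole plane; with that replacement the proof is complete.
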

\begin{proof}
Let  $L\subset X\smallsetminus \dom\psi$ be an exceptional plane, and $\ell\subset X'\smallsetminus \dom\psi^{-1}$ the corresponding exceptional line
(see Th.~\ref{SQM0}).
Notice that every curve contained in $\Exc(g)$ is contracted by $g$ and has positive anticanonical degree; in particular $\Exc(g)$ cannot contain $\ell$.  Hence either $\Exc(g)\cdot\ell>0$, or $\Exc(g)\cap \ell=\emptyset$; respectively in $X$ we have either $D\cdot C_L<0$ and $L\subset D$, or $D\cap L=\emptyset$.

Consider now an exceptional plane $L_0\subset D$. If $L_0$ intersects $\dom\psi$, then by Rem.~\ref{SQM2} it intersects the exceptional planes in $X\smallsetminus\dom\psi$ in (at most) finitely many points, therefore a general line $C_{L_0}\subset L_0\cong\pr^2$ is contained in $\dom\psi$. If $C_{L_0}'\subset\Exc(g)$ is the transform of $C_{L_0}$, then $-K_{X'}\cdot C_{L_0}'=-K_X\cdot C_{L_0}=1$, which is impossible because every curve in $\Exc(g)$ has anticanonical degree at least $3$. Thus we have (1).

Finally, (2) follows from Lemma \ref{locus} (and its proof), and
(3) follows from (2).
\end{proof}
\begin{proposition}\label{fanotarget}
Let $X$ be a smooth Fano $4$-fold and $D\subset X$ a fixed prime divisor of type $(3,0)^{sm}$. 
Then there exist a SQM $\ph\colon X\dasharrow \w{X}$ and  $f\colon \w{X}\to Y$ a blow-up of a smooth point $p\in Y$, with $\Exc(f)$ the transform of $D$, such that:
\begin{enumerate}[(1)]
\item $X\smallsetminus\dom\ph$ is the union of all exceptional planes contained in $D$;
\item $X\stackrel{\ph}{\dasharrow}\w{X}\stackrel{f}{\to}Y$ is a MMP for $D$;
\item $Y$ is Fano.
\end{enumerate}
Moreover $\ph\colon X\dasharrow \w{X}$ and $f\colon \w{X}\to Y$ are uniquely 
determined by one of the properties (1), (2), or (3) above.
\end{proposition}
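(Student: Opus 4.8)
The plan is to start from the data furnished by Definition \ref{30} and to \emph{remove the superfluous flips}, producing a SQM whose indeterminacy locus is exactly the set of exceptional planes inside $D$; the Fano-ness of the target will then come for free. First I would fix a SQM $\psi\colon X\dasharrow X'$ and a blow-up $g\colon X'\to Y'$ of a smooth point with $\Exc(g)$ the transform $D'$ of $D$. By Theorem \ref{SQM0}, $X\smallsetminus\dom\psi$ is a disjoint union of exceptional planes $L_1,\dotsc,L_s$, with corresponding exceptional lines $\ell_1,\dotsc,\ell_s\subset X'$. By Lemma \ref{firstproperties}(1) each $L_i$ is either contained in $D$ (with $D\cdot C_{L_i}<0$, equivalently $D'\cdot\ell_i>0$) or disjoint from $D$ (equivalently $\ell_i\cap D'=\emptyset$), and every exceptional plane contained in $D$ occurs among the $L_i$. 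I relabel so that $L_1,\dotsc,L_t\subset D$ and $L_{t+1},\dotsc,L_s$ are disjoint from $D$.

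Next I would undo the flips of $\ell_{t+1},\dotsc,\ell_s$: these are mutually disjoint exceptional lines of $X'$, part of the indeterminacy locus of $\psi^{-1}$, so flipping them back to exceptional planes gives a SQM $X'\dasharrow\w{X}$, and I set $\ph:=(X'\dasharrow\w{X})\circ\psi$. Since the $\ell_j$ $(j>t)$ are disjoint from $D'=\Exc(g)$, the map $X'\dasharrow\w{X}$ is an isomorphism near $D'$, so the transform $\w{D}$ of $D'$ in $\w{X}$ is again a $\pr^3$ with normal bundle $\ma{O}(-1)$ and blows down to a smooth point, giving $f\colon\w{X}\to Y$ a blow-up of a smooth point with $\Exc(f)=\w{D}$ (both $\w{X}$ and $Y$ are smooth by Theorem \ref{SQM0}). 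By construction $\ph$ flips exactly $L_1,\dotsc,L_t$, which are all the exceptional planes contained in $D$, so $X\smallsetminus\dom\ph=L_1\cup\cdots\cup L_t$ is property (1). Property (2) then follows, since the flips of $\ph$ are the $D$-negative flips of the rays $\langle C_{L_i}\rangle$ (as $D\cdot C_{L_i}<0$) and $f$ is $D$-negative (a line $C\subset\w{D}$ has $\w{D}\cdot C=-1$).

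The heart of the argument is property (3). Writing $f^*(-K_Y)=-K_{\w{X}}+3\w{D}$, I would show $f^*(-K_Y)\cdot\w{C}>0$ for every irreducible curve $\w{C}\subset\w{X}$ not contracted by $f$. If $-K_{\w{X}}\cdot\w{C}>0$, then $\w{C}\not\subset\w{D}$ gives $\w{D}\cdot\w{C}\geq 0$ and hence positivity; the delicate case is $-K_{\w{X}}\cdot\w{C}\leq 0$. Here Lemma \ref{SQM}(1)--(2) forces $\w{C}$ to be one of the exceptional lines $\ell_1,\dotsc,\ell_t$ of $\w{X}$: any curve meeting $\dom\ph^{-1}$ has positive anticanonical degree, so a curve of non-positive degree lies in $\w{X}\smallsetminus\dom\ph^{-1}=\ell_1\cup\cdots\cup\ell_t$. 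Each such line meets $\w{D}$ with $\w{D}\cdot\ell_i\geq 1$, so $f^*(-K_Y)\cdot\ell_i=-1+3\,\w{D}\cdot\ell_i\geq 2>0$. Since $\w{X}$ is a Mori dream space, $\NE(\w{X})$ is polyhedral and generated by irreducible curve classes, so $f^*(-K_Y)$ is nef and vanishes only on $\NE(f)$; by Kleiman's criterion $-K_Y$ is ample and $Y$ is Fano. The main obstacle is exactly this control of exceptional lines — one must exclude exceptional lines of $\w{X}$ disjoint from $\w{D}$ — and that is precisely what property (1) together with Lemma \ref{SQM} guarantees.

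Finally, for the uniqueness I would prove that each of (1), (2), (3) is equivalent to the condition that $X\smallsetminus\dom\ph$ contains \emph{no} exceptional plane disjoint from $D$, and that this condition pins down $\ph$ (a SQM of a smooth Fano $4$-fold is determined by its indeterminacy locus, by Theorem \ref{SQM0}) and hence $f$ (the blow-down of $\w{D}\cong\pr^3$ is unique). Indeed, a superfluous plane $L$ disjoint from $D$ has $D\cdot C_L=0$, so its flip is not $D$-negative, obstructing (2); and it corresponds to an exceptional line $\ell\subset\w{X}$ with $\w{D}\cdot\ell=0$, whence $f^*(-K_Y)\cdot\ell=-1<0$ and $Y$ is not Fano, obstructing (3). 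Combined with the dichotomy of Lemma \ref{firstproperties}(1), this yields (1) $\Leftrightarrow$ (2) $\Leftrightarrow$ (3) and the asserted uniqueness.
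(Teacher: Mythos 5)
Your overall architecture (start from the data of Definition \ref{30}, isolate the exceptional planes contained in $D$ from those disjoint from $D$ via Lemma \ref{firstproperties}(1), then check Fano-ness of $Y$ by intersecting $f^*(-K_Y)=-K_{\w X}+3\w D$ with the exceptional lines) matches the paper's in its endpoints, and your treatment of property (3) and of the uniqueness is essentially sound \emph{granted} properties (1) and (2). The gap is in the construction of $\ph$ itself, i.e.\ precisely in the step ``undo the flips of $\ell_{t+1},\dotsc,\ell_s$.'' You assert that, because these exceptional lines lie in the indeterminacy locus of $\psi^{-1}$, one can flip them back and obtain a SQM $X'\dasharrow\w X$. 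But a SQM must be a \emph{projective} $\Q$-factorial variety, i.e.\ it must correspond to a chamber of $\MCD(X)$; to antiflip exactly $\ell_{t+1},\dotsc,\ell_s$ you need these classes to span a contractible extremal face of $\NE(X')$ not containing $[\ell_1],\dotsc,[\ell_t]$ or any other curve class (equivalently, a nef divisor on $X'$ vanishing exactly on that face). The fact that each $\ell_j$ gets extracted at \emph{some} intermediate stage of a factorization of $\psi^{-1}$ into flips does not give this: the flips need not be reorderable, and the ``partial surgery'' along an arbitrary subset of the disjoint planes need not be projective. This existence claim is not a formal consequence of Theorem \ref{SQM0} or of Mori dream space generalities, and it is essentially equivalent in difficulty to what the proposition asserts. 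The same issue infects your one-line verification of (2): even if $\w X$ existed, you would still have to check that $\ph$ factors as a chain of $D$-negative flips of small \emph{extremal rays} of the successive intermediate varieties, which is not automatic from $D\cdot C_{L_i}<0$.

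The paper resolves this by going in the opposite direction: it \emph{defines} $\ph$ by running a MMP for $D$ using only $D$-negative small extremal rays, so that projectivity of every step and property (2) are automatic, and $X\smallsetminus\dom\ph$ is a priori only \emph{contained in} the union of exceptional planes inside $D$. The substantial content of the proof is then to show the reverse inclusion, i.e.\ that the transform $\w D$ contains no exceptional plane: this is a page-long contradiction argument using the transform $\w\Gamma$ of a general line of $\Exc(g)$ to force $\dim\N(\w D,\w X)>1$, Remark \ref{basic} to pin down the type of a $\w D$-negative extremal ray, and Lemma \ref{torino} to contradict $D\cdot C_L<0$ from Lemma \ref{firstproperties}(1). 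Your proposal contains no substitute for this argument; to repair it you would either need to prove the existence of the partial antiflip (which amounts to the same difficulty) or adopt the paper's MMP construction and supply the missing ``no exceptional planes in $\w D$'' step.
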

\begin{proof}
Let $L_1,\dotsc,L_r$ be the exceptional planes contained in $D$.
By Def.~\ref{30}, there exist a SQM $\psi\colon X\dasharrow X'$ and  a blow-up of a smooth point $g\colon X'\to Y'$ with $\Exc(g)$ the transform of $D$.
By  Th.~\ref{SQM0} and Lemma \ref{firstproperties}(1) we have 
$$X\smallsetminus\dom\psi=L_1\cup\cdots\cup L_r\cup L_1'\cup\cdots\cup L_s',$$
where $L_i'$ is an exceptional plane disjoint from $D$ for $i=1,\dotsc,s$, and $s\geq 0$.
\begin{parg} We construct $\ph\colon X\dasharrow\w{X}$ and compare $\ph$ and $\psi$.

Let us run a MMP for $D$ in $X$, by choosing at each step a $D$-negative \emph{small} extremal ray of the cone of effective curves, and we stop when there are no more $D$-negative small extremal rays. This yields a SQM $\ph\colon X\dasharrow \w{X}$ such that $X\smallsetminus\dom\ph$ is a union of exceptional planes contained in $D$, thus
$$X\smallsetminus\dom\ph\subseteq L_1\cup\cdots\cup L_r.$$
Up to reordering, we can assume that $X\smallsetminus\dom\ph= L_1\cup\cdots\cup L_t$, with $t\in\{0,\dotsc,r\}$.
Then $L_{t+1},\dotsc,L_r,L_1',\dotsc,L_s'\subset\dom\ph$, hence their images 
$$\ph(L_{t+1}),\dotsc,\ph(L_r),\ph(L_1'),\dotsc,\ph(L_s')$$ are  pairwise disjoint exceptional planes in $\w{X}$.
Moreover, if  $\w{D}\subset \w{X}$ is the transform of $D$, then 
$\ph(L_{t+1}),\dotsc,\ph(L_r)$ are contained in $\w{D}$, while $\ph(L_1'),\dotsc,\ph(L_s')$ are disjoint from $\w{D}$.

 Let us consider the diagram:
$$\xymatrix{X \ar@{-->}[r]_{\ph}\ar@/^1pc/@{-->}[rr]^{\psi} &{\w{X}}\ar@{-->}[r]_{\xi}&{ X'}\ar[r]^g&Y'}$$
where $\xi:=\psi\circ\ph^{-1}$ is a SQM, and by construction
$$\w{X}\smallsetminus\dom\xi=\ph(L_{t+1})\cup\cdots\cup
\ph(L_r)\cup\ph(L_1')\cup\cdots\cup\ph(L_s').$$
Notice that $ X'\smallsetminus\dom(\xi^{-1})$ is a finite union of exceptional lines.

As $\Exc(g)\subset X'$ cannot contain exceptional lines, it intersects $X'\smallsetminus\dom(\xi^{-1})$ in at most finitely many points.
Let $T\subset\Exc(g)\cong\pr^3$ be a general plane and $\Gamma\subset T$ a line.
 Then $T\subset\dom\xi^{-1}$, so if $\w{\Gamma}\subset \w{D}\subset\w{X}$ is the transform of $\Gamma$, we have 
\stepcounter{thm}
\begin{equation}\label{bonassola}
\w{D}\cdot\w{\Gamma}=\Exc(g)\cdot\Gamma=-1\quad\text{and}\quad -K_{\w{X}}\cdot\w{\Gamma}=-K_{ X'}\cdot\Gamma=3.\end{equation}
\end{parg}
\begin{parg}
We show that $\w{D}$ cannot contain exceptional planes. 

By contradiction, 
let $L\subset \w{D}$ be an exceptional plane, and let $C_L\subset L$ be a line.
 We have $-K_{\w{X}}\cdot C_L=1$, and using \eqref{bonassola} it is easy to check that the classes $[C_L]$ and $[\w{\Gamma}]$ in $\N(\w{X})$ cannot be proportional. Since both curves $C_L$ and $\w{\Gamma}$ are contained in $\w{D}$, we get $\dim\N(\w{D},\w{X})>1$.

Being $D$ a fixed divisor, $\w{D}$ cannot be nef, so there is at least one $\w{D}$-negative extremal ray $R$ of $\NE(\w{X})$, which by construction is divisorial. The extremal ray $R$ is $K$-negative, by Lemma \ref{SQM}(1).

Since $\dim\N(\w{D},\w{X})>1$, the extremal ray $R$ cannot be of type $(3,0)$ (see Rem.~\ref{basic}(1)). 
Recall that $\w{\Gamma}$ is the transform of a general line in $\Exc(g)\cong\pr^3$, thus
$[\w{\Gamma}]\not\in R$, otherwise $R$ would be of type $(3,0)$. Therefore
  the contraction of $R$ must be finite on the tranform $\w{T}\subset\w{D}$ of the plane $T\subset\Exc(g)$.  This implies that $R$ is of type $(3,2)$ and that $\dim\N(\w{D},\w{X})=2$ (see Rem.~\ref{basic}(3)).

Thus $\w{D}$ is covered by integral curves of anticanonical degree $1$ (see \ref{basic0}), and
by Lemma \ref{SQM}(3) we have $\w{D}\subset\dom\ph^{-1}$ and $D\subset\dom\ph$.
On the other hand $\ph$ is a composition of $D$-negative flips, 
thus $\ph$ must be the identity and $X=\w{X}$. Moreover we have shown that every $D$-negative extremal ray of $\NE(X)$
is divisorial of type $(3,2)$. By Lemma \ref{torino}(1), $-K_X+D$ is nef.

Notice that since $\dim\N(D,X)=2$, there are at most two $D$-negative extremal rays. On the other hand $(-K_X+D)\cdot\w{\Gamma}= 2$ by \eqref{bonassola}, thus $(-K_X+D)_{|D}$ is not numerically trivial. Again by Lemma \ref{torino}(1), this implies that $R$ is the unique $D$-negative extremal ray of $\NE(X)$, and 
$R=(-K_X+D)^{\perp}\cap\NE(X)$.

Let us consider now the exceptional plane $L\subset D$. The contraction of $R$ is finite on $L$ by Rem.~\ref{basic}(4), thus $[C_L]\not\in R$ and $(-K_X+D)\cdot C_L>0$. Since $-K_X\cdot C_L=1$, this implies that $D\cdot C_L\geq 0$, contradicting
 Lemma \ref{firstproperties}(1).

We conclude that $\w{D}$ cannot contain exceptional planes.
\end{parg} 
\begin{parg} 
We are now ready to show the statement.
Since $\w{D}$  does not contain exceptional planes, we must have $t=r$,
$$X\smallsetminus\dom\ph=L_1\cup\cdots\cup L_r,\ \text{ and } \ \w{X}\smallsetminus\dom\xi=\ph(L_1')\cup\cdots\cup\ph(L_s'),$$
thus we have (1).
Recall that the exceptional planes
$\ph(L_1'),\dotsc,\ph(L_s')$ are
 disjoint from $\w{D}$, so that  $\w{D}\subset\dom\xi$. In particular $\w{D}\cong\Exc(g)\cong\pr^3$; this implies that the $\w{D}$-negative extremal ray of $\NE(\w{X})$ is unique, and yields a contraction $f\colon \w{X}\to Y$ which is locally isomorphic to $g$, and hence is the blow-up of $Y$ at a smooth point $p$. Thus we have also (2).

We are left to show that $Y$ is Fano. If $C\subset Y$ is an irreducible curve containing $p$, then $-K_{Y}\cdot C>0$ by Lemma \ref{firstproperties}(2). If instead $p\not\in C$, then the transform $\w{C}\subset \w{X}$ of $C$ is disjoint from $\w{D}$, and hence it is not an exceptional line, thus $-K_{Y}\cdot C=-K_{\w{X}}\cdot \w{C}>0$. Notice that $Y$ is a Mori dream space, hence $\NE(Y)$ is closed; we conclude that $-K_Y$ is ample, and we have (3).
 
Let us consider now the map  $\xi_Y:=g\circ\xi\circ f^{-1}\colon Y\dasharrow Y'$ as in the diagram:
$$\xymatrix{X \ar@{-->}[r]_{\ph}\ar@/^1pc/@{-->}[rr]^{\psi} &{\w{X}}\ar[d]_{f}\ar@{-->}[r]_{\xi}&{ X'}\ar[d]^g\\
&Y\ar@{-->}[r]^{\xi_Y}&{Y'}}
$$
Then $\xi_Y$ is birational, and it is an isomorphism in codimension one, namely it is a SQM of the smooth Fano $4$-fold $Y$. Moreover $$Y\smallsetminus\dom(\xi_Y)=f(\ph(L_1'))\cup\cdots\cup f(\ph(L_s')).$$
For the last statement of the Theorem, let us notice that the following statements are equivalent:
\begin{enumerate}[$(i)$]
\item $X\smallsetminus\dom\psi=L_1\cup\cdots\cup L_r$;
\item $\psi$ factors as a sequence of $D$-negative flips;
\item $Y'$ is Fano;
\item $s=0$;
\item $\xi$ and $\xi_Y$ are isomorphisms.
\end{enumerate}
\end{parg}
\vspace{-20pt}
\end{proof} 
\begin{definition}[the curve $C_D$]\label{prima}
Let $X$ be a smooth Fano $4$-fold and $D\subset X$ a fixed divisor of type $(3,0)^{sm}$. Consider the MMP for $D$:
$$X\stackrel{\ph}\dasharrow\w{X}\stackrel{f}{\la}Y$$
as in Prop.~\ref{fanotarget}.
We denote by $C_D\subset D$ the transform of a general line in $\Exc(f)\cong\pr^3$. 

Notice that $D\cdot C_D=-1$ and $-K_X\cdot C_D=3$.
\end{definition}
\begin{proposition}
\label{tofix}
Let $X$ be a smooth Fano $4$-fold and $D\subset X$ a fixed divisor of type $(3,0)^{sm}$.
Let $\psi\colon X\dasharrow X'$ be a SQM and $g\colon X'\to Y'$ an elementary divisorial contraction with exceptional divisor the transform of $D$.
Then $g$ is of type $(3,0)^{sm}$,
$C_D\subset\dom\psi$, and $\psi(C_D)$ is a line in $\Exc(g)\cong\pr^3$.
\end{proposition}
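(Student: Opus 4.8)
The plan is to compare the given pair $(\psi,g)$ with the canonical MMP furnished by Prop.~\ref{fanotarget}. Fix
$$X\stackrel{\ph}{\dasharrow}\w{X}\stackrel{f}{\la}Y$$
as in that proposition, so that $f$ is the blow-up of a smooth point $p\in Y$, $\w{D}:=\Exc(f)\cong\pr^3$, and $X\smallsetminus\dom\ph=L_1\cup\dotsb\cup L_r$ is the union of \emph{all} exceptional planes contained in $D$; recall $C_D=\ph^{-1}(\Gamma)$ for $\Gamma$ a general line in $\w{D}$. The exceptional lines of $\ph^{-1}$ are not contained in $\w{D}$ (their anticanonical degree is $-1$), so a general $\Gamma$ avoids them, whence $C_D\cong\pr^1$, $C_D\subset\dom\ph$, and $C_D\cap L_i=\emptyset$ for every $i$. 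First I would prove $C_D\subset\dom\psi$. Write $X\smallsetminus\dom\psi=M_1\cup\dotsb\cup M_k$ (disjoint exceptional planes, Th.~\ref{SQM0}); it suffices to see $C_D\cap M_j=\emptyset$ for all $j$. If $M_j\subset D$, then $M_j$ is one of the $L_i$ by Prop.~\ref{fanotarget}(1), and we are done. If $M_j\not\subset D$, then $C_D\not\subset M_j$, for a copy of $\pr^1$ in $M_j\cong\pr^2$ would be a smooth plane curve of degree $-K_X\cdot C_D=3$, hence of genus $1$, absurd. Thus $C_D\cap M_j$ is finite and $M_j\neq L_i$ for all $i$. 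Since $M_j\cap\dom\ph$ is dense in $M_j$ (Rem.~\ref{SQM2}), $W_j:=\overline{\ph(M_j\cap\dom\ph)}$ is irreducible of dimension $\leq 2$ and, as $M_j\not\subset D$, is not contained in $\w{D}$; so $W_j\cap\w{D}$ has dimension $\leq 1$. Choosing $\Gamma$ general we may assume $\Gamma\cap\bigcup_j(W_j\cap\w{D})=\emptyset$, and then $C_D\cap M_j=\emptyset$ for every $j$ with $M_j\not\subset D$. Hence $C_D\subset\dom\psi$.

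Set $\gamma':=\psi(C_D)$ and $D':=\Exc(g)$ (the transform of $D$). Since $C_D\subset\dom\psi$ we have $\gamma'\cong\pr^1$ and, $\psi$ being an isomorphism in codimension one, $-K_{X'}\cdot\gamma'=-K_X\cdot C_D=3$ and $D'\cdot\gamma'=D\cdot C_D=-1$; in particular $\gamma'\subset D'$. Next I would record that $g$ is $K$-negative: the exceptional divisor $D'$ is covered by curves of the ray $\NE(g)$, and a general such curve avoids the finitely many exceptional lines of $\psi^{-1}$, so it meets $\dom\psi^{-1}$ and Lemma~\ref{SQM}(1) applied to $\psi$ gives $-K_{X'}\cdot C'\geq 1$.

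The remaining point is to identify $g$ with $f$ near the exceptional divisor, and this I would do by showing that $\eta:=\psi\circ\ph^{-1}\colon\w{X}\dasharrow X'$ is an isomorphism in a neighbourhood of $\w{D}$. The analysis above already shows that $\eta$ is defined along a general line $\Gamma\subset\w{D}$ with $\eta(\Gamma)=\gamma'$, and that the indeterminacy loci of $\eta$ and $\eta^{-1}$ meet $\w{D}$, resp.\ $D'$, in dimension $\leq 1$. Moreover $\dim\N(\w{D},\w{X})=1$ (as $f$ is of type $(3,0)$, Rem.~\ref{basic}(1)), so every curve in $\w{D}$ is proportional to $\Gamma$ and generates the ray $\NE(f)$, which is divisorial, not small; hence no flip composing $\eta$ contracts a curve of $\w{D}$. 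Granting that $\eta$ is an isomorphism near $\w{D}$, one gets $D'\cong\w{D}\cong\pr^3$ with $\ma{N}_{D'/X'}\cong\ma{O}_{\pr^3}(-1)$, so $g$ is the blow-up of a smooth point, i.e.\ of type $(3,0)^{sm}$, and $\psi(C_D)=\eta(\Gamma)$ is a line in $\Exc(g)\cong\pr^3$, as required. Concretely, the class $[\gamma']=\eta_*[\Gamma]$ generates $\NE(g)$, the curves $\{\psi(\ph^{-1}(\Gamma))\}_\Gamma$ sweep out $D'$ through a $2$-dimensional subfamily at each point and are all contracted by $g$, forcing type $(3,0)$, and the anticanonical degree $3$ of $\gamma'$ (versus $2$ for type $(3,0)^Q$) pins down $(3,0)^{sm}$.

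I expect the main obstacle to be precisely this last local identification: upgrading ``$\eta$ is an isomorphism along general lines of $\w{D}$'' to ``$\eta$ is an isomorphism on a neighbourhood of $\w{D}$'', equivalently excluding that $g$ be of type $(3,1)^{sm}$, $(3,2)$, or $(3,0)^Q$. The delicate part is that exceptional planes of $\psi$ disjoint from $D$ but meeting $\w{D}$ after passing through $\ph$, or exceptional lines of $\psi^{-1}$ lying on $D'$, could in principle alter $\w{D}$ at finitely many points; ruling this out is where I would combine the $K$-negativity of $g$, the degree-$3$ covering family with $D'\cdot\gamma'=-1$, and the one-dimensionality of $\N(\w{D},\w{X})$ to conclude that $g$ contracts all of $\w{D}$ to a single smooth point.
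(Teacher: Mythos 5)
Your reduction to the canonical model $X\stackrel{\ph}{\dasharrow}\w{X}\stackrel{f}{\to}Y$ of Prop.~\ref{fanotarget}, the verification that $C_D\subset\dom\psi$, and the degree computations for $\gamma'=\psi(C_D)$ are sound and match the paper's setup. But there is a genuine gap exactly where you flag it: you never establish that $\eta:=\psi\circ\ph^{-1}$ is an isomorphism in a neighbourhood of $\w{D}$, and everything after ``granting that $\eta$ is an isomorphism near $\w{D}$'' is conditional on that unproven step. Your dimension count only shows that the indeterminacy locus of $\eta$ meets $\w{D}$ in dimension $\leq 1$; this still allows a flip in a factorization of $\eta$ whose flipping (or flipped) locus crosses the transform of $\w{D}$ in a curve or in finitely many points, after which the transform of $\w{D}$ need no longer be $\pr^3$ with normal bundle $\ma{O}(-1)$, and $g$ could a priori be of a different type. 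The observation that $\dim\N(\w{D},\w{X})=1$ forces contracted curves \emph{inside} $\w{D}$ to be proportional to the line class does not exclude flipping loci that merely meet $\w{D}$ without containing a contracted curve of $\w{D}$, nor does it control the intermediate models, where $\N(\w{D}_i,\w{X}_i)$ and the relevant extremal rays may change.

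The paper closes this gap by descending to the base rather than arguing on $\w{X}$: the induced map $\xi_Y:=g\circ\eta\circ f^{-1}\colon Y\dasharrow Y'$ is a SQM of the \emph{smooth Fano} $4$-fold $Y$ (this is precisely why Prop.~\ref{fanotarget}(3) matters), so by Th.~\ref{SQM0} its indeterminacy locus $Z$ is a disjoint union of exceptional planes, and $p\notin Z$ by Lemma \ref{firstproperties}(3). Hence $f^{-1}(Z)\cap\w{D}=\emptyset$, and commutativity of the diagram gives $\w{X}\smallsetminus\dom\eta\subseteq f^{-1}(Z)\cup\w{D}$; therefore any indeterminacy of $\eta$ touching $\w{D}$ would be a whole connected component contained in $\w{D}$, which is then excluded using that $f$ is of type $(3,0)$ (the only curve classes available in $\w{D}$ span a divisorial, not small, ray). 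This descent through $Y$, combined with the fact that $p$ avoids exceptional planes, is the missing idea; I would incorporate it rather than trying to rule out the finitely-many-points scenario directly on $\w{X}$.
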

\begin{proof}
Let $X\stackrel{\ph}{\dasharrow}\w{X}\stackrel{f}{\to}Y$ be the MMP for $D$ as in
  Prop.~\ref{fanotarget}. We have a commutative diagram:
$$\xymatrix{X \ar@{-->}[r]_{\ph}\ar@/^1pc/@{-->}[rr]^{\psi} &{\w{X}}\ar[d]_{f}\ar@{-->}[r]_{\xi}&{X'}\ar[d]^g\\
&Y\ar@{-->}[r]^{\xi_Y}&{Y'}}
$$
where the birational maps $\xi:=\psi\circ\ph^{-1}\colon \w{X}\dasharrow X'$ and 
 $\xi_Y:=g\circ\xi\circ f^{-1}\colon Y\dasharrow Y'$ are isomorphisms in codimension $1$. Set $\w{D}:=\Exc(f)\subset\w{X}$, $T:=\w{X}\smallsetminus\dom\xi$, and $Z:=Y\smallsetminus\dom\xi_Y$.

Since $Y$ is a smooth Fano $4$-fold,  by Th.~\ref{SQM0} the variety $Y'$ is smooth and $Z$ is a union of exceptional planes. The point $p\in Y$ cannot be contained in an exceptional plane by Lemma \ref{firstproperties}(3), thus $p\in\dom\xi_Y$ and $\w{D}\cap f^{-1}(Z)=\emptyset$.

Since $\xi$ is an isomorphism in codimension $1$, this implies that there is a (non-empty) open subset of $\Exc(g)$ contracted to the point $\xi_Y(p)\in Y'$, and hence that $g$
 is of type $(3,0)$.

By the commutativity of the diagram above, we see that $\xi$ is regular on $f^{-1}(\dom\xi_Y)\smallsetminus\w{D}$, hence 
$$T=\w{X}\smallsetminus\dom\xi\subseteq \bigl(\w{X}
\smallsetminus f^{-1}(\dom\xi_Y)\bigr)\cup\w{D}=f^{-1}(Y\smallsetminus
\dom\xi_Y)\cup\w{D}=  f^{-1}(Z)\cup\w{D}.$$ 
If $T\cap\w{D}$ were non-empty, then it would be a connected component of $T$, entirely contained in $\w{D}$; it is not difficult to see that this is impossible, by considering a factorization of $\xi$ as a sequence of flips, and using that $f$ is of type $(3,0)$.
Thus $\w{D}\subset\dom\xi$, and $\xi$ is an isomorphism between $\Exc(f)$ and $\Exc(g)$. This shows the statement.
\end{proof}
The property above is quite special and depends on the special type of divisorial contraction that we are considering, that is, the blow-up of a smooth point. In general, the same fixed divisor in $X$ may become exceptional for different types of elementary divisorial contractions, as in the following example.
\begin{example}\label{special}
Set $Y:=\pr_{\pr^2}(\ma{O}\oplus\ma{O}(1)\oplus\ma{O}(2))$, and let $S\subset Y$ be the section of $Y\to\pr^2$ with normal bundle $\ma{N}_{S/Y}=\ma{O}_{\pr^2}(-1)\oplus\ma{O}_{\pr^2}(-2)$. Let $f\colon X\to Y$ be the blow-up of $Y$ along $S$,
 and set $D:=\Exc(f)\subset X$, so that $D\cong\Bl_{pt}\pr^3$.

Then $X$ is a toric Fano $4$-fold with $\rho_X=3$ (this is $D_3$ in Batyrev's list \cite{bat2}), and $D\subset X$ is a fixed prime divisor which is the exceptional divisor of the elementary divisorial contraction $f\colon X\to Y$, of type $(3,2)^{sm}$. 

On the other hand there is a $D$-negative flip $X\dasharrow X'$ such that if $D'\subset X'$ is the transform of $D$, then  $D'\cong\pr^3$,  $\ma{N}_{D'/X'}\cong\ma{O}_{\pr^3}(-2)$, and there is an elementary divisorial contraction $g\colon X'\to Y'$, of type $(3,0)$, with $\Exc(g)=D'$.

Let us finally notice that $\rho_X=3$ in this example; we will see in section \ref{hard} that such a behaviour cannot happen when $\rho_X\geq 6$.
\end{example}
\begin{lemma}\label{hongkong}
Let $X$ be a smooth Fano $4$-fold and $D_1,D_2\subset X$ two distinct fixed prime divisors of type $(3,0)^{sm}$. If $D_1+D_2$ is not movable, then $D_1\cdot C_{D_2}=D_2\cdot C_{D_1}=0$.
\end{lemma}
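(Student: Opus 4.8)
The plan is to exploit the fact that $D_1+D_2$ being non-movable forces the two rays $\langle[D_1]\rangle$ and $\langle[D_2]\rangle$ to span a genuine two-dimensional face of $\Eff(X)$ meeting $\Mov(X)$ only at the origin, so that by the bijection of Lemma \ref{bijection} the divisor $D_2$ descends to a fixed prime divisor on the target of a MMP for $D_1$. Concretely, I would run the MMP for $D_1$ as in Prop.~\ref{fanotarget}, obtaining a SQM $\ph\colon X\dasharrow\w{X}$ and the blow-up $f\colon\w{X}\to Y$ of a smooth point $p$, with $\Exc(f)$ the transform $\w{D_1}$ of $D_1$ and $C_{D_1}$ the transform of a general line in $\Exc(f)\cong\pr^3$. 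The number $D_1\cdot C_{D_1}=-1$ is already recorded in Def.~\ref{prima}, so the whole content is computing the two \emph{cross} intersection numbers $D_2\cdot C_{D_1}$ and $D_1\cdot C_{D_2}$.

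First I would show $D_2\cdot C_{D_1}=0$. Because $D_1+D_2$ is not movable, the face $\sigma:=\langle[D_1],[D_2]\rangle$ of $\Eff(X)$ satisfies $\sigma\cap\Mov(X)=\{0\}$; applying Lemma \ref{bijection}(2)--(3) to $f$ (pushing forward to $Y$), the transform $D_2^Y:=f_*(\ph_*D_2)$ is a fixed prime divisor on $Y$, in particular $\ph_*D_2$ does not meet the general fiber of $f$ over $p$. Since $C_{D_1}$ is the transform of a general line in $\Exc(f)$, which is contracted by $f$ to the single point $p$, I expect that $\ph(C_{D_1})$ is disjoint from the transform $\w{D_2}$ of $D_2$: the general line in $\Exc(f)\cong\pr^3$ avoids the finitely many points where $\w{D_2}$ could meet $\Exc(f)$ (noting $\w{D_2}\cap\Exc(f)$ has codimension at least one in the exceptional $\pr^3$, because $D_2\neq D_1$ and $\w{D_2}$ is the transform of a distinct prime divisor). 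Taking $C_{D_1}$ general within its family then gives $\w{D_2}\cdot\ph(C_{D_1})=0$, and since $C_{D_1}\subset\dom\ph$ is the transform, $D_2\cdot C_{D_1}=0$.

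The symmetric statement $D_1\cdot C_{D_2}=0$ follows by exchanging the roles of $D_1$ and $D_2$: the hypothesis that $D_1+D_2$ is non-movable is symmetric, and $D_2$ is also of type $(3,0)^{sm}$, so running the MMP for $D_2$ instead yields by the identical argument $D_1\cdot C_{D_2}=0$. The main obstacle I anticipate is the genericity bookkeeping in the middle paragraph: one must argue carefully that a \emph{general} line of the exceptional $\pr^3$ avoids the transform of $D_2$, which requires controlling $\dim(\w{D_2}\cap\Exc(f))$ inside $\Exc(f)\cong\pr^3$ and ensuring $C_{D_1}$ can be chosen in $\dom\ph$ transverse to the exceptional locus of the SQM (using Rem.~\ref{SQM2} and the fact, from Prop.~\ref{fanotarget}(1), that $X\smallsetminus\dom\ph$ consists of the exceptional planes inside $D_1$). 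Once this disjointness is established the intersection numbers are forced, so the real work is entirely geometric rather than computational.
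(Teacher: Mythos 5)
Your argument has two genuine gaps, and both trace back to the fact that you never use the hypothesis that $D_1+D_2$ is not movable in a way that produces a \emph{negative} intersection number. First, the step ``because $D_1+D_2$ is not movable, the face $\sigma:=\langle[D_1],[D_2]\rangle$ of $\Eff(X)$ satisfies $\sigma\cap\Mov(X)=\{0\}$'' is unjustified: the hypothesis only says $[D_1]+[D_2]\notin\Mov(X)$, which does not imply that the $2$-dimensional cone $\langle[D_1],[D_2]\rangle$ is a \emph{face} of $\Eff(X)$ (the implication goes the other way), and Lemma \ref{bijection}(3) needs the face condition. Second, and more seriously, even if $D_2$ did descend to a fixed prime divisor on $Y$, nothing prevents that divisor from containing $p$ (ruling this out is exactly the content of Lemma \ref{nofixed}, which requires $\rho_X\geq 7$ and a separate argument); and your genericity step is incorrect: if $\w{D}_2\cap\Exc(f)\neq\emptyset$, then, $\w{D}_2$ being an effective Cartier divisor not containing $\Exc(f)\cong\pr^3$, the intersection is a surface in $\pr^3$, not a finite set, and \emph{every} line meets it with strictly positive intersection number. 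So ``a general line avoids $\w{D}_2$'' is equivalent to the equality $\w{D}_2\cdot C'_{D_1}=0$ you are trying to prove, and nothing in your argument forces it.

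The paper obtains the needed inequality by running a MMP for $G:=D_1+D_2$ itself, not for $D_1$: since $G$ is effective but not movable, some divisorial contraction occurs, and the first one is $G'$-negative with exceptional divisor one of the $D_i'$, say $D_1'$. Prop.~\ref{tofix} identifies it as the blow-up of a smooth point with $\ph(C_{D_1})$ a line in $D_1'\cong\pr^3$, whence $0>G'\cdot C'_{D_1}=D_2'\cdot C'_{D_1}-1$, so $D_2'\cdot C'_{D_1}\leq 0$, hence $=0$ since $D_2'\neq D_1'$. The disjointness of $D_1'$ and $D_2'$ in $X'$ (forced by $D_1'\cong\pr^3$) then yields $D_1\cdot C_{D_2}=0$, using that a general $C_{D_2}$ avoids the exceptional planes. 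If you want to keep your structure, the missing ingredient is precisely this $G$-negativity; without it the cross intersection numbers are not forced.
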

\begin{proof}
Set $G:=D_1+D_2$. We run a MMP for $G$; being $G$ effective, the MMP must end with $G$ becoming nef. On the other hand, $G$ is not movable, thus some divisorial contraction must occur in the MMP; let us consider the first one. We have
$$X\stackrel{\ph}{\dasharrow} X'\stackrel{f}{\la}Y,$$
where $\ph$ is a SQM and $f$ is a $G'$-negative elementary divisorial contraction, with $G'=D_1'+D_2'$, $D_i'\subset X'$ the transform of $D_i\subset X$.
Then one of the two intersections $D_i'\cdot \NE(f)$ must be negative; up to exchanging the divisors $D_i$, we can assume that $D_1'\cdot\NE(f)<0$, hence $D_1'=\Exc(f)$. 

By Prop.~\ref{tofix} $f$ is of type $(3,0)^{sm}$,  $C_{D_1}\subset\dom\ph$, and if $C'_{D_1}:=\ph(C_{D_1})$, then $C'_{D_1}$ is a line in $D'_1\cong\pr^3$; in particular $D_2\cdot C_{D_1}=D_2'\cdot C_{D_1}'$.
We get
$$0>G'\cdot C_{D_1}'=(D_1'+D_2')\cdot C_{D_1}'=D_2'\cdot C_{D_1}'-1,$$
which yields $D_2'\cdot C_{D_1}'\leq 0$. On the other hand $D_2'\neq D_1'$, thus 
 $D_2'\cdot C_{D_1}'\geq 0$, and we conclude that  $D_2\cdot C_{D_1}=D_2'\cdot C_{D_1}'=0$. 
Since $D_1'\cong\pr^3$,
this implies that the two divisors $D_1'$ and $D_2'$ are disjoint in $X'$. In $X$, their transforms $D_1$ and $D_2$ may meet only along exceptional planes in $X\smallsetminus\dom(\ph)$. Since a general $C_{D_2}\subset D_2$ is disjoint from any exceptional plane, we also have $D_1\cdot C_{D_2}=0$.
\end{proof}
\begin{lemma}\label{transform}
Let $X$ be a smooth Fano $4$-fold, $D\subset X$ a fixed prime divisor of type $(3,0)^{sm}$, and 
$X\stackrel{\ph}\dasharrow\w{X}\stackrel{f}{\la}Y$ the
 MMP for $D$ as in Prop.~\ref{fanotarget}.

Let $B_Y\subset Y$ be a fixed prime divisor, and $B_X\subset X$ its transform. Then $B_X$ is a fixed divisor, and $B_X$ is of type $(3,0)^{sm}$ if and only if $B_Y$ is of type $(3,0)^{sm}$.
\end{lemma}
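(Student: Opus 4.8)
The plan is the following. The first assertion, that $B_X$ is a fixed prime divisor, is immediate from Lemma~\ref{bijection}(3) applied to the elementary divisorial rational contraction $f\circ\ph\colon X\dasharrow Y$, whose exceptional divisor is $D$: this yields precisely a bijection $B_Y\mapsto B_X$ between fixed prime divisors of $Y$ and fixed prime divisors of $X$, where $B_X$ is the transform of $B_Y$ and $\langle[D],[B_X]\rangle$ is a $2$-dimensional face of $\Eff(X)$ with $\langle[D],[B_X]\rangle\cap\Mov(X)=\{0\}$; in particular $D+B_X$ is not movable. Writing $\w{B}\subset\w{X}$ for the strict transform of $B_Y$ and $m:=\operatorname{mult}_pB_Y$, so that $\w{B}=f^*B_Y-m\,\Exc(f)$, a direct computation with a line $\w{C}_D\subset\Exc(f)\cong\pr^3$ gives $B_X\cdot C_D=\w{B}\cdot\w{C}_D=m$. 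Hence $p\notin B_Y$ is equivalent to $B_X\cdot C_D=0$, i.e.\ to $\w{B}\cap\Exc(f)=\emptyset$, and this is the condition I would put at the centre of the argument.

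I would organise the second assertion as follows: each of the two hypotheses forces $p\notin B_Y$, while conversely, once $p\notin B_Y$, each type can be deduced from the other. If $B_Y$ is of type $(3,0)^{sm}$, I argue by contradiction: assuming $p\in B_Y$ and running the MMP $Y\dasharrow\w{Y}\stackrel{f'}{\to}Y'$ for $B_Y$ as in Prop.~\ref{fanotarget}, the point $p$ lies off every exceptional plane by Lemma~\ref{firstproperties}(3), hence lies in the domain of the SQM and corresponds to a point of $\Exc(f')\cong\pr^3$; the transform of a general line of $\pr^3$ through that point is then an irreducible curve through $p$ numerically equivalent to $C_{B_Y}$, of anticanonical degree $3$, which contradicts Lemma~\ref{firstproperties}(2). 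If instead $B_X$ is of type $(3,0)^{sm}$, then $D$ and $B_X$ are distinct fixed prime divisors of type $(3,0)^{sm}$ with $D+B_X$ not movable, so Lemma~\ref{hongkong} gives $B_X\cdot C_D=0$, that is $p\notin B_Y$.

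It then remains to show that $p\notin B_Y$ lets one pass between the two types, and here the guiding idea is that, since $\Exc(f)\cong\pr^3$ is disjoint from $\w{B}$, the blow-up $f$ of the point $p$ is a mere spectator to the birational geometry of $B$. In the direction ``$B_Y$ of type $(3,0)^{sm}\Rightarrow B_X$ of type $(3,0)^{sm}$'' this is transparent: starting from the MMP $Y\dasharrow\w{Y}\stackrel{f'}{\to}Y'$ for $B_Y$, the point $p$ lifts to a point $q'$ of $\w{Y}$ which, because $p\notin B_Y$, lies off $\Exc(f')$; then $\Bl_{q'}\w{Y}$ is a SQM of $\w{X}$ (the flips of the $B_Y$-MMP lift over the point, being supported on planes contained in $B_Y$ and hence disjoint from $p$), and it carries a blow-up of a smooth point contracting the transform of $B_Y$, so that $B_X$ is of type $(3,0)^{sm}$ by Definition~\ref{30}. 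The reverse direction is symmetric, using the blow-down of $B_X$ furnished by its type together with the fact that the disjoint $\pr^3=\Exc(f)$ can be contracted before or after the flips and divisorial contraction of the $B_X$-MMP, yielding a SQM of $Y$ on which $B_Y$ becomes the exceptional divisor of a blow-up of a smooth point.

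I expect the main obstacle to be exactly this last commutation: one must verify that contracting the $\pr^3$ disjoint from $\w{B}$ genuinely commutes with the flips and the divisorial contraction realising the type of $B$, and that this is compatible with the transforms tracked through the various SQMs. The delicate point is that the intermediate model $\w{X}$ need not be Fano, so the Fano-specific tools (Prop.~\ref{fanotarget} and Prop.~\ref{tofix}, which pin down that the relevant contraction is a blow-up of a smooth point) have to be invoked on the Fano varieties $X$ and $Y$ themselves, and the disjointness $\w{B}\cap\Exc(f)=\emptyset$ used to transport the conclusion across the blow-up of $p$.
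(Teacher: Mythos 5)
Your reduction of the problem to the condition $p\notin B_Y$ (equivalently $B_X\cdot C_D=0$, equivalently $\w{B}\cap\Exc(f)=\emptyset$ in $\w{X}$) is sound, and three of your four implications are correct and complete: the first assertion via Lemma~\ref{bijection}(3), the implication ``$B_X$ of type $(3,0)^{sm}\Rightarrow p\notin B_Y$'' via Lemma~\ref{hongkong}, the implication ``$B_Y$ of type $(3,0)^{sm}\Rightarrow p\notin B_Y$'' via the general line through $\psi_Y(p)$ in $\Exc(f')\cong\pr^3$ (this is essentially the paper's proof of Lemma~\ref{nofixed}, and works here without the hypothesis $\rho_X\geq 7$; the paper instead invokes Lemma~\ref{sequence}), and the direction ``$B_Y$ of type $(3,0)^{sm}\Rightarrow B_X$ of type $(3,0)^{sm}$'', where your explicit identification $\Bl_{q'}\w{Y}=\Bl_{q'',p'}Y'$ is a clean alternative to the paper's argument.

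The genuine gap is the reverse direction ``$B_X$ of type $(3,0)^{sm}\Rightarrow B_Y$ of type $(3,0)^{sm}$'', which you declare ``symmetric'' and then flag as the expected obstacle without resolving. It is not symmetric: in the forward direction the operation to be commuted past is the honest point blow-up $f\colon\w{X}\to Y$ at $p\notin B_Y$, whereas in the reverse direction one must push the realization of $B_X$ down across $f\circ\ph$, and $\ph$ flips the exceptional planes contained in $D$ --- which is exactly where $D$ and $B_X$ can meet (the proof of Lemma~\ref{hongkong} only gives $D\cap B_X\subseteq X\smallsetminus\dom\ph$, not $D\cap B_X=\emptyset$). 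Consequently the transform of $\Exc(f)\cong\pr^3$ under the SQM of the $B_X$-MMP need not remain a $\pr^3$ disjoint from the transform of $B_X$, so ``contracting the disjoint $\pr^3$ before or after'' is not available on the models you have set up. The paper resolves this by building a single common model: it factors $\psi_Y\circ f\colon\w{X}\dasharrow\widehat{Y}$ as a SQM $\w{X}\dasharrow\widehat{X}$ followed by an elementary birational contraction $\hat{f}\colon\widehat{X}\to\widehat{Y}$, uses Prop.~\ref{tofix} to conclude that $\hat{f}$ is again a blow-up of a smooth point (so the transform of $D$ on $\widehat{X}$ is again a $\pr^3$), establishes the disjointness $B_{\widehat{X}}\cap\Exc(\hat{f})=\emptyset$ on $\widehat{X}$, and then plays the two-ray game with $h:=g_Y\circ\hat{f}\colon\widehat{X}\to W$ to show that the contraction $g_X$ of $B_{\widehat{X}}$ and the contraction $g_Y$ of the transform of $B_Y$ are locally isomorphic; this single construction gives both directions at once. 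You need either this two-ray game or an equivalent substitute to close your reverse implication.
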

\begin{proof}
By Lemma \ref{bijection} we know that  $B_X$ is a fixed divisor in $X$ such that   $\langle [D],[B_X]\rangle\cap\Mov(X)=\{0\}$; in particular $D+B_X$ is not movable.

Since $B_Y$ is fixed, there are a SQM $\psi_Y\colon Y\dasharrow\widehat{Y}$ and an elementary divisorial contraction $g_Y\colon\widehat{Y}\to W$ such that $\Exc(g_Y)$ is the transform of $B_Y$ (see \ref{pargfixed}).

The rational contraction $\psi_Y\circ f\colon \w{X}\dasharrow\widehat{Y}$ can be factored as $\w{X}\stackrel{\psi_X}{\dasharrow}\widehat{X}\stackrel{\hat{f}}{\to}\widehat{Y}$, where $\psi_X\colon\w{X}\dasharrow\widehat{X}$ is a SQM and $\hat{f}\colon \widehat{X}\to\widehat{Y}$ is an elementary birational contraction. 
$$
\xymatrix{X\ar@{-->}[r]^{\ph}&{\w{X}}\ar@{-->}[r]^{\psi_X}\ar[d]_f&{\widehat{X}}
\ar[d]^{\hat{f}}\\
&Y\ar@{-->}[r]^{\psi_Y}&{\widehat{Y}}\ar[r]^{g_Y}&W}
$$
Since $D$ is contracted in $Y$, $\hat{f}$ must be divisorial with exceptional divisor the transform of $D$. By assumption $D$ is of type $(3,0)^{sm}$,  hence Prop.~\ref{tofix} yields that $\hat{f}$ is again the blow-up of a smooth point $p\in\widehat{Y}$; moreover $C_D\subset\dom(\psi_X\circ\ph)$ and $[(\psi_X\circ\ph)(C_D)]\in\NE(\hat{f})$. 
\begin{parg}\label{p}
Let $B_{\widehat{X}}\subset \widehat{X}$ be the transform of $B_X$. We show that if either $B_X$ or $B_Y$ is of type $(3,0)^{sm}$, then $$B_{\widehat{X}}\cap\Exc(\hat{f})=\emptyset.$$

Indeed suppose that $B_X$ is of type $(3,0)^{sm}$. Then by Lemma \ref{hongkong} we have $B_X\cdot C_D=0$ in $X$ and hence $B_{\widehat{X}}\cdot\NE(\hat{f})=0$ in $\widehat{X}$. This implies that  $B_{\widehat{X}}\cap\Exc(\hat{f})=\emptyset$, because $\Exc(\hat{f})\cong\pr^3$.

Suppose instead that $B_Y$ is of type $(3,0)^{sm}$. Then  $g_Y$ is of type $(3,0)^{sm}$ by Prop.~\ref{tofix}; thus we can apply Lemma \ref{sequence} to $g_Y\circ\hat{f}\colon \widehat{X}\to W$, and deduce that
$p\not\in \Exc(g_Y)$, $B_{\widehat{X}}=\hat{f}^{-1}(\Exc(g_Y))$ and $B_{\widehat{X}}\cap\Exc(\hat{f})=\emptyset$. 
\end{parg}
\begin{parg}
Suppose now that either $B_X$ or $B_Y$ is of type $(3,0)^{sm}$, and
let us consider $h:=g_Y\circ\hat{f}\colon\widehat{X}\to W$. The cone $\NE(h)$ has dimension $2$, and one of its extremal rays is $\NE(\hat{f})$. 
Let $g_X\colon \widehat{X}\to Z$ be the contraction of the second extremal ray of
$\NE(h)$ and $k\colon Z\to W$ the induced contraction; notice that both $g_X$ and $k$ are birational.
$$
\xymatrix{X\ar@{-->}[r]^{\ph}&{\w{X}}\ar@{-->}[r]^{\psi_X}\ar[d]_f&{\widehat{X}}
\ar[d]_{\hat{f}}\ar[dr]^h\ar[r]^{g_X}&Z\ar[d]^k\\
&Y\ar@{-->}[r]^{\psi_Y}&{\widehat{Y}}\ar[r]^{g_Y}&W}
$$
We have  $B_{\widehat{X}}\cap\Exc(\hat{f})=\emptyset$ by \ref{p},  hence $\Exc(h)=B_{\widehat{X}}\cup\Exc(\hat{f})$; moreover it is not difficult to see that 
$B_{\widehat{X}}\cdot\NE(g_X)<0$, so that $\Exc(g_X)\subseteq B_{\widehat{X}}$. Notice that 
$g_X(\Exc(\hat{f}))\cong\pr^3$ is a Cartier divisor in $Z$, with negative normal bundle; this implies that $\Exc(k)=g_X(\Exc(\hat{f}))$ and hence $g_X$ must be divisorial with $\Exc(g_X)=B_{\widehat{X}}$.
Therefore $g_X$ and $g_Y$ are locally isomorphic,
and $g_X$ is of type $(3,0)^{sm}$ if and only if $g_Y$ is; this yields the statement.
\end{parg}
\vspace{-20pt}
\end{proof}
\subsection{Fano $4$-folds where every fixed divisor is of type $(3,0)^{sm}$}\label{every2}
\begin{parg}\label{introevery2}
\noindent In this section we study Fano $4$-folds $X$ where every fixed prime divisor is of type $(3,0)^{sm}$, and we prove Th.~\ref{everyfixeddiv} stating that $\rho_X\leq 11$. 

Let us outline the strategy. We know that if $X$ has an elementary rational contraction of fiber type, then $\rho_X\leq 11$ by Th.~\ref{elemft}. One can always find a sequence of flips and elementary divisorial contractions $X\dasharrow Y$ such that $Y$ has an elementary rational contraction of fiber type $Y\dasharrow Z$. Using the fact that every fixed prime divisor of $X$ is of type $(3,0)^{sm}$, we show that if $\dim Z\geq 2$, then we can ``lift'' this elementary rational contraction of fiber type to $X$, and hence apply   Th.~\ref{elemft} (Lemma \ref{lifting}). Otherwise, we reduce to the case where $\rho_Y\leq 2$, and we show that $X$ is isomorphic in codimension one to $\Bl_{p_1,\dotsc,p_r}Y$  (Lemma \ref{rho1o2}). Finally if $\rho_Y=1$ we conclude by Th.~\ref{manypoints}; if $\rho_Y=2$ we follow the same proof as for Th.~\ref{manypoints}, and for this we need to analyse $Y$ in order to bound $h^0(Y,-K_Y)$ (Lemma \ref{caserho2}).
\end{parg}
\begin{lemma}\label{factor}
Let $X$ be a projective, normal, and $\Q$-factorial Mori dream space, and $f\colon X\dasharrow Y$  a sequence of flips and elementary divisorial contractions (in arbitrary order).
Set $r:=\rho_X-\rho_Y$. Then $f$ can be factored as
$$X\dasharrow X_0\la X_1\la\cdots\la X_{r-1}\la X_r=Y,$$
where $X\dasharrow X_0$ is a SQM, and $X_{i-1}\to X_i$ is an elementary divisorial contraction for every $i=1,\dotsc,r$. 
\end{lemma}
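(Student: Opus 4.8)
The plan is to recognize $f$ as a birational rational contraction and then reorganize its factorization so that all the flips precede all the divisorial contractions. First I would observe that each flip is an SQM and each elementary divisorial contraction is a contraction, so both are rational contractions; since all the intermediate varieties are $\Q$-factorial projective Mori dream spaces, the composition property of rational contractions recalled in Section~\ref{notMMP} shows that $f\colon X\dasharrow Y$ is itself a (clearly birational) rational contraction. The whole statement then reduces, by induction on the number $n$ of elementary operations composing $f$, to a single commutation step: transporting one SQM to the left past one elementary divisorial contraction.

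The key lemma I would isolate is the following commutation statement. Let $a\colon V\to W$ be an elementary divisorial contraction and $b\colon W\dasharrow Y$ an SQM, with $V,W,Y$ all $\Q$-factorial projective Mori dream spaces. Then $b\circ a$ factors as an SQM $c\colon V\dasharrow V'$ followed by an elementary divisorial contraction $h\colon V'\to Y$. To prove it, note that $b\circ a$ is again a birational rational contraction, so by the definition of rational contraction (see \eqref{f}) it factors as an SQM $c\colon V\dasharrow V'$ followed by a contraction $h\colon V'\to Y$. Since SQMs preserve the Picard number, $\rho_{V'}-\rho_Y=\rho_V-\rho_Y=(\rho_V-\rho_W)+(\rho_W-\rho_Y)=1$, so $h$ is elementary, and $h$ is birational because $c$ is. An elementary birational contraction is either divisorial or small, and here $h$ cannot be small: the target of a small elementary contraction of a $\Q$-factorial variety is never $\Q$-factorial, whereas $Y$ is $\Q$-factorial by hypothesis. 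Hence $h$ is divisorial, as wanted. This is the heart of the argument, and exactly the point where $\Q$-factoriality of the target is essential.

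With this lemma in hand, I would run the induction as follows. Writing $f=g_n\circ\cdots\circ g_1$ and letting $W$ be the source of the last operation $g_n$, the inductive hypothesis factors $g_{n-1}\circ\cdots\circ g_1\colon X\dasharrow W$ as an SQM followed by a chain of elementary divisorial contractions $X_0\to\cdots\to W$. If $g_n$ is itself divisorial, I simply append it and we are done. If $g_n$ is a flip, hence an SQM $W\dasharrow Y$, I apply the commutation lemma repeatedly to move this SQM leftwards, one divisorial contraction at a time, through the entire chain; each application replaces a (divisorial contraction, SQM) pair by an (SQM, divisorial contraction) pair without changing the number of divisorial contractions, and the hypotheses of the lemma are met at every step because the relevant target is always a $\Q$-factorial SQM-image. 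Once the SQM reaches the far left it merges with the initial SQM (a composition of SQMs being an SQM), yielding the desired factorization $X\dasharrow X_0\to X_1\to\cdots\to X_r=Y$; the equality $r=\rho_X-\rho_Y$ is automatic, since flips preserve and divisorial contractions drop the Picard number by one. The main obstacle is entirely contained in the commutation lemma---specifically, guaranteeing that the elementary birational contraction produced there is divisorial rather than small---while the surrounding induction is purely bookkeeping.
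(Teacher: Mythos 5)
Your proposal is correct and rests on exactly the same key step as the paper's proof: factor the relevant birational rational contraction as an SQM followed by a regular contraction as in \eqref{f}, observe it is elementary and birational by the Picard number count, and rule out the small case because the target is $\Q$-factorial. The only difference is bookkeeping---the paper inducts on $r$ and applies this factorization once to the whole tail $Z\dasharrow Y$ with $\rho_Z-\rho_Y=1$, whereas you iterate a one-step commutation lemma---but the mathematical content is identical.
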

\begin{proof}
We proceed by induction on $r$. If $r=0$, then $f$ is a SQM and there is nothing to prove.
Assume that $r>0$, and let us factor $f$ as in the following diagram:
$$\xymatrix{{X}\ar@{-->}[r]\ar@{-->}@/^1pc/[rr]^f&Z\ar@{-->}[r]_g&{Y,}
}
$$
where both maps are sequences of flips and elementary divisorial contractions, and $\rho_Z-\rho_Y=1$. We can factor $g$ as follows:
$$\xymatrix{{Z}\ar@{-->}[r]\ar@{-->}@/^1pc/[rr]^g&{\w{Z}}\ar[r]_{\tilde{g}}&{Y,}
}$$
where $Z\dasharrow\w{Z}$ is a SQM, and $\tilde{g}$ is regular. Then $\tilde{g}$ is an elementary birational contraction, and cannot be small because $Y$ is $\Q$-factorial; thus it is divisorial. 

Let us now consider the composite map $X\dasharrow \w{Z}$. By applying the induction assumption, we get the statement.
\end{proof}
\begin{lemma}\label{several}
Let $X$ be a smooth Fano $4$-fold such that every fixed prime divisor is of type $(3,0)^{sm}$.
Let $X\dasharrow Y$ be  a sequence of flips and elementary divisorial contractions (in arbitrary order).  We have the following:
\begin{enumerate}[(1)]
\item
$Y$ is smooth, it is a SQM of a smooth Fano $4$-fold $Y'$, and every fixed prime divisor in $Y'$ is of type $(3,0)^{sm}$;
\item
 every
divisorial contraction occurring in the sequence is of type $(3,0)^{sm}$;
\item set $r:=\rho_X-\rho_Y$; there are $r$ distinct points $p_1,\dotsc,p_r\in Y$ such that $X$ and $\Bl_{p_1,\dotsc,p_r}Y$ are isomorphic in codimension one.
\end{enumerate}
\end{lemma}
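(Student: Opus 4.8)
The plan is to argue by induction on $r=\rho_X-\rho_Y$, proving the three assertions simultaneously for every sequence of flips and elementary divisorial contractions $X\dasharrow Y$ realizing a given value of $r$. The base case $r=0$ is immediate: no divisorial contraction can occur (each would strictly lower $\rho$), so $X\dasharrow Y$ is a SQM, $Y$ is smooth by Th.~\ref{SQM0}, and taking $Y':=X$ settles (1), while (2) is vacuous and (3) holds since $X$ and $Y$ are isomorphic in codimension one.

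For the inductive step I would first isolate the \emph{first} divisorial contraction in the given sequence, writing it as
$$X\stackrel{\psi}{\dasharrow}W\stackrel{g}{\la}W'\dasharrow Y,$$
where $\psi$ (the flips preceding the first contraction) is a SQM and $g\colon W\to W'$ is elementary divisorial. The composition $X\dasharrow W'$ is then an elementary divisorial rational contraction, so its exceptional divisor $D\subset X$ is a fixed prime divisor (par.~\ref{otite}), hence of type $(3,0)^{sm}$ by hypothesis. Since $\Exc(g)$ is the transform of $D$ under the SQM $\psi$, Prop.~\ref{tofix} applies and shows that $g$ is of type $(3,0)^{sm}$, i.e.\ the blow-up of a smooth point. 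This disposes of the first divisorial contraction.

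To handle the remaining contractions I would replace $W'$ by a Fano model. Applying Prop.~\ref{fanotarget} to $D$ yields a MMP $X\stackrel{\ph}{\dasharrow}\w{X}\stackrel{f}{\la}Y_1$ with $f$ the blow-up of a smooth point and $Y_1$ a smooth Fano $4$-fold; the commutative diagram of Prop.~\ref{tofix} (both $f$ and $g$ contracting the transform of $D$) provides a SQM $Y_1\dasharrow W'$. By Lemma \ref{transform}, every fixed prime divisor of $Y_1$ has its transform in $X$ fixed, hence of type $(3,0)^{sm}$, hence is itself of type $(3,0)^{sm}$; thus $Y_1$ satisfies the hypotheses of the lemma, and $Y_1\dasharrow W'\dasharrow Y$ is a sequence of flips and elementary divisorial contractions with $\rho_{Y_1}-\rho_Y=r-1$. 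The induction hypothesis applied to this sequence gives (1) for $Y$ verbatim, and shows that every divisorial contraction in the tail $W'\dasharrow Y$ is of type $(3,0)^{sm}$; together with $g$ this proves (2). Note that this argument for (2) uses only the case $r-1$, so it in fact holds for \emph{every} sequence from $X$ to $Y$ of the given length.

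For (3) I would combine (2) with the reordering of Lemma \ref{factor}: factor the given map as $X\dasharrow X_0\to X_1\to\cdots\to X_r=Y$ with $X\dasharrow X_0$ a SQM and each $X_{i-1}\to X_i$ elementary divisorial. As this is again such a sequence, assertion (2) (now available for all length-$r$ sequences) shows each $X_{i-1}\to X_i$ is the blow-up of a smooth point, so $X_0\to Y$ is a composition of $r$ blow-ups of smooth points. Lemma \ref{sequence} then identifies $X_0$ with $\Bl_{p_1,\dots,p_r}Y$ for $r$ distinct points $p_1,\dots,p_r\in Y$, and since $X$ is isomorphic in codimension one to $X_0$, this gives (3). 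The main obstacle is the bookkeeping forced by the ``arbitrary order'' of the sequence: because (2) concerns the actual morphisms of the given sequence, one cannot simply reorder at the outset, and must instead peel off the first divisorial contraction (where the preceding flips genuinely form a SQM, so that Prop.~\ref{tofix} applies) and then recurse through the Fano model $Y_1$, checking via Lemma \ref{transform} that the hypothesis is inherited.
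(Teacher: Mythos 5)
Your proposal is correct and follows essentially the same route as the paper: induction on $r$ for (1)--(2) by peeling off the first divisorial contraction, applying Prop.~\ref{tofix} and Prop.~\ref{fanotarget} to identify it as a point blow-up and pass to the Fano model, Lemma \ref{transform} to see that the hypothesis is inherited, and then Lemma \ref{factor} plus Lemma \ref{sequence} for (3). Your explicit remark that (2) must be known for the \emph{reordered} sequence before invoking Lemma \ref{sequence} is a point the paper leaves implicit, and is a welcome clarification rather than a divergence.
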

\begin{proof}
To show (1) and (2), we proceed by induction on $r=\rho_X-\rho_Y$. If $r=0$, then $X\dasharrow Y$ is a SQM, and $Y$ is smooth by Th.~\ref{SQM0}.
Let us consider the general case. We factor $X\dasharrow Y$ as
$$X\dasharrow X'\stackrel{f}{\la} Z\stackrel{\ph}{\dasharrow}Y,$$
where $X\dasharrow X'$ is a SQM,
$f\colon X'\to Z$ is  an elementary divisorial contraction, and $\ph\colon Z\dasharrow Y$ is a sequence of flips and elementary divisorial contractions. 

Let $D\subset X$ be the transform of $\Exc(f)$. Then $D$ is a fixed prime divisor, and by assumption it is of type $(3,0)^{sm}$. Therefore Prop.~\ref{tofix} and \ref{fanotarget} yield that $Z$ is smooth, $f$ is of type $(3,0)^{sm}$, and 
there exists a SQM $\psi\colon Z'\dasharrow Z$ where $Z'$ is a  
 Fano $4$-fold. Moreover, by Lemma \ref{transform}, every fixed prime divisor in $Z'$ is of type $(3,0)^{sm}$. By the induction assumption applied to the map $\ph\circ\psi\colon Z'\dasharrow Y$, we get (1) and (2).

We show (3). By Lemma \ref{factor} the map $X\dasharrow Y$ can be factored as 
$$X\dasharrow\w{X}\stackrel{g}{\la} Y,$$
where $X\dasharrow\w{X}$ is a SQM and  $g$ is a composition of $r$ elementary divisorial contractions. By (2), every such contraction is of type $(3,0)^{sm}$, thus the statement follows from Lemma \ref{sequence}.
\end{proof}
The following is a ``version with flips'' of the well-known 2-ray game.
\begin{lemma}\label{lifting0}
Let $X$ be a projective, normal, and $\Q$-factorial Mori dream space, $f\colon X\to Y$ an elementary divisorial contraction, and $g\colon Y\to Z$ an elementary contraction of fiber type. Set $h:=g\circ f\colon X\to Z$, $D:=\Exc(f)\subset X$, and assume that $\codim h(D)\geq 2$. 

Then there exists a SQM $\ph\colon X\dasharrow\w{X}$ such that $\tilde{h}:=h\circ\ph^{-1}\colon\w{X}\to Z$ is regular and factors 
as $\w{X}\stackrel{\tilde{g}}{\to}T\stackrel{\tilde{f}}{\to}Z$, where $\tilde{g}$ is an elementary contraction of fiber type, and $\tilde{f}$ is an elementary divisorial contraction with $\Exc(\tilde{f})=\tilde{g}(\w{D})$,
$\w{D}\subset\w{X}$ the transform of $D$.
$$\xymatrix{{X}\ar[d]_f\ar[dr]^h\ar@{-->}^{\ph}[rr]&&{\w{X}}\ar[dl]_{\tilde{h}}
\ar[d]^{\tilde{g}}\\    
{Y}\ar[r]^g&Z&{T}\ar[l]_{\quad\tilde{f}}
}$$
\end{lemma}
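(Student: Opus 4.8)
The plan is to play the two-ray game for $h$ relative to $Z$. Since $f$ and $g$ are elementary we have $\rho_X-\rho_Z=2$, so working over $Z$ the relative space $\Nu(X/Z)$ is $2$-dimensional, $X$ is a Mori dream space relatively over $Z$, and $\NE(h)=\NE(X/Z)$ is a $2$-dimensional cone whose extremal rays are $R_f:=\NE(f)$ and a second ray $R'$. Recall $D=\Exc(f)$ satisfies $D\cdot R_f<0$. The relative cones $\Nef(X/Z)\subseteq\Mov(X/Z)\subseteq\Eff(X/Z)$ and $\mov(X/Z)\subseteq\NE(X/Z)$ all live in these $2$-dimensional spaces, so the relative MMP over $Z$ terminates after finitely many steps.

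First I would extract a covering family from the hypothesis $\codim h(D)\geq 2$. For a general $z\in Z$ we have $z\notin h(D)$, so the general fibre $F$ of $h$ is disjoint from $D$; as $f$ is an isomorphism near $F$, one gets $\dim\N(F,X)=\dim\N(G,Y)=\rho_Y-\rho_Z=1$ for $G$ a general fibre of $g$, so the curves of $F$ have a single numerical class $\gamma$, and they sweep out $X$. Moreover $D\cdot\gamma=0$, since $F\cap D=\emptyset$. As $\gamma$ is a moving class with $D\cdot\gamma=0$ while $D\cdot R_f<0$, the class $\gamma$ lies in the relative interior of $\NE(X/Z)$ (equivalently $D\cdot R'>0$), and $[D]$ lies on the boundary hyperplane $\gamma^{\perp}$ of $\Eff(X/Z)$.

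The heart of the argument — and the step I expect to be the main obstacle — is to run the relative MMP for $-D$ over $Z$ and to control it. At each model $X_i$ (transform $D_i$ of $D$, with $h_i(D_i)=h(D)$) one flips or contracts a $D$-positive (that is, $(-D)$-negative) extremal ray $R_i$; note $R_f$ is never touched, since $-D\cdot R_f>0$. The crucial point is that $R_i$ cannot be divisorial: a divisorial contraction of $R_i$ would have exceptional divisor $E_i\neq D_i$ (because $D_i\cdot R_i>0>E_i\cdot R_i$), giving a factorization $X_i\to W_i\to Z$ with $W_i\to Z$ of fibre type and $D_i$ mapping to a divisor $D_{W_i}$; but then a dimension count (the fibres of $W_i\to Z$ have dimension $\dim X-\dim Z$) forces $\tilde{f}$-image $h(D)$ of $D_{W_i}$ to have codimension $\leq 1$ in $Z$, contradicting the hypothesis. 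Hence every step is a flip, and the procedure yields an SQM $\ph\colon X\dasharrow\w{X}$ (a sequence of flips, by Lemma \ref{factor} and the Mori dream space property) after which $-\w{D}$ is relatively nef, where $\w{D}=\ph_*D$. Since $-\w{D}$ is nef and trivial on the covering class $\gamma$, it is not big; therefore the elementary contraction it defines, $\tilde{g}\colon\w{X}\to T$, is of fibre type and contracts the transform of $\gamma$. Letting $\tilde{f}\colon T\to Z$ be the remaining elementary contraction (so $\rho_T-\rho_Z=1$), the composite $\tilde{f}\circ\tilde{g}=h\circ\ph^{-1}$ is regular.

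It remains to identify $\tilde{f}$ and its exceptional locus. From $-\w{D}\cdot\gamma=0$ the divisor $\w{D}$ is numerically trivial on the fibres of $\tilde{g}$, hence $\w{D}=\tilde{g}^{*}\w{D}_T$ for a divisor $\w{D}_T$ on $T$ with $\tilde{g}(\w{D})=\w{D}_T$; moreover $\w{D}$ cannot dominate $T$, since otherwise $h(D)=\tilde{f}(T)=Z$. A dimension count then gives $\dim\w{D}_T=\dim Z-1$, so $\w{D}_T$ is a prime divisor and $\dim T=\dim Z$, whence $\tilde{f}$ is birational (and elementary, so divisorial or small). Finally $\tilde{f}(\w{D}_T)=h(D)$ has codimension $\geq 2$ in $Z$, so the divisor $\w{D}_T$ is contracted by $\tilde{f}$; an elementary birational contraction that contracts a prime divisor is divisorial with that divisor as exceptional locus, so $\Exc(\tilde{f})=\w{D}_T=\tilde{g}(\w{D})$. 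This produces the required SQM $\ph$, the fibre-type contraction $\tilde{g}$, the divisorial contraction $\tilde{f}$, and the commutative diagram, completing the proof.
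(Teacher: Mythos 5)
Your proposal is correct in outline and takes essentially the same route as the paper: run the relative MMP for $-D$ over $Z$, show that every step is a flip so that the MMP produces a SQM $\ph$ with $-\w{D}$ relatively nef, and then play the two-ray game on $\NE(\tilde{h})$ to extract $\tilde{g}$ (fiber type, contracting the covering class $\gamma$ of the general $\tilde h$-fiber) and $\tilde{f}$ (birational, hence divisorial with exceptional divisor $\tilde g(\w D)$ since it contracts that divisor and cannot be small).

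The one step that does not hold as written is your exclusion of divisorial contractions in the MMP by a ``dimension count'' based on the fibres of $W_i\to Z$ having dimension $\dim X-\dim Z$: fibres of a fiber-type contraction can jump in dimension over special points, so this count alone does not prevent the divisor $D_{W_i}$ from mapping onto the codimension~$\geq 2$ set $h(D)$. The statement you need is that under an \emph{elementary} contraction of fiber type the preimage of a subset of codimension $\geq 2$ again has codimension $\geq 2$ (equivalently, no prime divisor maps into such a subset); this is proved numerically --- such a divisor would be trivial on the class of curves in general fibres, hence a pullback from $Z$, hence its image would be a divisor --- and is exactly \cite[Rem.~2.9]{eff}, which the paper invokes at the very start (to see that $D$ is the unique divisorial component of $h^{-1}(W)$) instead of at the contradiction stage where you place it. A second, harmless, overstatement: from $D\cdot\gamma=0$ and $D\cdot\NE(f)<0$ you only get that $\gamma$ does not span $\NE(f)$, not that it lies in the relative interior of $\NE(h)$; what your argument actually uses later is only that $\w D$ is nonzero on $\ker\tilde h_*$, which follows from $D\cdot\NE(f)<0$.
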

\begin{proof}
Set $W:=h(D)$. Since $W$ has codimension at least $2$ and $g$ is elementary of fiber type, the inverse image $g^{-1}(W)\subset Y$ has codimension at least $2$ (see for instance \cite[Rem.~2.9]{eff}), so 
 $D$ is the unique divisorial component of $h^{-1}(W)$.

Now we run a MMP for $-D$ on $X$, relative to $h$.
Since we are considering $D$-positive extremal rays, contained in $\NE(h)$, the exceptional loci of all the maps in the MMP are contained in $h^{-1}(W)$. In particular, the MMP cannot end with a $D$-positive fiber type contraction, so it must end with $-D$ becoming $h$-nef. 

On the other hand, since we are considering $D$-positive extremal rays, $D$ can never be the exceptional divisor of a divisorial contraction in the MMP. Since there are no other divisorial components in $h^{-1}(W)$, there can be no divisorial contraction in the MMP. This means that the MMP yields a SQM $\ph\colon X\dasharrow\w{X}$ such that the composition $\tilde{h}:=h\circ\ph^{-1}\colon\w{X}\to Z$ is regular, and if
 $\w{D}\subset\w{X}$ is the transform of $D$, then $-\w{D}$ is $\tilde{h}$-nef.

Notice that $\NE(\tilde{h})$ is a $2$-dimensional cone.
Since $\tilde{h}$ is of fiber type, $-\w{D}$ cannot be $\tilde{h}$-ample. On the other hand $\tilde{h}(\w{D})=W$ is not a divisor, thus $\w{D}$ is not the pull-back of a divisor under $\tilde{h}$, hence $\w{D}$ is not $\tilde{h}$-trivial (again see \cite[Rem.~2.9]{eff}). 
 Therefore $\w{D}^{\perp}\cap\NE(\tilde{h})$ is an extremal ray of $\NE(\w{X})$, which yields an elementary contraction $\tilde{g}\colon \w{X}\to T$ such that $\tilde{h}$ factors through $\tilde{g}$. Let $\tilde{f}\colon T\to Z$ be the induced elementary contraction.

Since $\w{D}$ does not dominate $Z$, if $C\subset \w{X}$ is a curve in a general fiber of $\tilde{h}$, then $C\cap\w{D}=\emptyset$, hence $\w{D}\cdot C=0$ and $[C]\in\NE(\tilde{g})$. This shows that $\tilde{g}$ is of fiber type.
Moreover $D_T:=\tilde{g}(\w{D})$ is a prime divisor such that $\w{D}=\tilde{g}^*(D_T)$ and $\tilde{f}(D_T)=W$. It is not difficult to see that $D_T\cdot \NE(\tilde{f})<0$, in particular $\tilde{f}$ is birational. On the other hand $Z$ is $\Q$-factorial, hence $\tilde{f}$ cannot be small. We conclude that $\tilde{f}$ is divisorial with $\Exc(\tilde{f})=D_T$.
\end{proof}
\begin{lemma}\label{lifting}
Let $X$ be a smooth Fano $4$-fold such that every fixed prime divisor is of type $(3,0)^{sm}$.
 Let $\alpha\colon X\dasharrow Y$ be a sequence of flips and elementary divisorial contractions (in arbitrary order), and $\beta\colon Y\dasharrow Z$ an 
elementary rational contraction of fiber type.
If  $\dim Z\in\{2,3\}$, then $\beta\circ\alpha$ can be factored as in the 
 following diagram:
$$\xymatrix{
X\ar@{-->}[r]^{\alpha}\ar@{-->}[d]_{\tilde{\beta}}&Y\ar@{-->}[d]^{\beta}\\
T\ar[r]^{\tilde{\alpha}}&Z
}$$
where $\tilde{\beta}$ is an elementary rational contraction of fiber type, and $\tilde{\alpha}$ is a sequence of elementary divisorial contractions.
\end{lemma}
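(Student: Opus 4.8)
The plan is to argue by induction on $r:=\rho_X-\rho_Y$, peeling off the last elementary divisorial contraction of $\alpha$ and using Lemma \ref{lifting0} to interchange it with the fiber type contraction $\beta$. If $r=0$, then $\alpha$ is a SQM, so $\beta\circ\alpha\colon X\dasharrow Z$ is itself an elementary rational contraction of fiber type; I would then take $\tilde{\beta}:=\beta\circ\alpha$, $T:=Z$, and $\tilde{\alpha}:=\Id_Z$ (the empty sequence of divisorial contractions).

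Now suppose $r\geq 1$. By Lemma \ref{factor} I factor $\alpha$ as $X\stackrel{\alpha'}{\dasharrow}W\stackrel{f}{\la}Y$, where $\alpha'$ is a sequence of flips and $r-1$ elementary divisorial contractions (so $\rho_X-\rho_W=r-1$) and $f$ is a single elementary divisorial contraction. By Lemma \ref{several}(1), $W$ is a SQM of a smooth Fano $4$-fold $W'$ in which every fixed prime divisor is of type $(3,0)^{sm}$. Next I factor $\beta=\beta'\circ\sigma$ with $\sigma\colon Y\dasharrow\w{Y}$ a SQM and $\beta'\colon\w{Y}\to Z$ a regular elementary contraction of fiber type (Rem.~\ref{Kneg}), and I factor the elementary divisorial rational contraction $\sigma\circ f\colon W\dasharrow\w{Y}$ as $W\stackrel{\tau}{\dasharrow}\wi{W}\stackrel{\hat f}{\la}\w{Y}$, with $\tau$ a SQM and $\hat f$ a regular elementary divisorial contraction.

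The crucial step, and the one I expect to be the main obstacle, is to show that $\hat f$ is of type $(3,0)^{sm}$, so that $\hat f(\Exc(\hat f))$ is a point; this is the only place where the hypothesis on $X$ enters. To establish it, let $D'\subset W'$ be the transform of $\Exc(f)$. Since $\Exc(f)$ is contracted by $f$ it is not movable, hence $D'$ is a fixed prime divisor of $W'$, and therefore of type $(3,0)^{sm}$ by Lemma \ref{several}(1). Now $W'\dasharrow W\stackrel{\tau}{\dasharrow}\wi{W}$ is a SQM and $\hat f\colon\wi{W}\to\w{Y}$ is an elementary divisorial contraction whose exceptional divisor is the transform of $D'$; hence Prop.~\ref{tofix} yields that $\hat f$ is of type $(3,0)^{sm}$, and in particular $\hat f(\Exc(\hat f))$ is a point. (Along the way one should keep track of the fact that all intermediate targets remain normal, projective, and $\Q$-factorial, as required for the framework and for invoking Lemma \ref{lifting0}.)

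With this in hand I would apply Lemma \ref{lifting0} to $\hat f\colon\wi{W}\to\w{Y}$ and $\beta'\colon\w{Y}\to Z$: since $\hat f(\Exc(\hat f))$ is a point and $\dim Z\geq 2$, the image $(\beta'\circ\hat f)(\Exc(\hat f))$ has codimension at least $2$, so the hypothesis of Lemma \ref{lifting0} holds. This produces a SQM $\ph\colon\wi{W}\dasharrow\wi{W}_1$ and a factorization $\wi{W}_1\stackrel{g_1}{\la}T_1\stackrel{f_1}{\la}Z$ of $\beta'\circ\hat f\circ\ph^{-1}$, with $g_1$ elementary of fiber type and $f_1$ elementary divisorial. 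Setting $\gamma:=\ph\circ\tau\circ\alpha'\colon X\dasharrow\wi{W}_1$, which is a sequence of flips and $r-1$ elementary divisorial contractions with $\rho_X-\rho_{\wi{W}_1}=r-1$, the factorizations combine to give
$$\beta\circ\alpha=\beta'\circ\sigma\circ f\circ\alpha'=\beta'\circ\hat f\circ\tau\circ\alpha'=f_1\circ g_1\circ\ph\circ\tau\circ\alpha'=f_1\circ g_1\circ\gamma.$$
Since $f_1$ is divisorial we have $\dim T_1=\dim Z\in\{2,3\}$, so $g_1\colon\wi{W}_1\dasharrow T_1$ is an elementary rational contraction of fiber type, and the induction hypothesis applies to $g_1\circ\gamma$ (with the same Fano $X$ and $r-1<r$). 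It yields $X\stackrel{\tilde{\beta}}{\dasharrow}T\stackrel{\tilde{\alpha}_0}{\la}T_1$ with $\tilde{\beta}$ elementary rational of fiber type and $\tilde{\alpha}_0$ a sequence of elementary divisorial contractions satisfying $\tilde{\alpha}_0\circ\tilde{\beta}=g_1\circ\gamma$. Finally $\tilde{\alpha}:=f_1\circ\tilde{\alpha}_0$ is again a sequence of elementary divisorial contractions, and $\tilde{\alpha}\circ\tilde{\beta}=f_1\circ g_1\circ\gamma=\beta\circ\alpha$, which closes the induction.
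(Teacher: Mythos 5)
Your proof is correct and follows essentially the same strategy as the paper's: induction on $\rho_X-\rho_Y$, using Lemma \ref{factor} and Lemma \ref{several} to isolate a single elementary divisorial contraction of type $(3,0)^{sm}$ and Lemma \ref{lifting0} to commute that point blow-up past the fiber-type contraction. The only (immaterial) structural difference is the order of operations: you peel off the divisorial contraction adjacent to $Y$ and apply Lemma \ref{lifting0} there before recursing on the remaining $X\dasharrow\wi{W}_1\to T_1$, whereas the paper peels off the contraction adjacent to $X$, first recurses on $W\dasharrow Y\dasharrow Z$, and applies Lemma \ref{lifting0} once at the end.
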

\begin{proof}
We proceed by induction on $\rho_X-\rho_Y$. If $\rho_X=\rho_Y$, then $\alpha$ is a SQM, and we can take $T=Z$, $\tilde{\beta}=\beta\circ\alpha$, and $\tilde{\alpha}=\text{Id}_Z$.
Suppose now that  $\rho_X-\rho_Y>0$; we can factor $\alpha$ as
$$
\xymatrix{{X}\ar@{-->}[r]_{f}\ar@{-->}@/^1pc/[rr]^{\alpha}&
{W}\ar@{-->}[r]_{\psi}&{Y}}$$
where both $f$ and $\psi$ are sequences of flips and elementary divisorial contractions, and $\rho_X-\rho_{W}=1$.

By Lemma \ref{several}, $W$ is smooth, there is a SQM $W\dasharrow \w{W}$ such that $\w{W}$ is a Fano $4$-fold, and every fixed divisor of $\w{W}$ is of type $(3,0)^{sm}$. Up to composing $f$ and $\psi$ with a SQM, we can assume that $W=\w{W}$.

Then we can apply the induction assumption to $\psi\colon W\dasharrow Y$ and $\beta\colon Y\dasharrow Z$, and deduce that
there exists a commutative diagram:
$$\xymatrix{
W\ar@{-->}[r]^{\psi}\ar@{-->}[d]_g&Y\ar@{-->}[d]^{\beta}\\
{Z'}\ar[r]^{\chi}&Z
}$$
where $g$ is
an elementary rational contraction of fiber type, and $\chi$ is a sequence of elementary divisorial contractions. 

Let $\ph\colon{W}\dasharrow W'$ be a SQM such that the composition $g':=g\circ\ph^{-1}\colon W'\to Z'$ is regular, and consider the rational contraction $\ph\circ f\colon X\dasharrow W'$.
Finally
 let $\xi\colon {X}\dasharrow X'$ be a SQM such that $f':=\ph\circ f\circ\xi^{-1}\colon X'\to W'$ is regular.
$$\xymatrix{
X\ar@{-->}[d]_f\ar@{-->}[r]^{\xi}\ar@{-->}[rd] & {X'}\ar[d]^{f'}\ar[rd]^h\ar@{-->}[rr]^{\eta}&&{\w{X}}\ar[dl]_{\tilde{h}}\ar[d]^{\tilde{g}}\\
W\ar@/_1pc/@{-->}[rr]_{g}\ar@{-->}[r]^{\ph} & {W'}\ar[r]^{g'}&{Z'}&{T}\ar[l]^{\tilde{f}}
}$$
Set $h:=g'\circ f'\colon X'\to Z'$. By Lemma \ref{several} $f'$ is of type $(3,0)^{sm}$; set $D:=\Exc(f')$. Then $h(D)$ is a point in $Z'$ and $\dim Z'=\dim Z\geq 2$, so by Lemma \ref{lifting0} there exists a SQM $\eta\colon X'
\dasharrow\w{X}$ such that the composition $\tilde{h}:=h\circ\eta^{-1}\colon\w{X}\to Z'$ is regular and factors as $\tilde{f}\circ\tilde{g}$, where $\tilde{g}\colon \w{X}\to T$
is an elementary contraction of fiber type, and $\tilde{f}\colon T\to Z'$ is an elementary divisorial contraction.
Now we set $\tilde{\beta}:=\tilde{g}\circ\eta\circ\xi$ and $\tilde{\alpha}:=\chi\circ\tilde{f}$, and we have the statement.
\end{proof}
\begin{lemma}\label{rho1o2}
Let $X$ be a smooth Fano $4$-fold such that every fixed prime divisor is of type $(3,0)^{sm}$.
Then one of the following holds:
\begin{enumerate}[$(i)$]
\item 
$X$ has an elementary rational contraction of fiber type;
\item  
there is a SQM $X\dasharrow \w{X}$ such that $\w{X}=\Bl_{p_1,\dotsc,p_r}Y$, where $Y$ is smooth and $p_1,\dotsc,p_r\in Y$ are distinct points, and one of the following holds:
\begin{enumerate}
\item[$(iia)$] $\rho_Y=1$;
\item[$(iib)$] $\rho_Y=2$ and $Y$ has two distinct elementary rational contractions $Y\dasharrow\pr^1$.
\end{enumerate}
\end{enumerate}
\end{lemma}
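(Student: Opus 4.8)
The plan is to first dispose of case $(i)$, and then realize $(ii)$ by pushing $X$, through flips and divisorial contractions, to a model $Y$ carrying a fibration onto a base of dimension $\le 1$. So assume $X$ has no elementary rational contraction of fiber type; we must produce $(ii)$. Since $\Eff(X)$ is a pointed cone, I would pick a class $[D]\notin\Eff(X)$ and run an MMP for $D$: by the definition recalled in section \ref{notMMP} it terminates with a sequence of flips and elementary divisorial contractions $X\dasharrow Y_0$ followed by an elementary contraction of fiber type on $Y_0$. Thus the family of maps $\alpha\colon X\dasharrow Y$ (sequences of flips and elementary divisorial contractions) such that $Y$ admits an elementary rational contraction of fiber type $\beta\colon Y\dasharrow Z$ is nonempty; I choose one with $\rho_Y$ minimal. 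If $\dim Z\in\{2,3\}$, then Lemma \ref{lifting} applied to $\alpha$ and $\beta$ lifts $\beta$ to an elementary rational contraction of fiber type on $X$, contradicting our assumption; hence $\dim Z\le 1$ and $\rho_Y=\rho_Z+1\le 2$. By Lemma \ref{several} the variety $Y$ is smooth and there are $r=\rho_X-\rho_Y$ distinct points $p_1,\dots,p_r\in Y$ with $X$ and $\Bl_{p_1,\dots,p_r}Y$ isomorphic in codimension one, which is the SQM required in $(ii)$. If $\rho_Y=1$ this is exactly $(iia)$, and we are done.

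It remains to treat $\rho_Y=2$ and to verify the extra condition in $(iib)$. The first step is to show that $Y$ has no fixed prime divisor: otherwise, by par.~\ref{pargfixed}, running an MMP for such a divisor gives a sequence of flips and one elementary divisorial contraction $Y\dasharrow Y''$ with $\rho_{Y''}=1$, and then $Y''$ carries the elementary fiber type contraction to a point, so composing with $\alpha$ would contradict the minimality of $\rho_Y$. Hence $Y$ has no fixed divisor, so by the correspondence of par.~\ref{otite} we get $\Eff(Y)=\Mov(Y)$; since $\rho_Y=2$ this is a two-dimensional cone with exactly two extremal rays $\tau_1,\tau_2$, one of which is realized by the fibration $\beta$.

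The heart of the argument is to prove that each $\tau_i$ gives an elementary rational contraction of fiber type onto $\pr^1$. Each $\tau_i$ is a one-dimensional cone of $\MCD(Y)$, hence corresponds to a rational contraction $f_i\colon Y\dasharrow W_i$ with $\rho_{W_i}=\dim\tau_i=1$; by Remark \ref{Kneg} I factor $f_i=\tilde{f}_i\circ\ph_i$ with $\ph_i$ a SQM and $\tilde{f}_i$ a $K$-negative contraction. Now $\tilde{f}_i$ cannot be divisorial, since its exceptional divisor would be a fixed divisor of $Y$; and it cannot be small, for then the flip of $\tilde{f}_i$ would exhibit $\tau_i$ as a common facet of two distinct full-dimensional chambers of $\MCD(Y)$, that is, as an \emph{interior} wall of $\Mov(Y)$, contradicting $\tau_i\subseteq\partial\Eff(Y)=\partial\Mov(Y)$. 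Hence each $\tilde{f}_i$, and so each $f_i$, is of fiber type. Applying Lemma \ref{lifting} once more (to $\alpha$ and $\beta=f_i$), together with the failure of $(i)$, forces $\dim W_i\le 1$; combined with $\rho_{W_i}=1$ this gives $W_i\cong\pr^1$. Since $\tau_1\ne\tau_2$, the contractions $f_1,f_2$ are distinct, which is precisely $(iib)$.

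The main obstacle I anticipate is exactly the $\rho_Y=2$ case: showing that \emph{both} boundary walls of $\Mov(Y)$ are honest fibrations onto $\pr^1$. The two delicate points are the exclusion of small contractions along these walls — which is what forces one to use the existence of the flip to distinguish interior walls of $\Mov(Y)$ from boundary walls — and the exclusion of fixed divisors on $Y$, for which the minimality of $\rho_Y$ is essential. Everything else reduces to a careful bookkeeping application of Lemmas \ref{lifting} and \ref{several}.
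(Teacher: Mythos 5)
Your argument is correct and follows essentially the same route as the paper's proof: run an MMP to reach a target $Y$ carrying an elementary fiber-type contraction, use Lemma \ref{lifting} to exclude bases of dimension $2$ or $3$, Lemma \ref{several} for the blow-up structure, and then analyse the two walls of $\Mov(Y)$ when $\rho_Y=2$. The only organizational difference is that where you invoke minimality of $\rho_Y$ to rule out a fixed prime divisor on $Y$ (and hence a divisorial second wall), the paper simply contracts that divisor and lands in case $(iia)$ with the smaller target.
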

\begin{proof}
Let $A$ be an ample divisor on $X$. By running a MMP for $-A$, we get a diagram
$$X\dasharrow Y\stackrel{f}{\la} Z$$
where $X\dasharrow Y$ is a sequence of flips and elementary divisorial contractions, and $f$ is an elementary contraction of fiber type.
By Lemma \ref{several}(1)-(3), $Y$ is smooth and there is a 
SQM $X\dasharrow \w{X}$ such that $\w{X}=\Bl_{p_1,\dotsc,p_r}Y$, where 
$p_1,\dotsc,p_r\in Y$ are distinct points.

If $\dim Z>1$, then we have $(i)$ by Lemma \ref{lifting}.
If $Z=\{pt\}$, then $\rho_Y=1$ and we have $(iia)$.
Suppose that $\dim Z=1$, so that $Z\cong\pr^1$ and $\rho_Y=2$. 
Then $f^*(\Nef(Z))$ is a one-dimensional face of $\Mov(Y)$; let $\sigma$ be the other one-dimensional face of $\Mov(Y)$, and let $g\colon Y\dasharrow Z'$ be the associated elementary rational contraction (namely $\sigma=g^*(\Nef(Z'))$), so that $\rho_{Z'}=1$.

Since $g^*(\Nef(Z'))$ is in the boundary of $\Mov(Y)$, either $g$ is divisorial, or it is of fiber type. If it is divisorial, then we replace $X\dasharrow Y$ by the composition $X\dasharrow Z'$, and we have again $(iia)$. If $g$ is of fiber type
and $\dim Z'>1$, we have again $(i)$ as before. Otherwise $Z'\cong\pr^1$, and we have $(iib)$.
\end{proof}
\begin{lemma}\label{caserho2}
Let $X$ be a smooth Fano $4$-fold such that every fixed prime divisor is of type $(3,0)^{sm}$, and $\rho_X\geq 3$.
If we are in case $(iib)$ of Lemma \ref{rho1o2}, then $h^0(Y,-K_Y)\leq 40$.
\end{lemma}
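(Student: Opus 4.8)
The plan is to exploit the two fibration structures on $Y$ to reduce the computation of $h^0(Y,-K_Y)$ to the anticanonical geometry of a single Fano $3$-fold fibre. First I would record the structure coming from case $(iib)$: since both one-dimensional faces of $\Mov(Y)$ give fibre type contractions, we have $\Mov(Y)=\Eff(Y)=\langle[H_1],[H_2]\rangle$, where $H_i$ is the pull-back of $\ma{O}_{\pr^1}(1)$ under $g_i\colon Y\dasharrow\pr^1$, and $Y$ has no fixed prime divisors. Because $Y$ is a SQM of a smooth Fano $4$-fold (Lemma \ref{several}(1)), $-K_Y$ is big and movable, hence lies in the interior of $\Eff(Y)$; normalising $H_1,H_2$ to be primitive I would write $-K_Y\equiv\alpha H_1+\beta H_2$ with $\alpha,\beta$ positive integers. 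Since $h^0(-K)$ is invariant under SQM (Cor.~\ref{flip} together with Kawamata-Viehweg vanishing), I may replace $Y$ by the SQM $\w{Y}$ on which $g_1$ becomes a $K$-negative morphism $\w{Y}\to\pr^1$ (Rem.~\ref{Kneg}), whose general fibre $F$ is a smooth Fano $3$-fold.

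The key reduction is adjunction on $F$. As $H_1$ restricts to $0$ on $F$ and $F$ has trivial normal bundle in $\w{Y}$, one gets $-K_F=(-K_{\w{Y}})|_F=\beta\,(H_2|_F)$, with $H_2|_F$ an ample Cartier divisor. Therefore $\beta$ divides the Fano index of $F$, which is at most $4$ by Kobayashi-Ochiai, so $\beta\leq 4$; by the symmetric argument using $g_2$, also $\alpha\leq 4$. Moreover $h^0(\w{Y},H_2)=h^0(\pr^1,\ma{O}(1))=2$, so $|H_2|$ is a pencil, and its restriction to $F$ exhibits $F$ as carrying a pencil inside $|\tfrac1\beta(-K_F)|$ with one-dimensional base locus: this is exactly the trace of the second fibration $g_2$ on the fibre. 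I would use this pencil structure, together with the divisibility $\beta\mid i(F)$, to cut down the possibilities for the pair $(F,\beta)$ to a short explicit list of Fano $3$-folds.

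To assemble the global bound I would push forward along $\w{Y}\to\pr^1$. Writing $\mathcal{F}:=(g_1)_*\ma{O}(\beta H_2)$, the projection formula gives
$$h^0(Y,-K_Y)=h^0(\w{Y},\alpha H_1+\beta H_2)=h^0\bigl(\pr^1,\ma{O}(\alpha)\otimes\mathcal{F}\bigr),$$
where $\mathcal{F}$ has rank $h^0(F,-K_F)$ and satisfies $h^0(\pr^1,\mathcal{F})=h^0(\w{Y},\beta H_2)=\beta+1$. The main obstacle is precisely the passage from this fibrewise data to the global count: a naive estimate bounds $h^0(Y,-K_Y)$ only by $\alpha$ times the rank of $\mathcal{F}$, which can be as large as $h^0(\pr^3,\ma{O}(4))=35$ and is far too weak. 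The crux is therefore to control the splitting type $\mathcal{F}=\bigoplus_i\ma{O}(b_i)$ on $\pr^1$, and in particular the number of summands with $b_i\geq-\alpha$; this is where the second fibration is indispensable, since the pencil structure on $F$ forces $F$ into the short list above. For each admissible $(F,\beta)$ an explicit computation of $\mathcal{F}$ and of $h^0(\pr^1,\ma{O}(\alpha)\otimes\mathcal{F})$, using $\alpha,\beta\leq 4$, then yields $h^0(Y,-K_Y)\leq 40$.
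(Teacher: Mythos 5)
Your proposal shares the paper's starting point (two fibrations onto $\pr^1$, writing $-K_Y$ in terms of the two fibre classes, restricting to a Fano $3$-fold fibre), but it stalls exactly where you say it does, and that missing step is the whole content of the lemma. Two concrete problems. First, the integrality of $\alpha,\beta$: taking $H_1,H_2$ primitive does not make them a basis of $\Pic(Y)$, so a priori you only get $n(-K_Y)\equiv a_1H_1+a_2H_2$ with $n,a_1,a_2\in\Z_{>0}$; the paper has to work with this more general shape and run a careful $\gcd$ analysis. Second, and more seriously, your bound $\alpha,\beta\leq 4$ from Kobayashi--Ochiai is far too weak to conclude: with $F\cong\pr^3$ and $\beta=4$ the sheaf $\ma{F}=(g_1)_*\ma{O}(\beta H_2)$ has rank $35$, and $h^0(\pr^1,\ma{O}(\alpha)\otimes\ma{F})$ can only be controlled if you know the splitting type of $\ma{F}$ --- which you explicitly leave open, and for which the proposed route (a ``short list'' of Fano $3$-folds carrying a pencil in $|\tfrac1\beta(-K_F)|$) does not obviously exist: for $\beta=1$ every Fano $3$-fold qualifies. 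So the proposal is a plan, not a proof.

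The paper's way around this is arithmetic rather than sheaf-theoretic, and it is where the unused hypothesis $\rho_X\geq 3$ enters. Since $r=\rho_X-2\geq 1$ points of $Y_i$ are blown up to reach $X$, the fibre $F_i$ cannot carry a covering family of rational curves of anticanonical degree $3$ (the argument of Lemma \ref{3}), hence $F_i$ contains a curve $\Gamma_i$ with $-K_{F_i}\cdot\Gamma_i=c_i\in\{1,2,4\}$. Combining intersections with $\Gamma_i$ and with a section $C_i$ of $f_i$ (Graber--Harris--Starr), plus the fact that $Y$ has index $1$ (both extremal rays of $\NE(Y)$ are small), one gets $\gcd(a_1,a_2)=1$ and $a_1,a_2\in\{1,2,4\}$, so WLOG $a_1=1$. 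This is the decisive point: it makes $-K_{Y_1}-F_1=a_2\widehat{F}_2-(n-1)(-K_{Y_1})$, whence $h^0(Y_1,-K_{Y_1}-F_1)\leq h^0(Y_2,a_2F_2)=a_2+1\leq 5$, and the restriction sequence to the single fibre $F_1$ gives $h^0(Y,-K_Y)\leq 5+h^0(F_1,-K_{F_1})\leq 5+35=40$. No pushforward to $\pr^1$ and no control of splitting types is needed. To repair your argument you would need to supply the analogue of ``one of the two coefficients equals $1$'', and that requires exactly the degree-$3$ exclusion and $\gcd$ bookkeeping you omitted.
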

\begin{proof}
By Lemma \ref{several}(1), up to composing $X\dasharrow Y$ with a SQM of $Y$, we can assume that $Y$ is Fano. 
For both rational contractions $Y\dasharrow\pr^1$, let us consider a SQM $Y_i\dasharrow Y$ such that the composition $f_i\colon Y_i\to\pr^1$ is regular and $K$-negative (see Rem.~\ref{Kneg}).
We have a diagram:
$$\xymatrix{{Y_1}\ar[d]_{f_1}&Y\ar@{-->}[l]\ar@{-->}[r]&{Y_2}\ar[d]^{f_2}\\
{\pr^1}&&{\pr^1}
}$$
Fix $i\in\{1,2\}$ and let $F_i\subset Y_i$ be a general fiber of $f_i$. Since $f_i$ is $K$-negative, $F_i$ is a smooth Fano $3$-fold. 
 Let $\w{F}_i\subset Y$ be the transform of $F_i$. 
\begin{parg}\label{uff}
We show that $F_i$ cannot have a covering family of rational curves of anticanonical degree $3$. This implies that $F_i$ contains a curve $\Gamma_i$ with $c_i:=-K_{F_i}\cdot \Gamma_i\in\{1,2,4\}$ \cite[Cor.~IV.1.15]{kollar}; notice that $F_i\cdot\Gamma_i=0$ and $-K_{Y_i}\cdot\Gamma_i=c_i$.

By contradiction, if $F_i$ has a covering family of rational curves of anticanonical degree $3$, then $Y_i$ has a covering family $V$ of rational curves, of anticanonical degree $3$, and such that $f_i(C_v)$ is a point of every $v\in V$. 

Consider now the map $X\dasharrow Y_i$. By Lemma \ref{several}(3), we know that there are $r:=\rho_X-2$ distinct points $p_1,\dotsc,p_r\in Y_i$ such that 
$X$ and $\Bl_{p_1,\dotsc,p_r}Y_i$ are isomorphic in codimension one. Notice that $r>0$, because $\rho_X\geq 3$.
Then, by considering a curve $C_{v}$ of the family containing $p_1$,
 we reach a contradiction as in the proof of Lemma \ref{3} (note that even if 
$Y_i$ is not Fano, every irreducible component of $C_v$ has positive anticanonical degree, because
$f_i$ is
$K$-negative).
\end{parg}
\begin{parg}
We show that $Y\dasharrow Y_i$ cannot be an isomorphism. Indeed if for instance $Y=Y_1$ and $f_1\colon Y\to\pr^1$, let $R$ be the extremal ray of $\NE(Y)$ different from $\NE(f_1)$. 
Then $R$ is either of type $(4,1)$ (if $Y=Y_2$), or small (if $Y\dasharrow Y_2$ is not an isomorphism), and in this last case it is of type $(2,0)$ (see Lemma \ref{lorenzo}). Thus in any case the contraction of $R$ has a fiber $Z$ with $\dim Z\geq 2$, but $f_1$ must be finite on $Z$, which gives a contradiction.

We conclude that $\NE(Y)$ is generated by two small extremal rays. In particular $Y$ contains curves of anticanonical degree $1$ (see Lemma \ref{lorenzo}), hence it has index $1$.
\end{parg}
\begin{parg}
We have $\Eff(Y)=\langle[\w{F}_1],[\w{F}_2]\rangle$, so there exist positive integers $n,a_1,a_2$ such that
$$n(-K_Y)=a_1\w{F}_1+a_2\w{F}_2,$$
and we can assume that $\gcd(n,a_1,a_2)=1$. 

Since $Y_i$ is rationally connected, by \cite{GraberHarrisStarr} there exists $C_i\subset Y_i$ section of $f_i$, so that $F_i\cdot C_i=1$.
Now let $\widehat{F}_2\subset Y_1$ be the transform of $F_2$. In $Y_1$ we have 
\stepcounter{thm}
\begin{equation}\label{Y1}
n(-K_{Y_1})=a_1F_1+a_2\widehat{F}_2,
\end{equation}
 and intersecting with $C_1$ we get 
$$n(-K_{Y_1}\cdot C_1)=a_1+a_2\widehat{F}_2\cdot C_1.$$
This implies that $\gcd(a_2,n)$ divides $a_1$, and hence that $\gcd(a_2,n)=1$. Similarly we get that $\gcd(a_1,n)=1$.

Set $d:=\gcd(a_1,a_2)$ and  $a_i:=da_i'$ for $i=1,2$. 
We have
$$n(-K_Y)=d(a'_1\w{F}_1+a'_2\w{F}_2)$$
and $\gcd(n,d)=1$. This implies that $d=1$, because $Y$ has index $1$. 
Now intersecting with $\Gamma_1$ in $Y_1$ (see \ref{uff}) we get from \eqref{Y1}:
$$nc_1=a_2\widehat{F}_2\cdot\Gamma_1,$$
which implies that $a_2$ divides $c_1$ and hence that $a_2\in\{1,2,4\}$.
Similarly we deduce that $a_1\in\{1,2,4\}$.
\end{parg}
\begin{parg}
Since $\gcd(a_1,a_2)=1$, 
we can assume that $a_1=1$, so that \eqref{Y1} becomes $n(-K_{Y_1})=F_1+a_2\widehat{F}_2$ and hence
$$-K_{Y_1}-F_1=a_2\widehat{F}_2-(n-1)(-K_{Y_1}).$$
We know that $h^0(Y_1,-K_{Y_1})=h^0(Y,-K_Y)>0$ by \cite[Th.~5.2]{kawasian};
thus we get
\begin{gather*}
h^0(Y_1,-K_{Y_1}-F_1)=h^0(Y_1,a_2\widehat{F}_2-(n-1)(-K_{Y_1}))\leq 
h^0(Y_1,a_2\widehat{F}_2)
=h^0(Y_2,a_2F_2)\\=h^0(Y_2,f_2^*\ma{O}_{\pr^1}(a_2))
=h^0(\pr^1,\ma{O}_{\pr^1}(a_2))=a_2+1\leq 5.\end{gather*}
Now the exact sequence
$$0\la\ma{O}_{Y_1}(-K_{Y_1}-F_1)\la\ma{O}_{Y_1}(-K_{Y_1})\la\ma{O}_{F_1}(-K_{F_1})\la 0$$
yields 
$$h^0(Y,-K_Y)=h^0(Y_1,-K_{Y_1})\leq h^0(Y_1,-K_{Y_1}-F_1)+h^0(F_1,-K_{F_1}).$$
On the other hand $F_1$ is a smooth Fano $3$-fold; we have $(-K_{F_1})^3\leq 64$ \cite[Ch.~12]{fanoEMS}, and by Hirzebruch-Riemann-Roch \cite[Ex.~15.2.5]{fultonint}
$$h^0(F_1,-K_{F_1})=\chi(F_1,-K_{F_1})=\frac{1}{2}(-K_{F_1})^3+3\leq 35.$$
Therefore $h^0(Y,-K_Y)\leq 40$, and this concludes the proof.
\end{parg}
\vspace{-20pt}
\end{proof}
\begin{proof}[Proof of Th.~\ref{everyfixeddiv}]
We apply Lemma \ref{rho1o2}. If $(i)$ holds, then $X$ has a rational contraction of fiber type, and $\rho_X\leq 11$  by Th.~\ref{elemft}.
If $(iia)$ holds, then $\rho_X\leq 9$ by Th.~\ref{manypoints}.
If $(iib)$ holds, then either $\rho_X=2$, or $\rho_X\geq 3$ and $h^0(Y,-K_Y)\leq 40$ by Lemma \ref{caserho2}. Hence by Prop.~\ref{drop} we have 
$$0\leq h^0(X,-K_X)=h^0(Y,-K_Y)-15r\leq 40-15r$$
thus $r\leq 2$ and $\rho_X\leq 4$.
\end{proof}
\section{Fixed divisors in Fano $4$-folds with $\rho\geq 6$}\label{hard}
\subsection{The type of a fixed prime divisor}\label{tf}
\noindent Fixed prime divisors in Fano $4$-folds with $\rho\geq 6$ are described in \cite{eff}; there are four possible types, one of which -- type $(3,0)^{sm}$ -- has already been defined and studied in section \ref{every}. In this section we recall this result from \cite{eff}, and we 
define the type of a fixed prime divisor.
\begin{thm}[\cite{eff}, Th.~3.15 and Rem.~3.16]\label{typefixed}
Let $X$ be a smooth Fano $4$-fold with $\rho_X\geq 6$, 
 $D\subset X$ a fixed prime divisor, and
consider a MMP for $D$:
$$X\stackrel{\ph}{\dasharrow}\widetilde{X}\stackrel{f}{\la}Y.$$
Then one of the following holds:
\begin{enumerate}[$(i)$]
\item  $\ph$ is an isomorphism and $f$ is of type $(3,2)$;
\item $Y$ is  Fano and $f$ is of type $(3,1)^{sm}$;
\item $Y$ is Fano and $f$ is of type $(3,0)^{sm}$;
\item $Y$ is (singular) Fano and $f$ is of type $(3,0)^Q$.
\end{enumerate}
Moreover, if $f$ is not of type $(3,2)$, then $\ph$ factors as a sequence of at least $\rho_X-4$ $K$-negative flips.
\end{thm}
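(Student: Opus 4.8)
The plan is to study the elementary divisorial contraction $f\colon\widetilde X\to Y$ output by the MMP for $D$, and to combine the local classification of $K$-negative extremal contractions with the global bound $\delta_X\le 3$ that $\rho_X\ge 6$ provides via Th.~\ref{summary}. First I would note that $\widetilde X$ is smooth by Th.~\ref{SQM0}, and that $f$ is $K$-negative: a general fibre of $f$ is a curve moving inside the threefold $\widetilde D:=\Exc(f)$, hence it is not one of the finitely many exceptional lines of $\widetilde X$ (Th.~\ref{SQM0}), so Lemma~\ref{SQM}(2) gives it positive anticanonical degree. Thus $\NE(f)$ is a $K$-negative divisorial extremal ray of the smooth fourfold $\widetilde X$, and the structure theory of Andreatta--Wi\'sniewski (as already used in \ref{basic0} and \cite{AWaview}) applies.

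The core is the classification of such a contraction. It is of type $(3,2)$, $(3,1)$, or $(3,0)$, and in the last two cases comes with a short list of local models $\bigl(\Exc(f),\mathcal N_{\Exc(f)/\widetilde X}\bigr)$. If $f$ is of type $(3,2)$, then $\widetilde D$ is covered by rational curves of anticanonical degree $1$ (see \ref{basic0}), so $\widetilde D\subset\dom(\ph^{-1})$ by Lemma~\ref{SQM}(3); since the flips of an MMP for $D$ only contract exceptional planes contained in $D$, this forces $\ph$ to be an isomorphism, which is case $(i)$. In the other cases I would show that the only surviving local models are the blow-up of a smooth curve, giving $(3,1)^{sm}$, and the two $(3,0)$-models $(\pr^3,\mathcal O(-1))$ and $(Q^3,\mathcal O(-1))$, giving $(3,0)^{sm}$ and $(3,0)^Q$. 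Ruling out the remaining models (most notably the $(\pr^3,\mathcal O(-2))$-contraction) is the main obstacle: these models really do occur at small Picard number -- Example~\ref{special} exhibits the $\mathcal O(-2)$-model for $\rho_X=3$ -- so the argument must use $\rho_X\ge 6$ in an essential way, exploiting that for such $X$ the divisorial realization of $D$ becomes rigid (Lemma~\ref{elementarydivisorial}) and that $\delta_X\le 3$ constrains $\dim\N(D,X)$.

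Granting the classification, the Fano-ness of $Y$ follows as in the proof of Prop.~\ref{fanotarget}(3). Since $Y$ is a Mori dream space, $\NE(Y)$ is a closed polyhedral cone, so by Kleiman it suffices to check $-K_Y\cdot C>0$ for every irreducible curve $C\subset Y$: curves avoiding the centre $f(\widetilde D)$ have transforms in $\widetilde X$ that are not exceptional lines, hence positive anticanonical degree by Lemma~\ref{SQM}(2), while curves meeting the centre have positive degree by the local description of $f$ (as in Lemma~\ref{firstproperties}(2)). In the $(3,0)^Q$ case the point $f(\widetilde D)$ is terminal and factorial by Lemma~\ref{quadricsing}, so $Y$ is a singular Fano, which is case $(iv)$; the two smooth models give the smooth Fano $Y$ of cases $(ii)$ and $(iii)$.

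Finally, for the ``moreover'' statement I would bound the number of flips in $\ph$ using Lemma~\ref{dimensions}. Taking $Z=D$ and $\widetilde Z=\widetilde D\cup\{\text{exceptional lines}\}$ yields $\dim\N(D,X)=\dim\bigl(\N(\widetilde D,\widetilde X)+\textstyle\sum_i\R[\ell_i]\bigr)$, and applied one flip at a time this shows that a single flip can lower $\dim\N(D,\cdot)$ by at most $1$, because the curves it creates span a single ray. Hence the number of flips is at least $\dim\N(D,X)-\dim\N(\widetilde D,\widetilde X)$. When $f$ is not of type $(3,2)$ one has $\dim\N(\widetilde D,\widetilde X)\le 2$ by Rem.~\ref{basic}(1)--(2), while $\dim\N(D,X)\ge\rho_X-\delta_X$; using $\delta_X\le 2$ for $\rho_X\ge 7$ (and handling $\rho_X=6$ separately, where $\delta_X=3$ by Th.~\ref{summary} still leaves room in the $(3,0)$ cases) gives at least $\rho_X-4$ flips, each $K$-negative by Lemma~\ref{SQM}(1). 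The main difficulty thus lies in the classification step, specifically in excluding the non-standard local models via the hypothesis $\rho_X\ge 6$.
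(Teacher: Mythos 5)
This theorem is not proved in the present paper: it is quoted verbatim from \cite{eff} (Th.~3.15 and Rem.~3.16), so there is no internal proof to compare against, and your proposal is in effect an attempt to reconstruct the argument of that earlier paper. Your overall architecture is the right one — pass to the smooth SQM $\w{X}$, observe that $f$ is a $K$-negative elementary divisorial contraction, invoke the Andreatta--Wi\'sniewski-type local classification, handle type $(3,2)$ by the ``covered by anticanonical degree $1$ curves'' argument (which does correctly force $\ph=\Id$, exactly as in the proofs of Prop.~\ref{fanotarget} and Lemma~\ref{elementarydivisorial}), and get Fano-ness of $Y$ as in Prop.~\ref{fanotarget}(3).

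However, there is a genuine gap at the step you yourself flag as ``the main obstacle'': excluding the non-standard local models (e.g.\ $(\Exc(f),\ma{N})\cong(\pr^3,\ma{O}_{\pr^3}(-2))$, singular centres in the $(3,1)$ case, the other quadric bundles/cones in the $(3,0)$ case) when $\rho_X\geq 6$. You do not give this argument, and the two tools you gesture at do not supply it: Lemma~\ref{elementarydivisorial} cannot be used because its proof explicitly invokes Th.~\ref{typefixed}, so the reference is circular; and the bound $\delta_X\leq 3$ alone constrains $\dim\N(D,X)$ but does not by itself rule out, say, the $\ma{O}(-2)$-model, whose exceptional divisor also has $\dim\N(\Exc(f),\w{X})=1$. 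This exclusion is precisely the content of the several-page argument in \cite{eff} (which proceeds by analysing how the exceptional planes/lines of $\ph$ meet $\Exc(f)$ and by a case analysis of the local models), and without it cases $(ii)$--$(iv)$ are not established. A secondary, smaller issue: your count of flips via Lemma~\ref{dimension} gives only $\geq\rho_X-5$ flips in the sub-case $\rho_X=6$, $\delta_X=3$, $f$ of type $(3,1)$ (where $a=2$), so the ``moreover'' clause is not fully covered by the argument as written.
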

\begin{corollary}\label{uniqueness}
Let $X$ be a smooth Fano $4$-fold with $\rho_X\geq 6$, and $D\subset X$ a fixed prime divisor. Then one of the following holds:
\begin{enumerate}[$(i)$]
\item there is a unique MMP $X\stackrel{\ph}{\dasharrow}\w{X}\stackrel{f}{\to}Y$ for $D$, up the order of the flips in $\ph$; in particular both the maps $\ph\colon X\dasharrow\w{X}$ and $f\colon \w{X}\to Y$ in Th.~\ref{typefixed} are uniquely determined by $D$;
\item  $\rho_X=6$, $\delta_X=3$, 
$\dim\N(D,X)=3$, and $\NE(X)$ has $n\geq 2$ $D$-negative extremal rays $R_1,\dotsc,R_n$, all of type $(3,2)$. The possible MMP's for $D$ are $f_i\colon X\to Y_i$, where $f_i$ is the contraction of $R_i$.  
\end{enumerate}
\end{corollary}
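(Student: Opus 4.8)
The plan is to organise the proof around a single dichotomy: whether or not some $D$-negative extremal ray of $\NE(X)$ is divisorial. First I would record the elementary observation that if a $D$-negative ray $R$ is divisorial, then $\Lo(R)\subseteq D$ (since $D\cdot R<0$ forces the curves of $R$ into $\Supp D$), and as $\Lo(R)$ is then a prime divisor it equals $D$; because $\rho_X\geq 6$, Rem.~\ref{pisa}(1) shows $R$ is of type $(3,2)$. Conversely, if no $D$-negative ray is divisorial, then by Th.~\ref{typefixed} the final contraction $f$ of any MMP for $D$ cannot be of type $(3,2)$ (that would force $\ph=\Id$ and $D=\Lo(\NE(f))$), so $f$ is of type $(3,1)^{sm}$, $(3,0)^{sm}$, or $(3,0)^Q$ and the target $Y$ is Fano and $\Q$-factorial.

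\emph{Case A: some $D$-negative ray is divisorial.} Then $D=\Lo(R)$ with $R$ of type $(3,2)$, so Rem.~\ref{pisa}(2) applies and \emph{every} $D$-negative ray is of type $(3,2)$, hence divisorial. Consequently no flip is ever available in an MMP for $D$: each MMP consists of a single divisorial contraction of a $D$-negative ray, after which $D$ maps to a surface and the MMP stops. By Rem.~\ref{pisa}(3) there are exactly two possibilities. If there is a unique $D$-negative ray, its contraction is the unique MMP and we are in case $(i)$. Otherwise $\rho_X=6$, $\delta_X=3$, $\dim\N(D,X)=3$, and the $D$-negative rays $R_1,\dots,R_n$ ($n\geq 2$) are all of type $(3,2)$, their contractions $f_i\colon X\to Y_i$ being precisely the possible MMP's for $D$; this is case $(ii)$.

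\emph{Case B: all $D$-negative rays are small.} Here the MMP must be shown to be unique, and this is the crux. Let $X\stackrel{\ph}{\dasharrow}\w X\stackrel{f}{\la}Y$ be any MMP for $D$; as above $f$ is $K$-negative with $Y$ Fano and $\Q$-factorial, and $\ph$ is a composition of $K$-negative flips. The plan is to pin down the rational contraction $g:=f\circ\ph\colon X\dasharrow Y$ through the fan $\MCD(X)$. Writing $a>0$ for the discrepancy of $\Exc(f)=\w D$, we have $f^*(-K_Y)=-K_{\w X}+a\w D$; since $\ph$ is an isomorphism in codimension one, the induced identification $\Nu(X)\cong\Nu(\w X)$ sends $[-K_X]\mapsto[-K_{\w X}]$ and $[D]\mapsto[\w D]$, so $g^*(-K_Y)$ has class $-K_X+a[D]$ in $\Nu(X)$. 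As $-K_Y$ is ample, this class lies in the relative interior of $\tau:=g^*(\Nef(Y))$, a cone of $\MCD(X)$ of dimension $\rho_X-1$; and since $g$ is divisorial (not a SQM), $\tau$ is a facet of $\Mov(X)$ contained in $\partial\Mov(X)$.

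Finally I would compute $a$ intrinsically. For the class $[C]\in\N(X)$ of a fibre of $f$ we have $\operatorname{span}(\tau)=[C]^{\perp}$, while $[D]\cdot[C]=\w D\cdot C\neq 0$, so $[D]\notin\operatorname{span}(\tau)$. Hence the ray $\{-K_X+t[D]\mid t\geq 0\}$, which starts in the interior of $\Mov(X)$ at $t=0$, crosses $\partial\Mov(X)$ transversally at $-K_X+a[D]$, and by convexity of $\Mov(X)$ this forces
$$a=\max\{t\geq 0\mid -K_X+t[D]\in\Mov(X)\},$$
a quantity depending only on $X$ and $[D]$. Therefore any two MMP's for $D$ produce the same class $g^*(-K_Y)=-K_X+a[D]$, lying in the relative interior of the corresponding facets $\tau$; since $\MCD(X)$ is a fan these facets coincide, so the rational contraction $g\colon X\dasharrow Y$ is unique. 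Uniqueness of $g$ fixes $Y$ and the elementary divisorial extraction $f\colon\w X\to Y$ of $\w D$, hence $\w X$, and then $\ph$ is determined up to the order of its flips, which is case $(i)$. The main obstacle is exactly this Case B uniqueness: the non-$(3,2)$ contractions may be preceded by arbitrarily many flips, and the only robust way I see to control them simultaneously is to replace the step-by-step MMP by the single numerical invariant $-K_X+a[D]$ and exploit that $\MCD(X)$ is a fan.
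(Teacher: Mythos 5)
Your proposal is correct, but it reaches the conclusion by a genuinely different route from the paper. The paper compares two arbitrary MMP's for $D$ directly: the induced map $\psi\colon Y\dasharrow Y_2$ between the two targets is a SQM, and the dichotomy is whether both targets are Fano (in which case $\psi$ pulls back the ample $-K_{Y_2}$ to the ample $-K_Y$, hence is an isomorphism, giving $(i)$) or not (in which case Th.~\ref{typefixed} forces $\ph=\Id$ and $f$ of type $(3,2)$, and Wi\'sniewski's \cite[Prop.~3.4(i)]{wisn} is invoked to get $n\geq 2$ $D$-negative rays, giving $(ii)$ via Rem.~\ref{pisa}). You instead split on whether $\NE(X)$ has a divisorial $D$-negative ray; your Case A reaches $(ii)$ directly from Rem.~\ref{pisa}(2)--(3) without needing Wi\'sniewski's result, and your Case B replaces the ``SQM between Fanos is an isomorphism'' argument by locating the rational contraction $g$ inside the fan $\MCD(X)$ via the class $g^*(-K_Y)=-K_X+a[D]$, with $a=\max\{t\geq 0\mid -K_X+t[D]\in\Mov(X)\}$ intrinsic to $(X,D)$, and using that two cones of a fan with intersecting relative interiors coincide. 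All the ingredients you use (the divisorial wall lying in $\partial\Mov(X)$, the bijection between cones of $\MCD(X)$ and rational contractions, the uniqueness of the SQM resolving an elementary divisorial rational contraction) are standard and are used elsewhere in the paper, so the argument goes through; it also anticipates the $\MCD$/$C_D$ viewpoint developed later in section~\ref{dual}. The one remark worth making is that in your Case B both targets are automatically Fano by Th.~\ref{typefixed}, so the paper's one-line observation would already close that case; your Mori-chamber argument is heavier than necessary there, though it has the merit of producing an explicit numerical invariant that pins down the contraction.
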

\noindent Whenever we are in case $(i)$ of Cor.~\ref{uniqueness}, we can talk about ``the MMP for $D$''. Notice that we are always in case $(i)$ if we assume that $\rho_X\geq 7$ or $\rho_X=6$ and $\delta_X\leq 2$, and also if $D$ is not the locus of an extremal ray of $\NE(X)$ of type $(3,2)$.
\begin{proof}
Let  $X\stackrel{\ph}{\dasharrow}\w{X}\stackrel{f}{\to}Y$ 
and $X\stackrel{\ph_2}{\dasharrow}\w{X}_2\stackrel{f_2}{\to}Y_2$ 
be two MMP's for $D$.
Then the induced birational map $\psi\colon Y\dasharrow Y_2$ is a SQM.

If both $Y$ and $Y_2$ are Fano, then $\psi$ is an isomorphism (because the pull-back of the ample divisor $-K_{Y_2}$ stays ample on $Y$). Thus we may assume that $Y=Y_2$ and  $\psi=\Id_Y$, namely that $f\circ\ph=f_2\circ\ph_2\colon X\dasharrow Y$. 
On the other hand, since the rational contraction 
$g:=f\circ\ph\colon X\dasharrow Y$ is elementary and divisorial, the SQM $\xi\colon X\dasharrow X'$ such that $g\circ\xi^{-1}\colon X'\to Y$ is regular is unique (up to isomorphism of the target). Therefore we conclude that 
there is an isomorphism $\chi\colon\w{X}\to\w{X}_2$ such that $\ph_2=\chi\circ\ph$ and $f_2=f\circ\chi^{-1}$, so we are in case $(i)$.

Suppose now that $Y$ and $Y_2$ are not both Fano; we can assume for simplicity that $Y$ is not Fano. Then, by Th.~\ref{typefixed}, $\ph$ is an isomorphism and $f$ is of type $(3,2)$.
 
Let $R_1,\dotsc,R_n$ be the $D$-negative extremal rays of $\NE(X)$. Each $R_i$ is of type $(3,2)$, by Rem.~\ref{pisa}(2); moreover since $Y$ is not Fano, we must have $n\geq 2$ by \cite[Prop.~3.4(i)]{wisn}. This yields $\rho_X=6$, $\delta_X=3$, and $\dim\N(D,X)=3$, by Rem.~\ref{pisa}(3).
Since there are no $D$-negative small extremal rays, $\ph_2$ must be an isomorphism too, and $f_2$ is the contraction of one of the extremal rays $R_1,\dotsc,R_n$. This yields 
$(ii)$.
\end{proof}
\begin{definition}[the type of a fixed prime divisor in a Fano $4$-fold with $\rho\geq 6$]\label{deftypefixed}
Let $X$ be a smooth Fano $4$-fold with $\rho_X\geq 6$, and
 $D\subset X$ a fixed prime divisor.

We say that $D$ is {\bf of type $\mathbf{(3,2)}$} if it is the locus of an extremal ray of $\NE(X)$ of type $(3,2)$.
Otherwise, let
 $X\stackrel{\ph}{\dasharrow}\w{X}\stackrel{f}{\to}Y$ be the MMP for $D$.
We say that $D$ is:
\begin{enumerate}[--]
\item {\bf of type $\mathbf{(3,1)^{sm}}$} if we are in case $(ii)$ of Th.~\ref{typefixed};
\item {\bf  of type $\mathbf{(3,0)^{sm}}$} if we are in case $(iii)$ of Th.~\ref{typefixed};
\item {\bf of type $\mathbf{(3,0)^Q}$} if we are in case $(iv)$ of Th.~\ref{typefixed}.
\end{enumerate}
\end{definition}
Cor.~\ref{uniqueness} guarantees that there is no ambiguity in the definition above (namely that $D$ has a unique type), and by Th.~\ref{typefixed} every fixed prime divisor 
in a Fano $4$-fold with $\rho_X\geq 6$ is of one of the four types defined above.

Let us also notice that by Prop.~\ref{fanotarget}, the definition given above of the type $(3,0)^{sm}$ is equivalent to Def.~\ref{30}.

The uniqueness property given by Cor.~\ref{uniqueness} does not hold without the assumption $\rho_X\geq 6$; here is a simple example (see also Ex.~\ref{special}).
\begin{example}\label{simple}
Set $X:=\pr_{\pr^1\times\pr^2}(\ma{O}\oplus\ma{O}(1,1))$. Then $X$ is a toric Fano $4$-fold with $\rho_X=3$, and the $\pr^1$-bundle $X\to\pr^1\times\pr^2$ has a section $D$ with normal bundle $\ma{N}_{D/X}\cong\ma{O}_{\pr^1\times\pr^2}(-1,-1)$. The divisor $D$ is the locus of two extremal rays $R_1$ and $R_2$ of $\NE(X)$, both divisorial, one of type $(3,1)^{sm}$ and one of type $(3,2)^{sm}$.
Thus in this case it does not make sense to talk about the ``type of the fixed divisor $D$''.
\end{example}
We conclude the section with some properties that will be used in the sequel.
\begin{remark}\label{davvero}
In case $(ii)$ of Th.~\ref{typefixed}, every non-trivial fiber $F$ of $f$ can intersect at most one exceptional line. Indeed if $F$ has two distinct points $p_1,p_2$ both belonging to an exceptional line, consider the line $C=\overline{p_1p_2}\subset F\cong \pr^2$, and its transform $C_X\subset X$. 
We have $-K_{\w{X}}\cdot C=2$ and $-K_X\cdot C_X\leq 0$ by Lemma \ref{SQM}(1), which is impossible.
\end{remark}
\begin{remark}\label{viapeyron}
In the setting of Th.~\ref{typefixed}, $\Exc(f)$ does not contain exceptional lines. Indeed this is clear in cases $(i)$, $(iii)$, and $(iv)$. In case $(ii)$, if $\Exc(f)$ contains an exceptional line $\ell$, then $f$ must be finite on $\ell$ (because $f$ is $K$-negative), thus $\ell$ intersects every non-trivial fiber of $f$. On the other hand by  Th.~\ref{typefixed} $\w{X}$ contains at least $\rho_X-4\geq 2$ exceptional lines, and these must all intersect $\Exc(f)$. Thus there is at least one non-trivial fiber of $f$ intersecting two exceptional lines, contradicting Rem.~\ref{davvero}.
\end{remark}
\begin{lemma}\label{dimension}
Let $X$ be a smooth Fano $4$-fold with $\rho_X\geq 6$, $D\subset X$ a fixed prime divisor not of type $(3,2)$, 
and $X\stackrel{\ph}{\dasharrow}\w{X}\stackrel{f}{\to}Y$ the MMP for $D$.
Let  $\ell_1,\dotsc,\ell_s\subset \w{X}$ be the  exceptional lines, and $H\subseteq\N(\w{X})$ the linear span of $[\ell_1],\dotsc,[\ell_s]$.
Then 
$$\dim H\geq\dim\N(D,X)-a\ \ \text{and}\ \ \dim \bigl(f_*H+\R[f(\Exc(f))]\bigr)=\dim\N(D,X)-1,$$
where $a=1$ if $f$ is of type $(3,0)$ and $a=2$ if $f$ is of type $(3,1)$.
\end{lemma}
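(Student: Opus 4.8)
The plan is to compute $\dim\N(D,X)$ by transporting it across the SQM $\ph$ to $\w X$, where the divisorial contraction $f$ makes the numerical class of $\w D:=\Exc(f)$ explicit. Throughout I use that, since $D$ is not of type $(3,2)$, Theorem \ref{typefixed} gives a factorization of $\ph$ as a sequence of $D$-negative flips, each of which is moreover $K$-negative, and that $f$ is $K$-negative of type $(3,0)$ or $(3,1)$. In particular Remark \ref{basic}(1)--(2) yields $\dim\N(\w D,\w X)=a$, which is the input that replaces $a$ in the statement. The whole argument is then linear algebra on $\N(\w X)$, once the indeterminacy loci are understood.

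First I would show that $X\smallsetminus\dom\ph=L_1\cup\cdots\cup L_s$ with each exceptional plane $L_i\subseteq D$. By Theorem \ref{SQM0} the set is a disjoint union of exceptional planes, so the point is the inclusion $L_i\subseteq D$, which I would prove by induction on the number of flips. If $\ph=\ph_{k-1}\circ\cdots\circ\ph_1$ with $\ph_1\colon X\dasharrow X_2$ the flip of a $D$-negative $K$-negative small ray $R_1$, then $\Lo(R_1)=X\smallsetminus\dom\ph_1$ is a union of exceptional planes (Theorem \ref{lorenzo}) on which $D$ is negative, hence contained in $D$ (if $D\cdot C<0$ then $C\subseteq D$, and a plane is swept out by such lines). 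For a point $x\in\dom\ph_1$ with $\ph_1(x)$ in the indeterminacy locus of $\Psi:=\ph_{k-1}\circ\cdots\circ\ph_2$, the inductive hypothesis places $\ph_1(x)$ in the transform $D_2$ of $D$, and since $\ph_1$ is an isomorphism near $x$ carrying $D$ to $D_2$ we get $x\in D$. As $X\smallsetminus\dom\ph\subseteq(X\smallsetminus\dom\ph_1)\cup\ph_1^{-1}(X_2\smallsetminus\dom\Psi)$, this gives $L_i\subseteq D$ for all $i$.

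Next I would apply Lemma \ref{dimensions} with $Z:=D$ and $\w Z:=\w D\cup\ell_1\cup\cdots\cup\ell_s$. The hypotheses hold: $Z$ contains $\bigcup_iL_i=X\smallsetminus\dom\ph$ by the previous step, $\w Z$ contains $X\smallsetminus\dom(\ph^{-1})=\bigcup_i\ell_i$ by Theorem \ref{SQM0}, and the common resolution of Theorem \ref{SQM0} shows $\ph(\dom\ph)=\w X\smallsetminus\bigcup_i\ell_i=\dom(\ph^{-1})$ and identifies $D\cap\dom\ph$ with $\w D\cap\dom(\ph^{-1})$, so that $\ph(Z\cap\dom\ph)=\w Z\cap\dom(\ph^{-1})$. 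Since $\N(\w Z,\w X)=\N(\w D,\w X)+\sum_i\R[\ell_i]=\N(\w D,\w X)+H$, Lemma \ref{dimensions} yields
\[
\dim\N(D,X)=\dim\bigl(\N(\w D,\w X)+H\bigr).
\]
The first assertion is now immediate, since $\dim\N(D,X)\leq\dim\N(\w D,\w X)+\dim H=a+\dim H$.

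For the equality I would push forward by $f_*\colon\N(\w X)\to\N(Y)$, whose kernel is the line spanned by the class $\gamma$ of a curve contracted by $f$; as such curves lie in $\w D=\Exc(f)$ we have $\gamma\in\N(\w D,\w X)$, and $\gamma\neq 0$ because $f$ is divisorial. Setting $W:=\N(\w D,\w X)+H$, the restriction $f_*|_W$ has kernel exactly $\R\gamma$, so $\dim f_*(W)=\dim W-1=\dim\N(D,X)-1$. Finally $f_*(W)=f_*\N(\w D,\w X)+f_*H=\N(f(\w D),Y)+f_*H$, and $\N(f(\Exc(f)),Y)=\R[f(\Exc(f))]$ (the zero subspace when $f(\Exc(f))$ is a point, the line spanned by the class of the irreducible curve $f(\Exc(f))$ in the type $(3,1)$ case), giving $\dim\bigl(f_*H+\R[f(\Exc(f))]\bigr)=\dim\N(D,X)-1$. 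The main obstacle is the second step, identifying the indeterminacy locus of $\ph$ with a union of exceptional planes inside $D$; once $\dim\N(\w D,\w X)=a$ and $\ker f_*\subseteq\N(\w D,\w X)$ are in hand, the rest is dimension counting.
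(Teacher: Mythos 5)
Your proof is correct and follows essentially the same route as the paper: apply Lemma \ref{dimensions} to $Z=D$ and $\w{Z}=\Exc(f)\cup\ell_1\cup\cdots\cup\ell_s$ to get $\dim\N(D,X)=\dim\bigl(\N(\Exc(f),\w{X})+H\bigr)$, use Rem.~\ref{basic}(1)--(2) for $\dim\N(\Exc(f),\w{X})=a$, and push forward by $f_*$ noting $\ker f_*\subseteq\N(\Exc(f),\w{X})$. The only difference is that you spell out the verification that the exceptional planes of $\ph$ lie in $D$ (which the paper leaves implicit, it being immediate from the $D$-negativity of the flips); this is a harmless elaboration, not a change of method.
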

\begin{proof}
By Lemma \ref{dimensions} we have 
$\dim\N(\Exc(f)\cup\ell_1\cup\cdots\cup\ell_s,\w{X})=\dim\N(D,X)$.
On the other hand $\N(\Exc(f)\cup\ell_1\cup\cdots\cup\ell_s,\w{X})=\N(\Exc(f),\w{X})+H$, thus
$$\dim\Bigl(\N(\Exc(f),\w{X})+H\Bigr)=\dim\N(D,X)$$
and hence
$\dim H\geq\dim\N(D,X)-\dim\N(\Exc(f),\w{X})$.
Using Rem.~\ref{basic}(1)-(2), this yields the inequality on $\dim H$.
Moreover,
clearly $\N(\Exc(f),\w{X})+H$ contains $\ker f_*$, and
$f_*(\N(\Exc(f),\w{X})+H)=f_*H+\R[f(\Exc(f))]$; this yields the 
second statement.
\end{proof}
\subsection{Fixed divisors of type $(3,1)^{sm}$}\label{a}
\noindent In this section we describe some properties of Fano $4$-folds having a fixed prime divisor of type $(3,1)^{sm}$. Our main goal is to show the following result, which will be used in section \ref{last}.
\begin{proposition}\label{(3,1)}
Let $X$ be a smooth Fano $4$-fold with $\rho_X\geq 7$.
If $X$ has a fixed prime divisor of type $(3,1)^{sm}$, then $X$  
 also has a fixed prime divisor of type $(3,2)$.
\end{proposition}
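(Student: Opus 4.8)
The plan is to run the MMP for $D$ and reduce the statement to producing a fixed divisor of type $(3,2)$ via Lemma \ref{divisorcovered}. Write the MMP for $D$ as $X\stackrel{\ph}{\dasharrow}\w{X}\stackrel{f}{\la}Y$ as in case $(ii)$ of Th.~\ref{typefixed}, so that $Y$ is Fano, $\rho_Y=\rho_X-1\geq 6$, and $f$ is the blow-up of a smooth curve $C\subset Y$ with exceptional divisor $E:=\Exc(f)$, a $\pr^2$-bundle over $C$. Since $\rho_X\geq 7$, Th.~\ref{typefixed} shows that $\ph$ is a nontrivial composition of at least $\rho_X-4\geq 3$ $K$-negative flips; thus $\w{X}$ (a smooth SQM of the Fano $X$) carries exceptional lines $\ell_1,\dotsc,\ell_s$ with $s\geq 3$, and $D$ contains the corresponding exceptional planes $L_1,\dotsc,L_s$.

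First I would record the local numerical constraints coming from $-K_{\w{X}}=f^*(-K_Y)-2E$. For any irreducible curve $\Gamma\subset Y$ meeting $C$, with transform $\w{\Gamma}\subset\w{X}$, one has $-K_{\w{X}}\cdot\w{\Gamma}=-K_Y\cdot\Gamma-2(E\cdot\w{\Gamma})$; combined with Lemma \ref{SQM}(2) this yields $-K_Y\cdot\Gamma\geq 2(E\cdot\w{\Gamma})-1$, with equality exactly when $\w{\Gamma}$ is an exceptional line. By Rem.~\ref{viapeyron} each $\ell_i$ is not contained in $E$; the displayed inequality then forces $E\cdot\ell_i\geq 1$, so $\ell_i$ meets $E$ and $\bar\ell_i:=f(\ell_i)$ is a curve meeting $C$ with $-K_Y\cdot\bar\ell_i=2(E\cdot\ell_i)-1$. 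I would also observe that $E$, hence $D$, is covered by rational curves of anticanonical degree $2$ and $E$-degree $-1$ (the transforms of lines in the fibers $\cong\pr^2$ of $E\to C$, which by Rem.~\ref{davvero} can be chosen to avoid the exceptional lines), whereas the lines in those fibers \emph{through} a point of $E\cap\ell_i$ transform, by Lemma \ref{SQM}(1), to rational curves of anticanonical degree $1$ in $X$.

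The goal is then to exhibit a fixed prime divisor of type $(3,2)$. By Rem.~\ref{pisa}(1) any divisorial extremal ray of $\NE(X)$ already has this property, and by Lemma \ref{divisorcovered} it suffices to find a non-nef prime divisor covered by a family of rational curves of anticanonical degree $1$. My plan is to locate such a divisor in $Y$ and transport it to $X$. Since $Y$ is Fano with $\rho_Y\geq 6$, all its fixed prime divisors fall into the four types, and I would use the curve constraint above to control how $C$ meets them. The key reduction I would aim for is that $Y$ cannot have \emph{all} its fixed prime divisors of type $(3,0)^{sm}$: otherwise Th.~\ref{everyfixeddiv} applies to $Y$, and together with Lemma \ref{bijection} (relating fixed divisors of $Y$ to those adjacent to $D$ in $\Eff(X)$) this should be incompatible with the presence of the $(3,1)^{sm}$ divisor $D$. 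Granting that $Y$ carries a fixed divisor not of type $(3,0)^{sm}$, I would then use the low-degree curves $\bar\ell_i$ and the covering family of $E$ to show it can be taken of type $(3,2)$, and that its transform in $X$ via Lemma \ref{bijection} is again non-nef and covered by anticanonical-degree-$1$ curves, hence of type $(3,2)$.

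The step I expect to be the main obstacle is the transfer of the type across the curve blow-up $f$. Unlike the blow-up of a point treated in Lemma \ref{transform}, blowing up a curve can change the type of a fixed divisor, and the real difficulty is to control the interaction of $C$ with the fixed divisors of $Y$ (whether $C$ lies on, meets, or avoids each of them) and its effect on the covering family after passing back to $X$. Concretely, the hard point is to upgrade the anticanonical-degree-$1$ curves found above — which a priori sweep out only the finitely many fibers of $E\to C$ meeting exceptional lines, i.e.\ a locus of dimension two — into a genuine covering family of a prime divisor, by deforming them in the Fano $4$-fold $X$ and using the inequality $-K_Y\cdot\Gamma\geq 2(E\cdot\w{\Gamma})-1$ to pin down that divisor as the locus of a $(3,2)$ ray. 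I would expect the bookkeeping of exceptional lines versus $E$, and the verification that the resulting divisor is non-nef, to be the technically delicate parts.
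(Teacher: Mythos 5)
Your setup (the MMP for $D$, the numerical constraints on curves meeting $C$, and the goal of producing a $(3,2)$ divisor in $Y$ and transporting it to $X$ via Lemma \ref{divisorcovered}) matches the paper's framework, but the mechanism you propose for finding the $(3,2)$ divisor in $Y$ has a genuine gap. Your ``key reduction'' is that $Y$ cannot have all fixed prime divisors of type $(3,0)^{sm}$, to be deduced from Th.~\ref{everyfixeddiv}; this cannot work, because Th.~\ref{everyfixeddiv} only yields $\rho_Y\leq 11$, while here $\rho_Y=\rho_X-1\geq 6$ is far below that bound, so there is no contradiction to extract. Moreover, even granting that $Y$ has a fixed divisor not of type $(3,0)^{sm}$, it could be of type $(3,1)^{sm}$ or $(3,0)^Q$, and nothing in your sketch forces it to be of type $(3,2)$ or to interact with $C$ --- and interaction with $C$ is exactly what is needed to transfer it back to $X$, since a fixed divisor of $Y$ disjoint from $C$ gives no control after the blow-up and the flips.

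The paper's actual route is different and you are missing its two key steps. First, since $C$ is positive on some effective divisor, there is a one-dimensional face $\sigma$ of $\Eff(Y)$ with $\sigma\cdot C>0$; Lemma \ref{lungo} shows that such a $\sigma$ cannot lie in $\Mov(Y)$, because otherwise $\sigma$ corresponds to a fiber-type rational contraction of $Y$ whose general fiber has Picard number at least $\rho_Y-1\geq 4$ and is therefore covered by rational curves of anticanonical degree $2$; these produce infinitely many integral curves of anticanonical degree $\leq 2$ meeting $C$, contradicting the degree constraints (your analogue of Lemma \ref{firstprop31}). Hence $\sigma$ is generated by a fixed prime divisor $B$ with $B\cdot C>0$. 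Second, Lemma \ref{31}(1) shows that \emph{any} fixed prime divisor of $Y$ meeting $C$ must be of type $(3,2)$, by running the MMP for that divisor and ruling out the types $(3,0)^{sm}$, $(3,1)^{sm}$, $(3,0)^Q$ one by one using the same degree constraints together with Rem.~\ref{davvero}. Only then does the transfer (Lemma \ref{transform32}) proceed: since $C\not\subset B$ (by Lemma \ref{31}(2)), the set $C\cap B$ is finite, and the complement in $B$ of the finitely many fibers of the $(3,2)$ contraction over $g(C\cap B)$ is a dense open subset covered by anticanonical-degree-$1$ curves disjoint from $C$, which survives untouched into $X$; Lemma \ref{divisorcovered} then identifies the transform as a fixed divisor of type $(3,2)$. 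Your closing paragraph correctly flags the transfer as delicate, but the harder issue is upstream: without the movable-face argument of Lemma \ref{lungo} and the case analysis of Lemma \ref{31}(1), you have not produced the $(3,2)$ divisor in $Y$ at all.
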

Let us fix a notation for the situation studied in this section.
\begin{notation}\label{not31}
Let $X$  be a smooth Fano $4$-fold with $\rho_X\geq 6$, $D\subset X$ a fixed prime divisor of type $(3,1)^{sm}$, and $X\stackrel{\ph}{\dasharrow}\widetilde{X}\stackrel{f}{\to}Y$
 the MMP for $D$, where $Y$ is smooth and Fano, and $f$ is the blow-up of the smooth curve $C\subset Y$.
\end{notation}
\begin{lemma}\label{firstprop31}
Assume that we are in Setting \ref{not31}.
Let $C_0\subset Y$ be an irreducible curve such that  $C\cap C_0\neq\emptyset$ and $C_0\neq C$. We have the following:
\begin{enumerate}[(1)]
\item
 $-K_Y\cdot C_0\geq 1$ and $-K_Y\cdot C_0\neq 2$;
\item 
if  $-K_Y\cdot C_0= 1$, then $C_0=f(\ell)$, $\ell\subset\w{X}$ an exceptional line with 
  $\Exc(f)\cdot \ell=1$; 
\item $C$ can meet (at most) finitely many curves of anticanonical degree $1$, and cannot meet any exceptional plane.
\end{enumerate}
\end{lemma}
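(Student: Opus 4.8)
The plan is to pull every intersection number back to $\w{X}$ using the formula for the canonical class of the blow-up of a smooth curve. Since $f$ is the blow-up of the smooth curve $C\subset Y$, which has codimension $3$ in the $4$-fold $Y$, we have $f^*(-K_Y)=-K_{\w{X}}+2E$, where $E:=\Exc(f)$. Given $C_0$ as in the statement, let $\w{C}_0\subset\w{X}$ be its strict transform. First I would record that $\w{C}_0\not\subset E$: otherwise $C_0=f(\w{C}_0)\subseteq C$ would force $C_0=C$. Since $\w{X}$ is smooth, $E$ is Cartier, and as $\w{C}_0$ is irreducible, not contained in $E$, and meets $E$ (because $C_0\cap C\neq\emptyset$), we get $E\cdot\w{C}_0\geq 1$. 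Projecting then gives $-K_Y\cdot C_0=-K_{\w{X}}\cdot\w{C}_0+2\,E\cdot\w{C}_0$.

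The key step is to apply Lemma \ref{SQM}(2) to $\w{C}_0$; this is legitimate because $\ph\colon X\dasharrow\w{X}$ is a SQM of the smooth Fano $4$-fold $X$. It yields that either $-K_{\w{X}}\cdot\w{C}_0\geq 1$, or $\w{C}_0$ is an exceptional line with $-K_{\w{X}}\cdot\w{C}_0=-1$; in particular the value $0$ is impossible. In the first case $-K_Y\cdot C_0\geq 1+2=3$. In the second case $-K_Y\cdot C_0=-1+2\,E\cdot\w{C}_0$ is a positive odd integer, hence $\neq 2$. This proves (1). For (2), if $-K_Y\cdot C_0=1$ the first case cannot occur, so $\w{C}_0$ is an exceptional line $\ell$ with $-1+2\,E\cdot\ell=1$, i.e.\ $\Exc(f)\cdot\ell=E\cdot\ell=1$, and $C_0=f(\ell)$, as required.

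Statement (3) would then follow from the finiteness of the set of exceptional lines in $\w{X}$ (Th.~\ref{SQM0}). By part (2), every integral curve of anticanonical degree $1$ meeting $C$ and distinct from $C$ is the image $f(\ell_i)$ of one of the finitely many exceptional lines $\ell_1,\dots,\ell_s\subset\w{X}$; hence there are at most finitely many such curves. For the last assertion, suppose $C$ met an exceptional plane $L\subset Y$, and fix $q\in C\cap L$. The lines of $L\cong\pr^2$ through $q$ form an infinite family; each has anticanonical degree $1$ and meets $C$ at $q$, and infinitely many of them are distinct from $C$ (all of them when $C\not\subset L$, all but $C$ itself when $C$ is a line inside $L$). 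By (2) each such line is the image of an exceptional line, contradicting the finiteness of $\ell_1,\dots,\ell_s$.

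The main obstacle is essentially the bookkeeping in this case analysis: establishing $E\cdot\w{C}_0\geq 1$ cleanly and invoking Lemma \ref{SQM}(2) to simultaneously rule out $-K_{\w{X}}\cdot\w{C}_0=0$ and to pin down the exceptional-line alternative. Once the parity identity $-K_Y\cdot C_0=-1+2\,E\cdot\w{C}_0$ is in hand, excluding the value $2$, identifying the degree-$1$ curves, and deriving the exceptional-plane contradiction are all immediate.
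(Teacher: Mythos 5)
Your proof is correct and follows essentially the same route as the paper, which simply declares the argument "analogous to that of Lemma \ref{locus}": pull back along $f$ using $f^*(-K_Y)=-K_{\w{X}}+2E$, note $E\cdot\w{C}_0\geq 1$, invoke Lemma \ref{SQM}(2) to get the dichotomy between $-K_{\w{X}}\cdot\w{C}_0\geq 1$ and the exceptional-line case, and read off the parity/positivity constraints. Your deduction of (3) from (2) via the finiteness of exceptional lines and the pencil of degree-one lines through a point of an exceptional plane is also exactly what is intended.
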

\begin{proof}
The proof is analogous to that of Lemma \ref{locus}, see also \cite[proof of Cor.~3.19]{eff}.
\end{proof}
\begin{lemma}\label{31}
Assume that we are in Setting \ref{not31}, and that  $\rho_X\geq 7$. 
\begin{enumerate}[(1)]
\item
If $B\subset Y$ is a fixed prime divisor such that $B\cap C\neq\emptyset$, then $B$ is of type $(3,2)$.
\item If $B\subset Y$ is a fixed prime divisor containing $C$, then $B\cdot C<0$, $R:=\langle[C]\rangle$ is an extremal ray of type $(3,2)$ of $\NE(Y)$,  $R$ is the unique $B$-negative extremal ray in  $\NE(Y)$, and 
  $C$ is a one-dimensional fiber of the contraction of $R$.
\item One of the following holds:
\begin{enumerate}[$(i)$]
\item $[C]\in\mov(Y)$;
\item $[C]$ generates an extremal ray $R$ of $\NE(Y)$, of type $(3,2)$, such that $C$ is a one-dimensional fiber of the associated contraction; in particular 
 $C\cong\pr^1$, $-K_Y\cdot C=1$, and $\ma{N}_{C/Y}\cong \ma{O}_{\pr^1}(-1)\oplus\ma{O}_{\pr^1}^{\oplus 2}$.
\end{enumerate}
\end{enumerate}
\end{lemma}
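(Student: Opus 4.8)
The plan is to treat the three parts in the order (1), (2), (3), using throughout that $Y$ is a smooth Fano $4$-fold with $\rho_Y=\rho_X-1\geq 6$, so that the classification of fixed prime divisors (Th.~\ref{typefixed}) and the results of Rem.~\ref{pisa} apply to $Y$, and that by Lemma \ref{firstprop31} no curve of anticanonical degree $2$ can meet $C$, while only finitely many curves of anticanonical degree $1$ do.

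For part (1) I would argue by contradiction: if a fixed prime divisor $B$ with $B\cap C\neq\emptyset$ were not of type $(3,2)$, then by Th.~\ref{typefixed} it is of type $(3,1)^{sm}$, $(3,0)^Q$, or $(3,0)^{sm}$, and the aim is to produce a curve of anticanonical degree $2$ (or infinitely many of degree $1$) meeting $C$, contradicting Lemma \ref{firstprop31}. Consider the MMP $Y\dasharrow\widetilde Y\to W$ for $B$. In the first two cases the exceptional divisor $\Exc(g)$ is covered by rational curves of anticanonical degree $2$ (lines in the $\pr^2$-fibers of a $(3,1)^{sm}$ blow-up, resp.\ lines in the quadric of a $(3,0)^Q$ contraction); a general such curve through the image of a point of $C\cap B$ avoids the exceptional lines, so its transform in $Y$ still has degree $2$ and meets $C$, a contradiction. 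The delicate case is $(3,0)^{sm}$, where $\Exc(g)\cong\pr^3$ is covered only by degree-$3$ curves. Here I would exploit the flips: since the MMP for $B$ produces at least $\rho_Y-4\geq 2$ exceptional lines in $\widetilde Y$, a line of $\pr^3$ joining the image of a point of $C\cap B$ to one of the finitely many points where an exceptional line meets $\Exc(g)$ has, by Lemma \ref{SQM}(1), a transform in $Y$ of anticanonical degree at most $2$; such a transform meets $C$, and is again excluded by Lemma \ref{firstprop31} (directly when the degree is $2$, and, when $C\subset B$, via an infinite family of degree-$1$ transforms). This is the step I expect to be the main obstacle, as it requires careful bookkeeping of how anticanonical degrees drop across the SQM.

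For part (2), since $C\subset B$ I may apply part (1) to conclude that $B$ is of type $(3,2)$, so $B=\Lo(R_B)$ for an extremal ray $R_B$ of type $(3,2)$ with contraction $g_B$. I would first show that $C$ is contracted by $g_B$: otherwise, varying a point along $C$, the general (degree-$1$) fibers of $g_B$ through it would form an infinite family of degree-$1$ curves meeting $C$, contradicting Lemma \ref{firstprop31}. Hence $[C]$ generates $R:=R_B$, and $B\cdot C<0$ because $R_B$ is $B$-negative. Applying the same contraction argument to any second $B$-negative extremal ray (which is again of type $(3,2)$ with locus $B$ by Rem.~\ref{pisa}) forces it to coincide with $R$, giving uniqueness. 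Finally, to see that $C$ is a one-dimensional fiber I would rule out that $C$ lies in a $2$-dimensional fiber $\Phi$: every curve in $\Phi$ has class in $R$, and degenerating the general degree-$1$ fibers into $\Phi$ sweeps it out by degree-$1$ curves, which would meet $C$ infinitely often, again contradicting Lemma \ref{firstprop31}. Thus $C$ equals a one-dimensional fiber, whence $C\cong\pr^1$, $-K_Y\cdot C=1$, $B\cdot C=-1$, and $\ma{N}_{C/Y}\cong\ma{O}_{\pr^1}(-1)\oplus\ma{O}_{\pr^1}^{\oplus 2}$ by the local structure of $(3,2)$-contractions recalled in \ref{basic0}.

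Part (3) then follows formally from part (2) together with the identity $\mov(Y)=\Eff(Y)^{\vee}$. Either $[C]\in\mov(Y)$, giving case $(i)$, or there is an effective divisor, hence a prime component $B$, with $B\cdot C<0$; in the latter situation $C\subset B$, and $B$ is fixed by Rem.~\ref{movable}, since $C$ is not contained in an exceptional plane by Lemma \ref{firstprop31}. Applying part (2) to this $B$ then yields case $(ii)$.
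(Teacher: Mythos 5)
Your overall architecture coincides with the paper's: parts (2) and (3) are handled exactly as in the actual proof (the paper rules out the $2$-dimensional fiber via the classification $(F,-K_{Y|F})\cong(\pr^2,\ma{O}(1))$ or $(Q,\ma{O}_Q(1))$ from Andreatta--Wi\'sniewski rather than by your degeneration argument, but the substance is the same), and in part (1) the cases $(3,1)^{sm}$ and $(3,0)^Q$ are treated exactly as you propose. The problem is the step you yourself flag as the main obstacle, the case where $B$ is of type $(3,0)^{sm}$, and there the gap is real.

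Concretely: for $p$ the image in $\w{B}\cong\pr^3$ of a point of $C\cap B$ and $q_1,\dotsc,q_h$ the (finitely many, $h\geq 2$) points of $\w{B}$ lying on exceptional lines, the transforms $\tilde r_i$ of the lines $\overline{pq_i}$ satisfy $-K_Y\cdot\tilde r_i\leq 2$ by Lemma \ref{SQM}(1), and the value $1$ cannot be excluded a priori. When $-K_Y\cdot\tilde r_i=2$ you get a contradiction with Lemma \ref{firstprop31}(1), as you say. But when $-K_Y\cdot\tilde r_i=1$ and $C\not\subset B$ (which is allowed in part (1), where only $C\cap B\neq\emptyset$ is assumed), you produce only the \emph{finitely many} curves $\tilde r_1,\dotsc,\tilde r_h$ of anticanonical degree $1$ meeting $C$, and Lemma \ref{firstprop31}(3) forbids only infinitely many such curves; your fallback ``via an infinite family of degree-$1$ transforms'' is available only under the stronger hypothesis $C\subset B$, so the argument does not close. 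The paper's resolution uses an idea you are missing: by Lemma \ref{firstprop31}(2), any integral curve of anticanonical degree $1$ meeting $C$ is the image $f(\ell)$ of an exceptional line $\ell\subset\w{X}$ of the MMP for $D$; since all the $\tilde r_i$ pass through the common point $p\in C$, the corresponding $h\geq 2$ exceptional lines in $\w{X}$ all meet the fiber $F\cong\pr^2$ of $f$ over $p$, contradicting Rem.~\ref{davvero} (a non-trivial fiber of the type-$(3,1)^{sm}$ contraction $f$ meets at most one exceptional line). You need this interplay between the MMP for $B$ on $Y$ and the MMP for $D$ on $X$ to finish part (1).
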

\begin{proof}
Notice that $\rho_Y\geq 6$, hence the type of fixed prime divisors of $Y$ is well-defined. 
Let $B\subset Y$ be a fixed prime divisor intersecting $C$;
we assume by contradiction that $B$ is not of type $(3,2)$.
 Consider the MMP for $B$:
$$Y\stackrel{\psi}{\dasharrow}\widetilde{Y}\la Z$$
so that $\w{Y}\to Z$ is an elementary divisorial contraction with exceptional divisor the transform $\w{B}\subset\w{Y}$ of $B$.
 By Th.~\ref{typefixed}, $\psi$ factors as a sequence of at least $\rho_Y-4$ $K$-negative flips, hence $\w{Y}$ contains at least $\rho_Y-4\geq 2$ exceptional lines, which are pairwise disjoint and have positive intersection with 
 $\w{B}$.

Notice that $C\subset\dom\psi$ by Lemma \ref{firstprop31}(3). Let $p\in C\cap B$; we still denote by $p\in\w{B}$ its image. In particular $p\in\w{B}$ cannot belong to any exceptional line.
\begin{parg}
If $B$ is of type $(3,0)^Q$, let $\Gamma\subset\w{B}\cong Q$ be a general line through $p$. If instead $B$ is of type $(3,1)^{sm}$, let  $F\subset\w{B}$ be the fiber of  $\w{Y}\to Z$ containing $p$, and let  $\Gamma\subset F\cong\pr^2$ be a general line through $p$.
Then in both cases $-K_{\w{Y}}\cdot\Gamma=2$, and $\Gamma$ (being general) is distinct from the transform of $C$, and disjoint from the exceptional lines in $\w{Y}$. Therefore the transform $\w{\Gamma}\subset Y$ is an integral curve distinct from $C$, intersecting $C$, and with $-K_Y\cdot \w{\Gamma}=2$,  contradicting Lemma \ref{firstprop31}(1).
\end{parg}
\begin{parg}
Suppose now that $B$ is of type $(3,0)^{sm}$, and
let $q_1,\dotsc,q_h\in\w{B}$ be the points belonging to some exceptional line in $\w{Y}$, so that $h\geq 2$; notice that $q_i\neq p$ for every $i=1,\dotsc,h$. Consider the line $r_i:=\overline{pq_i}\subset\w{B}\cong\pr^3$ and its transform $\tilde{r}_i\subset B\subset Y$. Notice that $C\neq \tilde{r}_i$ for every $i=1,\dotsc,h$, because $C$ is disjoint from all exceptional planes in $Y$; on the other hand $p\in C\cap \tilde{r}_i$.

We have $-K_{\w{Y}}\cdot r_i=3$ and $-K_Y\cdot \tilde{r}_i\leq 2$ by Lemma \ref{SQM}(1) (because $r_i$ intersects some exceptional line). Since by Lemma \ref{firstprop31}(1) $C$ cannot intersect integral curves of anticanonical degree $2$ (distinct from $C$ itself), we must have $-K_Y\cdot \tilde{r}_i=1$ for every $i=1,\dotsc,h$. By Lemma \ref{firstprop31}(2), this implies that $\tilde{r}_i$ are images under $f\colon\w{X}\to Y$ of exceptional lines $\hat{r}_i$ in $\w{X}$. Since the curves $\tilde{r}_i$ in $Y$ all contain $p$, the curves $\hat{r}_i$ in $\w{X}$ all intersect the fiber $F_2\cong\pr^2$ of $f$ over $p$, but this is impossible
by Rem.~\ref{davvero} (as $h\geq 2$).

We conclude that $B$ is of type $(3,2)$, and we have (1).
\end{parg}
\begin{parg}
We show (2). If $B\subset Y$ is a fixed prime divisor containing $C$, then $B$ is of type $(3,2)$ by (1).
Let $R$ be an extremal ray of type $(3,2)$ with $\Lo(R)=B$, and let
$g\colon Y\to W$ be the associated contraction.
If $g(C)$ were a curve, $C$ should intersect infinitely many  curves of anticanonical degree $1$ (see \ref{basic0}); this contradicts Lemma \ref{firstprop31}(3). Therefore $C$ is contained in a fiber of $g$, and $R=\langle[C]\rangle$. This shows that $R$ is the unique $B$-negative extremal ray of $\NE(Y)$ of type $(3,2)$. By Rem.~\ref{pisa}(2), there can be no other $B$-negative extremal ray.

If $F$ is a $2$-dimensional fiber of $g$, then by \cite[Prop.~4.3.1]{AWaview} the possibilities for $(F,-K_{Y|F})$ are $(\pr^2,\ma{O}_{\pr^2}(1))$ or $(Q,\ma{O}_Q(1))$, where $Q\subset\pr^3$ is a reduced quadric. If $C$ were contained in such a fiber, it should intersect infinitely many curves of anticanonical  degree $1$, again contradicting Lemma \ref{firstprop31}(3). Thus $C$ is a $1$-dimensional fiber of $g$, and we have shown (2).
\end{parg}
\begin{parg}
 For (3), suppose that $[C]\not\in\mov(Y)$: since $\mov(Y)=\Eff(Y)^{\vee}$, there exists a prime divisor $B\subset Y$ such that $B\cdot C<0$. 
Since $C$ cannot be contained in an exceptional plane by Lemma \ref{firstprop31}(3), Rem.~\ref{movable} implies that $[B]\not\in\Mov(Y)$. Therefore $B$ is a fixed prime divisor containing $C$, and we get $(ii)$ by (2) (see also \ref{basic0}).
\end{parg}
\vspace{-20pt}
\end{proof}
\begin{lemma}\label{transform32}
Assume that we are in Setting \ref{not31}, and that $\rho_X\geq 7$.
Let $B_Y\subset Y$ be a fixed prime divisor such that $B_Y\cdot C>0$, and let $B_X\subset X$ be the transform of $B_Y$.
Then $B_X$ is a fixed prime divisor of type $(3,2)$ such that $D\cap B_X\neq\emptyset$, $\langle [D],[B_X]\rangle$ is a face of $\Eff(X)$, and 
  $\langle [D],[B_X]\rangle\cap\Mov(X)=\{0\}$.
\end{lemma}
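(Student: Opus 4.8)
The plan is to establish the four assertions in turn, using the diagram of Setting \ref{not31} together with the bijection of Lemma \ref{bijection} applied to the elementary divisorial rational contraction $g:=f\circ\ph\colon X\dasharrow Y$, which satisfies $\Exc(g)=D$ and $\rho_X-\rho_Y=1$. First note that since $B_Y\cdot C>0$ we have $B_Y\cap C\neq\emptyset$, and moreover $C\not\subseteq B_Y$: otherwise Lemma \ref{31}(2) would force $B_Y\cdot C<0$. As $B_Y$ is a fixed prime divisor, Lemma \ref{bijection}(3) (with $D=\Exc(g)$) immediately yields that its transform $B_X$ is a fixed prime divisor, that $\langle[D],[B_X]\rangle$ is a ($2$-dimensional) face of $\Eff(X)$, and that $\langle[D],[B_X]\rangle\cap\Mov(X)=\{0\}$. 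This settles three of the four claims.

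For the type of $B_X$, the idea is to produce a covering family of rational curves of anticanonical degree $1$ on $B_X$ and then invoke Lemma \ref{divisorcovered}. By Lemma \ref{31}(1) the divisor $B_Y$ is of type $(3,2)$, so by \ref{basic0} it is covered by a family of rational curves of anticanonical degree $1$, the fibers of the associated contraction. Since $C\cap B_Y$ is finite, a general member $C_v$ of this family is disjoint from $C$; hence its strict transform under the blow-up $f$ still has anticanonical degree $1$ and is disjoint from $\Exc(f)$, and these transforms cover $\w{B}:=f^{-1}_*B_Y\subset\w{X}$. Finally, a general such curve in $\w{X}$ avoids the finitely many exceptional lines of $\ph^{-1}$, so by Lemma \ref{SQM}(1) (with $n=0$) its transform in $X$ is again a rational curve of anticanonical degree $1$, and these cover $B_X$. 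Thus $B_X$ is covered by rational curves of anticanonical degree $1$; being fixed it is not nef, so Lemma \ref{divisorcovered} forces $B_X$ to be of type $(3,2)$.

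It remains to prove $D\cap B_X\neq\emptyset$. Pick $p\in B_Y\cap C$ and let $F_p\cong\pr^2$ be the fibre of $f$ over $p$; note $F_p\subset\Exc(f)=\w{D}$, where $\w{D}$ is the transform of $D$ in $\w{X}$. Since $C\not\subseteq B_Y$ we have $f^*B_Y=\w{B}$ (no exceptional component), so every line $\Gamma\subset F_p$ satisfies $\w{B}\cdot\Gamma=B_Y\cdot f_*\Gamma=0$; as $f(\w{B})=B_Y\ni p$ gives $\w{B}\cap F_p\neq\emptyset$, while $\w{B}|_{F_p}$ would otherwise be a nonzero effective divisor of degree $0$ on $\pr^2$, we conclude $F_p\subseteq\w{B}$. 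Hence $F_p\subseteq\w{B}\cap\w{D}$. The exceptional lines of $\ph^{-1}$ are not contained in $\w{D}$ (Rem.~\ref{viapeyron}), so they meet $F_p$ in finitely many points; therefore $F_p\cap\dom(\ph^{-1})\neq\emptyset$, and for any point $q$ there we get $q\in\w{B}\cap\w{D}$ with $\ph^{-1}(q)\in B_X\cap D$, as desired.

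The main obstacle is the interplay with the SQM $\ph$ in the last two steps. On the one hand I must ensure that anticanonical degrees are preserved when pushing curves through $\ph^{-1}$, which is handled by applying Lemma \ref{SQM}(1) only to general members avoiding the exceptional lines. On the other hand, more delicately, I must guarantee that the intersection $\w{B}\cap\w{D}$ is not entirely swallowed by the indeterminacy locus of $\ph^{-1}$; this is exactly why I exhibit the whole $2$-dimensional fibre $F_p$ inside $\w{B}\cap\w{D}$, rather than a single point, so that it necessarily meets $\dom(\ph^{-1})$.
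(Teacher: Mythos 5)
Your proof is correct and follows essentially the same route as the paper: Lemma \ref{bijection}(3) for the face statements, Lemma \ref{31}(1)--(2) to get $B_Y$ of type $(3,2)$ with $C\cap B_Y$ finite, transport of the anticanonical-degree-$1$ covering family to $X$ followed by Lemma \ref{divisorcovered} for the type, and a dimension count ($2$-dimensional intersection inside $\Exc(f)$ versus the $1$-dimensional indeterminacy locus of $\ph^{-1}$) for $D\cap B_X\neq\emptyset$. The only differences are cosmetic: the paper invokes Lemma \ref{SQM}(3) where you use Lemma \ref{SQM}(1) on general members, and it deduces $\dim(B_{\w{X}}\cap\Exc(f))=2$ directly from the intersection of two divisors rather than exhibiting the fiber $F_p$ inside $\w{B}$.
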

\begin{proof}
 By Lemma \ref{bijection}(3), $B_X$ is a fixed prime divisor in $X$, and $\langle [D],[B_X]\rangle$ is a face of $\Eff(X)$ which intersects $\Mov(X)$ only in zero.

Since $B_Y\cdot C>0$, we have $B_Y\cap C\neq\emptyset$, thus $B_Y$ is a fixed divisor of type $(3,2)$
by Lemma \ref{31}(1). 
On the other hand
 $C\not\subset B_Y$ by Lemma \ref{31}(2), thus $C\cap B_Y$ is a finite set.

Let $B_{\w{X}}\subset\w{X}$ be the transform of $B_Y$. Then $\dim(B_{\w{X}}\cap\Exc(f))=2$, and since $\dim(\w{X}\smallsetminus\dom\ph^{-1})=1$ by Th.~\ref{SQM0}, we have 
 $B_X\cap D\neq\emptyset$.

We are left to show that $B_X$ is of type $(3,2)$.
Let $g\colon Y\to Z$ be an elementary contraction of type $(3,2)$ with $\Exc(g)=B_Y$. Then $g^{-1}(g(C\cap B_Y))$ is a union of finitely many non-trivial fibers of $g$, and $U_Y:=B_Y\smallsetminus g^{-1}(g(C\cap B_Y))$ is a non-empty open subset of $B_Y$, covered by proper, integral rational curves of anticanonical degree $1$ (see \ref{basic0}).
Since $U_Y\cap C=\emptyset$, the transform $U_{\w{X}}\subset\w{X}$ of $U_Y$ is isomorphic to $U_Y$, and by Lemma \ref{SQM}(3) we have $U_{\w{X}}\subset\dom\ph^{-1}$. In conclusion, the transform $U_X\subset B_X$ of $U_Y$ in $X$ is contained in the open subset where the map
$X\dasharrow Y$ is an isomorphism.

Since $X$ is Fano, this implies that $B_X$ is covered by a family of rational curves of anticanonical degree $1$, and $B_X$ is not nef. We conclude that $B_X$ is of type $(3,2)$ by Lemma \ref{divisorcovered}.
\end{proof}
\begin{lemma}\label{lungo}
Assume that we are in Setting \ref{not31}.
Let $\sigma$ be a 
 one-dimensional face of $\Eff(Y)$ such that $\sigma\cdot C>0$. Then
$\sigma\not\subset\Mov(Y)$.
\end{lemma}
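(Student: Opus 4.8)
The plan is to argue by contradiction: suppose that $\sigma\subset\Mov(Y)$ while $\sigma\cdot C>0$. Since $\sigma$ is a one-dimensional face of $\Eff(Y)$ contained in $\Mov(Y)$, it is also a one-dimensional face of $\Mov(Y)$, hence a cone of $\MCD(Y)$; as in the proof of Lemma \ref{useful} it determines a rational contraction $g\colon Y\dasharrow Z$ with $g^*(\Nef(Z))=\sigma$ and $\rho_Z=\dim\sigma=1$. Using Remark \ref{Kneg} I would factor $g$ as $Y\stackrel{\psi}{\dasharrow}\w{Y}\stackrel{\tilde{f}}{\to}Z$ with $\psi$ a SQM and $\tilde{f}$ a $K$-negative contraction. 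Applying Lemma \ref{movf} to $g$ (whose minimal face of $\Eff(Y)$ is $\sigma$ itself), the general fiber $F$ of $\tilde{f}$ satisfies $\rho_F\geq\dim\N(F,\w{Y})=\rho_Y-1\geq 5>0$; in particular $g$ is of fiber type, and since $\tilde{f}$ is $K$-negative, $F$ is a smooth Fano variety of dimension $2$ or $3$ (dimension $1$ being excluded by $\rho_F\geq 5$) with $\rho_F\geq 5$.

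Because $C$ meets no exceptional plane (Lemma \ref{firstprop31}(3)) and the indeterminacy locus of $\psi$ is a union of exceptional planes (Th.~\ref{SQM0}), we have $C\subset\dom\psi$; set $\w{C}:=\psi(C)$. Then $\sigma\cdot C=\tilde{f}^*H\cdot\w{C}=H\cdot\tilde{f}_*\w{C}$, where $H$ is an ample generator of $\Nu(Z)$, so the hypothesis $\sigma\cdot C>0$ forces $\tilde{f}_*\w{C}\neq 0$: the curve $\w{C}$ is not contracted by $\tilde{f}$ and meets the general fiber $F$. The key geometric input is that $F$, being a smooth Fano of dimension $\leq 3$ with $\rho_F\geq 5$, is covered by a family of rational curves of anticanonical degree $\leq 2$ (for del Pezzo surfaces this is the ruling of a conic bundle, and threefolds of such large Picard number carry an analogous low-degree covering family). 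Since $\w{C}$ meets a one-parameter family of fibers $F_z$ for $z\in\tilde{f}(\w{C})$, through each point $\w{C}\cap F_z$ there passes a member $\Gamma_z$ of this family, and by adjunction $-K_{\w{Y}}\cdot\Gamma_z=-K_{F_z}\cdot\Gamma_z\leq 2$ (as $F_z$ is a fiber). This produces infinitely many integral curves $\Gamma_z$, distinct from $\w{C}$, meeting $\w{C}$ and of anticanonical degree at most $2$.

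Finally I would transport these curves back to $Y$. As $\w{C}\subset\dom(\psi^{-1})$, the transform $(\Gamma_z)_Y$ still meets $C$ and is distinct from it, and by Lemma \ref{SQM}(1) its anticanonical degree satisfies $1\leq -K_Y\cdot(\Gamma_z)_Y\leq -K_{\w{Y}}\cdot\Gamma_z\leq 2$. Thus $C$ meets infinitely many integral curves of anticanonical degree $1$ or $2$: if one of them has degree $2$ this contradicts Lemma \ref{firstprop31}(1), while if they all have degree $1$ their infiniteness contradicts Lemma \ref{firstprop31}(3). In either case we reach a contradiction, which proves $\sigma\not\subset\Mov(Y)$. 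The main obstacle is precisely the geometric input on the general fiber $F$, namely that a smooth Fano of dimension at most $3$ and Picard number at least $5$ always carries a covering family of rational curves of anticanonical degree at most $2$; everything else is a direct manipulation of the contraction $g$ combined with the curve estimates of Lemmas \ref{firstprop31} and \ref{SQM}.
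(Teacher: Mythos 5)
Your proof is correct and follows the paper's own argument essentially step for step: contradiction, the rational contraction $g\colon Y\dasharrow Z$ with $\rho_Z=1$ factored through a SQM and a $K$-negative fiber-type contraction, Lemma \ref{movf} to force a general fiber $F$ of dimension $2$ or $3$ with large $\rho_F$ carrying a covering family of rational curves of anticanonical degree $\leq 2$, and then transporting the (infinitely many) members meeting $\w{C}$ back to $Y$ to contradict Lemma \ref{firstprop31}(1) and (3). The only quibble is the numerical slip $\rho_F\geq\rho_Y-1\geq 5$ (Setting \ref{not31} only assumes $\rho_X\geq 6$, so $\rho_Y\geq 5$ and $\rho_F\geq 4$), which is harmless since the key geometric input on $F$ already holds for $\rho_F\geq 4$, exactly as the paper uses it.
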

\begin{proof}
We proceed by contradiction, and assume that there exists a  one-dimensional face $\sigma$ of $\Eff(Y)$ such that $\sigma\cdot C>0$ and
$\sigma\subset\Mov(Y)$. As in the proof of Lemma \ref{useful}, this yields
a diagram
$$\xymatrix{Y\ar@{-->}[r]\ar@/^1pc/@{-->}[rr]^g&{\w{Y}}\ar[r]_{\tilde{g}}&{Z,}
}$$
where $g$ is a  rational contraction with $\sigma=g^*(\Nef(Z))$, $Y\dasharrow \w{Y}$ is a SQM, $\w{Y}$ is smooth, and $\tilde{g}$ is a $K$-negative contraction. Moreover if  $F\subset\w{Y}$ is a general fiber of $\tilde{g}$, then $F$ is smooth, Fano, and 
$\rho_F\geq\rho_Y-\dim\sigma=\rho_Y-1\geq 4$ by Lemma \ref{movf}, 
thus $g$ is of fiber type and $F$ has dimension $2$ or $3$. 

In particular, since $\rho_F\geq 4$,  $F$ has a covering family of rational curves of anticanonical degree $2$. This is well-known if $F$ is a del Pezzo surface, and follows from Mori and Mukai's work if $\dim F=3$, see \cite[Theorem on p.~141]{fanoEMS}.
Therefore $\w{Y}$ has a covering family $V$ of rational curves, of anticanonical degree $2$, and contracted by $\tilde{g}$.

 Notice that $C$ cannot meet any exceptional plane in $Y$ by Lemma \ref{firstprop31}(3), therefore it is contained in the open subset where the map $Y\dasharrow \w{Y}$ is an isomorphism; let $\w{C}\subset\w{Y}$ be the transform of $C$. Since $\sigma\cdot C>0$, the image $\tilde{g}(\w{C})$ is a curve.

For every point $p\in \w{C}$ there exists a curve $C_{v_p}$ of the family $V$ containing $p$. Let $\Gamma_p$ be an irreducible component of $C_{v_p}$ containing $p$, so that $\Gamma_p$ is  contracted by $\tilde{g}$ (hence $\Gamma_p\neq\w{C}$), and $1\leq -K_{\w{Y}}\cdot\Gamma_p\leq 2$.
Notice also that when $p$ varies in $\w{C}$, we get infinitely many distinct curves $\Gamma_p$.

Let $\Gamma_p'\subset Y$ be the transform of $\Gamma_p$. Then $\Gamma_p'$ still meets $C$, and  $1\leq -K_{{Y}}\cdot\Gamma_p'\leq 2$ by Lemma \ref{SQM}(1). Thus $-K_{{Y}}\cdot\Gamma_p'=1$ by Lemma \ref{firstprop31}(1), therefore $C$ intersects infinitely many curves of anticanonical degree $1$, contradicting 
Lemma \ref{firstprop31}(3).
\end{proof}
\begin{proof}[Proof of Prop.~\ref{(3,1)}]
Let $D\subset X$ be a fixed prime divisor of type $(3,1)^{sm}$, and $X\dasharrow\widetilde{X}\stackrel{f}{\to}Y$ 
 the MMP for $D$,
where $Y$ is a smooth Fano $4$-fold and $f$ is the blow-up of a smooth curve $C\subset Y$.
Since $C$ has positive intersection with some effective divisor in $Y$,
there exists a one-dimensional face $\sigma$ of $\Eff(Y)$ such that $C\cdot\sigma>0$.
By Lemma \ref{lungo}, we have
 $\sigma\not\subset\Mov(Y)$, thus $\sigma$ contains the class of a fixed prime divisor $B\subset Y$ such that $B\cdot C>0$.  Then Lemma \ref{transform32} yields the statement.
\end{proof}
\subsection{Fixed divisors of type $(3,0)^Q$}\label{b}
\noindent In this section we describe some properties of Fano $4$-folds having a fixed prime divisor $D$ of type $(3,0)^Q$. 
Our main goal is to show the following result, which will be used in section \ref{last}.
\begin{proposition}\label{(3,0)Q}
Let $X$ be a smooth Fano $4$-fold with $\rho_X\geq 6$, and suppose that $X$ has a fixed prime divisor of type $(3,0)^Q$. Then one of the following holds:
\begin{enumerate}[$(i)$]
\item  $X$ contains  a prime divisor covered by rational curves of anticanonical degree $1$; 
\item $X$ has a 
covering family of rational curves of anticanonical degree $3$.
\end{enumerate}
\end{proposition}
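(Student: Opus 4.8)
The plan is to run the (unique, by Corollary \ref{uniqueness}) MMP for $D$ and to extract the dichotomy from the geometry of the exceptional quadric together with the flipping curves. Write this MMP as $X\stackrel{\ph}{\dasharrow}\w{X}\stackrel{f}{\la}Y$ with $E:=\Exc(f)\cong Q\subset\pr^4$ and $\ma{N}_{E/\w{X}}\cong\ma{O}_Q(-1)$. First I would record the anticanonical degree of the lines of $Q$: by adjunction $K_E=(K_{\w{X}}+E)|_E$, and since $K_E=\ma{O}_Q(-3)$ (valid for any, possibly singular, quadric hypersurface in $\pr^4$) and $E|_E=\ma{O}_Q(-1)$, one gets $K_{\w{X}}|_E=\ma{O}_Q(-2)$. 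Hence the family $V$ of lines of $Q$ covers $E$ and satisfies $-K_{\w{X}}\cdot[V]=2$.

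Next I would locate the flipping curves. By Theorem \ref{typefixed}, $\ph$ is a composition of at least $\rho_X-4\geq 2$ $D$-negative flips, so by Theorem \ref{SQM0} the set $\w{X}\smallsetminus\dom(\ph^{-1})$ is a disjoint union of exceptional lines $\ell_1,\dotsc,\ell_s$ with $s\geq 2$. Since each flip is $D$-negative, its flipping plane is contained in $D$, and I would check that, consequently, each $\ell_j$ meets $E$ (i.e.\ $E\cdot\ell_j>0$). Then for a point $q\in E\cap\ell_j$ and a line $C\subset E$ through $q$ and through no other $\ell_i$, Lemma \ref{SQM}(1) forces $-K_X\cdot C_X\leq -K_{\w{X}}\cdot C-1=1$ on the transform $C_X\subset X$; being $X$ Fano, $C_X$ is then a rational curve of anticanonical degree exactly $1$.

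With these degree-$1$ curves available, I would split according to Lemma \ref{quadricsing}. If $Q$ is a cone over a smooth surface with vertex $v$, its rulings (the lines through $v$) cover $E$; when $v$ lies on one of the $\ell_j$, all these rulings transform to anticanonical-degree-$1$ curves sweeping out the whole transform of $D$, yielding a prime divisor covered by rational curves of anticanonical degree $1$, i.e.\ case $(i)$. In the remaining situation (smooth $Q$, or the vertex off every $\ell_j$), only the lines through the finitely many points of $E\cap(\ell_1\cup\dotsb\cup\ell_s)$ drop to degree $1$; by Lemma \ref{kol} the locus they sweep has $1$-dimensional numerical image, hence is not a divisor, whereas a general line of $E$ avoids the $\ell_j$ and transforms to a degree-$2$ curve, exhibiting the transform of $D$ as a divisor covered by anticanonical-degree-$2$ rational curves $C_X$ with $D\cdot C_X=-1$. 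As $D\cdot C_X<0$, these curves are trapped in $D$, so to cover $X$ I would attach to such a $C_X$ a transversal curve and smooth the resulting connected chain (equivalently, lift a minimal covering family of $Y$ through the image point), producing a covering family of $X$ of anticanonical degree $3$, i.e.\ case $(ii)$.

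The main obstacle is precisely this last step: ruling out case $(i)$ in the non-cone situation and genuinely producing the degree-$3$ covering family, since the natural degree-$2$ curves on $D$ do not deform off $D$. A secondary point requiring care is the sign assertion $E\cdot\ell_j>0$ (that every exceptional line of the MMP really meets $E$) and the dimension bookkeeping ensuring that, when a degree-$1$ family does occur, it sweeps out a true divisor; the eventual application will feed case $(i)$ into Lemma \ref{divisorcovered} to produce a fixed divisor of type $(3,2)$.
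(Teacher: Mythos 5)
There is a genuine gap here---in fact two. First, your dichotomy is not the right one. The paper reduces the Proposition to Lemma \ref{partial(3,0)Q}, which splits according to the minimal anticanonical degree of irreducible curves of $Y$ through $p:=f(\Exc(f))$: if some such curve has degree $\leq 2$ (equivalently, by Lemma \ref{firstpropQ}(1)-(2), some exceptional line $\ell$ has $\Exc(f)\cdot\ell=1$ and meets the quadric transversally at a smooth point $q$), one gets $(i)$; if every such curve has degree $\geq 3$, one gets $(ii)$. Your split (cone with vertex on an exceptional line vs.\ not) misassigns cases: a smooth $Q$ met transversally by some $\ell_j$ lands in your second branch, yet the correct conclusion there is still $(i)$. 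Indeed the step you explicitly discard is the heart of the paper's proof of $(i)$: the transform $r_X$ of a general line of $Q$ through $q$ satisfies $-K_X\cdot r_X=1$, $D\cdot r_X=0$ and $\ma{N}_{r_X/X}\cong\ma{O}_{\pr^1}^{\oplus 2}\oplus\ma{O}_{\pr^1}(-1)$, so it moves in a $2$-dimensional family \emph{in $X$}, not merely inside the strict transform $S$ of the cone of lines of $Q$ through $q$; Lemma \ref{kol} is then invoked to show this family cannot have a $2$-dimensional locus (a $2$-dimensional locus would force $\N(\Lo(V_x),X)=\R[V]$, while $\dim\N(S,X)\geq 2$ because of the conic $S\cap L$), hence $\Lo(V)$ is a prime divisor covered by degree-$1$ curves. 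You use Lemma \ref{kol} in exactly the opposite direction, and only for the subfamily of lines of $Q$ through $q$, which is why you wrongly conclude that the degree-$1$ curves never fill up a divisor in the non-cone case.

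Second, the branch producing $(ii)$ is not carried out, as you acknowledge, and it cannot be repaired by ``attaching and smoothing'' a degree-$2$ curve trapped in $D$. The paper's argument for that branch works entirely on $Y$: take a maximal covering family $W$ of rational curves of $Y$ of minimal anticanonical degree; some member passes through $p$, so the branch hypothesis ($-K_Y\cdot C\geq 3$ for every curve through $p$) gives $-K_Y\cdot[W]\geq 3$; the estimate $\dim W\geq -K_Y\cdot[W]+1$ (valid since $Y$ has an isolated l.c.i.\ singularity) together with $\dim\N(B,Y)\geq 2$ for every prime divisor $B$ forces $\dim\Lo(W_y)\leq 2$, hence $-K_Y\cdot[W]=3$; finally freeness of the general member shows it avoids $p$ and the images of all exceptional curves, so it lifts to a covering family of $X$ of anticanonical degree $3$. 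Note that this argument needs the hypothesis that \emph{every} curve through $p$ has degree $\geq 3$, which your second branch does not supply, since in it $p$ may well lie on a degree-$1$ curve $f(\ell_j)$.
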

Let us fix a notation for the situation studied in this section.
\begin{notation}\label{not30Q}
Let $X$  be a smooth Fano $4$-fold with $\rho_X\geq 6$, $D\subset X$ a fixed prime divisor of type $(3,0)^Q$, and $X\stackrel{\ph}{\dasharrow}\widetilde{X}\stackrel{f}{\to}Y$
 the MMP for $D$, where $f$ is of type $(3,0)^Q$ and $Y$ is Fano.
 Set $p:=f(\Exc(f))\in Y$; we have $\Sing(Y)=\{p\}$ and $p$ is a factorial, terminal, l.c.i.\ singularity (see  Lemma \ref{quadricsing}).
\end{notation}
\begin{lemma}
\label{firstpropQ}
Assume that we are in Setting \ref{not30Q}.
Let $C\subset Y$ be an integral curve containing $p$. Then we have the following:
\begin{enumerate}[(1)]
\item $-K_Y\cdot C\geq 1$ and $-K_Y\cdot C\neq 2$;
\item
if $-K_Y\cdot C=1$, then $C=f(\ell)$, $\ell\subset \w{X}$ an exceptional line with $\Exc(f)\cdot\ell=1$;
\item $p$ belongs to (at most) finitely many curves of anticanonical degree $1$.
\end{enumerate}
\end{lemma}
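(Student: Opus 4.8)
The plan is to mirror the proof of Lemma \ref{locus}, the only genuine difference being the coefficient in the anticanonical pullback formula. First I would record the discrepancy of $f$. Since $f\colon\w{X}\to Y$ is of type $(3,0)^Q$, I claim
$$f^*(-K_Y)=-K_{\w{X}}+2\,\Exc(f).$$
Indeed, by Lemma \ref{quadricsing} the point $p$ is, locally analytically, either the ordinary double point $\{x_0^2+x_1^2+x_2^2+x_3^2+x_4^2=0\}$ or the singularity $\{x_0^2+x_1^2+x_2^2+x_3^2+x_4^3=0\}$ in $\C^5$, and $f$ is the blow-up of $\mathfrak{m}_p$. In both cases the defining equation has multiplicity $2$ at $p$ and the exceptional divisor $E:=\Exc(f)$ is the (irreducible) projectivized tangent cone, so the standard discrepancy computation for a hypersurface singularity in $\C^5$ gives discrepancy $4-2=2$. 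In the smooth-quadric case one can alternatively use adjunction on $E\cong Q$: the chain $\ma{O}_Q(-3)=K_Q=(K_{\w{X}}+E)|_E=(a+1)\,\ma{O}_Q(-1)$ forces $a=2$. This is the exact analogue of the ``$+3E$'' in Lemma \ref{locus}, where the blow-up of a smooth point in a $4$-fold has discrepancy $3$.

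Next comes the intersection-theoretic dichotomy. For an integral curve $C\ni p$, its strict transform $\w{C}\subset\w{X}$ meets $E$ (as $C\not\subset\{p\}=f(E)$ while $p\in C$), so $E\cdot\w{C}\geq 1$, an integer since $\w{X}$ is smooth by Th.~\ref{SQM0}. By the projection formula and the discrepancy formula,
$$-K_{\w{X}}\cdot\w{C}=-K_Y\cdot C-2\,E\cdot\w{C}.$$
Because $Y$ is Fano and $p$ is factorial (Lemma \ref{quadricsing}), $-K_Y$ is an ample Cartier divisor, whence $-K_Y\cdot C\geq 1$; this is the first half of (1). Suppose now $-K_Y\cdot C\leq 2$. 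Then the displayed equality gives $-K_{\w{X}}\cdot\w{C}\leq 2-2=0$, so by Lemma \ref{SQM}(2) the curve $\w{C}$ is an exceptional line with $-K_{\w{X}}\cdot\w{C}=-1$. Feeding this back, $-K_Y\cdot C=-1+2\,E\cdot\w{C}$ is odd, hence (being between $1$ and $2$) equal to $1$, and moreover $E\cdot\w{C}=1$. This simultaneously rules out $-K_Y\cdot C=2$, completing (1), and — setting $\ell:=\w{C}$, so that $C=f(\ell)$ and $\Exc(f)\cdot\ell=1$ — yields (2).

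For the finiteness statement (3), note that by (2) every integral curve through $p$ of anticanonical degree $1$ is the image under $f$ of an exceptional line of $\w{X}$, so it suffices to see that $\w{X}$ contains finitely many exceptional lines. By Th.~\ref{SQM0} the set $\w{X}\smallsetminus\dom(\ph^{-1})$ is a disjoint union of exceptional lines $\ell_1,\dotsc,\ell_s$, and any exceptional line $\ell$ lies among them: since $-K_{\w{X}}\cdot\ell=-1<0$, Lemma \ref{SQM}(1) forces $\ell\cap\dom(\ph^{-1})=\emptyset$, and $\ell$ being irreducible it must coincide with one of the disjoint $\ell_i$. Hence there are at most $s$ curves as in (2), proving (3).

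I expect the only point needing real care — beyond transcribing the parity/positivity argument of Lemmas \ref{locus} and \ref{firstprop31} — to be pinning down the coefficient $2$ uniformly, i.e.\ checking that the discrepancy of the $(3,0)^Q$ contraction is still $2$ when $Q$ is the singular (cone) quadric, where adjunction on $E$ is less transparent. The multiplicity-$2$ hypersurface computation coming out of Lemma \ref{quadricsing} dispatches this cleanly, after which everything else is formal.
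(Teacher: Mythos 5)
Your proof is correct and is exactly what the paper intends: its own proof of this lemma is just the sentence ``The proof is analogous to that of Lemma \ref{locus}'', and your argument is precisely that analogue, with the discrepancy $3$ of a smooth point blow-up replaced by the discrepancy $2$ of the $(3,0)^Q$ contraction (which you verify correctly in both the smooth and cone cases) and the mod-$3$ congruence replaced by the resulting parity argument.
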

\begin{proof}
The proof is analogous to that of Lemma \ref{locus}.
\end{proof}
\begin{lemma}\label{smallQ}
Assume that we are in Setting \ref{not30Q}.
\begin{enumerate}[(1)]
\item
If $R$ is a small extremal ray of $\NE(Y)$, then $p\not\in\Lo(R)$, and 
$f(\ell)\cap \Lo(R)=\emptyset$
for every exceptional line $\ell\subset\w{X}$.
\item
If $\psi\colon Y\dasharrow\w{Y}$ is a SQM, then $p\in\dom\psi$, and Th.~\ref{SQM0} holds for $\psi$ even if $Y$ has a singular point. Moreover $f(\ell)\subset\dom\psi$
for every exceptional line $\ell\subset\w{X}$.
\end{enumerate}
\end{lemma}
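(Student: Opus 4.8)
The crux of both parts is a single numerical fact that I would establish first: \emph{every exceptional plane $L\subset\w{X}$ is disjoint from $E:=\Exc(f)$.} Since $f$ is of type $(3,0)$, we have $\dim\N(E,\w X)=1$ (Rem.~\ref{basic}(1)), spanned by the class of a line $C_E\subset E$ with $-K_{\w X}\cdot C_E=2$ and $E\cdot C_E=-1$; on the other hand a line $C_L\subset L$ has $-K_{\w X}\cdot C_L=1$. If $[C_L]$ were proportional to $[C_E]$ then $E\cdot C_L$ would equal $-\tfrac12\notin\Z$, which is impossible; hence no irreducible curve of $L$ lies in $E$, so $L\cap E$ is finite. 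But all lines of $L\cong\pr^2$ are numerically equivalent, so $E\cdot C_L$ is independent of the line: a general line misses the finite set $L\cap E$, giving $E\cdot C_L=0$, whereas a line through a point of $L\cap E$ (if nonempty) meets $E$ without being contained in it, so $E\cdot C_L>0$, a contradiction. Thus $L\cap E=\emptyset$. I would also record that \emph{every exceptional line $\ell\subset\w X$ meets $E$}: otherwise $f$ is an isomorphism near $\ell$ and $-K_Y\cdot f(\ell)=-K_{\w X}\cdot\ell=-1<0$, contradicting the ampleness of $-K_Y$; in particular $p\in f(\ell)$.

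For part (1), let $g\colon Y\to W$ be the contraction of the small ray $R$; since $Y$ is Fano, $g$ is $K$-negative. I would consider $h:=g\circ f\colon\w X\to W$, a contraction of the \emph{smooth} fourfold $\w X$ with $\NE(h)=\langle R_f,\w R\rangle$ two-dimensional, where $R_f=\R_{\ge0}[C_E]$ is contracted by $f$ and $\w R$ is the companion ray. Using that a general curve of $R$ avoids $p$ (every fibre of $g$ is positive-dimensional) and that its transform is then disjoint from $E$, one sees that $\w R$ is $K$-negative; and it is small, since a divisorial companion contraction would make $\Lo R=\Exc(g)$ a divisor. By Th.~\ref{lorenzo}, $\Lo(\w R)$ is a disjoint union of exceptional planes $L_1,\dots,L_m$, which by the crux are \emph{disjoint from $E$}. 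Hence $f$ (contracting $E$) and the contraction of $\w R$ (contracting $\bigsqcup_i L_i$) have disjoint exceptional loci and commute, so $\Lo R=\bigsqcup_i f(L_i)$ lies in the smooth locus $Y\smallsetminus\{p\}$; in particular $p\notin\Lo R$. Finally the $L_i$ are covered by curves of anticanonical degree $1$, hence lie in the set $T$ of Lemma \ref{SQM}(3) and are disjoint from every exceptional line $\ell$; combined with $p\notin\Lo R$ this yields $f(\ell)\cap\Lo R=\emptyset$.

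For part (2) I would reduce to (1). As $Y$ is a Mori dream space, $\psi$ factors as a chain of flips $Y=Y_0\dasharrow Y_1\dasharrow\cdots\dasharrow Y_k=\w Y$ of small extremal rays. The first flip, of a ray $R_0$ of the Fano $Y$, has indeterminacy locus exactly $\Lo(R_0)$, which by (1) misses $p$ and every $f(\ell)$; thus $Y\dasharrow Y_1$ is an isomorphism near $p$ and near the curves $f(\ell)$, and near $p$ the variety $Y_1$ carries the same factorial terminal singularity (Lemma \ref{quadricsing}). Iterating the argument of (1) stage by stage — transporting $f$ and the degree analysis of Lemma \ref{firstpropQ} to each $Y_i$, which coincides with $Y$ in a neighbourhood of $p$ — shows that every flip is an isomorphism near $p$ and near the $f(\ell)$. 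Hence $\psi$ is an isomorphism there, so $p\in\dom\psi$ and $f(\ell)\subset\dom\psi$; and since the whole modification takes place in $Y\smallsetminus\{p\}\cong\w X\smallsetminus E$, the description of Th.~\ref{SQM0} (disjoint exceptional planes, resp.\ lines, for $\psi$ and $\psi^{-1}$) is inherited from the smooth model $\w X$.

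The main obstacle is the behaviour at the singular point $p$ together with the fact that exceptional lines \emph{do} meet $E$, so that the curves $f(\ell)$ pass through $p$ and one cannot argue that small modifications trivially avoid these loci. The numerical crux is precisely what rules out interference with $E$, while the anticanonical-degree constraints of Lemma \ref{firstpropQ} (no curve of degree $2$ through $p$, only finitely many of degree $1$) are what exclude the one degenerate configuration in my argument for (1), namely when the companion ray $\w R$ is generated by an exceptional line and the fibre of $g$ through $p$ is one-dimensional; ruling this out uses the explicit local description of $(Y,p)$ in Lemma \ref{quadricsing}, which prevents a flipping curve from passing through the factorial terminal point. A secondary technical point is that $\w X$ is smooth but not Fano, so Th.~\ref{SQM0} and Rem.~\ref{Kneg} must be invoked for the genuine Fano $X$ through the SQM $\ph\colon X\dasharrow\w X$ rather than for $\w X$ directly.
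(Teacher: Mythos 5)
Your argument for part (1) has a genuine gap at its central step: you never establish that the companion ray $\w{R}$ of $\NE(h)$ is $K$-negative, and without that Th.~\ref{lorenzo} cannot be applied to describe $\Lo(\w{R})$. What your observation about a general curve $C$ of $R$ actually gives is a class $[\w{C}]\in\NE(h)$ with $E\cdot\w{C}=0$ and $-K_{\w{X}}\cdot\w{C}>0$; writing $[\w{C}]=a v+b[C_E]$ with $v$ a generator of $\w{R}$, one gets $b=a\,(E\cdot v)$, so this forces $-K_{\w{X}}\cdot v>0$ only when $E\cdot v=0$. But $E\cdot v>0$ happens exactly when $\w{R}$ is generated by an irreducible curve meeting $E$ — equivalently (by Lemma \ref{SQM}(2), since such a generator of non-positive degree must be an exceptional line, and every exceptional line meets $E$) exactly in the configuration where a generating curve of $R$ passes through $p$. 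That is the very situation the lemma is supposed to exclude, so your appeal to ``the explicit local description of $(Y,p)$ in Lemma \ref{quadricsing}, which prevents a flipping curve from passing through the factorial terminal point'' is circular as stated: that a flipping curve avoids $p$ is the content of part (1), not a consequence of Lemma \ref{quadricsing}. The elementary numerical constraints of Lemma \ref{firstpropQ} do not by themselves rule this out either, since they leave room for curves of $R$ of anticanonical degree $1$ (finitely many) or $\geq 3$ through $p$.

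The paper's proof avoids the 2-ray game entirely and works directly on $Y$: since $p$ is an isolated terminal l.c.i.\ singularity (Lemma \ref{quadricsing}) and $R$ is a $K$-negative small ray, the results of Kawamata, Andreatta--Wi\'sniewski, and Kachi give that every irreducible component $T$ of $\Lo(R)$ satisfies $(T,-K_{Y|T})\cong(\pr^2,\ma{O}_{\pr^2}(1))$; hence through every point of $\Lo(R)$ there pass infinitely many curves of anticanonical degree $1$, and Lemma \ref{firstpropQ}(3) immediately yields $p\notin\Lo(R)$. After that, $\Lo(R)$ sits in the smooth locus, Kawamata's description applies, and the disjointness from the curves $f(\ell)$ follows from Lemma \ref{SQM}(3) as in your endgame. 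Some outside structural input of this kind seems unavoidable here; your preliminary ``crux'' (exceptional planes of $\w{X}$ are disjoint from $E$, proved by the integrality of $E\cdot C_L$) is correct and pleasant, and your part (2) matches the paper's reduction to (1) via a factorization into $K$-negative flips, but part (1) as written does not close.
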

\begin{proof}
Let $R$ be a small extremal ray of $\NE(Y)$. Recall that $Y$ is Fano, hence $R$ is $K$-negative. By   Lemma \ref{quadricsing} $Y$ has an isolated terminal l.c.i.\ singularity at $p$. By the works of Kawamata, Andreatta and Wi\'sniewski, and Kachi (see \cite[Th.~1.1, Cor.~2.2]{kachiflips} and references therein), for every irreducible component $T$ of $\Lo(R)$ we have $(T,-K_{Y|T})\cong(\pr^2,\ma{O}_{\pr^2}(1))$. Therefore $p\not\in T$, because $T$ cannot belong to infinitely many curves of anticanonical degree $1$ by Lemma \ref{firstpropQ}(3). 

We conclude that $p\not\in\Lo(R)$, thus $\Lo(R)$ is contained in the smooth locus of $Y$, and the locus of $R$ and its flip are described in \cite{kawsmall}; in particular $\Lo(R)$ is a finite disjoint union of exceptional planes.

Hence the transform of $\Lo(R)$ in $\w{X}$ is again a union of exceptional planes. If $\ell\subset\w{X}$ is an exceptional line, then $\ell$ cannot meet any exceptional plane by Lemma \ref{SQM}(3), hence $f(\ell)\cap\Lo(R)=\emptyset$. This gives (1).

For (2), the same proof as the one of \cite[Rem.~3.6]{eff} works: since $Y$ is Fano, $\psi$ can be factored as a sequence of $K$-negative flips; the locus of each flip is contained in the smooth part by (1), and the loci of the flips are all disjoint.
\end{proof}
\begin{lemma}\label{partial(3,0)Q}
Assume that we are in Setting \ref{not30Q}.
\begin{enumerate}[(1)]
\item
If there exists an irreducible curve $C\subset Y$, containing $p$, with $-K_Y\cdot C\leq 2$, then $X$ contains  a prime divisor covered by rational curves of anticanonical degree $1$.
\item If $-K_Y\cdot C\geq 3$ for every irreducible curve $C\subset Y$ containing $p$, then
$X$  has a 
covering family of rational curves of anticanonical degree $3$.
\end{enumerate}
\end{lemma}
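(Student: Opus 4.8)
The common engine for both parts is the transfer of rational curves across $X\stackrel{\ph}{\dasharrow}\w{X}\stackrel{f}{\la}Y$. Since $f$ is an isomorphism over $Y\smallsetminus\{p\}$, any irreducible curve $C\subset Y$ with $p\notin C$ lifts to a curve $\w{C}\subset\w{X}$ disjoint from $\Exc(f)$ with $-K_Y\cdot C=-K_{\w{X}}\cdot\w{C}$; and if $-K_{\w{X}}\cdot\w{C}=1$, or more generally if $\w{C}$ meets $\dom(\ph^{-1})$ and avoids the exceptional lines, then by Lemma \ref{SQM} it transfers to a curve $C_X\subset X$ of the same anticanonical degree. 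Thus a family of rational curves in $Y$ whose general member avoids $p$ produces a family in $X$ of the same degree and the same covering behaviour. The hypotheses of (1) and (2) are precisely conditions on the anticanonical degree of curves through $p$, which by Lemma \ref{firstpropQ} and Lemma \ref{smallQ} is exactly what controls the lift near $\Exc(f)$.

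For (1), Lemma \ref{firstpropQ}(1) forces $-K_Y\cdot C=1$, and (2) shows $C$ is rational. As only finitely many curves of degree $1$ pass through $p$ (Lemma \ref{firstpropQ}(3)), the plan is to study a maximal family $V$ of rational curves of anticanonical degree $1$ on $Y$ containing $C$, whose general member then lies in $Y\smallsetminus\{p\}$. I would distinguish two cases according to whether $[C]\in\mov(Y)$. If $[C]\notin\mov(Y)$, then since $\mov(Y)=\Eff(Y)^{\vee}$ there is a prime divisor $B\subset Y$ with $B\cdot C<0$, forcing $C\subset B$; transferring $B$ to $X$ and using that $\rho_X\geq 6$, I would identify $B_X$ as a fixed divisor of type $(3,2)$, which by \ref{basic0} is covered by rational curves of anticanonical degree $1$, giving (i). If instead $[C]\in\mov(Y)$, then $C$ moves in a covering family, so $Y$ — and hence, after transfer, $X$ — is covered by rational curves of anticanonical degree $1$; restricting to the subfamily of curves meeting a general surface then carves out a prime divisor covered by such curves.

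For (2), $Y$ is Fano, hence covered by rational curves; let $V$ be a covering family of minimal anticanonical degree $d$. Since $\Lo(V)=Y\ni p$, some member passes through $p$, and an irreducible component through $p$ has degree $\geq 3$ by hypothesis (all components having positive degree as $Y$ is Fano), so $d\geq 3$. The plan is to show $d=3$ and then transfer $V$ to $X$: its general member avoids $p$, so the transferred family covers $X$ with general member of anticanonical degree $3$, which is (ii). The main obstacle is exactly this dimension/degree bookkeeping — pinning the minimal covering degree down to $3$ in (2), and ensuring in (1) that the degree-$1$ curves sweep out at least a divisor rather than a lower-dimensional locus. Both are delicate because the relevant curves pass through, or accumulate near, the singular point $p$, so the usual deformation estimates for rational curves must be applied using that $p$ is terminal, factorial and l.c.i.\ (Lemma \ref{quadricsing}); I expect the large Picard number ($\rho_Y=\rho_X-1\geq 5$) to be what ultimately forces the covering degree to be as small as $3$.
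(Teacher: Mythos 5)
Your proposal for part (1) does not work. The key point you miss is that the degree-one curve $C\subset Y$ through $p$ is, by Lemma \ref{firstpropQ}(2), the image $f(\ell)$ of an exceptional line $\ell\subset\w{X}$, and an exceptional line has normal bundle $\ma{O}_{\pr^1}(-1)^{\oplus 3}$: it is rigid. So the "maximal family $V$ of rational curves of anticanonical degree $1$ on $Y$ containing $C$" may well be a single point, and there is no reason its members should sweep out anything, let alone avoid $p$ generically. Your case division is also flawed: $[C]\in\mov(Y)$ is a statement about the numerical class lying in the cone generated by classes of covering families, and does \emph{not} imply that $C$ itself deforms in a covering family; and in the other branch the assertion that $B_X$ is a fixed divisor of type $(3,2)$ is not justified (the analogous statement in the $(3,1)^{sm}$ case, Lemma \ref{31}(1), requires a substantial argument). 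The paper's construction is entirely different and takes place in $X$, not $Y$: the line $\ell$ meets the quadric $\w{D}=\Exc(f)$ transversally at a smooth point $q$; the lines of $\w{D}\cong Q$ through $q$ transform, in $X$, to rational curves $r_X\subset D$ with $-K_X\cdot r_X=1$ and $\ma{N}_{r_X/X}\cong\ma{O}_{\pr^1}^{\oplus 2}\oplus\ma{O}_{\pr^1}(-1)$, so they move in a $2$-dimensional family $V$; one then shows $\dim\Lo(V)=3$ by comparing $\N(\Lo(V_x),X)=\R[V]$ (Lemma \ref{kol}) with $\dim\N(S,X)\geq 2$ for the surface $S$ swept by these lines.

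For part (2) your skeleton agrees with the paper's, but you explicitly defer the only nontrivial step: proving that the minimal anticanonical degree $d$ of a covering family of rational curves on $Y$ is at most $3$. The paper does this by taking $W$ of minimal degree (hence generically unsplit), so $\dim\N(\Lo(W_y),Y)=1$ for general $y$ by Lemma \ref{kol}; since $\rho_Y\geq 5$ forces $\dim\N(B,Y)\geq 2$ for every prime divisor $B$, one gets $\dim\Lo(W_y)\leq 2$, and the deformation bound $\dim W\geq -K_Y\cdot[W]+1$ (valid because $p$ is an isolated l.c.i.\ singularity) then gives $-K_Y\cdot[W]\leq\dim\Lo(W_y)+1\leq 3$. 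Finally, transferring the family to $X$ requires the general member to avoid not just $p$ but the entire indeterminacy locus of $Y\dasharrow X$ (images of exceptional lines and the loci of the flips); the paper secures this by observing that the general member is a free curve in $Y_{reg}$ and invoking \cite[Prop.~II.3.7]{kollar}. Without these two inputs your argument for (2) is incomplete.
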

\begin{proof}
We know by Lemma \ref{quadricsing} that $\w{D}:=\Exc(f)$ is either a smooth quadric, or a cone over a smooth $2$-dimensional quadric, and that $Y$ has a l.c.i.\ singularity at $p$.
\begin{parg}
We suppose first that there exists an irreducible curve $C\subset Y$ such that $p\in C$ and $-K_Y\cdot C\leq 2$.
Then by Lemma \ref{firstpropQ}(1)-(2) we know that $-K_Y\cdot C=1$ and that $C=f(\ell)$, where $\ell\subset\w{X}$ is an exceptional line with $\w{D}\cdot\ell=1$. In particular $\ell$ intersects $\w{D}$ transversally at a smooth point $q$. 

The exceptional line $\ell$ corresponds, under $\ph\colon X\dasharrow \w{X}$, to an exceptional plane $L\subset D$, such that $D$ is smooth along $L$ and the birational map $\ph_{|D}\colon D\dasharrow\w{D}$ realizes locally $D$ as the blow-up of $\w{D}$ at $q$, with exceptional divisor $L$.

Let $r\subset \w{D}$ be a curve corresponding (under the isomorphism $\w{D}\cong Q$) to a general
line in $Q$ through $q$, and let $r_X\subset D$ be its transform in $X$. 
We have $-K_{\w{X}}\cdot r=2$, and $r$ meets $\ell$ transversally at $q$.
By Lemma \ref{SQM}(1) $q$ is the unique point of $r$ belonging to an
 exceptional line, and $-K_X\cdot r_X=1$; moreover $r_X\cong r\cong\pr^1$ and $r_X$ is contained in the smooth locus of $D$.

In the blow-up of $\w{D}\cong Q$ at the smooth point $q$ we have $\ma{N}_{r_X/D}\cong\ma{O}_{\pr^1}\oplus\ma{O}_{\pr^1}(-1)$; in particular $-K_D\cdot r_X=1$, and by adjunction $D\cdot r_X=0$.
Thus $\ma{N}_{D/X|r_X}\cong\ma{O}_{\pr^1}$, and we get an exact sequence of sheaves on $\pr^1$:
$$0\la\ma{O}_{\pr^1}\oplus\ma{O}_{\pr^1}(-1)\la\ma{N}_{r_X/X}\la\ma{O}_{\pr^1}
\la 0.$$
Since $\text{Ext}^1(\ma{O}_{\pr^1},\ma{O}_{\pr^1}\oplus\ma{O}_{\pr^1}(-1))=0$, we deduce that
 $\ma{N}_{r_X/X}\cong \ma{O}_{\pr^1}^{\oplus 2}\oplus\ma{O}_{\pr^1}(-1)$.

Therefore $h^1(r_X,\ma{N}_{r_X/X})=0$ and $h^0(r_X,\ma{N}_{r_X/X})=2$. 
 This implies that 
both $\text{Hilb}(X)$ and $\text{Chow}(X)$ are smooth at $[r_X]$, and locally of dimension $2$ \cite[Th.~I.2.8 and Cor.~I.6.6.1]{kollar}.
 Let $V$ be the irreducible component of $\text{Chow}(X)$ containing $[r_X]$. Then $\dim V=2$, $\Lo(V)$ is an irreducible closed subset with $\dim\Lo(V)\leq 3$, and since $X$ is Fano and $-K_X\cdot r_X=1$, $C_v$ is an integral rational curve for every $v\in V$.

Let $S\subset D$ be the strict transform of the union of lines in $\w{D}\cong Q$ through $q$. Then $\dim S=2$ and $S\subseteq \Lo(V)$. Moreover $S$ intersects the exceptional plane $L$ in a conic $\Gamma$, and $S$ contains the curve $r_X$.
We have  $D\cdot \Gamma<0$ and $D\cdot r_X=0$, thus
 $\dim\N(S,X)\geq 2$.

If $\dim\Lo(V)=2$, then for any $x\in\Lo(V)$ there are infinitely many curves of the family $V$ through $x$. This implies that $\Lo(V_x)=\Lo(V)=S$, but this is impossible because $\N(\Lo(V_x),X)
=\R[V]$ by Lemma \ref{kol}, while  $\dim\N(S,X)\geq 2$.
Therefore $\Lo(V)$ is a prime divisor, and we have (1).
\end{parg}
\begin{parg}
Suppose now that $-K_Y\cdot C\geq 3$ for every irreducible curve $C\subset Y$ containing $p$.
Let $W$ be a maximal covering family of rational curves  in $Y$, of minimal anticanonical degree among such families. Since some curve of the family must contain $p$, we have $-K_Y\cdot [W]\geq 3$.

By (the proof of) \cite[Th.~IV.2.4]{kollar} the family $W$ is ``generically unsplit'' in the terminology of \cite{kollar}. This implies that for $y\in Y$ general, the curve $C_w$ is integral for every $w\in W_y$, therefore $\dim\N(\Lo(W_y),Y)=1$ by Lemma \ref{kol}. 

Since $\rho_Y\geq 5$,  by \cite[Rem.~5.5]{gloria} we have  $\dim\N(B,Y)\geq 2$ for every prime divisor $B\subset Y$, hence we must have $\dim\Lo(W_y)\leq 2$.

Even if $Y$ is singular, it has l.c.i.\ singularities, and $\Sing(Y)$ is a point; thus \cite[Th.~II.1.3]{kollar} yields $\dim W\geq -K_Y\cdot [W]+1$. We can apply
this to get \cite[Cor.~IV.2.6]{kollar} as in the smooth case, and we get $-K_Y\cdot [W]\leq \dim\Lo(W_y)+1\leq 3$, thus
$-K_Y\cdot [W]=3$.

We also know that the curve $C_w$ is integral for every $w\in W_p$, thus as before we deduce that $\dim\Lo(W_p)\leq 2$, in particular $W_p\subsetneq W$. This means that there is a non-empty open subset $W^0\subset W$ such that for every $w\in W^0$
the curve $C_w$ is an integral rational curve contained in the smooth locus $Y_{reg}=Y\smallsetminus\{p\}$. 

Let $w\in W^0$ and let $f_w\colon\pr^1\to Y_{reg}$ be a morphism given by the normalization of $C_w$. Then, for general $w\in W^0$, $f_w$ is a free rational curve in $Y_{reg}$. Hence by \cite[Prop.~II.3.7]{kollar}, for general $w\in W^0$, the curve $C_w$ is disjoint from 
 the images in $Y$ of exceptional curves in $\w{X}$, namely $C_w$ is contained in the open subset where the map $X\dasharrow Y$ is an isomophism. 
Therefore $X$ has a covering family of rational curves of anticanonical degree $3$, and we have (2).
\end{parg}
\vspace{-20pt}
\end{proof}
\begin{proof}[Proof of Prop.~\ref{(3,0)Q}]
It is a straightforward consequence of Lemma \ref{partial(3,0)Q}.
\end{proof}
\subsection{Elementary divisorial rational contractions}\label{dual}
\noindent Let $X$ be a Fano $4$-fold with $\rho_X\geq 7$, or $\rho_X=6$ and $\delta_X\leq 2$. We have defined in section \ref{tf} the type of a fixed prime divisor $D\subset X$, in terms of the unique MMP for $D$. In this section we show that whenever $D$ becomes the exceptional divisor of an elementary divisorial contraction on a SQM of $X$, this contraction is of the same  type as $D$, and contracts  the same class of curves from $X$ (Corollaries \ref{vialechiuse} and \ref{portovenere}). This is false in general for smaller values of $\rho_X$, and generalizes to arbitrary fixed prime divisors  some properties that we have seen in section \ref{every1} for type $(3,0)^{sm}$ (and any $\rho_X$), in particular  Prop.~\ref{tofix}. 
\begin{lemma}\label{elementarydivisorial}
Let $X$ be a smooth Fano $4$-fold with  $\rho_X\geq 7$, or $\rho_X=6$ and $\delta_X\leq 2$, and 
 $D\subset X$ a fixed prime divisor. Let
 $\psi\colon X\dasharrow X'$ be a SQM, and $g\colon X'\to Y'$ an elementary divisorial contraction with exceptional divisor the transform of $D$.
Let  $X\stackrel{\ph}{\dasharrow}\w{X}\stackrel{f}{\to}Y$ be the MMP for $D$, and set $\xi:=\psi\circ\ph^{-1}$ and $\xi_Y:=g\circ\xi\circ f^{-1}$.
$$\xymatrix{X \ar@{-->}[r]_{\ph}\ar@/^1pc/@{-->}[rr]^{\psi} &{\w{X}}\ar[d]_f\ar@{-->}[r]_{\xi}&{X'}\ar[d]^{g}\\
&Y\ar@{-->}[r]^{\xi_Y}&{Y'}}
$$
Then $\Exc(f)\subset\dom\xi$ and $f(\Exc(f))\subset\dom\xi_Y$, so that $f$ and $g$ are locally isomorphic. 
\end{lemma}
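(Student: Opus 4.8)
The plan is to reduce the statement to two things: that the induced birational map $\xi_Y\colon Y\dasharrow Y'$ is an isomorphism near the center $f(\Exc(f))$, and that this can be lifted to $\xi$ near $\Exc(f)$; the argument generalizes the proof of Prop.~\ref{tofix}, with the special features of type $(3,0)^{sm}$ replaced by the per-type results of sections~\ref{a} and~\ref{b}. Since $\rho_X\geq 7$, or $\rho_X=6$ and $\delta_X\leq 2$, we are in case $(i)$ of Cor.~\ref{uniqueness}, so the MMP $X\stackrel{\ph}{\dasharrow}\w{X}\stackrel{f}{\to}Y$ is the unique one for $D$ and the type of $D$ is well defined. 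I would treat type $(3,2)$ separately (see below) and assume first that $D$ is of type $(3,1)^{sm}$, $(3,0)^{sm}$, or $(3,0)^Q$, so that $Y$ is Fano — smooth in the first two cases, and with a single factorial terminal point $p=f(\Exc(f))$ in the last, by Th.~\ref{typefixed} and Lemma~\ref{quadricsing}.

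First I observe that $\xi=\psi\circ\ph^{-1}$ is an isomorphism in codimension one carrying $\w{D}:=\Exc(f)$ onto $D':=\Exc(g)$ (both are transforms of $D$), so that $\xi_Y=g\circ\xi\circ f^{-1}$ is a birational map, an isomorphism in codimension one between the $\Q$-factorial Fano varieties $Y$ and $Y'$; that is, $\xi_Y$ is a SQM of $Y$. Setting $Z:=Y\smallsetminus\dom\xi_Y$, Th.~\ref{SQM0} (for $Y$ smooth, types $(3,1)^{sm}$ and $(3,0)^{sm}$) and Lemma~\ref{smallQ}(2) (for $Y$ singular, type $(3,0)^Q$) show that $Z$ is a disjoint union of exceptional planes lying in the smooth locus of $Y$. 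The second assertion $f(\Exc(f))\subset\dom\xi_Y$ then follows by checking that the center meets no exceptional plane: the point $p$ lies on no exceptional plane by Lemma~\ref{firstproperties}(3) in type $(3,0)^{sm}$ and by Lemma~\ref{smallQ}(2) in type $(3,0)^Q$, while the curve $C=f(\Exc(f))$ meets no exceptional plane by Lemma~\ref{firstprop31}(3) in type $(3,1)^{sm}$. Hence $f(\Exc(f))\cap Z=\emptyset$.

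For the first assertion I would argue by relative minimal models. Choose an open $U\subseteq Y$ with $f(\Exc(f))\subset U$ on which $\xi_Y$ restricts to an isomorphism onto $U'\subseteq Y'$. Since the square commutes, $\xi$ restricts to a birational map $\xi_U\colon f^{-1}(U)\dasharrow g^{-1}(U')$ which is an isomorphism in codimension one and satisfies $g\circ\xi_U=f$ after identifying $U\cong U'$; thus $\xi_U$ is a birational map over $U$ between two $\Q$-factorial varieties each projective of relative Picard number one over $U$. As $f$ and $g$ are divisorial over $U$ (their contracted rays are divisorial, being $\Exc(f)$ and $\Exc(g)$), neither $f^{-1}(U)$ nor $g^{-1}(U')$ admits a small contraction over $U$, so there is no flip over $U$; the transform of an ample-over-$U$ divisor is therefore already nef over $U$, and $\xi_U$ is an isomorphism. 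Because $f^{-1}(f(\Exc(f)))=\Exc(f)$, this gives $\Exc(f)\subset f^{-1}(U)\subseteq\dom\xi$ and simultaneously shows, via $g\circ\xi_U=f$, that $f$ and $g$ are locally isomorphic.

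Finally, for type $(3,2)$ we have $\ph=\Id$, $\w{X}=X$, $\w{D}=D=\Exc(f)$, and $\xi=\psi$ is a SQM of the Fano $X$; by Th.~\ref{SQM0}, $X\smallsetminus\dom\psi$ is a disjoint union of exceptional planes $L_1,\dots,L_s$, with corresponding exceptional lines $\ell_1,\dots,\ell_s\subset X'$. By Rem.~\ref{pisa}(2) no $L_i$ is contained in $D$, so if some $L_i$ met $D$ we could take a line in $L_i$ through a point of $L_i\cap D$ not contained in $D$, giving positive intersection with $D$; the sign reversal under the flip would then force $D'\cdot\ell_i<0$, hence $\ell_i\subseteq D'=\Exc(g)$, contradicting Rem.~\ref{viapeyron}. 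Thus $D\cap L_i=\emptyset$ for all $i$, so $D\subset\dom\psi=\dom\xi$ and again $f,g$ are locally isomorphic. The main obstacle throughout is that $\w{X}$ and $X'$ are SQMs of $X$ but are not themselves Fano, so one cannot apply Th.~\ref{SQM0} or Lemma~\ref{SQM} directly to $\xi$; the relative–Picard-number-one argument over $U$ (and, for type $(3,2)$, working directly on the Fano $X$) is precisely what circumvents this.
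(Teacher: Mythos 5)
Your treatment of the types $(3,0)^{sm}$, $(3,1)^{sm}$ and $(3,0)^Q$ is essentially sound, and it takes a somewhat different route from the paper: the paper reduces these cases to the proof of Prop.~\ref{tofix} and to \cite[Cor.~3.19]{eff}, where the key point is that a connected component of $\w{X}\smallsetminus\dom\xi$ cannot be entirely contained in $\Exc(f)$; you replace this with a relative--Picard-number-one argument over the neighbourhood $U=\dom\xi_Y$ of the centre, which works uniformly once one knows $f(\Exc(f))\cap(Y\smallsetminus\dom\xi_Y)=\emptyset$ (and your citations for that step are the right ones). The one point to make explicit there is why the transform of a $g$-ample divisor cannot be $f$-numerically trivial over $U$: evaluate it on a $g$-contracted curve lying in the open set where $\xi_U$ is an isomorphism (such a curve exists since $\Exc(g)$ is a divisor covered by contracted curves and the bad locus has codimension at least $2$).

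The type $(3,2)$ case, however, has two genuine gaps. First, Rem.~\ref{viapeyron} cannot be invoked for $\Exc(g)\subset X'$: that remark is proved only for the output of the (unique, $D$-negative) MMP for $D$, and in case $(i)$ of Th.~\ref{typefixed} it is the vacuous observation that the Fano $X=\w{X}$ contains no exceptional lines at all; here $\psi$ is an \emph{arbitrary} SQM, so $X'\to Y'$ is not a MMP for $D$ and the remark says nothing about it. The conclusion $\ell_i\not\subset\Exc(g)$ is true, but the paper reaches it only after first proving that $g$ is of type $(3,2)$ --- via $\dim\N(\Exc(g),X')=\dim\N(D,X)\geq\rho_X-\delta_X\geq 4$, using \cite[Cor.~3.14]{eff} --- so that $\Exc(g)$ is covered by integral curves of anticanonical degree $1$ and Lemma \ref{SQM}(3) applies. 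Second, and more seriously, even granting $D\subset\dom\psi$, the final claim ``and again $f,g$ are locally isomorphic'' does not follow: you must still show that $g$ contracts curves whose classes, transported back to $X$, lie in $\NE(f)$, i.e.\ that $g$ is not a \emph{different} divisorial contraction of the same divisor (exactly the phenomenon of Example \ref{simple}, and note that $\NE(g)$ is an extremal ray of $\NE(X')$, not of $\NE(X)$, so Rem.~\ref{pisa}(3) does not apply to it directly). The paper supplies this missing step with Lemma \ref{torino}(1): $\NE(f)=(-K_X+D)^{\perp}\cap\NE(X)$ is the unique $D$-negative extremal ray, a general fiber $F$ of $g$ satisfies $(-K_{X'}+\Exc(g))\cdot F=0$ because $g$ is of type $(3,2)$, hence $(-K_X+D)\cdot F_X=0$ and $[F_X]\in\NE(f)$. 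Your write-up omits this argument entirely.
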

\begin{proof}
If $D$ is of type $(3,0)^{sm}$ this follows Prop.~\ref{tofix} (and its proof), and if $D$ is of type $(3,1)^{sm}$ this follows from (the proof of)
\cite[Cor.~3.19]{eff}. 

Suppose that $D$ is of type $(3,0)^Q$, and set $p:=f(\Exc(f))\in Y$. Notice that $\xi_Y$ is birational and an isomorphism in codimension $1$, that is, a SQM. 
Then $p\in\dom\xi_Y$ by Lemma \ref{smallQ}(2), and the statement follows as in the proof of Prop.~\ref{tofix}.

Finally, suppose that $D$ is of type $(3,2)$, and recall that in this case $\ph$ is an isomorphism (see Th.~\ref{typefixed}), so we can assume that $X=\w{X}$ and $f\colon X\to Y$.
Notice that by our assumptions on $\rho_X$ and $\delta_X$, Rem.~\ref{pisa}(3) yields that $\NE(f)$ is the unique $D$-negative extremal ray of $\NE(X)$, thus $\NE(f)=(-K_X+D)^{\perp}\cap\NE(X)$ by Lemma \ref{torino}(1).

If $\psi$ is an isomorphism, the statement follows from Cor.~\ref{uniqueness}.

Let us assume that $\psi$ is not an isomorphism. In particular $X$ is not a product of  surfaces, thus $\delta_X\leq 3$ by Th.~\ref{summary}.
By  Rem.~\ref{pisa}(2) $D$ does not contain exceptional planes, and \cite[Cor.~3.14]{eff} yields $\dim\N(\Exc(g),X')=\dim\N(D,X)\geq \rho_X-\delta_X\geq 4$. Therefore $g$ is of type $(3,2)$ (see Rem.~\ref{basic}(1)-(2))
and $\Exc(g)$ is covered by curves of anticanonical degree $1$ (see \ref{basic0}), thus $\Exc(g)\subset\dom(\psi^{-1})$ and $D\subset\dom\psi$ by Lemma \ref{SQM}(3).

Let $F\subset\Exc(g)$ be a general fiber of $g$, and $F_X\subset D$ its transform. Since $(-K_{X'}+\Exc(g))\cdot F=0$ in $X'$ (see \ref{basic0}), we have $(-K_X+D)\cdot F_X=0$ in $X$.
On the other hand
$\NE(f)=(-K_X+D)^{\perp}\cap\NE(X)$,
 therefore $[F_X]\in \NE(f)$; this implies the statement. 
\end{proof}
The following is a straightforward consequence of
 Lemma \ref{elementarydivisorial} and Th.~\ref{typefixed}.
\begin{corollary}\label{vialechiuse}
Let $X$ be a smooth Fano $4$-fold with $\rho_X\geq 7$, or $\rho_X=6$ and $\delta_X\leq 2$.
Let  $\psi\colon X\dasharrow X'$ be a SQM and $g\colon X'\to Y'$ an elementary divisorial contraction.
Then $g$ is  either of type $(3,0)^{sm}$, or $(3,1)^{sm}$, or $(3,2)$, or $(3,0)^Q$.
Moreover, let $D\subset X$ be the transform of $\Exc(g)\subset X'$, so that $D$ is a fixed prime divisor. Then $g$ and $D$ have the same type.
\end{corollary}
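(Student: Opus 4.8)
The plan is to exploit that essentially all the work is already done in Lemma~\ref{elementarydivisorial}: once $f$ and $g$ are known to be isomorphic near their exceptional loci, equating their types is just the observation that each of the four types is a local invariant of the contraction near $\Exc$. So the argument will proceed in three movements: first check that $D$ is a fixed prime divisor and pin down its type together with its MMP; then invoke Lemma~\ref{elementarydivisorial} to get a local isomorphism between $f$ and $g$; finally transport the type across this isomorphism. To see that $D$ is fixed, note that the exceptional divisor $\Exc(g)$ of the elementary divisorial contraction $g$ is automatically fixed in $X'$, since $\Exc(g)\cdot C<0$ for a curve $C\subset\Exc(g)$ contracted by $g$, so $\Exc(g)$ lies in its own stable base locus. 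The SQM $\psi$ identifies $\Nu(X)$ with $\Nu(X')$ and carries $\Eff(X)$ to $\Eff(X')$ and $\Mov(X)$ to $\Mov(X')$; by the bijection recalled in~\ref{otite} this matches fixed prime divisors of $X'$ with those of $X$, so the transform $D$ of $\Exc(g)$ is fixed. Because $\rho_X\geq 7$, or $\rho_X=6$ and $\delta_X\leq 2$, we are in case $(i)$ of Cor.~\ref{uniqueness}, hence $D$ has a single well-defined type and a unique MMP $X\stackrel{\ph}{\dasharrow}\w{X}\stackrel{f}{\to}Y$; by Th.~\ref{typefixed} and Def.~\ref{deftypefixed}, $f$ realizes exactly the type of $D$, one of $(3,2)$, $(3,1)^{sm}$, $(3,0)^{sm}$, $(3,0)^Q$.

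Next I would apply Lemma~\ref{elementarydivisorial} directly to the given $\psi$ and $g$. Its conclusion that $\Exc(f)\subset\dom\xi$ and $f(\Exc(f))\subset\dom\xi_Y$, with $\xi=\psi\circ\ph^{-1}$ and $\xi_Y=g\circ\xi\circ f^{-1}$, means that $\xi$ restricts to an isomorphism of a neighbourhood of $\Exc(f)$ onto a neighbourhood of $\Exc(g)$, sending $\Exc(f)$ to $\Exc(g)$, while $\xi_Y$ identifies the images compatibly; in short, $f$ and $g$ agree as morphisms on neighbourhoods of their exceptional divisors. It then remains to conclude that $g$ has the same type as $f$. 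For this I would observe that each of the four types is determined by local data near the exceptional divisor, namely the pair $(\dim\Exc,\dim f(\Exc))$, the isomorphism type of $\Exc$ and of its normal bundle, and whether the image is a smooth point, a smooth curve, a surface, or a singular point with exceptional quadric. All of these are preserved by the local isomorphism, so $g$ is of the same type as $f$, which is the type of $D$.

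The main, and quite mild, obstacle is this last step: one must verify that ``type'' as fixed by Def.~\ref{typecontraction} and Def.~\ref{deftypefixed} is genuinely an invariant of a neighbourhood of $\Exc$, so that the local isomorphism supplied by Lemma~\ref{elementarydivisorial} suffices to transfer it. The only case needing a word of care is type $(3,2)$, whose definition is phrased through an extremal ray rather than through the MMP; here I would recall that in the MMP the map $\ph$ is then an isomorphism and $f$ is literally the contraction of that ray, of type $(3,2)$ as a morphism, so the comparison with $g$ is again purely local. Granting this, the corollary is a direct assembly of the cited results, as asserted.
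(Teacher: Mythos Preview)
Your proposal is correct and takes essentially the same approach as the paper, which simply states that the corollary is a straightforward consequence of Lemma~\ref{elementarydivisorial} and Th.~\ref{typefixed}. You have supplied the details that the paper leaves implicit: that $D$ is fixed, that the hypotheses place us in case~$(i)$ of Cor.~\ref{uniqueness}, and that the four types are local invariants preserved by the local isomorphism furnished by Lemma~\ref{elementarydivisorial}.
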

 We associate to every fixed prime divisor $D$ a smooth rational curve, moving in a family which dominates $D$. When $D$ is of type $(3,0)^{sm}$, this  is the same as Def.~\ref{prima}.
\begin{definition}[the curve $C_D$ moving in a fixed divisor $D$]
Let $X$ be a smooth Fano $4$-fold with $\rho_X\geq 7$, or $\rho_X=6$ and $\delta_X\leq 2$, and let $D\subset X$
be a fixed prime divisor. 
Let $X\stackrel{\ph}{\dasharrow}\widetilde{X}\stackrel{f}{\la}Y$
be the MMP for $D$ (unique by Cor.~\ref{uniqueness}).
 Let $C_D\subset D\subset X$ be, respectively:
\begin{enumerate}[--]
\item 
if $D$ is of type $(3,0)^{sm}$ or $(3,0)^Q$: the transform of a general line in $\Exc(f)$;
\item
if $D$ is of type $(3,1)^{sm}$: the transform of a line in a general fiber of  $f$;
\item if $D$ is of type $(3,2)$ (so that $\ph$ is an isomorphism): a general non-trivial fiber of $f$.
\end{enumerate}
We note that $C_D\cong\pr^1$ and $D\cdot C_D=-1$ for every $D$, and that $C_D$ moves in a family covering a dense subset of $D$. Moveover $C_D$ is contained in the open subset where $\ph\colon X\dasharrow\w{X}$ is an isomorphism. Finally we have $-K_X\cdot C_D=1$ if $D$ is of type $(3,2)$, $-K_X\cdot C_D=2$ if $D$ is of type $(3,1)^{sm}$ or $(3,0)^Q$, and  $-K_X\cdot C_D=3$ if $D$ is of type $(3,0)^{sm}$.
\end{definition}
The following corollary follows immediately from 
 Lemma \ref{elementarydivisorial}.
\begin{corollary}\label{portovenere}
Let $X$ be a smooth Fano $4$-fold with $\rho_X\geq 7$, or $\rho_X=6$ and $\delta_X\leq 2$.
Let  $\psi\colon X\dasharrow X'$ be a SQM and $g\colon X'\to Y'$ an elementary divisorial contraction. Let $D\subset X$ be the transform of $\Exc(g)\subset X'$, so that $D$ is a fixed prime divisor.
Then $C_D\subset\dom\psi$ and $g(\psi(C_D))=\{pt\}$.
\end{corollary}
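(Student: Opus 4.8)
The plan is to read the statement off directly from Lemma \ref{elementarydivisorial}, once we record how $C_D$ sits inside the MMP for $D$. Let $X\stackrel{\ph}{\dasharrow}\w{X}\stackrel{f}{\to}Y$ be the MMP for $D$ (unique by Cor.~\ref{uniqueness}), and set, as in Lemma \ref{elementarydivisorial}, $\xi:=\psi\circ\ph^{-1}$ and $\xi_Y:=g\circ\xi\circ f^{-1}$, which produces the commutative diagram of that lemma.

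First I would observe that, in each of the four types, the curve $C_D$ is by definition the $\ph$-transform of a curve $\w{C}\subset\Exc(f)$ which is contracted by $f$: a general line in $\Exc(f)$ (types $(3,0)^{sm}$ and $(3,0)^Q$), a line in a general fiber of $f$ (type $(3,1)^{sm}$), or a general non-trivial fiber of $f$ (type $(3,2)$, where $\ph$ is an isomorphism). In every case $\w{C}\subset\Exc(f)$ and $f(\w{C})$ is a point. Moreover the definition of $C_D$ records that $C_D$ lies in the open subset where $\ph$ is an isomorphism, so $C_D\subset\dom\ph$ and $\ph(C_D)=\w{C}$.

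Next, Lemma \ref{elementarydivisorial} gives $\Exc(f)\subset\dom\xi$ and $f(\Exc(f))\subset\dom\xi_Y$. Hence $\w{C}=\ph(C_D)\subset\Exc(f)\subset\dom\xi$, and since $\psi=\xi\circ\ph$ with $C_D\subset\dom\ph$, we conclude that $C_D\subset\dom\psi$ and $\psi(C_D)=\xi(\w{C})$. Finally, to see that $g$ contracts $\psi(C_D)$, I would use the commutativity $g\circ\xi=\xi_Y\circ f$, which holds on $\Exc(f)$ since both sides are morphisms there (using $\Exc(f)\subset\dom\xi$ and $f(\Exc(f))\subset\dom\xi_Y$). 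Thus
$$g(\psi(C_D))=g(\xi(\w{C}))=\xi_Y(f(\w{C})),$$
and $f(\w{C})$ is a single point lying in $f(\Exc(f))\subset\dom\xi_Y$, so its image under $\xi_Y$ is again a single point; that is, $g(\psi(C_D))=\{pt\}$.

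I do not expect a genuine obstacle here. The entire geometric content — that $f$ and $g$ are locally isomorphic through $\xi$, so that the $f$-contracted curve $\w{C}$ is carried to a $g$-contracted curve — is already packaged in Lemma \ref{elementarydivisorial}. What remains is the bookkeeping above together with the short case-by-case check that the curve defining $C_D$ lies in $\Exc(f)$ and is $f$-contracted, which is immediate from the definition of $C_D$.
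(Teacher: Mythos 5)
Your argument is correct and is exactly the intended one: the paper itself states that Cor.~\ref{portovenere} ``follows immediately from Lemma \ref{elementarydivisorial}'', and your write-up is precisely the expected unpacking — $C_D$ is the $\ph$-transform of an $f$-contracted curve in $\Exc(f)$, which Lemma \ref{elementarydivisorial} (via $\Exc(f)\subset\dom\xi$ and $f(\Exc(f))\subset\dom\xi_Y$) carries to a $g$-contracted curve. No gaps.
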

\subsection{The dual of the cone of movable divisors}
\noindent Recall that we have the following cones of curves in $\N(X)$:
$$\mov(X)\subseteq \Mov(X)^{\vee}\subseteq\NE(X).$$
It has been shown by Choi \cite{choi} that in a Mori dream space $X$, the cone 
 $\Mov(X)^{\vee}$ is generated by the classes of families of
curves covering at least a divisor, on $X$  or on a SQM of $X$ (we refer the reader to \cite{choi} for a precise statement). In our situation, thanks to Cor.~\ref{portovenere}, we do not need to consider SQM's, and we can describe 
$\Mov(X)^{\vee}$ simply in terms of $\mov(X)$ and of the curves $C_D$ (see also Lemma \ref{movdual2}).
\begin{lemma}\label{movdual}
Let $X$ be a smooth Fano $4$-fold with $\rho_X\geq 7$, or $\rho_X=6$ and $\delta_X\leq 2$. 
\begin{enumerate}[(1)]
\item $\langle[C_D]\rangle$ is a one-dimensional face of $\Mov(X)^{\vee}$, not contained in $\mov(X)$,
 for every fixed prime divisor $D$;

\smallskip

\item
$\Mov(X)=\bigl(\langle[C_D]\rangle_{D\text{\,fixed}}\bigr)^{\vee}\cap\Eff(X)\ $ and $\ \Mov(X)^{\vee}=\langle[C_D]\rangle_{D\text{\,fixed}}+\mov(X)$.
\end{enumerate}
\end{lemma}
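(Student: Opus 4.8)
The plan is to translate everything into statements about the extremal rays of the polyhedral cone $\Mov(X)^{\vee}$, using the curves $C_D$ together with the identity $\mov(X)=\Eff(X)^{\vee}$. First I record two elementary facts valid for an arbitrary fixed prime divisor $D$. Since $D$ is effective and $D\cdot C_D=-1<0$, the class $[C_D]$ cannot lie in $\Eff(X)^{\vee}=\mov(X)$. On the other hand $C_D$ is contained in the open set where $\ph\colon X\dasharrow\w{X}$ is an isomorphism, so a general member of its family is not contained in any exceptional plane; by Remark~\ref{movable} this gives $B\cdot C_D\geq 0$ for every movable divisor $B$, that is $[C_D]\in\Mov(X)^{\vee}$. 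This already proves the second assertion of $(1)$.

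For the face statement in $(1)$, let $X\stackrel{\ph}{\dasharrow}\w{X}\stackrel{f}{\la}Y$ be the MMP for $D$ and set $g:=f\circ\ph\colon X\dasharrow Y$, an elementary divisorial rational contraction with $Y$ normal and $\Q$-factorial. Then $g^{*}(\Nef(Y))$ is a cone of dimension $\rho_Y=\rho_X-1$ lying on $\partial\Mov(X)$, and it spans the hyperplane $[C_D]^{\perp}$: indeed $C_D$ is contracted by $g$ (Cor.~\ref{portovenere}), so $[C_D]$ annihilates $g^{*}(\Nef(Y))$, and the latter already has codimension one. Choosing $m$ in the relative interior of $g^{*}(\Nef(Y))$ yields a movable class with $m\cdot C_D=0$, so $m^{\perp}$ supports $\Mov(X)^{\vee}$ and cuts out the face dual to the facet of $\Mov(X)$ through $m$. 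As this facet has codimension one, its dual face is a ray; since the ray contains the nonzero class $[C_D]$, it equals $\langle[C_D]\rangle$, proving $(1)$.

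For $(2)$ I first observe that the two equalities are dual to each other: dualizing $\Mov(X)^{\vee}=\langle[C_D]\rangle_{D\text{ fixed}}+\mov(X)$ and using $(\sigma_1+\sigma_2)^{\vee}=\sigma_1^{\vee}\cap\sigma_2^{\vee}$, biduality of the closed cone $\Mov(X)$, and $\mov(X)^{\vee}=\Eff(X)$, gives exactly $\Mov(X)=\langle[C_D]\rangle_{D\text{ fixed}}^{\vee}\cap\Eff(X)$. Hence it suffices to prove the second identity, whose inclusion $\supseteq$ is immediate from the first paragraph. For $\subseteq$ I argue on extremal rays: let $\gamma$ generate an extremal ray of $\Mov(X)^{\vee}$, so $\gamma^{\perp}\cap\Mov(X)$ is a facet of $\Mov(X)$. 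Picking a codimension-one cone $\tau_0$ of $\MCD(X)$ in its relative interior, one has $\operatorname{span}\tau_0=\gamma^{\perp}$, and $\tau_0$ corresponds to an elementary rational contraction $h\colon X\dasharrow Z$ which, being a boundary wall, is either of fiber type or divisorial (small walls are interior). If $h$ is of fiber type, a covering family of curves in its fibers gives a moving class $\eta\in\mov(X)$ with $\eta\in(\operatorname{span}\tau_0)^{\perp}=\R\gamma$; since $\Mov(X)^{\vee}$ is pointed this forces $\gamma\in\mov(X)$. If $h$ is divisorial, the transform of $\Exc(h)$ is a fixed prime divisor $D$, and $C_D$ is contracted by $h$ (Cor.~\ref{portovenere}), so $[C_D]\in\R\gamma$ and hence $\gamma\in\langle[C_D]\rangle$. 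In either case $\gamma\in\langle[C_D]\rangle_{D\text{ fixed}}+\mov(X)$, which gives $\subseteq$ and completes $(2)$.

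The main obstacle is precisely this last inclusion, where one must control the boundary facets of $\Mov(X)$ via the contractions attached to the walls of $\MCD(X)$. The crucial point is that a divisorial wall lying in $[C_D]^{\perp}$ really contracts the class $[C_D]$ attached to the fixed divisor $D$; this is exactly what Corollaries~\ref{vialechiuse} and~\ref{portovenere} provide, and it is here that the hypothesis $\rho_X\geq 7$ (or $\rho_X=6$ and $\delta_X\leq 2$) is essential, since it guarantees uniqueness of the MMP for $D$ and the rigidity of $C_D$. Without these assumptions a fixed divisor may be contracted by contractions of different types, as in Examples~\ref{special} and~\ref{simple}, and the clean correspondence between extremal rays of $\Mov(X)^{\vee}$ and fixed divisors would fail.
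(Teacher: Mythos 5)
Your proof is correct, but it takes a genuinely different route from the paper's for the key inclusion. The paper proves $\bigl(\langle[C_D]\rangle_{D\text{ fixed}}\bigr)^{\vee}\cap\Eff(X)\subseteq\Mov(X)$ directly by an MMP argument: given an effective $B$ with $B\cdot C_D\geq 0$ for every fixed $D$, it runs a MMP for $B$ and uses Cor.~\ref{portovenere} to rule out divisorial contractions, so $B$ becomes nef on a SQM and is therefore movable; part (1) is then deduced from (2) by showing that the generator $[C_{D_0}]$ cannot be omitted, since $[D_0]$ lies in $\bigl(\langle[C_D]\rangle_{D\neq D_0}\bigr)^{\vee}\cap\Eff(X)$ but not in $\Mov(X)$. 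You instead work entirely on the dual side, in the spirit of Choi's result quoted just before the lemma: you classify the extremal rays of $\Mov(X)^{\vee}$ through the codimension-one boundary cones of $\MCD(X)$, which are of fiber type (contributing $\mov(X)$) or divisorial (contributing the rays $\langle[C_D]\rangle$), and you obtain (1) directly as the dual face of the facet $[C_D]^{\perp}\cap\Mov(X)$ containing the chamber of the divisorial contraction of $D$. Both arguments hinge on the same input, Cor.~\ref{portovenere} (hence Lemma \ref{elementarydivisorial} and the hypotheses on $\rho_X$ and $\delta_X$); yours makes the wall-and-chamber picture explicit, while the paper's avoids invoking the fan structure of $\MCD(X)$ beyond the bijection with rational contractions. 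Two minor imprecisions, neither of which affects validity: your justification that a general $C_D$ lies in no exceptional plane (``contained in the open set where $\ph$ is an isomorphism'') is not quite the right reason --- the clean argument is that the family of curves $C_D$ covers a dense subset of the $3$-fold $D$ while exceptional planes are rigid surfaces (or, as in the paper, simply that $B_0\cdot C_D\geq 0$ for a movable prime divisor $B_0$ because $B_0\neq D$); and in the divisorial case one should note that the pairing of $[C_D]$ with $h^*(\Nef(Z))$ is computed consistently across the SQM precisely because $C_D\subset\dom\psi$, which is exactly what Cor.~\ref{portovenere} guarantees.
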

\begin{proof}
We first show (2).
Notice that the two equalities are dual to each other, so it is enough to prove the first one.

The inclusion $\subseteq$ is easy. Indeed $\Mov(X)$ is generated by the classes of movable prime divisors, and if $B_0$ is such a divisor, then $B_0\neq D$ for every fixed prime divisor $D$, so that $B_0\cdot C_D\geq 0$.

Consider now an effective divisor $B$ such that $B\cdot C_D\geq 0$ for every fixed prime divisor $D\subset X$, and let us run a MMP for $B$. Since $B$ is effective,  the MMP must end with $B$ becoming nef.

We show that the MMP cannot have divisorial contractions. Otherwise, by considering the first divisorial contraction, we get a diagram
$$X\stackrel{\psi}{\dasharrow} X'\stackrel{g}{\la} Y'$$
where $\psi$ is a SQM, and $g\colon X'\to Y'$ is a $B$-negative elementary divisorial contraction. 
Let $D\subset X$ be the transform of $\Exc(g)$. Then $D$ is a fixed prime divisor, and by Cor.~\ref{portovenere} we have $C_D\subset\dom\psi$ and $[\psi(C_D)]\in\NE(g)$. We have
 $B\cdot C_D\geq 0$ in $X$, contradicting the fact that $g$ is $B$-negative. 
We conclude that the MMP yields a SQM $X\dasharrow X'$ such that the transform of $B$ in $X'$ is nef, and hence $B$ is movable.
Therefore we have the equality
$$\Mov(X)=\bigl(\langle[C_D]\rangle_{D\text{\,fixed}}\bigr)^{\vee}\cap\Eff(X).$$

We show (1); let $D_0$ be a fixed prime divisor. Then we have $D_0\cdot C_{D_0}<0$ and $D_0\cdot C_D\geq 0$ for every fixed prime divisor $D\neq D_0$, therefore
$$[D_0]\in \bigl(\langle[C_D]\rangle_{D\text{\,fixed},D\neq D_0}\bigr)^{\vee}
\cap\Eff(X)\quad\text{and }[D_0]\not\in\Mov(X),$$
hence 
$$\bigl(\langle[C_D]\rangle_{D\text{\,fixed},D\neq D_0}\bigr)^{\vee}
\cap\Eff(X)\supsetneq\Mov(X)$$
and dually
$$\langle[C_D]\rangle_{D\text{\,fixed},D\neq D_0}+\mov(X)\subsetneq\Mov(X)^{\vee}.$$
This shows that $\langle[C_{D_0}]\rangle$ is a one-dimensional face of $\Mov(X)^{\vee}$. Moreover $C_{D_0}\cdot D_0<0$, thus $[C_{D_0}]\not\in\mov(X)$.
\end{proof}
\begin{lemma}\label{movdual2}
Let $X$ be a smooth Fano $4$-fold with $\rho_X\geq 7$, or $\rho_X=6$ and $\delta_X\leq 2$.
Then one of the following holds:
\begin{enumerate}[$(i)$]
\item  $\rho_X\leq 11$ and $X$ has an elementary rational contraction of fiber type;

\smallskip

\item
$\Mov(X)^{\vee}=\langle[C_D]\rangle_{D\text{\,fixed}}$.
\end{enumerate}
\end{lemma}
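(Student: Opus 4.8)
The plan is to deduce everything from Lemma \ref{movdual}(2), which already gives $\Mov(X)^{\vee}=\langle[C_D]\rangle_{D\text{ fixed}}+\mov(X)$. Writing $A:=\langle[C_D]\rangle_{D\text{ fixed}}$ and $B:=\mov(X)$, alternative $(ii)$ is exactly the statement that $A+B=A$, i.e.\ that $B\subseteq A$. So I would assume that $(ii)$ fails and produce from this the elementary rational contraction of fiber type required by $(i)$; the bound $\rho_X\leq 11$ will then come for free from Th.~\ref{elemft}.

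First I would locate a good extremal ray. Since $X$ is Fano, $\Nef(X)\subseteq\Mov(X)$ is full-dimensional and $\Eff(X)$, hence $\Mov(X)$, is pointed, so $\Mov(X)$ and its dual $\Mov(X)^{\vee}$ are full-dimensional and pointed and the usual inclusion-reversing polarity between their faces applies. The key elementary remark is that an extremal ray of a Minkowski sum $A+B$ is always an extremal ray of $A$ or of $B$: if $\R_{\geq 0}w$ is extremal in $A+B$ and $w=a+b$ with $a\in A$, $b\in B$, then extremality forces $a,b\in\R_{\geq 0}w$. By Lemma \ref{movdual}(1) the extremal rays of $A$ are precisely the rays $\langle[C_D]\rangle$; consequently, if every extremal ray of $\Mov(X)^{\vee}$ were of the form $\langle[C_D]\rangle$, then $\Mov(X)^{\vee}=A$, i.e.\ $(ii)$ would hold. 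Assuming $(ii)$ fails, there is therefore an extremal ray $R$ of $\Mov(X)^{\vee}$ which is not any $\langle[C_D]\rangle$; by the Minkowski remark $R$ must then be an extremal ray of $B=\mov(X)$, and since that cone is generated by classes of covering families we get $R=\langle[V]\rangle$ for the class of a family $V$ of curves covering $X$.

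Next I would turn $R$ into a contraction. By polarity, $\sigma:=R^{\perp}\cap\Mov(X)$ is a facet of $\Mov(X)$, of dimension $\rho_X-1$. As in the proof of Lemma \ref{useful}, I would pick a maximal chamber $\tau_0\in\MCD(X)$ with $\tau_0\subseteq\sigma$ and $\dim\tau_0=\rho_X-1$; this determines an elementary rational contraction $f\colon X\dasharrow Y$ with $f^{*}(\Nef(Y))=\tau_0$ and $\rho_Y=\rho_X-1$. Factoring $f$ as $X\stackrel{\varphi}{\dasharrow}\w{X}\stackrel{\tilde f}{\to}Y$ with $\varphi$ a SQM, the inclusion $\tau_0\subseteq\sigma=R^{\perp}\cap\Mov(X)$ gives $\tilde f^{*}(A)\cdot[V]=0$ for every $A\in\Nef(Y)$, hence $\tilde f_{*}[V]=0$ because $\Nef(Y)$ is full-dimensional. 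Thus a general (integral) member of the transform of $V$ in $\w{X}$ is contracted by $\tilde f$; as $V$ is a covering family this forces $\dim Y<\dim X$, so $f$ is of fiber type. Then $X$ has an elementary rational contraction of fiber type, and Th.~\ref{elemft} yields $\rho_X\leq 11$, giving $(i)$.

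The routine ingredients are the polarity dictionary for pointed full-dimensional cones and the Minkowski-sum remark on extremal rays. The one step to watch is ensuring that the ray $R$ supplied by the failure of $(ii)$ is genuinely an extremal ray of $\mov(X)$, so that $R=\langle[V]\rangle$ for a \emph{single} covering family and so that the facet $\sigma$ has the full dimension $\rho_X-1$ needed to house a maximal chamber $\tau_0$. This is exactly what the Minkowski remark secures, once Lemma \ref{movdual}(1) tells us that the rays $\langle[C_D]\rangle$ exhaust the extremal rays of $A$; beyond this bookkeeping I do not expect a genuine obstacle.
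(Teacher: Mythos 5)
Your proof is correct and follows essentially the same route as the paper's: both rest on the decomposition $\Mov(X)^{\vee}=\langle[C_D]\rangle_{D\text{ fixed}}+\mov(X)$ from Lemma \ref{movdual}, and on extracting, when $(ii)$ fails, a one-dimensional face of $\Mov(X)^{\vee}$ that is simultaneously a one-dimensional face of $\mov(X)$ --- your Minkowski-sum remark is exactly the implicit justification of that step in the paper. The only divergence is at the end: the paper invokes \cite[Cor.~2.25]{eff} (the hyperplane $R^{\perp}$ cuts a facet of $\Mov(X)$ and a facet of $\Eff(X)$, hence there is an elementary rational contraction of fiber type), whereas you build the contraction by hand. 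Your construction is sound, but the assertion ``$\tilde f^{*}(A)\cdot[V]=0$'' silently identifies an intersection number computed in $X$ with one computed in $\w{X}$; to justify it you need the inequality $B\cdot C_X\geq \w{B}\cdot \w{C}$ from negativity of contractions (recorded in \S\ref{notMMP}) together with nefness of $\tilde f^{*}(A)$. Alternatively you can bypass curves entirely: since $R$ is extremal in $\mov(X)=\Eff(X)^{\vee}$, the cone $R^{\perp}\cap\Eff(X)$ is a facet of $\Eff(X)$, so $\tau_0\subseteq R^{\perp}\cap\Mov(X)\subseteq\partial\Eff(X)$, and a chamber lying in $\partial\Eff(X)$ corresponds to a rational contraction of fiber type because its pullback classes are not big.
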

\begin{proof}
We know by
Lemma \ref{movdual} that
$$\Mov(X)^{\vee}=\langle[C_D]\rangle_{D\text{\,fixed}}+\mov(X)\supseteq \langle[C_D]\rangle_{D\text{\,fixed}}.$$
Suppose that $(ii)$ does not hold, so that
 $\Mov(X)^{\vee}$ is strictly bigger than the cone $\langle[C_D]\rangle_{D\text{\,fixed}}$. Then  there exists a one-dimensional face $\sigma$ of $\mov(X)$ which is also a one-dimensional face of $\Mov(X)^{\vee}$. Dually, this means that the hyperplane $\sigma^{\perp}$ cuts both a facet of $\Mov(X)$ and $\Eff(X)$. By \cite[Cor.~2.25]{eff} this implies that $X$ has an elementary rational contraction of fiber type, and  $\rho_X\leq 11$ by Th.~\ref{elemft}, so we have $(i)$.
\end{proof}
In the setting of Lemma \ref{movdual}, a fixed prime divisor $D\subset X$ determines a facet of $\Mov(X)$, given by $\Mov(X)\cap C_D^{\perp}$. For a rational contraction $g\colon X\dasharrow Z$, consider the corresponding cone 
$\sigma:=g^*(\Nef(Z))\in\MCD(X)$. The following lemma describes 
the geometrical meaning for $g$ 
 of the condition $\sigma\subseteq\Mov(X)\cap C_D^{\perp}$.
\begin{lemma}\label{treno}
Let $X$ be a smooth Fano $4$-fold with $\rho_X\geq 7$, or $\rho_X=6$ and $\delta_X\leq 2$. Let $D\subset X$ be a fixed prime divisor, and consider the corresponding curve $C_D\subset D$.
\begin{enumerate}[(1)]
\item
Let $f\colon X\dasharrow Y$ be an elementary rational contraction. Then $f^*(\Nef(Y))\subset C_D^{\perp}$ if and only if $f$ is divisorial with exceptional divisor $D$.
\item
Let $f_0\colon X\dasharrow Y_0$ be an elementary divisorial rational contraction with exceptional divisor $D$, and let
 $g\colon X\dasharrow Z$ be a rational contraction. Then $g^*(\Nef(Z))\subset C_D^{\perp}$ if and only if $g\circ f_0^{-1}\colon Y_0\dasharrow Z$ is a rational contraction.
\end{enumerate}
\end{lemma}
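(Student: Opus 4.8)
The plan is to reduce both equivalences to the single numerical condition $f_*[C_D]=0$, and then read off the geometry from the facet of $\Mov(X)$ cut out by the class $[C_D]$.

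\emph{Part (1), the implication $\Leftarrow$.} Suppose $f$ is divisorial with $\Exc(f)=D$, and factor it as a SQM $\psi\colon X\dasharrow X'$ followed by a contraction $\tilde f\colon X'\to Y$ whose exceptional divisor is the transform of $D$. By Cor.~\ref{portovenere} we have $C_D\subset\dom\psi$ and $\tilde f(\psi(C_D))=\{pt\}$, so $f$ contracts $C_D$ and $f_*[C_D]=0$ in $\N(Y)$. Hence $f^*N\cdot C_D=N\cdot f_*[C_D]=0$ for every $N\in\Nef(Y)$, that is $f^*(\Nef(Y))\subset C_D^{\perp}$.

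\emph{Part (1), the implication $\Rightarrow$.} Assume $f^*(\Nef(Y))\subset C_D^{\perp}$. Since $f$ is elementary and $f^*$ is injective, $\sigma:=f^*(\Nef(Y))$ has dimension $\rho_Y=\rho_X-1$, so it spans the hyperplane $C_D^{\perp}$. By Lemma~\ref{movdual}(1) the ray $\langle[C_D]\rangle$ is a face of $\Mov(X)^{\vee}$, so $F_D:=\Mov(X)\cap C_D^{\perp}$ is a facet of $\Mov(X)$ and $\sigma\subseteq F_D\subset\partial\Mov(X)$. In particular $\sigma$ is not an interior wall of the Mori chamber decomposition, so $f$ is not a small (flipping) contraction; thus $f$ is either divisorial or of fiber type. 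I would rule out fiber type as follows: if $\dim Y<\dim X$, then every $f^*N$ ($N$ nef) is nef but not big, hence lies on $\partial\Eff(X)$, so $\sigma\subset\partial\Eff(X)$; as $\sigma$ spans $C_D^{\perp}$, it is contained in a facet of $\Eff(X)$, which (because $\mov(X)=\Eff(X)^{\vee}$) has the form $\gamma^{\perp}$ with $\gamma\in\mov(X)$; then $\gamma^{\perp}=C_D^{\perp}$, so $[C_D]$ is a positive multiple of $\gamma\in\mov(X)$ (both classes lie in the pointed cone $\NE(X)$), giving $[C_D]\in\mov(X)$ and contradicting Lemma~\ref{movdual}(1). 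Therefore $f$ is divisorial; let $B:=\Exc(f)$, a fixed prime divisor. Applying the implication $\Leftarrow$ to $f$ gives $\sigma\subset C_B^{\perp}$, whence $C_B^{\perp}=\operatorname{span}\sigma=C_D^{\perp}$ and $[C_B],[C_D]$ are positively proportional. Since $B\cdot C_B=-1<0$ we get $B\cdot C_D<0$; but $D'\cdot C_D\geq0$ for every fixed prime divisor $D'\neq D$ (otherwise $C_D\subset D'$ and, as $C_D$ sweeps out $D$, $D\subseteq D'$), so $B=D$.

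\emph{Part (2).} Here I use that $f_{0*}[C_D]=0$ (Cor.~\ref{portovenere}), so $\im f_0^{*}=C_D^{\perp}$ (both have dimension $\rho_X-1$), and that $f_{0*}\Mov(X)=\Mov(Y_0)$ (as in the proof of Lemma~\ref{bijection}). For $\Leftarrow$: if $h:=g\circ f_0^{-1}$ is a rational contraction then $g=h\circ f_0$, so by functoriality of pullback $g^*(\Nef(Z))=f_0^{*}(h^{*}\Nef(Z))\subseteq f_0^{*}(\Nu(Y_0))=C_D^{\perp}$. For $\Rightarrow$: assume $g^*(\Nef(Z))\subset C_D^{\perp}$. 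Then $g^*(\Nef(Z))\subseteq\Mov(X)\cap C_D^{\perp}$, so setting $\mathcal C:=f_{0*}(g^*\Nef(Z))$ we have $g^*(\Nef(Z))=f_0^{*}\mathcal C$ and $\mathcal C\subseteq f_{0*}\Mov(X)=\Mov(Y_0)$. Choose $w$ in the relative interior of $\mathcal C$, let $\theta_w\in\MCD(Y_0)$ be the chamber containing $w$, and let $h_w\colon Y_0\dasharrow Z_w$ be the associated rational contraction, so $h_w^{*}(\Nef(Z_w))=\theta_w$. Then $h_w\circ f_0$ is a rational contraction whose chamber is $f_0^{*}\theta_w$; since $f_0^{*}w\in f_0^{*}\mathcal C=g^*(\Nef(Z))$ lies in the relative interior of the chamber $g^*(\Nef(Z))\in\MCD(X)$, we get $f_0^{*}\theta_w=g^*(\Nef(Z))$. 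By the bijection between cones of $\MCD(X)$ and rational contractions, $g=h_w\circ f_0$, hence $h=g\circ f_0^{-1}=h_w$ is a rational contraction.

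The step I expect to require the most care is the fiber-type exclusion in (1) together with the chamber-matching in (2): both hinge on keeping straight the distinction between $\mov(X)$ and the ray $\langle[C_D]\rangle$ (Lemma~\ref{movdual}), and on the fact that the portion of $\MCD(X)$ lying in the facet $F_D$ is exactly the $f_0^{*}$-image of $\MCD(Y_0)$, which I extract from $f_{0*}\Mov(X)=\Mov(Y_0)$ and the chamber/contraction bijection rather than proving from scratch.
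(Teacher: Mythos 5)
Your proof is correct. Part (1) and the backward implication of (2) follow the paper's own argument: for the forward implication of (1), both you and the paper read Lemma \ref{movdual}(1) dually --- $C_D^{\perp}$ supports a facet of $\Mov(X)$ but not of $\Eff(X)$, so the $(\rho_X-1)$-dimensional cone $f^*(\Nef(Y))$ lies in $\partial\Mov(X)$ but not in $\partial\Eff(X)$, forcing $f$ to be divisorial; you merely spell out the fiber-type exclusion on the primal side (a facet of $\Eff(X)$ through $C_D^{\perp}$ would put $[C_D]$ in $\mov(X)$), where the paper phrases the same fact dually, and you add an explicit justification of $\Exc(f)=D$ from $D'\cdot C_D\geq 0$ for $D'\neq D$ fixed. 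Where you genuinely diverge is the forward implication of (2). The paper chooses a maximal cone $\tau\in\MCD(X)$ with $g^*(\Nef(Z))\subseteq\tau\subseteq\Mov(X)\cap C_D^{\perp}$, identifies the associated contraction $f_1$ as elementary divisorial with $\Exc(f_1)=D$ by part (1), and concludes by exhibiting $g\circ f_0^{-1}$ as the SQM $f_1\circ f_0^{-1}$ followed by the regular contraction $g\circ f_1^{-1}$. You instead push the chamber down to $Y_0$, use that composition with $f_0$ sends cones of $\MCD(Y_0)$ to cones of $\MCD(X)$, and identify $g^*(\Nef(Z))$ with $f_0^*\theta_w$ via the fan property (distinct cones have disjoint relative interiors) and the chamber/contraction bijection. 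Both routes are valid: the paper's produces an explicit factorization of $g\circ f_0^{-1}$ as SQM plus contraction, so it is visibly a rational contraction, while yours trades that for a purely combinatorial matching of chambers at the cost of invoking $f_{0*}\Mov(X)=\Mov(Y_0)$ and the identity $f_0^*f_{0*}=\mathrm{id}$ on $\im f_0^*=C_D^{\perp}$, both available from Lemma \ref{bijection} and Cor.~\ref{portovenere}. One small point of precision: $\theta_w$ should be taken to be the cone of $\MCD(Y_0)$ containing $w$ in its \emph{relative interior}, since that is what your chamber-matching step actually uses.
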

\begin{proof}
We show (1).
If  $f$ is divisorial with exceptional divisor $D$, then $C_D\subset\dom f$ and $f(C_D)$ is a point by Cor.~\ref{portovenere}, therefore $f^*(\Nef(Y))\subset C_D^{\perp}$.

Conversely, assume that $f^*(\Nef(Y))\subset C_D^{\perp}$. 
We know by Lemma \ref{movdual}(1) that $\langle [C_D]\rangle$ is a one-dimensional face of $\Mov(X)^{\vee}$, not contained in $\mov(X)$. Dually, this means that the hyperplane  $C_D^{\perp}$ cuts a facet of $\Mov(X)$, and not of $\Eff(X)$. Thus 
 $f^*(\Nef(Y))$ is contained in $\partial\Mov(X)$ and not in $\partial\Eff(X)$, which means that $f$ is divisorial. By the first part of the proof, $f^*(\Nef(Y))\subset C_{\Exc(f)}^{\perp}$. This implies that $D=\Exc(f)$.

We show (2). 
If $h:=g\circ f_0^{-1}\colon Y_0\dasharrow Z$ is a rational contraction, then $g^*(\Nef(Z))=f_0^*(h^*(\Nef(Z))\subset f_0^*(\Nu(Y_0))=C_D^{\perp}$.

Conversely, set  $\sigma:=g^*(\Nef(Z))\in\MCD(X)$, and assume that
$\sigma\subset C_D^{\perp}$. Then $\sigma$ is contained in $\Mov(X)\cap C_D^{\perp}$, which is a facet of $\Mov(X)$ by Lemma \ref{movdual}(1).
Let   $\tau\in\MCD(X)$ be such that $\sigma\subseteq\tau\subseteq\Mov(X)\cap C_D^{\perp}$ and $\dim\tau=\rho_X-1$. Let $f_1\colon X\dasharrow Y_1$ be the rational contraction such that $\tau=f_1^*(\Nef(Y_1))$; notice that $f_1$ is elementary, and by (1) it is divisorial with exceptional divisor $D$.
$$\xymatrix{&X\ar@{-->}[dl]_{f_0}\ar@{-->}[d]^{f_1}\ar@{-->}[dr]^g&\\
{Y_0}\ar@{-->}@/_1pc/[rr]_{g\circ f_0^{-1}}\ar@{-->}[r]&{Y_1}\ar[r]&Z
}$$
Since $\sigma\subseteq\tau$, the composition $g\circ f_1^{-1}\colon Y_1\to Z$ is regular (see for instance \cite[Lemma 1.7]{hukeel} or \cite[p.~592]{eff}). Moreover the composition $f_1\circ f_0^{-1}\colon Y_0\dasharrow Y_1$ is an isomorphism in codimension $1$, and hence a SQM. Finally, $g\circ f_0^{-1}\colon Y_0\dasharrow Z$ is the composition of a SQM with a regular contraction, therefore it is a rational contraction.
\end{proof}
\subsection{Fixed divisors of type $(3,2)$}
\noindent In this section we conclude our analysis of the different types of fixed  divisors, by considering the case of type $(3,2)$: we show the following result, which  will be crucial in section \ref{last}.
\begin{proposition}\label{zero}
Let $X$ be a smooth Fano $4$-fold with $\rho_X\geq 12$ and $\delta_X\leq 1$. Then one of the following holds:
\begin{enumerate}[$(i)$]
\item $\rho_X=12$ and there exists $f\colon X\to X_1$ 
where $X_1$ is smooth and Fano, $f$ is the blow-up of a smooth, irreducible surface,  and $X_1$ has an elementary rational contraction onto a $3$-fold;  
\item for every fixed prime divisor $D\subset X$ of type $(3,2)$, we have  $\N(D,X)=\N(X)$.
\end{enumerate}
\end{proposition}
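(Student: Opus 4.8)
The plan is to prove the contrapositive of the dichotomy: assuming $(ii)$ fails, so that there is a fixed prime divisor $D\subset X$ of type $(3,2)$ with $\N(D,X)\neq\N(X)$, I would deduce the full structural statement $(i)$, in particular $\rho_X=12$. First I would record the reductions. Since $\delta_X\leq 1$ we have $\codim\N(D,X)\leq 1$, so the assumption forces $\dim\N(D,X)=\rho_X-1$. As $D$ is of type $(3,2)$ and $\rho_X\geq 12\neq 6$, Rem.~\ref{pisa}(3) gives a \emph{unique} $D$-negative extremal ray $R$ (the ray with $\Lo(R)=D$); let $f_0\colon X\to Y_0$ be its contraction, a divisorial contraction of type $(3,2)$ with $\Exc(f_0)=D$ and $S:=f_0(D)$ an irreducible surface. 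Uniqueness of the $D$-negative ray gives, by the contrapositive of \cite[Prop.~3.4(i)]{wisn} exactly as in the proof of Cor.~\ref{uniqueness}, that $Y_0$ is Fano. Finally, since $\ker f_{0*}=\R[C_D]\subseteq\N(D,X)$ and $f_{0*}(\N(D,X))=\N(S,Y_0)$, I get $\dim\N(S,Y_0)=\rho_X-2=\rho_{Y_0}-1$, i.e.\ $\codim\N(S,Y_0)=1$; let $\gamma_0\in\Nu(Y_0)$ span the resulting line $\N(S,Y_0)^{\perp}$.

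Next I would reduce $(i)$ to a statement about $Y_0$ alone. Setting $X_1:=Y_0$, what must be shown is that $f_0$ is the blow-up of the smooth surface $S$ (so that $f_0$ is of type $(3,2)^{sm}$ and $X_1$ is smooth) and that $X_1$ carries an elementary rational contraction of fiber type onto a $3$-fold. The numerical bound is then formal: such a contraction together with Th.~\ref{elemft} forces $\rho_{X_1}\leq 11$, whence $\rho_X=\rho_{X_1}+1\leq 12$, and with $\rho_X\geq 12$ we conclude $\rho_X=12$, giving $(i)$. To obtain smoothness I expect to show that $f_0$ has no two-dimensional fibers; granting this, $X_1=Y_0$ is a smooth Fano $4$-fold with $\rho_{X_1}=\rho_X-1\geq 11\geq 7$, so all the machinery of this section applies to it. I would also note that, by Lemma \ref{movdual2} applied to $X$ itself, $\rho_X\geq 12$ already rules out a fiber-type contraction \emph{on $X$}, so the contraction I am after genuinely lives on $Y_0$ and does not lift to $X$; consistently, by Lemma \ref{treno}(2) the facet $\Mov(X)\cap C_D^{\perp}$ of $\Mov(X)$ is identified with $\Mov(Y_0)$, which is where the contraction will be read off.

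The heart of the argument, and the step I expect to be hardest, is producing on $X_1=Y_0$ an elementary rational contraction of fiber type onto a $3$-fold out of the single datum $\codim\N(S,Y_0)=1$. Here I would apply Lemma \ref{movdual2} to the smooth Fano $4$-fold $Y_0$: either $Y_0$ already has an elementary rational contraction of fiber type (which is what I want), or $\Mov(Y_0)^{\vee}=\langle[C_{B}]\rangle_{B\text{ fixed}}$. In the second case I would use the surface $S$ to force a contradiction. The degeneracy $\gamma_0^{\perp}=\N(S,Y_0)$ should be witnessed by a covering family $V$ of rational curves of low anticanonical degree whose class lies on the hyperplane $\N(S,Y_0)$ (built from curves sweeping out $S$ together with the large Picard number $\rho_{Y_0}\geq 7$, controlling $\dim\N(\Lo(V_y),Y_0)$ via Lemma \ref{kol} and the dichotomy of Lemma \ref{divisorcovered}); its class $[V]$ should span a one-dimensional face of $\mov(Y_0)$ that is simultaneously a face of $\Mov(Y_0)^{\vee}$, so that \cite[Cor.~2.25]{eff} produces the desired elementary fiber-type contraction, contradicting the assumption of the second case.

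The main obstacle is twofold: pinning down the anticanonical degree of this covering family so as to guarantee that the general fiber is a curve (hence that the image is a $3$-fold, as required by $(i)$), and excluding two-dimensional fibers of $f_0$ so that $X_1$ is genuinely smooth. For both points I expect to need the detailed geometry of the conic/$\pr^1$-bundle $f_0\colon D\to S$ and an analysis of low-degree curves meeting $S$, in the spirit of Lemmas \ref{locus}, \ref{firstprop31}, and \ref{firstpropQ}: a curve of too-low degree through the degenerate locus would either be forced into the smooth blow-up model or contradict the uniqueness and Fano-ness already established. Once the fiber-type contraction onto a $3$-fold is in hand and $X_1$ is known to be smooth and Fano, the proof closes via Th.~\ref{elemft} as indicated above.
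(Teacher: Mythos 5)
Your overall framing (prove the contrapositive: if $(ii)$ fails for some $D$ of type $(3,2)$, deduce $(i)$) and your closing numerical step (an elementary rational contraction of fiber type on $X_1$ plus Th.~\ref{elemft} forces $\rho_{X_1}=11$, hence $\rho_X=12$) agree with the paper, but the two steps you yourself identify as the heart of the argument are exactly the ones left unproved, and the route you sketch for them does not go through. You propose to contract the degenerate divisor $D$ itself, via $f_0\colon X\to Y_0$ with image surface $S$, and to rule out two-dimensional fibers of $f_0$ by analysing low-degree curves meeting $S$. But the hypothesis $\codim\N(D,X)=1$ places no obstruction on a two-dimensional fiber $F$ of $f_0$: one has $\N(F,X)=\R[C_D]\subseteq\N(D,X)$, so no contradiction with the degeneracy arises, and none of Lemmas \ref{locus}, \ref{firstprop31}, \ref{firstpropQ} applies to this situation. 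Likewise, no tool in the paper converts the condition $\codim\N(S,Y_0)=1$ for a \emph{surface} $S$ into a covering family of curves with class in $\N(S,Y_0)$, let alone into a one-dimensional face of $\mov(Y_0)$ that is also a face of $\Mov(Y_0)^{\vee}$; your appeal to Lemmas \ref{kol} and \ref{divisorcovered} and to \cite[Cor.~2.25]{eff} at this point is not backed by any construction. As written, both the smoothness of $X_1$ and the existence of the fiber-type contraction are asserted rather than proved.

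The paper supplies precisely these two ingredients by introducing an auxiliary divisor, which is the idea missing from your plan. From $\rho_X\geq 12$ and Lemma \ref{movdual2} applied to $X$ (not to $Y_0$) one gets $\Mov(X)^{\vee}=\langle[C_B]\rangle_{B\text{ fixed}}$; since these classes span $\N(X)$ while $\N(D,X)$ is a hyperplane, there is a second fixed prime divisor $D_1$ with $[C_{D_1}]\notin\N(D,X)$ and $\langle[C_D],[C_{D_1}]\rangle$ a face of $\Mov(X)^{\vee}$; a convex-geometry argument lets one arrange $D\cap D_1\neq\emptyset$, and one then shows $D_1$ is of type $(3,2)$. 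The contraction $f$ of statement $(i)$ is the contraction of $D_1$, not of $D$: smoothness holds because $D$ meets every non-trivial fiber $F$ of $f$ in dimension zero (as $\N(F,X)=\R[C_{D_1}]\not\subseteq\N(D,X)$), which forces all fibers to be one-dimensional; and the fiber-type contraction on $X_1$ comes from checking $[C_D+C_{D_1}]\in\mov(X)$, so that $\Mov(X)\cap C_D^{\perp}\cap C_{D_1}^{\perp}$ is a codimension-two face of $\Mov(X)$ contained in the boundary of $\Eff(X)$, which descends to $X_1$ by Lemma \ref{treno}(2). Without this second divisor $D_1$ your argument has no mechanism to produce either conclusion, so the gap is essential rather than a matter of detail.
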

\begin{proof}
Suppose that $(ii)$ does not hold, and let $D_0\subset X$ be a fixed prime divisor of type $(3,2)$ with $\N(D_0,X)\subsetneq\N(X)$. Since $\delta_X\leq 1$, the subspace $\N(D_0,X)$ is a hyperplane in $\N(X)$.
\begin{parg}\label{coop}
We have 
 $\Mov(X)^{\vee}=\langle [C_D]\rangle_{D\text{ fixed}}$ by Lemma \ref{movdual2}.
Consider the class $[C_{D_0}]$ in $\Mov(X)^{\vee}$, and let $D_1,\dotsc,D_r$ be all the fixed prime divisors such that $\langle [C_{D_0}],[C_{D_i}]\rangle$ is a face of $\Mov(X)^{\vee}$. 

The cone $\Mov(X)^{\vee}$ has dimension $\rho_X$ in $\N(X)$, hence the classes $[C_{D_0}],[C_{D_1}],\dotsc,[C_{D_r}]$ generate all $\N(X)$. Since $\N(D_0,X)\subsetneq\N(X)$, and $[C_{D_0}]\in\N(D_0,X)$, there exists $i\in\{1,\dotsc,r\}$ such that $[C_{D_i}]\not\in\N(D_0,X)$. Suppose $i=1$ for simplicity.
\end{parg}
\begin{parg}\label{influenza}
We show that, up to replacing $C_{D_1}$ with $C_{D_i}$ for some $i\in\{2,\dotsc,r\}$, we can assume that $D_0\cap D_1\neq \emptyset$.

Indeed, suppose that  $D_0\cap D_1=\emptyset$; in particular we have $D_1\cdot C_{D_0}=0$.
We claim that there exists a one-dimensional face $\sigma$ of $\Mov(X)^{\vee}$ such that $\langle[C_{D_0}]\rangle+\sigma$ is a face of  $\Mov(X)^{\vee}$ and $D_1\cdot \sigma>0$.

This is an elementary fact from convex geometry, that can be seen as follows.
Let $\tau_1,\dotsc,\tau_s$ be all the one-dimensional faces of  $\Mov(X)^{\vee}$, except $\langle[C_{D_0}]\rangle$.
Notice that since $D_1$ is fixed, we have $[D_1]\not\in\Mov(X)=(\Mov(X)^{\vee})^{\vee}$, hence  the hyperplane $D_1^{\perp}$ intersects the cone $\Mov(X)^{\vee}$ in its interior, and contains $[C_{D_0}]$; in particular there exists some $i\in\{1,\dotsc,s\}$ such that $D_1\cdot\tau_i>0$.

Since $\langle[C_{D_0}]\rangle$ is a face of $\Mov(X)^{\vee}$, there exists
 $\alpha\in\Nu(X)$ such that $\alpha\cdot [C_{D_0}]=0$ and $\alpha\cdot\tau_i>0$ for every $i=1,\dotsc,s$. Set 
$$\lambda:=\max_{i=1,\dotsc,s}\frac{D_1\cdot\tau_i}{\alpha\cdot\tau_i}$$
and $I:=\{i\in\{1,\dotsc,s\}\,|\,(D_1\cdot\tau_i)/(\alpha\cdot\tau_i)=\lambda\}$.
Notice that $\lambda>0$, and that $D_1\cdot\tau_i>0$ for every $i\in I$.
Now set $\beta:=\lambda\alpha-[D_1]\in\Nu(X)$. Then $\beta\cdot [C_{D_0}]=0$, $$\beta\cdot\tau_i=\lambda\alpha\cdot\tau_i-D_1\cdot\tau_i\geq 0\quad\text{for every $i=1,\dotsc,s$,}$$ and $\beta\cdot\tau_i=0$ if and only if $i\in I$. Thus $\eta:=\beta^{\perp}\cap \Mov(X)^{\vee}$ is a face of $\Mov(X)^{\vee}$, of dimension at least $2$, containing  $\langle[C_{D_0}]\rangle$, and
such that $D_1\cdot\tau>0$ for every one-dimensional face $\tau$ of $\eta$ different from $\langle[C_{D_0}]\rangle$.
 Now we just choose a one-dimensional face $\sigma$ of $\eta$ such that $\langle[C_{D_0}]\rangle+\sigma$ is a $2$-dimensional face.
\end{parg}
\begin{parg}
Clearly $[C_{D_1}]\not\in\sigma$, because $D_1\cdot C_{D_1}<0$. Thus
by \ref{coop} there exists $i\in\{2,\dotsc,r\}$ such that $\sigma=\langle[C_{D_i}]\rangle$; for simplicity we assume $i=2$. 

Since $D_0\cap D_1=\emptyset$, we have $D_1\cdot C=0$ for every curve $C\subset D_0$, hence $\N(D_0,X)\subseteq D_1^{\perp}$. Since $\N(D_0,X)$ is a hyperplane, we get $\N(D_0,X)= D_1^{\perp}$. 
On the other hand $D_1\cdot C_{D_2}>0$,
therefore we have $[C_{D_2}]\not\in D_1^{\perp}=\N(D_0,X)$. 
Moreover
 $D_1$ and $D_2$ are not numerically proportional, hence $D_1^{\perp}\neq D_2^{\perp}$. This means that $D_2^{\perp}\neq\N(D_0,X)$, therefore $D_0\cap D_2\neq\emptyset$, and we can replace $D_1$ with $D_2$. Thus we can assume that $D_0\cap D_1\neq\emptyset$.
\end{parg}
\begin{parg}
We show that $D_1$ is of type $(3,2)$. 

Otherwise, 
let $X\dasharrow\w{X}\stackrel{g}{\to} Y$ be the MMP for $D_1$.
 Since $D_0$ cannot contain exceptional planes by Lemma \ref{pisa}(2), the transforms $\w{D}_0$ and $\w{D}_1$ respectively of $D_0$ and $D_1$ in $\w{X}$ must still intersect; recall also that  $\w{D}_1$ does not contain exceptional lines by Rem.~\ref{viapeyron}. 
Since  $\dim(\w{D}_0\cap\w{D}_1)=2$ and $g$ is either of type $(3,0)$ or of type $(3,1)$ by Th.~\ref{typefixed}, $g$ cannot be generically finite on  $\w{D}_0\cap\w{D}_1$.
Let $\Gamma\subset\w{D}_0\cap\w{D}_1$ be an irreducible curve, disjoint from any exceptional line, and contained in a fiber of $g$. 

Let $\Gamma'\subset X$ be the transform of $\Gamma$. Then $\Gamma'\subset D_0$ and $\Gamma'\equiv m C_{D_1}$ for some $m>0$, 
 contradicting that $[C_{D_1}]\not\in\N(D_0,X)$.
Therefore $D_1$ is of type $(3,2)$.
\end{parg}
\begin{parg}
Let $f\colon X\to X_1$ be the contraction of type $(3,2)$ with $\Exc(f)=D_1$ and $\NE(f)=\langle[C_{D_1}]\rangle$. We show that $f$ is of type $(3,2)^{sm}$, and that $X_1$ is smooth and Fano.

Since $D_0\cap D_1\neq\emptyset$, $D_0$ intersects some non-trivial fiber $F$ of $f$; on the other hand we must have $\dim (D_0\cap F)=0$, because $\N(F,X)=\R[C_{D_1}]$ and  $[C_{D_1}]\not\in\N(D_0,X)$. Therefore it cannot be 
$D_0\cdot C_{D_1}=0$ (for otherwise $D_0\supset F$), and we conclude that $D_0\cdot C_{D_1}>0$ and that  $D_0$ intersects every non-trivial fiber $F$ of $f$. This in turns shows that every such fiber has dimension one, and hence that $f$ is of type $(3,2)^{sm}$ and $X_1$ is smooth (see \ref{basic0}). Moreover $\NE(f)$ is the unique $D_1$-negative extremal ray of $\NE(X)$ (see Rem.~\ref{pisa}(3)), hence $X_1$ is Fano by \cite[Prop.~3.4(i)]{wisn}.
\end{parg}
\begin{parg}\label{old}
We show that  $[C_{D_0}+C_{D_1}]\in\mov(X)$.

We have already $D_0\cdot C_{D_1}>0$, and
by \cite[Lemma 2.2(b)]{cdue} we also have $D_1\cdot C_{D_0}>0$.
Let us consider $C_{D_0}+C_{D_1}$. We have $M\cdot (C_{D_0}+C_{D_1})\geq 0$ for every movable divisor $M$ of $X$, and $D\cdot(C_{D_0}+C_{D_1})\geq 0$ for every fixed prime divisor $D\subset X$ different from $D_1$ and $D_2$. Moreover
$$D_0\cdot (C_{D_0}+C_{D_1})=D_0\cdot C_{D_1}-1\geq 0,$$
and similarly for $D_1$. Recall that
$$\Eff(X)=\langle[D]\rangle_{D\text{\,fixed}}+\Mov(X)$$
(see \ref{otite}); thus we have $[C_{D_0}+C_{D_1}]\in\Eff(X)^{\vee}=\mov(X)$.
\end{parg}
\begin{parg}
By \ref{old}, the face $\langle[C_{D_0}],[C_{D_1}]\rangle$ of $\Mov(X)^{\vee}$ intersects $\mov(X)$ outside zero. Dually, this means that $\tau:=\Mov(X)\cap C_{D_0}^{\perp}\cap C_{D_1}^{\perp}$ is a codimension $2$ face of $\Mov(X)$, contained in the boundary of $\Eff(X)$. 

Let $\tau_0\in\MCD(X)$ be such that $\tau_0\subseteq\tau$ and $\dim\tau_0=\rho_X-2$, and let $g\colon X\dasharrow T$ be the rational contraction such that $g^*(\Nef(T))=\tau_0$. We have $\rho_T=\rho_X-2$, and since $\tau_0$ is contained in the boundary of $\Eff(X)$, $g$ is of fiber type.

Since $\tau_0\subset C_{D_1}^{\perp}$, by Lemma \ref{treno}(2) the composition $h:=g\circ f_1^{-1}\colon X_1\dasharrow T$ is a rational contraction, of fiber type;
moreover
 $\rho_T=\rho_{X_1}-1$. We conclude that $h$ is an elementary rational contraction of fiber type for $X_1$, therefore $\rho_{X_1}\leq 11$ by Th.~\ref{elemft}. Thus $\rho_{X_1}=11$, $\rho_{X}=12$,  
and we get $(i)$.
\end{parg}
\vspace{-20pt}
 \end{proof}
\subsection{Proof of Theorem \ref{onepoint}}
\label{last}
\noindent In this section we show the following result, which implies Th.~\ref{onepoint}.
\begin{thm}\label{finally}
Let $X$ be a smooth Fano $4$-fold with 
 $\rho_X\geq 12$. 
If $X$ has a fixed prime divisor  of type $(3,0)^{sm}$, then $\rho_X=12$ and there exists $f\colon X\to X_1$ 
where $X_1$ is smooth and Fano, $f$ is the blow-up of a smooth, irreducible surface,  and $X_1$ has an elementary rational contraction onto a $3$-fold.
\end{thm}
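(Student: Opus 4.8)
The plan is to deduce the statement from Proposition \ref{zero}, whose alternative $(i)$ is exactly the desired conclusion. Thus it suffices to produce a fixed prime divisor $E\subset X$ of type $(3,2)$ with $\N(E,X)\subsetneq\N(X)$, so that alternative $(ii)$ of Proposition \ref{zero} fails. Since that proposition requires $\delta_X\leq 1$, I would first dispose of the Lefschetz defect: as $X$ carries a fixed divisor of type $(3,0)^{sm}$ it is not a product of surfaces (a product has only fixed divisors of type $(3,2)$), so Theorem \ref{summary} gives $\delta_X\leq 3$; the hypothesis $\rho_X\geq 12>6$ rules out $\delta_X=3$, while $\delta_X=2$ forces $\rho_X=12$ and is handled by the $\delta_X=2$ theory. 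Hence the substance is the case $\delta_X\leq 1$, which I assume from now on.

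Next I would set up the target. By Proposition \ref{fanotarget} the given divisor $D$ yields a MMP $X\dasharrow\w{X}\stackrel{f}{\to}Y$ with $Y$ smooth and Fano and $f$ the blow-up of a smooth point $p\in Y$, so $\rho_Y=\rho_X-1\geq 11$. The decisive feature of $p$ is Lemma \ref{firstproperties}(2): every curve of $Y$ through $p$ has anticanonical degree $\geq 2$. Consequently \emph{any} prime divisor $T\subset Y$ that is covered by rational curves of anticanonical degree $1$ must avoid $p$ (one such curve would otherwise pass through $p$). Its transform $E\subset X$ is then disjoint from $D$, so $\N(E,X)\subseteq D^{\perp}\subsetneq\N(X)$, and $E$ is again covered by degree-$1$ curves; by Lemma \ref{divisorcovered} such an $E$ is either nef or of type $(3,2)$, the nef alternative being excludable since a nef divisor covered by degree-$1$ curves would give a fiber-type contraction of $X$, contradicting $\rho_X\geq 12$ via Theorem \ref{elemft}. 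So it is enough to find \emph{one} prime divisor of $Y$ covered by degree-$1$ curves.

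The core is therefore this existence problem on $Y$. Since $\rho_X\geq 12>11$, Theorem \ref{everyfixeddiv} shows that not all fixed divisors of $X$ are of type $(3,0)^{sm}$; using the bijection of Lemma \ref{bijection} (pairing fixed divisors of $X$ that span a face with $D$ against fixed divisors of $Y$) together with the type-preservation of Lemma \ref{transform}, I would descend this to a fixed divisor $B\subset Y$ of type $(3,2)$, $(3,1)^{sm}$, or $(3,0)^Q$ (Theorem \ref{typefixed}). A type $(3,2)$ divisor is already covered by degree-$1$ curves. If $B$ is of type $(3,1)^{sm}$, then since $\rho_Y\geq 7$ Proposition \ref{(3,1)} produces a type $(3,2)$ divisor of $Y$, hence a divisor covered by degree-$1$ curves. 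If $B$ is of type $(3,0)^Q$, Proposition \ref{(3,0)Q} gives either a prime divisor of $Y$ covered by degree-$1$ curves (which is what we want), or a covering family of degree-$3$ rational curves on $Y$; the latter is impossible, because some member of a covering family passes through $p$, contradicting Lemma \ref{3} (via Lemma \ref{locus}(3)). In every case we obtain the required $T\subset Y$, and by the previous paragraph the resulting $E\subset X$ is a fixed divisor of type $(3,2)$ disjoint from $D$.

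Finally, with $E\subset X$ of type $(3,2)$ and $\N(E,X)\subsetneq\N(X)$, alternative $(ii)$ of Proposition \ref{zero} fails, so $(i)$ holds, giving $\rho_X=12$ together with the asserted contraction $f\colon X\to X_1$. The hardest parts, I expect, are the bookkeeping around the point $p$ and the exceptional loci: verifying that a non-$(3,0)^{sm}$ fixed divisor of $X$ genuinely descends to one of $Y$ (this is delicate precisely when $\rho_Y=11$, where Theorem \ref{everyfixeddiv} is not by itself conclusive on $Y$), checking that the degree-$1$ curves on $T$ avoid both $p$ and the flipping loci so that $E$ is truly disjoint from $D$, and excluding the two borderline possibilities in the $(3,0)^Q$ case (the nef alternative and the degree-$3$ covering family). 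Together with the reduction to $\delta_X\leq 1$, these are the points where the argument is tightest.
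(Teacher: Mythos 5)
Your overall architecture coincides with the paper's (reduce to $\delta_X\leq 1$, produce a fixed prime divisor of type $(3,2)$ disjoint from a type $(3,0)^{sm}$ divisor $D$, invoke Proposition \ref{zero}), but two of the steps you flag as ``delicate'' are genuine gaps, and neither is filled by the lemmas you cite. The first is the descent of a non-$(3,0)^{sm}$ fixed divisor to $Y$. Lemma \ref{bijection}(3) only pairs fixed divisors of $Y$ with those fixed divisors $B_X\subset X$ for which $\langle[D],[B_X]\rangle$ is a $2$-dimensional face of $\Eff(X)$ meeting $\Mov(X)$ only at $0$; Theorem \ref{everyfixeddiv} gives you \emph{some} non-$(3,0)^{sm}$ fixed divisor of $X$, but nothing guarantees it spans such a face with your given $D$ --- all the divisors adjacent to $D$ in $\Eff(X)$ could a priori be of type $(3,0)^{sm}$. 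The paper resolves this by \emph{re-choosing} $D$: since $\rho_X\geq 12$, \ref{otite} together with Lemmas \ref{useful} and \ref{macau} show that every one-dimensional face of $\Eff(X)$ is spanned by a fixed divisor and every $2$-dimensional face meets $\Mov(X)$ only at $0$, and the connectedness of the edge-graph of the cone then yields an adjacent pair $(D,B_X)$ with $D$ of type $(3,0)^{sm}$ and $B_X$ not. You never invoke Lemma \ref{macau}, which is the key to this step (and also to verifying the hypothesis of Lemma \ref{bijection}(3)).

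The second gap is the nef exclusion in the $(3,0)^Q$ branch. Proposition \ref{(3,0)Q}$(i)$ only hands you a prime divisor $T\subset Y$ covered by rational curves of anticanonical degree $1$; it is not asserted to be fixed or non-nef, and Lemma \ref{divisorcovered} explicitly leaves the nef alternative open. Your claim that ``a nef divisor covered by degree-$1$ curves would give a fiber-type contraction of $X$'' is unsupported: the curves cover only the divisor, not $X$, so no covering family of $X$ and no fiber-type contraction is produced. The paper instead shows that $[T]$ spans a one-dimensional face of $\Eff(Y)$ --- via $T\cdot C_i=0$ for the images $C_i$ of the exceptional lines, $[C_i]\in\mov(Y)$ (which itself rests on Cor.~\ref{noquasiel} and Lemma \ref{nofixed}), and the dimension count of Lemma \ref{dimension} --- and then applies Cor.~\ref{noquasiel} to conclude that $T$ is fixed, hence of type $(3,2)$ by Lemma \ref{divisorcovered}. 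This is the technical heart of the paper's proof and is absent from your argument. (The remaining points you flag --- that the degree-$1$ curves covering $T$ avoid both $p$ and the images of the exceptional lines, so that the transform $E_X$ is genuinely disjoint from $D$ and remains non-nef --- are indeed settled by Lemma \ref{locus}(1), as you suspect.)
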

Before proving Th.~\ref{finally}, we need an auxiliary lemma.
\begin{lemma}\label{nofixed}
Let $X$ be a smooth Fano $4$-fold with $\rho_X\geq 7$, $D\subset X$ a fixed prime  divisor of type $(3,0)^{sm}$, and $X\dasharrow\w{X}\la Y$
 the MMP for $D$, 
where $Y$ is a smooth Fano $4$-fold and $f$ is the blow-up of the point $p\in Y$.
 Then $p$ cannot belong to any fixed prime divisor in $Y$.
\end{lemma}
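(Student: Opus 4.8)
The plan is to argue by contradiction: assume some fixed prime divisor $B\subset Y$ passes through $p$, and extract from $B$ a curve (or family of curves) through $p$ whose anticanonical degree violates the strong restrictions on $p$ recorded in Lemma \ref{firstproperties}. Since $\rho_Y=\rho_X-1\geq 6$, every fixed prime divisor of $Y$ has a well-defined type (Def.~\ref{deftypefixed}), and because $X\stackrel{\ph}{\dasharrow}\w{X}\stackrel{f}{\to}Y$ is the MMP for $D$ with $Y$ Fano and $f$ the blow-up of $p$, Lemma \ref{firstproperties}(2)--(3) apply verbatim: every irreducible curve through $p$ has anticanonical degree $\geq 2$ and $\neq 3$, only finitely many integral curves of anticanonical degree $2$ pass through $p$, and $p$ lies on no exceptional plane. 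The whole argument is then a case analysis on the type of $B$.

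First I would dispose of the case where $B$ is of type $(3,2)$. Here $B=\Exc(g)$ for a $(3,2)$-contraction $g\colon Y\to W$, and every fibre of $g$ is covered by rational curves of anticanonical degree $1$: for the $1$-dimensional fibres this is \ref{basic0}, and the finitely many $2$-dimensional fibres are $(\pr^2,\ma{O}(1))$ or $(Q,\ma{O}(1))$ with $Q\subset\pr^3$ a reduced quadric (\cite[Prop.~4.3.1]{AWaview}), hence again covered by degree-$1$ lines through each of their points. So $p\in B$ produces a degree-$1$ curve through $p$, contradicting Lemma \ref{firstproperties}(2).

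For the remaining three types I would run the MMP $Y\stackrel{\psi}{\dasharrow}\w{Y}\stackrel{g}{\to}W$ for $B$; by Th.~\ref{typefixed} the map $\psi$ is a non-trivial sequence of $K$-negative flips and $\Exc(g)=\w{B}$, with $g$ of type $(3,1)^{sm}$, $(3,0)^{sm}$, or $(3,0)^Q$. Since $p$ lies on no exceptional plane, $p\in\dom\psi$, so $q:=\psi(p)$ lies in $\Exc(g)$ and, being in $\dom(\psi^{-1})$, on no exceptional line of $\w{Y}$. The key observation is that $\Exc(g)$ carries a one-dimensional family of lines through $q$, of anticanonical degree $2$ in the $(3,1)^{sm}$ and $(3,0)^Q$ cases and degree $3$ in the $(3,0)^{sm}$ case: the lines of the $\pr^2$-fibre containing $q$ for $(3,1)^{sm}$, the lines through $q$ in $\Exc(g)\cong\pr^3$ for $(3,0)^{sm}$, and the lines of the quadric $\Exc(g)\cong Q$ through $q$ for $(3,0)^Q$. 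Using Rem.~\ref{viapeyron} (exceptional lines are not contained in $\Exc(g)$, hence meet it in finitely many points), a general such line $\Gamma$ is disjoint from all exceptional lines; since $q\in\Gamma\cap\dom(\psi^{-1})$, its transform $\Gamma'\subset Y$ is an integral curve through $p$ with $-K_Y\cdot\Gamma'\leq -K_{\w{Y}}\cdot\Gamma\leq 3$ by Lemma \ref{SQM}(1). Combined with Lemma \ref{firstproperties}(2) this forces $-K_Y\cdot\Gamma'=2$, and letting $\Gamma$ vary yields infinitely many distinct integral degree-$2$ curves through $p$, contradicting Lemma \ref{firstproperties}(3).

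The main obstacle I anticipate is the $(3,0)^Q$ case: I must be certain that through \emph{every} point $q$ of the quadric threefold $\Exc(g)\cong Q$ — in particular when $Q$ is the singular cone of Lemma \ref{quadricsing} and $q$ is a smooth point — there is genuinely a one-dimensional family of lines of $Q$ through $q$. For the smooth quadric this is classical, and for the cone it follows from a short direct computation showing that the admissible directions form a conic, so a one-parameter family of lines on $Q$ survives through every point; I would record this explicitly rather than treat it as evident. A secondary point to check is that the $\Gamma'$ are infinitely many \emph{distinct} integral curves, which holds because $\psi^{-1}$ is an isomorphism near $q$ and the lines $\Gamma$ are pairwise distinct.
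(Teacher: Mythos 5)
Your proposal is correct and follows essentially the same route as the paper's proof: dispose of the type $(3,2)$ case via the degree-$1$ curves covering such a divisor, then for the remaining types run the MMP for $B$, take a general line through the image of $p$ in $\Exc(g)$, and transport it back to $Y$ to contradict Lemma \ref{firstproperties}(2)--(3). The only (harmless) difference is that you uniformly reduce all three cases to ``infinitely many integral curves of anticanonical degree $2$ through $p$,'' whereas the paper uses the degree-$3$ exclusion directly in the $(3,0)^{sm}$ case.
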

\begin{proof}
By Lemma \ref{firstproperties}(2) and (3), $p$ cannot belong to a fixed prime divisor of type $(3,2)$, nor to an exceptional plane. 
Suppose that $p$ belongs to a fixed prime divisor $B\subset Y$. Let $Y\dasharrow\w{Y}\stackrel{g}{\to} Z$ be the MMP for $B$, $\w{B}\subset\w{Y}$ the transform of $B$,
 and $\tilde{p}\in\w{B}$ the image of $p$.

If $B$ is of type $(3,0)^{sm}$, let $C_1\subset B$ the transform of a general line through $\tilde{p}$ in $\Exc(g)\cong\pr^3$. Then $p\in C_1$ and 
$-K_Y\cdot C_1=3$, contradicting Lemma \ref{firstproperties}(2).
If $B$ is of type $(3,0)^{Q}$, let $C_2\subset B$ the transform of a general line through $\tilde{p}$ in $\Exc(g)\cong Q\subset\pr^4$.  Then $p\in C_2$,  
$-K_Y\cdot C_2=2$, and there are infinitely such curves, contradicting Lemma \ref{firstproperties}(3).
The case where $B$ is of type $(3,1)^{sm}$ is analogous.
\end{proof}
\begin{proof}[Proof of Th.~\ref{finally}]
Since $X$ has a fixed prime divisor of type $(3,0)^{sm}$, in particular $X$ is not a product of surfaces, and moreover $\rho_X\geq 12$; thus we have $\delta_X\leq 2$ by Th.~\ref{summary}.
If $\delta_X=2$, then \cite[Th.~1.2]{cdue} yields the statement.
Therefore we assume for the rest of the proof that
 $\delta_X\leq 1$. 
\begin{parg}
We know by \ref{otite} and Lemma \ref{macau} that $\Eff(X)$ is generated by the classes of fixed prime divisors. Moreover  Th.~\ref{everyfixeddiv} implies that the fixed prime divisors of $X$ cannot be all of type $(3,0)^{sm}$. Therefore there exist two fixed prime divisors $D,B_X\subset X$ such that $D$ is of type $(3,0)^{sm}$,
$B_X$ is not of type $(3,0)^{sm}$, and $\langle[D],[B_X]\rangle$ is a face of $\Eff(X)$. Moreover, again by Lemma \ref{macau}, we have  $\langle[D],[B_X]\rangle\cap\Mov(X)=\{0\}$.

Let $X\dasharrow\w{X}\stackrel{f}{\to} Y$ be the MMP for $D$, so that $Y$ is a smooth Fano $4$-fold with $\rho_Y\geq 11$, and $f$ is the blow-up of the point $p\in Y$.
\end{parg}
\begin{parg}\label{passaggio}
We note that $Y$ cannot have a  covering family of rational curves of anticanonical degree $3$. Indeed if so, the point $p$ should be contained in a connected curve of anticanonical degree $3$, which is impossible by Lemma \ref{3}.
\end{parg} 
\begin{parg}\label{secondo}
Let $C_1,\dotsc,C_s\subset Y$ be the images of all exceptional lines in $\w{X}$, and recall that $p\in C_i$ for every $i=1,\dotsc,s$.
We show that $[C_i]\in\mov(Y)$  for every $i=1,\dotsc,s$.

Indeed if  $[C_i]\not\in\mov(Y)$,
 there is a one-dimensional face $\sigma$ of $\Eff(Y)$ such that $\sigma\cdot C_i<0$. By Cor.~\ref{noquasiel}, $\sigma$ is not contained in $\Mov(Y)$, therefore $\sigma=\langle[G]\rangle$ where $G\subset Y$ is 
 a fixed prime divisor with $G\cdot C_i<0$. This gives $p\in G$, contradicting Lemma \ref{nofixed}.
\end{parg}
\begin{parg}\label{sabrina}
We show that $Y$ has a fixed divisor of type $(3,2)$.

By Lemma \ref{bijection}(3) the divisor $B_X\subset X$ is the transform of a fixed prime divisor $B_Y\subset Y$, and Lemma \ref{transform} yields that $B_Y$ is not of type $(3,0)^{sm}$, because $B_X$ is not. Therefore $B_Y$ is either of type $(3,2)$, or of type $(3,1)^{sm}$, or of type $(3,0)^Q$ (see Th.~\ref{typefixed}).

If $B_Y$ is of type $(3,2)$ we are done, and if $B_Y$ is of type $(3,1)^{sm}$, then $Y$ has also a fixed divisor of type $(3,2)$, by Prop.~\ref{(3,1)}.

Finally suppose that $B_Y$ is of type $(3,0)^Q$, and apply Prop.~\ref{(3,0)Q}. By \ref{passaggio}, we conclude that $Y$ has a prime divisor $T$ covered by rational curves of anticanonical degree $1$. For every $i=1,\dotsc,s$ we have $C_i\cap T=\emptyset$ by Lemma \ref{firstproperties}(2) and Lemma \ref{SQM}(1); in particular $T\cdot C_i=0$.

Since $T$ is effective, $\tau:=\mov(Y)\cap T^{\perp}$ is a face of $\mov(Y)$.
 On the other hand $[C_i]\in\mov(Y)$ by \ref{secondo}, thus $[C_i]\in\tau$ for all $i=1,\dotsc,s$.
By Lemma \ref{dimension} we get
$$\dim\tau\geq \dim\N(D,X)-1\geq\rho_X-2=\rho_Y-1,$$ 
thus $\dim \tau=\rho_Y-1$.
Dually this means that $[T]$ generates a 
 one-dimensional face of $\Eff(Y)$.
Cor.~\ref{noquasiel} implies that $T$ is a fixed divisor,
hence it is  of type $(3,2)$ by Lemma \ref{divisorcovered}.
\end{parg}
\begin{parg}
We show that $X$ has a fixed prime divisor $E_X$ of type $(3,2)$, disjoint from $D$. This is enough to conclude the proof, because then we have $\N(E_X,X)\subseteq D^{\perp}\subsetneq\N(X)$, and 
Prop.~\ref{zero} yields the statement. 

We have shown in \ref{sabrina} that $Y$ has 
a fixed prime divisor $E_Y$ of type $(3,2)$; let $E_X\subset X$ be its transform.
Set $\psi:=f\circ\ph\colon X\dasharrow Y$.
Since $E_Y$ is covered by curves of anticanonical degree $1$, by Lemma \ref{locus}(1) $E_Y$ is contained in the open subset where $\psi$ is an isomorphism. Hence  $D\cap E_X=\emptyset$, and moreover 
 $E_X$ is covered by curves of anticanonical degree $1$ and is not nef. Thus $E_X$ is
a fixed prime divisor of type $(3,2)$ by Lemma \ref{divisorcovered}.
\end{parg}
\vspace{-20pt}
\end{proof}
\section{Examples}\label{examples}
\begin{example}\label{example1}
Let $p_1,\dotsc,p_r\in\pr^4$ be general points, and set
$\w{X}_r:=\Bl_{p_1,\dotsc,p_r}\pr^4$.  
It follows from the work of Bauer \cite{bauer} (see also \cite[\S 12.5]{mukaibook}) and Mukai \cite{mukaiADE} that, for every $r\leq 8$, there is a SQM $\w{X}_r\dasharrow X_r$ such that $X_r$ is a smooth Fano $4$-fold, with $\rho_{X_r}=1+r\leq 9$, and every fixed prime divisor of $X_r$ is of type $(3,0)^{sm}$.
Furthermore,   for $r\geq 4$, $\Eff(X_r)$ is generated by fixed prime divisors.

Let us consider the case $r=8$.
The $4$-fold $\w{X}_8$ is studied in detail in \cite[\S 2]{mukaiADE}; let us briefly recall some of Mukai's results. 

The points $p_1,\dotsc,p_8\in\pr^4$ determine $8$ points $q_1,\dotsc,q_8\in\pr^2$, up to projective equivalence, via the classical association or Gale duality \cite[Ch.~III]{dolgort}. Consider the del Pezzo surface $S:=\Bl_{q_1,\dotsc,q_8}\pr^2$, and let $H\in\Pic S$ be the pull-back of $\ma{O}_{\pr^2}(1)$.

Then $\w{X}_8=\Bl_{p_1,\dotsc,p_8}\pr^4$ is isomorphic to 
the moduli space of rank $2$ torsion free sheaves $E$ on $S$, with $\det(E)=-K_S$ and $c_2(E)=2$, semistable with respect to $-K_S+2H$.
Moreover there exists a SQM $\ph\colon \w{X}_8\dasharrow X_8$ 
with $X_8$ smooth and Fano, and $\rho_{X_8}=9$; $X_8$ is isomorphic to 
the moduli space of rank $2$ torsion free sheaves $E$ on $S$, with $\det(E)=-K_S$ and $c_2(E)=2$, semistable with respect to $-K_S$.

The indeterminacy locus  $\w{X}_8\smallsetminus\dom\ph$ contains the following exceptional lines: the transforms of the $28$ lines in $\pr^4$ through $2$ of the blown-up points, and the transforms of the $8$ rational normal quartics through $7$ of the blown-up points. 
It is interesting to note that $h^0(X_8,-K_{X_8})=h^0(\w{X}_8,-K_{\w{X}_8})=6$, $(-K_{\w{X}_8})^4=-23$, and $(-K_{X_8})^4=13$ (these numbers can be computed using the results in section \ref{chi}). In particular $-K_{X_8}$ is not very ample, while $-K_{X_7}$ is (see \cite[\S 1.5]{parabolic}).
\end{example}
\begin{example}\label{example2}
Let $p\in\pr^4$ and let $\Lambda\subset\pr^4$ be a plane containing $p$. Let $Y\to\Bl_p\pr^4$ be the blow-up of the transform of $\Lambda$ in $\Bl_p\pr^4$. Then $Y$ is Fano, toric, with $\rho_Y=3$. The blow-up $\w{X}$ of $Y$ at two general points is still toric, and there is a SQM $\w{X}\dasharrow X$, given by a composition of $3$ flips, such that $X$ is a toric Fano $4$-fold with $\rho_X=5$; this is $R_3$ in Batyrev's list \cite{bat2}. One can check that $X$ has $6$ fixed prime divisors, of which only two are of type $(3,0)^{sm}$, the two exceptional divisors of $X\dasharrow Y$.
\end{example}

\providecommand{\noop}[1]{}
\providecommand{\bysame}{\leavevmode\hbox to3em{\hrulefill}\thinspace}
\providecommand{\MR}{\relax\ifhmode\unskip\space\fi MR }
\providecommand{\MRhref}[2]{%
  \href{http://www.ams.org/mathscinet-getitem?mr=#1}{#2}
}
\providecommand{\href}[2]{#2}

\end{document}